\documentclass[a4paper,10pt,reqno]{amsart}
\usepackage{amsmath}
\usepackage{cases}

\UseRawInputEncoding

\usepackage{amsfonts}
\usepackage[colorlinks,linkcolor=blue,citecolor=blue]{hyperref}
\usepackage{latexsym, amssymb, amsmath, amsthm, bbm}
\usepackage[all]{xy}
\usepackage{pgfplots}
\usepackage{enumerate}

\DeclareSymbolFont{EulerExtension}{U}{euex}{m}{n}
\DeclareMathSymbol{\euintop}{\mathop} {EulerExtension}{"52}
\DeclareMathSymbol{\euointop}{\mathop} {EulerExtension}{"48}

\allowdisplaybreaks[4]

\setlength{\textwidth}{5.6truein}
\setlength{\textheight}{8.2truein}
\setlength{\topmargin}{-0.13truein}
\addtolength{\parskip}{5pt}

\def \id{\operatorname{id}}
\def \Id{\operatorname{Id}}

\def \C{\mathcal{C}}

\def \e{\varepsilon}
\def \M{\mathrm{M}}

\def \Z{\mathbb{Z}}

\def \k{\Bbbk}

\def \dim{\operatorname{dim}}

\def \Hom{\operatorname{Hom}}

\def \Id{\operatorname{Id}}

\def \Rep{\operatorname{Rep}}

\def \Rex{\operatorname{Rex}}

\def \C{\mathcal{C}}
\def \D{\Delta}

\def \e{\varepsilon}
\def \M{\mathrm{M}}

\def \End{\operatorname{End}}

\def \C{\mathcal{C}}
\def \D{\mathcal{D}}

\def \M{\mathcal{M}}

\def \Z{\mathcal{Z}}

\def \dast{{\ast\ast}}
\def \ev{\mathsf{ev}}
\def \coev{\mathsf{coev}}
\def \tr{\mathrm{tr}}
\def \ptr{\mathrm{ptr}}
\def \Vec{\mathsf{Vec}}
\def \1{\mathbf{1}}
\def \Rep{\mathsf{Rep}}
\def \op{\mathrm{op}}
\def \cop{\mathrm{cop}}

\def \FPdim{\operatorname{FPdim}}

\def \intHom{\underline{\operatorname{Hom}}}
\def \ra{\mathrm{ra}}
\def \la{\mathrm{la}}
\def \FSexp{\operatorname{FSexp}}

\def \a{\mathfrak{a}}
\def \b{\mathfrak{b}}
\def \c{\mathfrak{c}}
\def \pams{partially admissible mapping system}
\def \biop{\mathrm{op\,cop}}
\usepackage{mathtools}
\usepackage{stmaryrd}

\def \pd{\boldsymbol}

\numberwithin{equation}{section}

\newtheorem{theorem}{Theorem}[section]
\newtheorem{lemma}[theorem]{Lemma}
\newtheorem{proposition}[theorem]{Proposition}
\newtheorem{corollary}[theorem]{Corollary}
\newtheorem{definition}[theorem]{Definition}

\newtheorem{remark}[theorem]{Remark}

\newtheorem{conjecture}[theorem]{Conjecture}
\newtheorem{notation}[theorem]{Notation}%[section]

\begin{document}
\title[\tiny{Indicators of dual fusion categories and partially dualized quasi-Hopf algebras}]{Frobenius-Schur indicators of dual fusion categories and semisimple partially dualized quasi-Hopf algebras}

\author[K. Li]{Kangqiao Li}
\address{School of Mathematics, Hangzhou Normal University, Hangzhou 311121, China}
\email{kqli@hznu.edu.cn}

\thanks{2020 \textit{Mathematics Subject Classification}.
18M15, 16T05, 18M20.}
\keywords{Frobenius-Schur indicator; Fusion category; Dual tensor category; Hopf algebra; Partial dualization; Exponent}

\thanks{$\dag$ This work was supported by National Natural Science Foundation of China [grant number 12301049].}

\date{}

\begin{abstract}
Frobenius-Schur indicators (or indicators for short) of objects in pivotal monoidal categories were defined and formulated by Ng and Schauenburg in 2007. In this paper, we introduce and study an analogous formula for indicators in the dual category $\C_\M^\ast$ to a spherical fusion category $\C$ (with respect to an indecomposable semisimple module category $\M$) over $\mathbb{C}$. Our main theorem is a relation between indicators of specific objects in $\C_\M^\ast$ and $\C$. As consequences:
1) We obtain equalities on the indicators between certain representations and the exponents of a semisimple complex Hopf algebra as well as its left partially dualized quasi-Hopf algebra;
2) We deduce formulas on indicators of certain modules over some particular semisimple Hopf algebras - bismash products and quantum doubles;
3) We show that for each semisimple left partially dualized quasi-Hopf algebra, its exponent and Frobenius-Schur exponent are identical.
\end{abstract}

\maketitle

\tableofcontents

\section{Introduction}

The notion of (higher) Frobenius-Schur indicators $\nu_n$ of modules over a finite-dimensional Hopf algebras or objects in pivotal monoidal categories is a classical invariant under equivalences.
It was defined firstly by Linchenko and Montgomery \cite{LM00} for a semisimple Hopf algebra $H$, and they provided a criteria for a simple $H$-module $V$ being self-dual with the value of $\nu_2(V)$. Afterwards in \cite{KSZ06}, Kashina, Sommerh\"{a}user and Zhu described various gauge invariants related to Frobenius-Schur indicators for semisimple Hopf algebras, where some essential formulas were established. One of the formulas was later expressed to define the regular Frobenius-Schur indicators of a finite-dimensional Hopf algebra $H$ (which is not necessarily semisimple) by Kashina, Montgomery and Ng in \cite{KMN12}, and it was generalized and studied further by Shimizu \cite{Shi15(b)}.

Moreover for a $\k$-linear pivotal monoidal category $\C$, Ng and Schauenburg defined in \cite{NS07(a)} the Frobenius-Schur indicators of objects in $\C$ with a categorical version of another formula in \cite{KSZ06}. The definition indeed generalizes the indicators for a semisimple Hopf algebra $H$, as they coincide when $\C=\Rep(H)$ is the category of finite-dimensional left $H$-modules. Furthermore, if $\C$ is a spherical fusion category over the complex field $\mathbb{C}$, they obtained in \cite{NS07(b)} a formula on the Frobenius-Schur indicators, which is described by the pivotal trace ``$\ptr(-)$'' of the power of the ribbon structure $\theta$ on the center $\Z(\C)$: For any object $X\in\C$, its $n$-th Frobenius-Schur indicator equals
\begin{equation}\label{eqn:indsformula(intro)}
\nu_n(X)=\frac{1}{\dim(\C)} \ptr\left(\theta_{I(X)}^n\right),
\end{equation}
where $\dim(\C)$ denotes the categorical dimension of $\C$, and $I:\C\rightarrow\Z(\C)$ is a two-sided adjoint to the forgetful functor. This formula was used to estimate the values of indicators and Frobenius-Schur exponents. As for non-pivotal categories, a theory of Frobenius-Schur indicators was developed by Shimizu \cite{Shi17(a)} with the construction of the pivotal cover.

Clearly, the results on Frobenius-Schur indicators for fusion categories would help to find properties and applications of indicators for semisimple (quasi-)Hopf algebras. One might see \cite{MN05,Sch04,NS08} and so on for the case of quasi-Hopf algebras.

To the knowledge of the author, Frobenius-Schur indicators for tensor categories were mostly studied under tensor equivalences or center constructions. Our goal of the paper to obtain some invariance properties under dualizations of fusion categories, or to find a categorical version of the results such as
\begin{equation}\label{eqn:invdualHopfalg}
\nu_n(H^\ast)=\nu_n(H)
\end{equation}
in the case of representations of a Hopf algebra $H$, which were shown in \cite{Shi12} and \cite{Shi15(b)}. This motivates us to consider indicators of objects in $\C_\M^\ast=\Rex_\C(\M)^\mathrm{rev}$, the dual category to $\C$ with respect to some $\C$-module category $\M$, since we know that $\Rep(H)_\Vec^\ast\approx\Rep(H^\ast)$ as tensor categories in the cases over Hopf algebras.

However, since we do not know when $\C_\M^\ast$ becomes pivotal, the Frobenius-Schur indicators are not well-defined in the original sense by Ng and Schauenburg. Thus we assume that $\C$ is a spherical fusion category, and attempt to redefine $\nu_n(\mathbf{F})$ for each object $\mathbf{F}\in\C_\M^\ast$ with a formula which is completely similar to Equation (\ref{eqn:indsformula(intro)}):
\begin{equation*}
\nu_n(\mathbf{F})=\frac{1}{\dim(\C)} \ptr\left(\theta_{K(\mathbf{F})}^n\right),
\end{equation*}
where $K$ is a two-sided adjoint to the composition functor
$$\Z(\C)\xrightarrow{\text{Schauenburg's equivalence}} \Z(\C_\M^\ast)
\xrightarrow{\text{forgetful functor}}\C_\M^\ast.$$
Fortunately, there is a description of the right adjoint functor $K$ by Shimizu \cite{Shi20} via the construction of ends, namely,
$$K(\mathbf{F})=\left(\int_{M\in\M} \underline{\Hom}(M,F(M)),\sigma^{\mathbf{F}}\right)\in\Z(\C).$$
With the help of this functor constructed, we are able to compute indicators $\nu_n(\mathbf{F})$ in some situations and obtain our main result(=Theorem \ref{thm:bimod-inds}) in Subsection \ref{subsection:mainthm}:

\begin{theorem}\label{thm:bimod-inds(intro)}
Let $\C$ be a spherical fusion category over $\mathbb{C}$. Suppose that $A$ is an algebra in $\C$ such that $\M=\C_A$ is an indecomposable $\C$-module category which is semisimple. Then for any $M\in \C_A$, the $n$-th indicator of
$-\otimes M\in\C_{\C_A}^\ast$ is
$$\nu_n(-\otimes M)=\nu_n(M)\;\;\;\;\;\;\;\;(\forall n\geq1),$$
where:
\begin{itemize}
\item
The $\C$-module functor $-\otimes M$ is canonically isomorphic to $(-)\otimes_A(A\otimes M)$ with structure as the associativity constraint of $\C$;

\item
The object $M$ in the right side is regarded in $\C$ (as the image under the forgetful functor from $\C_A$ to $\C$).
\end{itemize}
\end{theorem}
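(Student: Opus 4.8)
The plan is to reduce the claimed equality of indicators to a single isomorphism in the center $\Z(\C)$, and then to establish that isomorphism by the end calculus. By the defining formula $\nu_n(\mathbf{F})=\frac{1}{\dim(\C)}\ptr(\theta_{K(\mathbf{F})}^n)$ together with the Ng--Schauenburg formula (\ref{eqn:indsformula(intro)}) for $\C$ itself, it suffices to prove that
\begin{equation*}
K(-\otimes M)\cong I(M)\quad\text{in }\Z(\C),
\end{equation*}
where $I:\C\to\Z(\C)$ is the two-sided adjoint to the forgetful functor and $M$ is regarded in $\C$. Indeed, any isomorphism in $\Z(\C)$ intertwines the (canonical) ribbon twist, hence also its $n$-th power, and the pivotal trace is invariant under isomorphism; so $\ptr(\theta_{K(-\otimes M)}^n)=\ptr(\theta_{I(M)}^n)$ for every $n$, and dividing by $\dim(\C)$ gives $\nu_n(-\otimes M)=\nu_n(M)$.

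To identify the underlying object, I would feed Shimizu's end description $K(-\otimes M)=\int_{N\in\C_A}\intHom(N,N\otimes M)$ into the Yoneda/end calculus. Using the universal property of the end and the adjunction defining the internal Hom of $\M=\C_A$, one obtains, naturally in $X\in\C$,
\begin{equation*}
\Hom_\C\!\left(X,K(-\otimes M)\right)\cong\int_{N\in\C_A}\Hom_{\C_A}(X\otimes N,\,N\otimes M)\cong\Nat\!\left(X\otimes(-),\,(-)\otimes M\right),
\end{equation*}
the natural transformations between the two endofunctors of $\C_A$. On the other hand, the standard end presentation $I(M)=\int_{Y\in\C}Y\otimes M\otimes Y^\ast$ gives $\Hom_\C(X,I(M))\cong\Nat(X\otimes(-),(-)\otimes M)$, now between endofunctors of $\C$. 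The core of the argument is therefore the comparison
\begin{equation*}
\Nat_{\C_A}\!\left(X\otimes(-),(-)\otimes M\right)\cong\Nat_{\C}\!\left(X\otimes(-),(-)\otimes M\right),
\end{equation*}
which I would establish by evaluating on the free modules $Y\otimes A$ (which generate the semisimple category $\C_A$) and using the free--forgetful adjunction $\C\rightleftarrows\C_A$ together with semisimplicity to rewrite the end over $\C_A$ as an end over $\C$; concretely, $\C_A$-naturality cuts the free-module computation down to exactly the corresponding end over $\C$, absorbing the auxiliary copy of $A$. By the Yoneda lemma this yields the desired isomorphism of underlying objects $K(-\otimes M)\cong I(M)$. (As a consistency check, when $A=\1$ one has $\C_A=\C$ and both sides reduce literally to $\int_{N}N\otimes M\otimes N^\ast$.)

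The hard part will be upgrading this to an isomorphism of \emph{central} objects, i.e.\ verifying that Shimizu's half-braiding $\sigma^{-\otimes M}$ on $K(-\otimes M)$ corresponds, under the identification above, to the canonical half-braiding on $I(M)$ arising from the reindexing $Y\rightsquigarrow X\otimes Y$. Both half-braidings are pinned down by their compatibility with the outer variable $X$, and the comparison isomorphism is built from the $\C$-module structure of the free modules, so I expect the two dinatural families to coincide; but making this precise requires tracking the module-associativity constraints through the end calculus, and this bookkeeping is where the genuine work lies. Throughout, semisimplicity and indecomposability of $\M=\C_A$ are used to guarantee that $\C_\M^\ast$ is a fusion category for which the adjoint $K$ and the relevant ends exist and reduce to finite direct sums, so that the formula of Subsection~\ref{subsection:mainthm} applies. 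Once $K(-\otimes M)\cong I(M)$ in $\Z(\C)$ is secured, the theorem follows at once from the two indicator formulas as in the first paragraph.
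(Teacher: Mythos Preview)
Your strategy is correct and is essentially the paper's own: the key step is exactly the isomorphism $K(-\otimes M)\cong I(M)$ in $\Z(\C)$ (Corollary~\ref{cor:endsiso2}), after which both indicator formulas finish the job. The paper obtains the underlying-object identification by the same end-reindexing along the free--forgetful adjunction $\C\rightleftarrows\C_A$ that you sketch (this is \cite[Lemma~2.2]{Shi20}), but works directly with the internal Hom and its structure isomorphisms $\mathfrak{a},\mathfrak{b},\mathfrak{c}$ rather than passing through $\Nat$-spaces via Yoneda; the half-braiding comparison you flag as ``the hard part'' is then settled by an explicit diagram chase (Proposition~\ref{prop:endsiso}). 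One small correction: the end-rewriting and the half-braiding check require only that $\C$ be a finite tensor category with $\C_A$ exact---semisimplicity enters solely to make the two indicator formulas (\ref{eqn:indsformula}) and (\ref{eqn:dual-inds}) available, not for the categorical comparison itself.
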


The key point of the proof is an observation(=Corollary \ref{cor:endsiso2}) that
$$K(-\otimes M)\cong(\mathsf{Z}(M),\sigma^{\mathsf{Z}(M)})$$
as objects in $\Z(\C)$, where $\mathsf{Z}$ denotes the central Hopf comonad on $\C$.
Besides,
we remark that any finite (left) $\C$-module category $\M$ must be equivalent to $\C_A$, the category of right $A$-modules in $\C$, for some algebra $A$ in $\C$. Therefore, Theorem \ref{thm:bimod-inds(intro)} could provide a relation between indicators in $\C$ and its arbitrary dual fusion category $\C_\M^\ast$.

Afterwards, this result is applied to the following algebraic case: $\C=\Rep(H)$ is the integral fusion category of representations of a semisimple complex Hopf algebra, and the indecomposable exact $\Rep(H)$-module category $\M=\Rep(B)$ for a left coideal subalgebra $B$ of $H$. As it is reconstructed in \cite{Li23} the so-called (left) partially dualized quasi-Hopf algebra $(H/B^+H)^\ast\#B$ from the dual tensor category $\Rep(H)_{\Rep(B)}^\ast$, we show by Theorem \ref{thm:bimod-inds(intro)} that (=Theorem \ref{thm:inds-leftpartialdual}):
\begin{theorem}\label{thm:inds-leftpartialdual(intro)}
Let $H$ be a semisimple Hopf algebra over $\mathbb{C}$. Suppose $B$ is a left coideal subalgebra, and $(H/B^+H)^\ast\#B$ is a left partially dualized quasi-Hopf algebra of $H$.
Then for any $L\in\Rep(B)$,
\begin{equation*}
\nu_n((H/B^+H)^\ast\# L)=\nu_n(L\square_{B^\ast}H^\ast)
\;\;\;\;\;\;\;\;(\forall n\geq1),
\end{equation*}
where $(H/B^+H)^\ast\#L\in\Rep((H/B^+H)^\ast\#B)$ and $L\square_{B^\ast}H^\ast\in\Rep(H)$ with structures defined in Subsection \ref{subsection:mainthmPD}.
\end{theorem}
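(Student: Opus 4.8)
The plan is to obtain this statement as a Hopf-algebraic specialization of the categorical Theorem \ref{thm:bimod-inds(intro)}, translating every categorical datum into the language of the partial dualization. Since $H$ is a semisimple Hopf algebra over $\mathbb{C}$, the category $\C=\Rep(H)$ is a spherical fusion category with its canonical (pseudo-unitary) spherical structure, so the standing hypotheses of Theorem \ref{thm:bimod-inds(intro)} are in place once a module category is exhibited. The left coideal subalgebra $B$ makes $\Rep(B)$ an indecomposable left $\C$-module category via $b\cdot(x\otimes l)=b_{(1)}x\otimes b_{(2)}l$, coming from $\Delta(B)\subseteq H\otimes B$; being exact over a fusion category, this module category is automatically semisimple. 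I would then invoke from \cite{Li23} the algebra $A$ in $\C$ realizing $\M=\Rep(B)\simeq\C_A$, so that Theorem \ref{thm:bimod-inds(intro)} applies verbatim. The remaining work is then purely a matching of objects: given $L\in\Rep(B)$, let $M\in\C_A$ be the object corresponding to $L$ under $\Rep(B)\simeq\C_A$, and translate both sides of $\nu_n(-\otimes M)=\nu_n(M)$.

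First I would translate the left-hand side. Under the reconstruction equivalence $\C_\M^\ast=\Rep(H)_{\Rep(B)}^\ast\approx\Rep((H/B^+H)^\ast\#B)$ of \cite{Li23}, one must show that the distinguished module endofunctor $-\otimes M\in\C_\M^\ast$ is carried to the $(H/B^+H)^\ast\#B$-module $(H/B^+H)^\ast\#L$ of Subsection \ref{subsection:mainthmPD}. This is where the explicit form of the reconstruction—Schauenburg's equivalence followed by the forgetful functor, together with the smash-product presentation of the dual quasi-Hopf algebra—has to be unwound so that the canonical $A$-module and $\C$-module-functor structures on $-\otimes M$ match precisely the prescribed action of $(H/B^+H)^\ast\#B$ on $(H/B^+H)^\ast\#L$. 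Granting this, $\nu_n(-\otimes M)=\nu_n((H/B^+H)^\ast\#L)$ holds by the very definition of the indicator on $\C_\M^\ast$.

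Next I would translate the right-hand side. By Theorem \ref{thm:bimod-inds(intro)}, $\nu_n(-\otimes M)=\nu_n(M)$ where $M$ is regarded in $\C=\Rep(H)$ through the forgetful functor $\C_A\to\C$, so it remains to identify this underlying $H$-module with the cotensor product $L\square_{B^\ast}H^\ast$. Dualizing $B\hookrightarrow H$ realizes $B^\ast$ as a quotient coalgebra of $H^\ast$, the $B$-module $L$ as a right $B^\ast$-comodule, and $H^\ast$ as a left $B^\ast$-comodule, and the claim is that the forgetful functor $\C_A\to\C$, transported along $\Rep(B)\simeq\C_A$, is exactly the coinduction $L\mapsto L\square_{B^\ast}H^\ast$ with its residual $H$-action (as set up in Subsection \ref{subsection:mainthmPD}). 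Combining the two identifications with Theorem \ref{thm:bimod-inds(intro)} gives $\nu_n((H/B^+H)^\ast\#L)=\nu_n(L\square_{B^\ast}H^\ast)$; on the $\Rep(H)$ side this categorical indicator coincides with the ordinary Frobenius-Schur indicator of the $H$-module by \cite{NS07(a),NS07(b)}, closing the loop.

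The main obstacle is the bookkeeping in these two object identifications, not the indicator computation, which is entirely absorbed into Theorem \ref{thm:bimod-inds(intro)}. Concretely, the delicate points are to check that the explicit equivalences of \cite{Li23} send the $\C$-module-functor $-\otimes M$ to the module $(H/B^+H)^\ast\#L$ with \emph{exactly} the stated action, and dually that the forgetful functor is the cotensor product $-\square_{B^\ast}H^\ast$ and not some antipode- or cocycle-twisted variant of it. Once these module structures are pinned down in Subsection \ref{subsection:mainthmPD}, the equality of indicators is immediate.
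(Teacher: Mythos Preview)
Your approach is essentially the same as the paper's, and correctly identifies that the content is in the two object identifications rather than in any new indicator computation. A few differences in execution are worth noting. The paper does not work abstractly with $\Rep(B)\simeq\C_A$ for some unspecified $A$; it takes from the outset $\M=\Rep(H)_{(H/B^+H)^\ast}$, i.e.\ $A=(H/B^+H)^\ast$ as an algebra in $\Rep(H)$. With this choice the object $M\in\C_A$ associated to $L$ is literally $L\square_{B^\ast}H^\ast$ with its right $(H/B^+H)^\ast$-action, so your ``forgetful functor equals coinduction'' step becomes a tautology rather than something to be checked. The nontrivial identification is then entirely on the other side: the paper proves an explicit $(H/B^+H)^\ast$-bimodule isomorphism $\psi:(H/B^+H)^\ast\otimes(L\square_{B^\ast}H^\ast)\cong[(H/B^+H)^\ast\#L]\,\square_{B^\ast}H^\ast$ in $\Rep(H)$, which shows that the reconstruction equivalence $\Psi$ carries $(H/B^+H)^\ast\#L$ to $-\otimes(L\square_{B^\ast}H^\ast)$. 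Finally, one point you glide over: since Theorem~\ref{thm:bimod-inds(intro)} computes $\nu_n(-\otimes M)$ in the sense of Definition~\ref{def:dual-inds}, while the indicator of $(H/B^+H)^\ast\#L$ is the ordinary Ng--Schauenburg indicator in the pivotal category $\Rep((H/B^+H)^\ast\#B)$, one needs the compatibility result (Proposition~\ref{prop:intindscoincide}, via integrality and pseudo-unitarity) to equate them; the paper invokes this explicitly.
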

In particular, the indicators of the regular representations of $(H/B^+H)^\ast\#B$ and $H$ coincide. Since the left partial dual $(H/B^+H)^\ast\#B$ is regarded as a generalization of the dual Hopf algebra $H^\ast$ in a sense, we could obtain Equation (\ref{eqn:invdualHopfalg}) directly from Theorem \ref{thm:inds-leftpartialdual(intro)}.

Furthermore, note in \cite{Li23,HKL25} that some classical structures are realized as special forms of left partially dualized Hopf algebras, including bismash products of matched pair of groups (\cite{Tak81}) and (generalized) quantum doubles (\cite{DT94}). Thus we might apply
Theorem \ref{thm:inds-leftpartialdual(intro)} to these examples, and deduce following formulas (=Propositions \ref{prop:indicatorsbismashprod} and \ref{prop:indicatorsquantumdouble}) on the indicators of certain modules:

\begin{proposition}
Let $(F,G,\triangleright,\triangleleft)$ be a matched pair of finite groups.
%For any $a\in F$, regard $\mathbb{C}p_a\in\Rep(\mathbb{C}^F)$ as the 1-dimensional representation.
Then
$$\nu_n(\mathbb{C}^G\#p_a)=\big|\{x\in G\mid(a,x)^n=(1,1)\text{ in }F\bowtie G\}\big|
\;\;\;\;\;\;\;\;(\forall n\geq1),$$
where $\{p_b\}_{b\in G}$ is the basis of $\mathbb{C}^G$ dual to $G$, and $\mathbb{C}^G\#p_a\in\Rep(\mathbb{C}^G\#\mathbb{C}F)$ with structures defined in Subsection \ref{subsection:5.4}.
\end{proposition}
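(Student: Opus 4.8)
The plan is to read the formula off Theorem~\ref{thm:inds-leftpartialdual} by realizing the bismash product as a left partial dual and then evaluating the resulting indicator over the single group $F\bowtie G$. First I recall from \cite{Li23,HKL25} the realization used in Subsection~\ref{subsection:5.4}: for the matched pair $(F,G,\triangleright,\triangleleft)$ the bismash product $\mathbb{C}^G\#\mathbb{C}F$ is the left partially dualized Hopf algebra $(H/B^+H)^\ast\#B$ of the function Hopf algebra $H=\mathbb{C}^{F\bowtie G}$, where $B\subseteq H$ is the coideal subalgebra of functions depending on a single matched-pair coordinate. Thus $\Rep(H)\simeq\Vec_{F\bowtie G}$, while $\Rep(B)$ is a category of graded vector spaces in which $p_a$ labels the simple object $L$ sitting at $a$. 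Under this identification the module $\mathbb{C}^G\#p_a$ is precisely $(H/B^+H)^\ast\#L$.

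Applying Theorem~\ref{thm:inds-leftpartialdual} then gives
\[
\nu_n(\mathbb{C}^G\#p_a)=\nu_n\left(L\square_{B^\ast}H^\ast\right),
\]
and the task becomes the computation of this indicator in $\Rep(H)\simeq\Vec_{F\bowtie G}$, i.e. over the single bicrossed product group. The next step is to identify $L\square_{B^\ast}H^\ast$ explicitly. Here $H^\ast=\mathbb{C}[F\bowtie G]$ and $B^\ast$ is the group algebra dual to $B$, so unwinding the cotensor product reduces to describing the coalgebra projection $H^\ast\to B^\ast$; in the matched-pair coordinates this projection reads off the relevant coordinate of a group element. Consequently $L\square_{B^\ast}H^\ast$ is supported on the fiber of this projection over $a$, which in the conventions of Subsection~\ref{subsection:5.4} is the slice $\{(a,x):x\in G\}\subseteq F\bowtie G$ appearing in the statement; concretely it decomposes as the direct sum over $x\in G$ of the simple objects graded by the group elements $(a,x)$.

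It then remains to invoke the elementary evaluation of indicators in $\Vec_{F\bowtie G}$: the $n$-th indicator of the simple object concentrated at a group element $g$ equals $1$ if $g^n=(1,1)$ and $0$ otherwise. Summing over the fiber yields
\[
\nu_n(\mathbb{C}^G\#p_a)=\sum_{x\in G}\delta_{(a,x)^n,(1,1)}=\big|\{x\in G\mid(a,x)^n=(1,1)\}\big|,
\]
where each power $(a,x)^n$ is taken in $F\bowtie G$ through the matched-pair structure $(\triangleright,\triangleleft)$. As a consistency check, summing the resulting counts over all the modules $\mathbb{C}^G\#p_a$ recovers $|\{g\in F\bowtie G:g^n=(1,1)\}|$, the total number of $n$-th roots of $(1,1)$, matching the value for the regular representation.

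I expect the main obstacle to be the explicit computation of the cotensor product $L\square_{B^\ast}H^\ast$ in the second step: one must trace the left partial dualization and the projection $H^\ast\to B^\ast$ through the matched-pair coordinatization to confirm that the fiber over $a$ is exactly the slice $\{(a,x):x\in G\}$, and align this with the coordinate and ordering conventions for $F\bowtie G$ and for the basis $\{p_b\}_{b\in G}$ fixed in Subsection~\ref{subsection:5.4}. Once this identification is secured, the passage to the counting formula is immediate from the $\Vec_{F\bowtie G}$ indicator value $\delta_{g^n,(1,1)}$ at the simple object graded by $g$.
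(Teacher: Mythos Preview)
Your proposal is correct and follows essentially the same route as the paper: realize the bismash product as a left partial dual of $\mathbb{C}^{F\bowtie G}$ via Lemma~\ref{lem:bismashprodPD}, apply Theorem~\ref{thm:inds-leftpartialdual}, and identify the cotensor product $p_a\square_{\mathbb{C}F}\mathbb{C}(F\bowtie G)$ with the slice $\{(a,x):x\in G\}$. The only cosmetic difference is in the last step: the paper computes the indicator in $\Rep(\mathbb{C}^{F\bowtie G})$ explicitly via the Hopf-algebraic formula $\nu_n(V)=\chi_V(\Lambda^{[n]})$ with $\Lambda=p_{(1,1)}$ and $\Lambda^{[n]}=\sum_{(b,y)^n=(1,1)}p_{(b,y)}$, whereas you invoke the equivalent well-known fact that in $\Vec_{F\bowtie G}$ the simple object at $g$ has $n$-th indicator $\delta_{g^n,(1,1)}$.
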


\begin{proposition}\label{prop:1.4}
Let $H$ and $K$ be semisimple Hopf algebras over $\mathbb{C}$ with Hopf pairing $\sigma:K^\ast\otimes H\rightarrow\mathbb{C}$. Then for any $V\in\Rep(H)$,
$$\nu_n(K^{\ast\cop}\bowtie_\sigma V)=\sum_{W\in\mathsf{Irr}(K)}\nu_n^H(V\otimes W)\nu_n^{K^\cop}(W)
\;\;\;\;\;\;\;\;(\forall n\geq1),$$
where $\mathsf{Irr}(K)$ denotes the set of isoclasses of irreducible left $K$-modules, and:
\begin{itemize}
\item
$K^{\ast\cop}\bowtie_\sigma V$ is a left module over the (generalized) quantum double $K^{\ast\cop}\bowtie_\sigma H$ with structures defined in Subsection \ref{subsection:5.4};
\item
$V\otimes W\in\Rep(H)$ with diagonal $H$-action in a sense, and $W\in\Rep(K^\cop)$.
\end{itemize}
\end{proposition}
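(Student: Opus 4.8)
The plan is to deduce this from Theorem~\ref{thm:inds-leftpartialdual(intro)} by exhibiting the generalized quantum double $D:=K^{\ast\cop}\bowtie_\sigma H$ as a left partially dualized quasi-Hopf algebra. Concretely, I would first recall from \cite{Li23,HKL25} (to be made explicit in Subsection~\ref{subsection:5.4}) the realization $D\cong(\tilde H/B^+\tilde H)^\ast\#B$, where $\tilde H$ is the semisimple Hopf algebra assembled from $H$ and $K^\cop$ and $B\subseteq\tilde H$ is the left coideal subalgebra obtained as the ``graph'' of the Hopf algebra map $\phi_\sigma\colon H\to K^\cop$ determined by the Hopf pairing $\sigma$. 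Since $\Delta$ is an algebra map, this graph is closed under multiplication and $B\cong H$ as algebras, so $\Rep(B)\cong\Rep(H)$; under this identification a $B$-module $L$ corresponds to $V\in\Rep(H)$, and the $D$-module $(\tilde H/B^+\tilde H)^\ast\#L$ corresponds to $K^{\ast\cop}\bowtie_\sigma V$.

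With these identifications in place, Theorem~\ref{thm:inds-leftpartialdual(intro)} gives
\[
\nu_n\!\left(K^{\ast\cop}\bowtie_\sigma V\right)=\nu_n\!\left((\tilde H/B^+\tilde H)^\ast\#L\right)=\nu_n^{\tilde H}\!\left(L\,\square_{B^\ast}\tilde H^\ast\right),
\]
so the whole problem is reduced to computing the $\tilde H$-module $L\,\square_{B^\ast}\tilde H^\ast$. Here I would use that, in the semisimple setting, the cotensor product $L\mapsto L\,\square_{B^\ast}\tilde H^\ast$ agrees with induction along $B\subseteq\tilde H$, i.e.\ with $\tilde H\otimes_B L$; a dimension count ($\dim\tilde H/\dim B=\dim K$) already matches the target. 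The key claim is then the decomposition
\[
L\,\square_{B^\ast}\tilde H^\ast\;\cong\;\bigoplus_{W\in\mathsf{Irr}(K)}(V\otimes W)\boxtimes W
\]
as modules over $\tilde H\cong H\otimes K^\cop$, where $W$ acquires its $H$-action by pulling the $K$-action back along $\phi_\sigma$ and $V\otimes W$ carries the resulting diagonal $H$-action, while the second tensor factor is $W$ regarded in $\Rep(K^\cop)$.

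Granting this decomposition, the proposition follows from the multiplicativity of higher Frobenius-Schur indicators under tensor products of Hopf algebras: because the normalized integral of $\tilde H$ factors as $\Lambda_H\otimes\Lambda_{K^\cop}$, hence its $n$-th Sweedler power as $\Lambda_H^{[n]}\otimes\Lambda_{K^\cop}^{[n]}$, and the character of $(V\otimes W)\boxtimes W$ factors accordingly, one gets $\nu_n^{\tilde H}\big((V\otimes W)\boxtimes W\big)=\nu_n^H(V\otimes W)\,\nu_n^{K^\cop}(W)$, and summing over $\mathsf{Irr}(K)$ yields the asserted formula. I expect the main obstacle to be the displayed decomposition: one must track how the right $B^\ast$-coaction contracts against $\tilde H^\ast$ and, after passing to $\tilde H\otimes_B L$, verify by a Mackey/Frobenius-reciprocity style computation that the residual $H$-action is genuinely the $\phi_\sigma$-twisted diagonal action rather than acting on $L$ alone — this twisting, which is precisely the arithmetic of $\sigma$, is what turns each regular $K^\cop$-summand into the pair $\big(V\otimes W,\,W\big)$. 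The remaining op/cop bookkeeping in identifying $\tilde H$ and $B$ is routine but must be pinned down carefully, so that the $K$-factor contributes $\nu_n^{K^\cop}$ rather than $\nu_n^{K}$.
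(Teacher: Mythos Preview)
Your proposal is correct and follows essentially the same strategy as the paper: realize $K^{\ast\cop}\bowtie_\sigma H$ as a left partial dual (the paper uses $\tilde H=K^\op\otimes H$ with $B=H$ embedded via $\iota(h)=\sum\sigma_r(S^{-1}(h_{(2)}))\otimes h_{(1)}$), apply Theorem~\ref{thm:inds-leftpartialdual(intro)}, then decompose the resulting cotensor module and use multiplicativity of indicators. The only tactical difference is that the paper bypasses induction and constructs an explicit $K^\op\otimes H$-module isomorphism $V\square_{H^\ast}(K^{\ast\cop}\otimes H^\ast)\cong V\otimes K^\ast$, then uses the Peter--Weyl decomposition $K^\ast\cong\bigoplus_W W\otimes W^\ast$ to get summands $(V\otimes W)\otimes W^\ast$ with $W^\ast\in\Rep(K^\op)$, and finally converts via $\nu_n^{K^\op}(W^\ast)=\nu_n^{K^\cop}(W)$; so your anticipated op/cop bookkeeping is exactly where the second factor becomes $W^\ast$ rather than $W$.
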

We remark as well that Proposition \ref{prop:1.4} implies an analogous formula (=Corollary \ref{cor:indicatorDrinfelddouble}) on indicators of the Drinfeld double $D(H)$ as a particular case.

Another gauge invariant for semisimple (quasi-)Hopf algebras is the \textit{exponent} (\cite{Kas99,EG99,Eti02} etc.), which is in fact the period of the sequence of indicators (of the regular representation). This observation was used to defined the \textit{Frobenius-Schur exponent} (\cite{NS07(b)}) of spherical fusion categories over $\mathbb{C}$, which induces frequently the Frobenius-Schur exponent of finite-dimensional semisimple quasi-Hopf algebras.
It is known that these two notions could differ with amounts up to a factor $2$, but we might show that
they coincide for semisimple left partial dualized quasi-Hopf algebras as an invariant under partial dualizations (Proposition \ref{cor:partialdualexp}).
Besides, some other properties on the exponent of semisimple Hopf algebras are also discussed.

The paper is organized as follows: In Section \ref{section2}, we collect some necessary notions of tensor categories and module categories, especially pivotal structures preserved by Schauenburg's equivalence $\Omega:\Z(\C)\approx\Z(\C_\M^\ast)$, as well as constructions of adjoint functors to the forgetful functors from centers. The definition of Frobenius-Schur indicators in $\C_\M^\ast$ is defined in Section \ref{section3}, and some basic invariance properties of indicators in various senses are discussed then. Section \ref{section4} is devoted to our main theorem.
Finally we introduce applications to indicators and exponent for semisimple Hopf algebras and its left partially dualized quasi-Hopf algebras in Section \ref{section5}, including formulas in the cases of some classical structures.

\section{Preliminaries on monoidal categories, module categories and centers}\label{section2}

We refer to \cite[Chapters 1 to 4]{EGNO15} for the elementary definitions and properties about monoidal categories and tensor categories, and tensor categories are assumed to be over an algebraically closed field.
In this paper, associativity and unit constraints in a monoidal category
are always abbreviated without the loss of generality, according to the coherence theorem \cite{MacL63}.

Let $\C$ be a monoidal category with tensor product functor $\otimes$ and unit object $\1$.
For any object $X\in\C$, we would denote its left and right dual objects in $\C$ by $X^\ast$ and ${}^\ast X$ respectively when exist. Also, the evaluation and coevaluation for the left dual $X^\ast$ of $X$ are denoted by
$$\ev_X:X^\ast\otimes X\rightarrow\1\;\;\;\;\text{and}\;\;\;\;
\coev_X:\1\rightarrow X\otimes X^\ast.$$
When $\C$ is rigid, the canonical isomorphisms
$$(X\otimes Y)^\ast\cong Y^\ast\otimes X^\ast\;\;\;\;\text{and}\;\;\;\;
{}^\ast(X\otimes Y)\cong {}^\ast Y\otimes {}^\ast X\;\;\;\;\;\;\;\;(\forall X,Y\in\C)$$
are always abbreviated in this paper,
and the \textit{(left) categorical trace} could be defined for a morphism $f:X\rightarrow X^\dast$ in $\C$ as
$$\tr(f):=\ev_{X^\ast}\circ(f\otimes\id_{X^\ast})\circ\coev_X\in\End_\C(\1).$$
If $\C$ has furthermore a pivotal structure $j$, denote the \textit{(right) pivotal trace} (\cite[Section 2]{NS07(b)}) of an endomorphism $g:X\rightarrow X$ in $\C$ by
$$\ptr(g):=\tr(j_X\circ g).$$

%However, when $\C$ is rigid,
%we both identify endofunctors ${}^\ast((-)^\ast)$ and $({}^\ast(-))^\ast$ on $\C$ with the identity functor $\Id_\C$.

For later use, we remark that pivotal traces of (the powers of) a natural endomorphism are independent on the choices of isomorphic objects in $\C$:

\begin{lemma}\label{lem:independadj}
Let $\C$ be a rigid monoidal category with pivotal structure $j$. Suppose $\theta$
is a natural endomorphism on the identity functor $\Id_\C$. If two objects $X,Y\in\C$ are isomorphic, then
$$\ptr\left(\theta_{X}^n\right)=\ptr\left(\theta_{Y}^n\right)$$
holds for each positive integer $n$.
\end{lemma}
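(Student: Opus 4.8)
The plan is to fix an explicit isomorphism between $X$ and $Y$, transport $\theta_X^n$ to $\theta_Y^n$ by conjugation using the naturality of $\theta$, and then invoke the conjugation-invariance of the pivotal trace, which is a form of its cyclicity. So the statement reduces to two facts: that $\theta_Y^n$ and $\theta_X^n$ are conjugate, and that $\ptr$ is a cyclic trace.

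First I would choose an isomorphism $\phi\colon X\to Y$, which exists by hypothesis. Since $\theta$ is a natural endomorphism of $\Id_\C$, naturality applied to $\phi$ gives the commuting square $\theta_Y\circ\phi=\phi\circ\theta_X$, whence $\theta_Y=\phi\circ\theta_X\circ\phi^{-1}$. Iterating this relation and cancelling the inner factors $\phi^{-1}\circ\phi$ yields
$$\theta_Y^n=\phi\circ\theta_X^n\circ\phi^{-1}.$$
Thus the two endomorphisms whose pivotal traces must be compared are conjugate via $\phi$.

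The remaining and essential step is to show that the pivotal trace is unchanged under such conjugation, equivalently that it is cyclic: for any $f\colon X\to Y$ and $h\colon Y\to X$ one has $\ptr(f\circ h)=\ptr(h\circ f)$. Granting this, I would apply it with $f=\phi$ and $h=\theta_X^n\circ\phi^{-1}$, so that $f\circ h=\theta_Y^n$ while $h\circ f=\theta_X^n$, and conclude $\ptr(\theta_Y^n)=\ptr(\theta_X^n)$, as desired. This works uniformly for every positive integer $n$, since the displayed conjugation identity holds for all $n$.

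The main obstacle is therefore the cyclicity itself, which I would derive directly from the definitions $\ptr(g)=\tr(j_X\circ g)$ and $\tr(\alpha)=\ev_{X^\ast}\circ(\alpha\otimes\id_{X^\ast})\circ\coev_X$. The key ingredients are the naturality of the pivotal structure $j$, so that $j_Y\circ\phi=\phi^{\ast\ast}\circ j_X$, the defining relations of the dual morphisms $\phi^\ast$ and $\phi^{\ast\ast}$ against $\ev$ and $\coev$, and the snake identities, which together let one slide $\phi$ and $\phi^{-1}$ around the loop formed by $\coev$ and $\ev$ and cancel them. I expect this bookkeeping with the duality data to be the only delicate part; alternatively, one may simply appeal to the standard fact that in a pivotal category the pivotal trace defines a cyclic trace on endomorphism spaces.
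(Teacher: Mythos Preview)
Your proposal is correct and follows essentially the same route as the paper's proof: both use naturality of $\theta$ to write $\theta_Y^n=\phi\circ\theta_X^n\circ\phi^{-1}$, then combine the naturality of $j$ (giving $j_Y\circ\phi=\phi^{\ast\ast}\circ j_X$) with the cyclicity of the categorical trace $\tr$ to conclude. The only cosmetic difference is that the paper invokes cyclicity of $\tr$ directly (citing \cite[Proposition 4.7.3(4)]{EGNO15}) rather than packaging it as cyclicity of $\ptr$.
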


\begin{proof}
Suppose $\phi:X\cong Y$ is an isomorphism in $\C$. Then
the naturalities of $\theta$ and the pivotal structure $j$ imply respectively that
$$\theta_{Y}\circ\phi=\phi\circ\theta_{X}
\;\;\;\;\text{and}\;\;\;\;
j_Y\circ \phi=\phi^\dast\circ j_X.$$
Thus for any positive integer $n$, we obtain that
\begin{eqnarray*}
\ptr\left(\theta_Y^n\right)
&=& \ptr\left(\phi\circ\theta_X^n\circ\phi^{-1}\right)
~=~ \tr\left(j_Y\circ\phi\circ\theta_X^n\circ\phi^{-1}\right)  \\
&=& \tr\left(\phi^\dast\circ j_X\circ\theta_X^n\circ\phi^{-1}\right)
~=~ \tr\left(j_X\circ\theta_X^n\circ\phi^{-1}\circ\phi\right)  \\
&=& \tr\left(j_X\circ\theta_X^n\right)
~=~ \ptr\left(\theta_X^n\right),
\end{eqnarray*}
where the forth equality is due to \cite[Proposition 4.7.3(4)]{EGNO15}.
\end{proof}

Moreover, recall in \cite[Definition 4.7.14]{EGNO15} that a pivotal structure $j$ of a rigid monoidal category $\C$ is said to be \textit{spherical}, if $\tr(j_X)=\tr(j_{X^\ast})$ holds for all $X\in\C$. In fact, it is equivalent to the condition that (right) pivotal traces of any endomorphism and its dual morphisms are equal:
\begin{lemma}
(\cite[Definition 2.5]{BW99} or \cite[Section 2.1]{Shi12})%\label{lem:independadj}
Let $\C$ be a rigid monoidal category with pivotal structure $j$. Then $j$ is spherical if and only if
\begin{equation}\label{eqn:sphericaltraces}
\ptr\left(f\right)=\ptr\left(f^\ast\right)
\end{equation}
holds for each endomorphism $f:X\rightarrow X$ in $\C$.
\end{lemma}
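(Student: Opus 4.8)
The plan is to prove the two implications separately, the forward one being immediate and the converse carrying essentially all of the content. For the direction ``(\ref{eqn:sphericaltraces}) $\Rightarrow$ spherical'' I would simply specialize the hypothesis to $f=\id_X$: since $\ptr(\id_X)=\tr(j_X\circ\id_X)=\tr(j_X)$ and $(\id_X)^\ast=\id_{X^\ast}$, we get $\tr(j_X)=\ptr(\id_X)=\ptr(\id_{X^\ast})=\tr(j_{X^\ast})$ for every $X\in\C$, which is exactly the definition of sphericality. This uses nothing beyond the definitions.

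For the converse I would first reorganize $\ptr(f)$ and $\ptr(f^\ast)$ into a left/right pair. By definition $\ptr(f)=\tr(j_X\circ f)$ is the \emph{right} pivotal trace, obtained by closing $f$ up with $\coev_X$ and the $j$-deformed evaluation $\ev_{X^\ast}\circ(j_X\otimes\id_{X^\ast})$. The key preliminary identity I would establish is that
$$\ptr(f^\ast)=\ev_X\circ(\id_{X^\ast}\otimes f)\circ\big((\id_{X^\ast}\otimes j_X^{-1})\circ\coev_{X^\ast}\big),$$
that is, $\ptr(f^\ast)$ is the \emph{left} pivotal trace of $f$, closing $f$ up on the opposite side. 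This holds in any pivotal category: using the standard compatibility $j_{X^\ast}=\big((j_X)^\ast\big)^{-1}$ of a pivotal structure with duals (\cite{EGNO15}) one rewrites $j_{X^\ast}\circ f^\ast=(f\circ j_X^{-1})^\ast$, and then the snake relations uncurl the resulting diagram into the displayed left trace; as a consistency check, at $f=\id_X$ both sides reduce to $\tr(j_{X^\ast})=\dim_L(X)$. With this identity, the desired equation (\ref{eqn:sphericaltraces}) is reduced to the equality of the left and right pivotal traces of $f$.

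It then remains to show that sphericality forces the left and right pivotal traces to agree on \emph{every} endomorphism, not merely on identities, and I expect this to be the main obstacle. The hypothesis $\tr(j_X)=\tr(j_{X^\ast})$ is only the statement at $f=\id_X$ (equivalently $\dim_L(X)=\dim_R(X)$), so one must promote it to arbitrary $f$. In the semisimple setting relevant to this paper I would argue by decomposing $\End_\C(X)$: by cyclicity of both traces together with their multiplicativity under $\otimes$, each trace of $f$ is determined by its values on the identities $\id_{X_i}$ of the simple summands $X_i$ of $X$, which reduces the claim to the identity case already settled. In full generality this promotion is precisely the equivalence recorded in \cite{BW99}, proved by the sphere-isotopy argument for string diagrams, which I would invoke rather than reprove.
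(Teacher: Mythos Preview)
The paper does not supply its own proof of this lemma; it merely records the statement with citations to \cite{BW99} and \cite{Shi12}. So there is no ``paper's approach'' to compare against, and your proposal should be judged on its own.

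Your argument is sound in the setting that matters for this paper. The backward direction is exactly as you say. For the forward direction, your reduction is correct: the identity $j_{X^\ast}=\big((j_X)^\ast\big)^{-1}$ gives $j_{X^\ast}\circ f^\ast=(f\circ j_X^{-1})^\ast$, and unwinding this with the zig-zag relations does identify $\ptr(f^\ast)$ with the \emph{left} pivotal trace of $f$. Your semisimple promotion (decompose $X$ into simples, use that $\End_\C(X_i)=\k\cdot\id_{X_i}$, and invoke additivity of both traces) is the standard argument and suffices for every application in the paper, which works over fusion categories.

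One caveat on the fully general case: your appeal to \cite{BW99} does not actually close the gap, because Barrett--Westbury \emph{define} a spherical category by the equality of left and right traces on all endomorphisms, rather than proving it from the dimension condition $\tr(j_X)=\tr(j_{X^\ast})$. In a bare rigid monoidal category with no additivity or finiteness, the implication ``equal pivotal dimensions $\Rightarrow$ equal pivotal traces on all morphisms'' is not automatic. What rescues the lemma in the ambient setting of the paper is that $\C$ is a finite tensor category: one can pass to the semisimplification (nilpotent endomorphisms have zero left and right trace), where your semisimple argument applies. If you want a self-contained proof beyond the fusion case, this is the step you should spell out rather than citing \cite{BW99}.
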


%\begin{proof}
%\textcolor{red}{
%This result seems known, but here we provide a proof for safety. WHERE TO REFER???}
%
%\begin{eqnarray*}
%\ptr(f^\ast)
%&=&
%\tr\left(j_{X^\ast}\circ f^\ast\right)
%~=~
%\ev_{X^\ast}\circ\left((j_{X^\ast}\circ f^\ast)\otimes\id\right)\circ\coev_{V^\ast}???
%\end{eqnarray*}
%\end{proof}

\subsection{Monoidal functors preserving pivotal structure}

Since pivotal structures are crucial in defining and studying Frobenius-Schur indicators for monoidal categories, we should pay special attention to monoidal functors said to preserve the pivotal structure, which are introduced in \cite{NS07(a)}.

Recall that if $\Phi:\C\rightarrow\D$ is a (strong) monoidal functor with structures
$$J:\Phi(-)\otimes \Phi(-)\cong \Phi(-\otimes-)
\;\;\;\;\text{and}\;\;\;\;
\varphi:\1\cong \Phi(\1)$$
between rigid monoidal categories, then there must be a canonical isomorphism
\begin{equation}\label{eqn:tau}
\chi_X:\Phi(X^\ast)\xrightarrow{\cong} \Phi(X)^\ast,
\end{equation}
which is called the \textit{duality transformation} of $\Phi$, such that the following diagrams both commute for each $X\in \C$:
\begin{equation}\label{eqn:dualitytrans}
\xymatrix{
\Phi(X^\ast)\otimes \Phi(X) \ar[r]^{\chi_X\otimes\id} \ar[d]_{J_{X^\ast,X}}
& \Phi(X)^\ast\otimes \Phi(X) \ar[dd]^{\ev_{\Phi(X)}}  \\
\Phi(X^\ast\otimes X) \ar[d]_{\Phi(\ev_X)}  \\
\Phi(\1) \ar[r]^{\varphi^{-1}} & \;\1\;,
}\;\;\;\;\;\;\;\;
\xymatrix{
\1 \ar[r]^{\varphi} \ar[dd]_{\coev_{\Phi(X)}}
& \Phi(\1) \ar[d]^{\Phi(\coev_X)}  \\
& \Phi(X\otimes X^\ast) \ar[d]_{J_{X,X^\ast}^{-1}}  \\
\Phi(X)\otimes \Phi(X)^\ast \ar[r]^{\id\otimes\chi_X^{-1}} & \;\Phi(X)\otimes \Phi(X^\ast)\;.
}
\end{equation}
By \cite[Lemma 1.1]{NS07(a)}, $\chi$ is in fact a monoidal transformation natural in $X\in\C$, and it is unique as
$$\chi_X=(\Phi(\ev_X)\otimes\id_{\Phi(X)^\ast})\circ(\id_{\Phi(X^\ast)}\otimes\coev_{\Phi(X)}),$$
where monoidal structures $J$ and $\varphi$ are abbreviated.

\begin{notation}
For convenience in this paper, we denote by $\xi:\Phi((-)^\dast)\cong \Phi(-)^\dast$ the monoidal natural isomorphism defined as follows:
\begin{equation}\label{eqn:xi}
\xi_X:\Phi(X^\dast)\xrightarrow{\chi_{X^\ast}} \Phi(X^\ast)^\ast \xrightarrow{({\chi_X}^\ast)^{-1}} \Phi(X)^\dast,
%\;\;\;\;\;\;\;\;(\forall X\in\C)
\end{equation}
where $\chi$ is the duality transformation (\ref{eqn:tau}) of $\Phi$.
\end{notation}

It is shown in \cite{NS07(a)} that monoidal functors $\Phi$ preserve categorical traces in the following sense, which is described with terms of $\chi$ (or $\xi$).
Here we recall this result without identifying $\Phi(\1)$ with the unit object in $\D$:

\begin{lemma}(\cite[Lemma 6.1]{NS07(a)})\label{lem:monoidalfunspresevetr}
Let $\Phi:\C\rightarrow\D$ be a monoidal functor between rigid monoidal categories, with structures $J$ and $\varphi:\1\cong \Phi(\1)$. Suppose $f:X\rightarrow X^\dast$ is a morphism in $\C$. Then:
\begin{itemize}
\item[(1)]
$\Phi(\tr(f))
=\varphi\circ\tr\left(\xi_X\circ \Phi(f)\right)\circ\varphi^{-1}\in\End_\D(\Phi(\1))$;
\item[(2)]
If $\C$ and $\D$ are tensor categories over a field $\k$ and $\Phi$ is a tensor functor, then
$$\tr(f)=\tr\left(\xi_X\circ \Phi(f)\right)\in\k$$
under the usual identifications $\End_\C(\1)=\End_\D(\1)=\k$.
\end{itemize}
\end{lemma}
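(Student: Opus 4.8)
The plan is to apply $\Phi$ directly to the defining expression for the categorical trace and then to rewrite the result using the two commutative squares in (\ref{eqn:dualitytrans}) together with the naturality of the monoidal constraint $J$, so that the whole composite collapses into the $\D$-trace of $\xi_X\circ\Phi(f)$. Concretely, functoriality gives
$$\Phi(\tr(f))=\Phi(\ev_{X^\ast})\circ\Phi(f\otimes\id_{X^\ast})\circ\Phi(\coev_X),$$
and the two squares let me replace the outer two factors by their counterparts built from the units and (co)evaluations of $\D$.

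First I would use the coevaluation square of (\ref{eqn:dualitytrans}), applied to $X$, to write $\Phi(\coev_X)=J_{X,X^\ast}\circ(\id_{\Phi(X)}\otimes\chi_X^{-1})\circ\coev_{\Phi(X)}\circ\varphi^{-1}$, and the evaluation square, applied to $X^\ast$ in place of $X$, to write $\Phi(\ev_{X^\ast})=\varphi\circ\ev_{\Phi(X^\ast)}\circ(\chi_{X^\ast}\otimes\id_{\Phi(X^\ast)})\circ J_{X^\dast,X^\ast}^{-1}$. Substituting both pushes $\varphi$ and $\varphi^{-1}$ to the two ends, and the naturality of $J$ in both arguments yields $J_{X^\dast,X^\ast}^{-1}\circ\Phi(f\otimes\id_{X^\ast})\circ J_{X,X^\ast}=\Phi(f)\otimes\id_{\Phi(X^\ast)}$, so the two remaining constraints cancel. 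Writing $P=\Phi(X)$, $Q=\Phi(X^\ast)$ and $h=\chi_{X^\ast}\circ\Phi(f)\colon P\to Q^\ast$, the middle composite becomes $\ev_Q\circ(h\otimes\id_Q)\circ(\id_P\otimes\chi_X^{-1})\circ\coev_P$, which after reordering tensor factors by bifunctoriality reads $\ev_Q\circ(\id_{Q^\ast}\otimes\chi_X^{-1})\circ(h\otimes\id_{P^\ast})\circ\coev_P$.

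The decisive step is to convert the evaluation $\ev_{\Phi(X^\ast)}$ at the object $\Phi(X^\ast)$ into the evaluation $\ev_{\Phi(X)^\ast}$ at the dual object $\Phi(X)^\ast$; this is exactly what forces the transpose $(\chi_X^\ast)^{-1}$ and hence the precise isomorphism $\xi_X$ to appear. Setting $\beta=\chi_X\colon Q\cong P^\ast$, I would invoke the standard adjunction identity $\ev_A\circ(\alpha^\ast\otimes\id_A)=\ev_B\circ(\id_{B^\ast}\otimes\alpha)$ for an isomorphism $\alpha\colon A\to B$, taken with $\alpha=\beta^{-1}\colon P^\ast\to Q$, to replace $\ev_Q\circ(\id_{Q^\ast}\otimes\beta^{-1})$ by $\ev_{P^\ast}\circ((\beta^\ast)^{-1}\otimes\id_{P^\ast})$. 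Absorbing $(\beta^\ast)^{-1}=(\chi_X^\ast)^{-1}$ into $h=\chi_{X^\ast}\circ\Phi(f)$ produces precisely $\xi_X\circ\Phi(f)$, so the middle composite becomes $\ev_{\Phi(X)^\ast}\circ((\xi_X\circ\Phi(f))\otimes\id_{\Phi(X)^\ast})\circ\coev_{\Phi(X)}=\tr(\xi_X\circ\Phi(f))$, giving (1) after conjugating by $\varphi$. I expect this identification of the two evaluation morphisms to require the most care, since it is the only place where the full form of $\xi_X$ (rather than $\chi$ alone) is genuinely needed; the remaining manipulations are routine bookkeeping with $J$, $\varphi$ and the interchange law.

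Finally, part (2) is a formal consequence of (1). When $\C$ and $\D$ are tensor categories over $\k$ and $\Phi$ is a $\k$-linear tensor functor, the relation $\Phi(\id_\1)=\id_{\Phi(\1)}$ forces $\Phi$ to act as the identity on $\End_\C(\1)=\k\cdot\id_\1$, and conjugation by the isomorphism $\varphi$ acts trivially on scalar endomorphisms of $\Phi(\1)$. Hence, under the canonical identifications $\End_\C(\1)=\End_\D(\1)=\k$, the equality in (1) descends to the scalar equality $\tr(f)=\tr(\xi_X\circ\Phi(f))$ in $\k$.
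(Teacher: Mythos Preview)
Your argument is correct: the substitution of the two duality-transformation squares for $\Phi(\coev_X)$ and $\Phi(\ev_{X^\ast})$, the cancellation of the $J$'s via naturality, and the conversion of $\ev_{\Phi(X^\ast)}$ into $\ev_{\Phi(X)^\ast}$ through the identity $\ev_A\circ(\alpha^\ast\otimes\id_A)=\ev_B\circ(\id_{B^\ast}\otimes\alpha)$ all go through as you describe, producing exactly $\xi_X=(\chi_X^\ast)^{-1}\circ\chi_{X^\ast}$ in the middle and hence $\tr(\xi_X\circ\Phi(f))$; part~(2) then follows from $\k$-linearity as you say.

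The paper itself supplies no proof of this lemma---it simply cites \cite[Lemma 6.1]{NS07(a)} and moves on---so your proposal is not an alternative to the paper's argument but a full write-up of the cited result. Your approach is precisely the expected one (unfold the trace, apply the defining squares of $\chi$, and juggle the dual of $\chi_X$ to assemble $\xi_X$), and it matches the spirit of Ng--Schauenburg's original proof.
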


The notion of monoidal functors preserving the pivotal structure is introduced as follows:

%\textcolor{red}{Replace notation $E$ by $\Phi$ or $\Psi$?}

\begin{definition}(\cite[Section 1]{NS07(a)})\label{def:pivotalfun}
Let $\C$ and $\D$ be rigid monoidal categories with pivotal structures $j$ and $j'$ respectively. A monoidal functor $\Phi:\C\rightarrow\D$ is said to \textit{preserve the pivotal structure}, if the diagram
\begin{equation}\label{eqn:pivotalfun}
\xymatrix{
\Phi(X) \ar[r]^{\Phi(j_X)} \ar[d]_{j'_{\Phi(X)}} & \Phi(X^\dast) \ar[dl]^{\xi_X}  \\
\Phi(X)^\dast
}
\end{equation}
commutes for each $X\in\C$.
% where $\xi$ is the monoidal natural isomorphism defined as
%\begin{equation}\label{eqn:chi}
%\xi_X:\Phi(X^\dast)\xrightarrow{\chi_{X^\ast}} \Phi(X^\ast)^\ast \xrightarrow{({\chi_X}^\ast)^{-1}} \Phi(X)^\dast,
%%\;\;\;\;\;\;\;\;(\forall X\in\C)
%\end{equation}
%and $\chi$ is the duality transformation (\ref{eqn:tau}).
\end{definition}

On the other hand, if the rigid monoidal category $\C$ is assumed to be pivotal, then a tensor equivalence $\Phi:\C\rightarrow\D$ would provide $\D$ a pivotal structure preserved by $\Phi$. Moreover, we have the following proposition involving spherical structures:

\begin{proposition}\label{prop:equivspherical}
Let $\C$ and $\D$ be rigid monoidal categories with pivotal structures $j$ and $j'$ respectively. Suppose a faithful monoidal functor $\Phi:\C\rightarrow\D$ preserves the pivotal structure. If $j'$ is spherical, then $j$ is also spherical.
\end{proposition}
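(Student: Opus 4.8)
The plan is to transport the defining identity $\tr(j_X)=\tr(j_{X^\ast})$ of sphericity across the functor $\Phi$, using that $\Phi$ preserves both categorical traces and the pivotal structure, then convert everything into pivotal dimensions in $\D$, apply sphericity of $j'$, and finally descend the resulting equality back to $\C$ via faithfulness.

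First I would apply Lemma \ref{lem:monoidalfunspresevetr}(1) to the two morphisms $j_X\colon X\to X^\dast$ and $j_{X^\ast}\colon X^\ast\to(X^\ast)^\dast$, obtaining
\begin{align*}
\Phi(\tr(j_X)) &= \varphi\circ\tr\!\left(\xi_X\circ\Phi(j_X)\right)\circ\varphi^{-1},\\
\Phi(\tr(j_{X^\ast})) &= \varphi\circ\tr\!\left(\xi_{X^\ast}\circ\Phi(j_{X^\ast})\right)\circ\varphi^{-1}.
\end{align*}
Because $\Phi$ preserves the pivotal structure, the commuting triangle (\ref{eqn:pivotalfun}), applied to $X$ and to $X^\ast$, reads $\xi_X\circ\Phi(j_X)=j'_{\Phi(X)}$ and $\xi_{X^\ast}\circ\Phi(j_{X^\ast})=j'_{\Phi(X^\ast)}$. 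Substituting, the right-hand sides become $\varphi\circ\tr(j'_{\Phi(X)})\circ\varphi^{-1}$ and $\varphi\circ\tr(j'_{\Phi(X^\ast)})\circ\varphi^{-1}$; the conjugation by $\varphi$ is identical on both and will cancel at the end.

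Next I would note that $\tr(j'_Y)=\ptr(\id_Y)$ for every $Y\in\D$, so these categorical traces are simply pivotal dimensions in $\D$. The duality transformation (\ref{eqn:tau}) furnishes an isomorphism $\chi_X\colon\Phi(X^\ast)\cong\Phi(X)^\ast$, and Lemma \ref{lem:independadj} applied to the natural endomorphism $\theta=\id$ of $\Id_\D$ (with $n=1$) yields $\ptr(\id_{\Phi(X^\ast)})=\ptr(\id_{\Phi(X)^\ast})$, that is, $\tr(j'_{\Phi(X^\ast)})=\tr(j'_{\Phi(X)^\ast})$. Sphericity of $j'$ then gives $\tr(j'_{\Phi(X)^\ast})=\tr(j'_{\Phi(X)})$. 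Chaining these equalities produces
$$\Phi(\tr(j_{X^\ast}))=\varphi\circ\tr(j'_{\Phi(X)})\circ\varphi^{-1}=\Phi(\tr(j_X)).$$

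Finally, since $\tr(j_X),\tr(j_{X^\ast})\in\End_\C(\1)$ and $\Phi$ is faithful, the equality $\Phi(\tr(j_X))=\Phi(\tr(j_{X^\ast}))$ forces $\tr(j_X)=\tr(j_{X^\ast})$ for all $X\in\C$, which is precisely the sphericity of $j$. The faithfulness hypothesis enters only at this last descent. The step I expect to need the most care is the identification $\tr(j'_{\Phi(X^\ast)})=\tr(j'_{\Phi(X)^\ast})$: one must resist conflating $\Phi(X^\ast)$ with $\Phi(X)^\ast$ and instead justify the equality through the isomorphism invariance of the pivotal trace (Lemma \ref{lem:independadj}) mediated by $\chi_X$. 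The remainder is bookkeeping with the structure morphism $\varphi$, which cancels throughout.
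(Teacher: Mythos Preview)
Your proof is correct and follows essentially the same route as the paper's: both transport $\tr(j_X)$ and $\tr(j_{X^\ast})$ through $\Phi$ via Lemma~\ref{lem:monoidalfunspresevetr}(1), use the pivotal-preserving triangle (\ref{eqn:pivotalfun}) to rewrite in terms of $j'$, reduce to $\tr(j'_{\Phi(X^\ast)})=\tr(j'_{\Phi(X)^\ast})$, invoke sphericity of $j'$, and descend by faithfulness. The only cosmetic difference is that for the step $\tr(j'_{\Phi(X^\ast)})=\tr(j'_{\Phi(X)^\ast})$ you cite Lemma~\ref{lem:independadj} (with $\theta=\id$), whereas the paper unpacks that same argument inline using the naturality square for $j'$ along $\chi_X$ together with the cyclic property of trace \cite[Proposition 4.7.3(4)]{EGNO15}; these are the same computation packaged differently.
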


\begin{proof}
Suppose $X\in\C$. In order to prove
\begin{equation}\label{eqn:Cspherical}
\tr(j_X)=\tr(j_{X^\ast})\in\End_\C(\1),%\;\;\;\;\;\;\;\;(\forall X\in\C),
\end{equation}
let us compare their images in $\End_\D(\Phi(\1))$ under the functor $\Phi$ which is faithful.
%Note that we might assume that $E(\1)=\1\in\D$ without the loss of generality (see \cite[Section 1]{NS07(a)} for details).

In fact according to Lemma \ref{lem:monoidalfunspresevetr}(1), one could find that
\begin{equation}\label{eqn:E(tr(jX))}
\Phi(\tr(j_X))=\varphi\circ\tr\left(\xi_X\circ \Phi(j_X)\right)\circ\varphi^{-1}
=\varphi\circ\tr\left(j'_{\Phi(X)}\right)\circ\varphi^{-1}\in\End_\D(\Phi(\1)),
\end{equation}
where $\xi_X$ is defined in (\ref{eqn:xi}) for $\Phi$, and the latter equality follows from
\begin{equation}\label{eqn:xiE(jX)}
\xi_X\circ \Phi(j_X)=j'_{\Phi(X)}\in\Hom_\D(\Phi(X),\Phi(X)^\dast)
\end{equation}
as $\Phi$ preserves the pivotal structure with the commutativity of (\ref{eqn:pivotalfun}).

Similarly, another equation is obtained as
\begin{equation}\label{eqn:E(tr(jX*))}
\Phi(\tr(j_{X^\ast}))
=\varphi\circ\tr\left(\xi_{X^\ast}\circ \Phi(j_{X^\ast})\right)\circ\varphi^{-1}
=\varphi\circ\tr\left(j'_{\Phi(X^\ast)}\right)\circ\varphi^{-1}\in\End_\D(\Phi(\1)).
\end{equation}
Furthermore, the fact that $j':\Id_\D\cong(-)^\dast$ is a natural isomorphism provides the following commutative diagram in $\D$:
$$\xymatrix{
\Phi(X^\ast) \ar[r]^{j'_{\Phi(X^\ast)}} \ar[d]_{\chi_X}
& \Phi(X^\ast)^\dast \ar[d]^{\chi_X^\dast}  \\
\Phi(X)^\ast \ar[r]_{j'_{\Phi(X)^\ast}}  &  \Phi(X)^{\ast\ast\ast},
}$$
where $\chi$ is the duality transformation of $\Phi$. This is equivalent to
$j'_{\Phi(X^\ast)}=(\chi_X^{-1})^\dast\circ j'_{\Phi(X)^\ast}\circ\chi_X$, and consequently
\begin{eqnarray}\label{eqn:E(tr(jX*))2}
\Phi(\tr(j_{X^\ast}))
&\overset{(\ref{eqn:E(tr(jX*))})}{=}&
\varphi\circ\tr\big(j'_{\Phi(X^\ast)}\big)\circ\varphi^{-1}
=\varphi\circ\tr\big((\chi_X^{-1})^\dast\circ j'_{\Phi(X)^\ast}\circ\chi_X\big)\circ\varphi^{-1}
\nonumber  \\
&=& \varphi\circ\tr\big(j'_{\Phi(X)^\ast}\circ\chi_X\circ\chi_X^{-1}\big)\circ\varphi^{-1}
=\varphi\circ\tr\big(j'_{\Phi(X)^\ast}\big)\circ\varphi^{-1}
\end{eqnarray}
with the usage of \cite[Proposition 4.7.3(4)]{EGNO15}.

However, it is clear that $\tr\big(j'_{\Phi(X)}\big)=\tr\big(j'_{\Phi(X)^\ast}\big)$
since $j'$ is a spherical structure of $\D$. Thus we conclude by Equations (\ref{eqn:E(tr(jX))}) and (\ref{eqn:E(tr(jX*))2}) that
$\Phi(\tr(j_X))=\Phi(\tr(j_{X^\ast}))$, which implies that $\tr(j_X)=\tr(j_{X^\ast})$ because $\Phi$ is faithful.
\end{proof}

We end this subsection by using Equation \ref{eqn:xiE(jX)} to note a direct consequence of Lemma \ref{lem:monoidalfunspresevetr} for monoidal functors preserving the pivotal structure:

\begin{corollary}\label{cor:monoidalfunspreseveptr}
Let $\C$ and $\D$ be pivotal rigid monoidal categories, and let $\Phi:\C\rightarrow\D$ be a monoidal functor with structures $J$ and $\varphi:\1\cong \Phi(\1)$, which preserves the pivotal structure. Suppose $f:X\rightarrow X^\dast$ is a morphism in $\C$. Then:
\begin{itemize}
\item[(1)]
$\Phi(\ptr(f))
=\varphi\circ\ptr\left(\Phi(f)\right)\circ\varphi^{-1}\in\End_\D(\Phi(\1))$;
\item[(2)]
If $\C$ and $\D$ are tensor categories over a field $\k$ and $\Phi$ is a tensor functor, then
\begin{equation}\label{eqn:monoidalfunspreseveptr}
\ptr(f)=\ptr\left(\Phi(f)\right)\in\k
\end{equation}
under the usual identifications $\End_\C(\1)=\End_\D(\1)=\k$.
\end{itemize}
\end{corollary}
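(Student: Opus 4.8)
The plan is to reduce both parts to Lemma \ref{lem:monoidalfunspresevetr} by unwinding the definition of the pivotal trace, exactly as the sentence preceding the statement suggests. Since $\ptr$ is applied to $f$, I take $f$ to be an endomorphism of $X$ and set $g := j_X \circ f \colon X \to X^\dast$; then $\ptr(f) = \tr(g)$ by definition, and $g$ is precisely the type of morphism to which Lemma \ref{lem:monoidalfunspresevetr} applies.

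First I would feed $g$ into Lemma \ref{lem:monoidalfunspresevetr}(1), giving $\Phi(\tr(g)) = \varphi \circ \tr(\xi_X \circ \Phi(g)) \circ \varphi^{-1}$. By functoriality $\Phi(g) = \Phi(j_X) \circ \Phi(f)$, so the inner morphism rewrites as $\xi_X \circ \Phi(g) = (\xi_X \circ \Phi(j_X)) \circ \Phi(f)$. The decisive step is to substitute the pivotal-preservation identity $\xi_X \circ \Phi(j_X) = j'_{\Phi(X)}$ from Equation (\ref{eqn:xiE(jX)}) (equivalently, the commuting triangle (\ref{eqn:pivotalfun})), which leaves $\xi_X \circ \Phi(g) = j'_{\Phi(X)} \circ \Phi(f)$. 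Taking traces and recognizing $\tr(j'_{\Phi(X)} \circ \Phi(f)) = \ptr(\Phi(f))$ as the pivotal trace in $\D$ then produces $\Phi(\ptr(f)) = \varphi \circ \ptr(\Phi(f)) \circ \varphi^{-1}$, which is (1).

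For part (2) I would run the identical computation with Lemma \ref{lem:monoidalfunspresevetr}(2) in place of part (1); under the identifications $\End_\C(\1) = \End_\D(\1) = \k$ the structure isomorphism $\varphi$ drops out, and the same substitution $\xi_X \circ \Phi(j_X) = j'_{\Phi(X)}$ gives $\ptr(f) = \tr(g) = \tr(j'_{\Phi(X)} \circ \Phi(f)) = \ptr(\Phi(f))$.

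I expect no real obstacle, since the result is purely formal once Equation (\ref{eqn:xiE(jX)}) is in hand. The only delicate bookkeeping — the suppressed monoidal structure maps $J$ and $\varphi$ hidden inside $\xi_X$ and inside the identity $\xi_X \circ \Phi(j_X) = j'_{\Phi(X)}$ — has already been discharged in Lemma \ref{lem:monoidalfunspresevetr} and in the derivation of (\ref{eqn:xiE(jX)}) within the proof of Proposition \ref{prop:equivspherical}, so the argument requires no new calculation beyond composing with $\Phi(f)$.
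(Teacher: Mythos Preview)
Your proposal is correct and matches exactly what the paper has in mind: the corollary is stated without proof, merely flagged as a direct consequence of Lemma~\ref{lem:monoidalfunspresevetr} together with Equation~(\ref{eqn:xiE(jX)}), and your argument unpacks precisely that implication. You were also right to read the hypothesis as $f\colon X\to X$ rather than $f\colon X\to X^{\ast\ast}$, since $\ptr$ is only defined for endomorphisms; the stated domain is a slip carried over from Lemma~\ref{lem:monoidalfunspresevetr}.
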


\subsection{Braided monoidal categories with pivotal (or spherical) structures}

In this subsection, suppose $\C$ is a rigid monoidal category with braiding $\sigma$, which is a family of isomorphisms
$$\sigma^X_Y:X\otimes Y\cong Y\otimes X$$
natural in $X,Y\in\C$, satisfying the equations
$$(\id_Y\otimes\sigma^X_Z)\circ(\sigma^X_Y\otimes\id_Z)
=\sigma^X_{Y\otimes Z},\;\;\;\;
(\sigma^X_Z\otimes\id_Y)\circ(\id_X\otimes\sigma^Y_Z)
=\sigma^{X\otimes Y}_Z\;\;\;\;\;\;\;\;
(\forall X,Y,Z\in\C).$$
The detailed definition and basic properties of \textit{braided monoidal categories} is given in \cite[Section 2]{JS93}.

Moreover, when $\C$ is rigid, recall in \cite[Section 2.2]{BK01} that the \textit{Drinfeld morphism} $u$ of $\C$ and its inverse are defined as follows:
\begin{eqnarray}
u_X=(\ev_X\otimes\id_{X^\ast})\circ(\id_{X^\ast}\otimes(\sigma^X_{X^\dast})^{-1})
\circ(\coev_{X^\ast}\otimes\id_X) &:& X\rightarrow X^\dast,  \\
u_X^{-1}=(\id_X\otimes\ev_{X^\ast})
\circ(\sigma^{X^\dast}_X \otimes\id_{X^\ast})
\circ(\id_{X^\dast}\otimes\coev_X)\;\;\;\; &:& X^\dast\rightarrow X,  \label{eqn:Drinfeldelementinverse}
\end{eqnarray}
which are isomorphisms natural in $X\in\C$. Now let us study how a braided monoidal functor transfers the Drinfeld morphism:

\begin{lemma}\label{lem:preserveDrinfeldmor}
Let $\C$ and $\D$ be braided rigid monoidal categories. %with pivotal structures $j$ and $j'$ respectively.
Suppose $\Phi:\C\rightarrow\D$ is a braided monoidal functor.
%preserving the pivotal structure.
Denote by $u$ and $u'$ respectively the Drinfeld morphisms of $\C$ and $\D$. Then
$$u'_{\Phi(X)}=\xi_X\circ\Phi(u_X)$$
holds for each object $X\in\C$, where $\xi$ is the monoidal natural isomorphism defined in (\ref{eqn:xi}).
\end{lemma}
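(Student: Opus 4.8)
The plan is to expand both sides using the explicit formulas for the Drinfeld morphisms and the structural data of the braided monoidal functor, and then match them term by term. Write $J$ and $\varphi$ for the monoidal structure of $\Phi$, $\chi$ for its duality transformation (\ref{eqn:tau}), and $\sigma'$ for the braiding of $\D$. The three facts I would invoke are: (i) the braided-functor axiom, which after conjugating by $J$ lets me replace $\Phi(\sigma^X_{X^\dast})$ and its inverse by the braiding $\sigma'$ of $\D$; (ii) the two commuting squares of (\ref{eqn:dualitytrans}), which express $\Phi(\ev_X)$ and $\Phi(\coev_{X^\ast})$ in terms of $\ev'$, $\coev'$ and the duality transformations $\chi_X$, $\chi_{X^\ast}$; and (iii) the definition (\ref{eqn:xi}) of $\xi_X$ as $(\chi_X^\ast)^{-1}\circ\chi_{X^\ast}$, together with the monoidal naturality of $\chi$ recorded after (\ref{eqn:dualitytrans}). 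In spirit this is the braided analogue of Lemma \ref{lem:monoidalfunspresevetr}: both statements say that $\Phi$ transports a canonical morphism built from $\ev$, $\coev$ (and here the braiding) up to the comparison data $\chi$, $\xi$.

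First I would apply $\Phi$ to the defining composite of $u_X$ and use the naturality of $J$ in the form $\Phi(f\otimes g)=J\circ(\Phi(f)\otimes\Phi(g))\circ J^{-1}$ to split each tensor factor; the internal $J\circ J^{-1}$ pairs cancel in turn, leaving the composite of $\Phi(\coev_{X^\ast})$, $\Phi\big((\sigma^X_{X^\dast})^{-1}\big)$ and $\Phi(\ev_X)$ framed by a single outer $J$, $J^{-1}$ (with a $\varphi^{-1}$, $\varphi$ absorbing the unit). I would then substitute: the braided-functor axiom converts the middle term into the braiding $\sigma'$ on the strands $\Phi(X)$ and $\Phi(X^\dast)$, modulo $J$'s, while the first square of (\ref{eqn:dualitytrans}) at $X$ and the second square at $X^\ast$ convert $\Phi(\ev_X)$ and $\Phi(\coev_{X^\ast})$ into $\ev'$ and $\coev'$ at the cost of the duality transformations $\chi_X$ and $\chi_{X^\ast}$. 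After these replacements the expression becomes the defining composite of $u'_{\Phi(X)}$, postcomposed with the surviving duality transformations.

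The final step is to verify that these surviving $\chi$'s collapse exactly to $\xi_X$: one $\chi_{X^\ast}$ carries $\Phi(X^\dast)$ to $\Phi(X^\ast)^\ast$, and $(\chi_X^\ast)^{-1}$ carries it on to $\Phi(X)^\dast$, matching the codomain of $u'_{\Phi(X)}$ and assembling into $\xi_X$ by (\ref{eqn:xi}). I expect the genuine obstacle to be precisely this bookkeeping of the instances $\chi_X$, $\chi_{X^\ast}$ and the way $(\sigma^X_{X^\dast})^{-1}$ interacts with the dual object $X^\ast$ sitting inside the braiding, since one must check that no extraneous structure morphism survives and that every $\chi$ lands on the correct strand. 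For this reason I would carry out the actual manipulation in the graphical calculus of braided rigid categories, where the cancellation of the $J$'s is automatic and the only content is sliding the $\chi$-labelled cups and caps past the braiding; translating the resulting string identity back into morphism notation then yields $u'_{\Phi(X)}=\xi_X\circ\Phi(u_X)$.
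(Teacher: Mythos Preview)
Your proposal is correct and follows essentially the same approach as the paper: decompose the Drinfeld morphism into its $\ev$, $\coev$, and braiding constituents, transport each through $\Phi$ using (\ref{eqn:dualitytrans}) and the braided-functor axiom (\ref{eqn:braidedmonoidalfunctor}), and collect the leftover duality transformations into $\xi_X$. The only cosmetic differences are that the paper carries out the computation for $u_X^{-1}$ rather than $u_X$ (which involves $\sigma^{X^{**}}_X$, $\ev_{X^\ast}$, and $\coev_X$ and is marginally more convenient to track), and presents the argument as three explicit commuting diagrams (one for $\coev$, one for the braiding, one for $\ev$) rather than via graphical calculus.
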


\begin{proof}
We denote the structures of $\Phi$ by $J:\Phi(-)\otimes\Phi(-)\cong\Phi(-\otimes-)$ and $\varphi:\1\cong \Phi(\1)$ as usual, and the definition of braided monoidal functor means that
\begin{equation}\label{eqn:braidedmonoidalfunctor}
J_{Y,X}\circ\sigma^{\Phi(X)}_{\Phi(Y)}=\Phi(\sigma^X_Y)\circ J_{X,Y}
\;\;\;\;(\forall X,Y\in\C),
\end{equation}
where the braiding on $\D$ is also denoted by $\sigma$ without confusions.

Suppose $X\in\C$. Let us explain at first the following diagram commutes in $\D$:
\begin{equation}\label{eqn:preserveDrinfeldmor1}\tiny
\xymatrix{
\Phi(X)^\dast \ar[rr]^{\id\otimes\coev_{\Phi(X)}} \ar[d]_{\xi_X^{-1}}
&& \Phi(X)^\dast\otimes\Phi(X)\otimes\Phi(X)^\ast
  \ar[d]^{\id\otimes\id\otimes\chi_X^{-1}}  \\
\Phi(X^\dast) \ar[r]^{\id\otimes\coev_{\Phi(X)}\;\;\;\;\;\;\;\;\;\;\;\;\;\;\;\;}
  \ar[ddd]_{\Phi(\id\otimes\coev_X)} \ar[dr]_{\id\otimes\varphi}
& \Phi(X^\dast)\otimes\Phi(X)\otimes\Phi(X)^\ast
  \ar[ur]^{\xi_X\otimes\id\otimes\id} \ar[dr]_{\id\otimes\id\otimes\chi_X^{-1}\;\;\;\;}
& \Phi(X)^\dast\otimes\Phi(X)\otimes\Phi(X^\ast)  \\
& \Phi(X^\dast)\otimes\Phi(\1) \ar[d]_{\id\otimes\Phi(\coev_X)}
& \Phi(X^\dast)\otimes\Phi(X)\otimes\Phi(X^\ast) \ar[dl]_{\id\otimes J_{X,X^\ast}}
  \ar[d]^{J_{X^\dast,X}\otimes\id} \ar[u]_{\xi_X\otimes\id\otimes\id}  \\
& \Phi(X^\dast)\otimes\Phi(X\otimes X^\ast) \ar[dl]_{J_{X^\dast,X\otimes X^\ast}}
& \Phi(X^\dast\otimes X)\otimes\Phi(X^\ast) \ar[dll]^{J_{X^\dast\otimes X,X^\ast}}\;.  \\
\Phi(X^\dast\otimes X\otimes X^\ast)
}
\end{equation}
The reasons for the commutativity of the cells in this diagram are stated as follows:
\begin{itemize}
\item The top and top-right quadrangle: $\otimes$ is a bifunctor on $\D$;
\item The middle pentagon: The definition (\ref{eqn:dualitytrans}) of the duality transformation $\chi$;
\item The bottom-left trapezoid: The equation $\id_{\Phi(X^\dast)}\otimes\varphi=J_{X^\dast,1}^{-1}$ due to \cite[Proposition 2.4.3]{EGNO15} for example, and the naturality of $J_{X^\dast,-}$;
\item The bottom-right quadrangle: The monoidal structure axiom for $J$.
\end{itemize}

On the other hand, we could obtain the commutativity of another diagram:
\begin{equation}\label{eqn:preserveDrinfeldmor2}\tiny
\xymatrix{
\Phi(X)\otimes\Phi(X)^\dast\otimes\Phi(X)^\ast
  \ar[d]_{\id\otimes\id\otimes\chi_X^{-1}}  \ar[drr]^{\id\otimes\ev_{\Phi(X)^\ast}} \\
\Phi(X)\otimes\Phi(X)^\dast\otimes\Phi(X^\ast)
  \ar[r]^{\id\otimes{(\chi_X)}^\ast\otimes\id}
& \Phi(X)\otimes\Phi(X^\ast)^\ast\otimes\Phi(X^\ast)
  \ar[r]_{\;\;\;\;\;\;\;\;\;\;\;\;\id\otimes\ev_{\Phi(X^\ast)}}
& \Phi(X)  \\
\Phi(X)\otimes\Phi(X^\dast)\otimes\Phi(X^\ast)
  \ar[ur]_{\;\;\id\otimes\chi_{X^\ast}\otimes\id} \ar[dr]^{J_{X^\dast,X^\ast}}
  \ar[d]_{J_{X,X^\dast}\otimes\id} \ar[u]^{\id\otimes\xi_X\otimes\id}
& \Phi(X)\otimes\Phi(\1) \ar[ur]_{\id\otimes\varphi^{-1}}  \\
\Phi(X\otimes X^\dast)\otimes\Phi(X^\ast) \ar[drr]_{J_{X\otimes X^\dast,X^\ast}}
& \Phi(X)\otimes\Phi(X^\dast\otimes\ X^\ast) \ar[u]_{\id\otimes\Phi(\ev_{X^\ast})}
  \ar[dr]^{\;\id\otimes J_{X,X^\dast\otimes X^\ast}}  \\
&& \Phi(X^\dast\otimes X\otimes X^\ast) \ar[uuu]_{\Phi(\id\otimes\ev_{X^\ast})}\;,
}
\end{equation}
where three cells commute due to similar reasons above, except:
\begin{itemize}
\item
The top quadrangle: The definition of the left dual morphism
$$({\chi_X})^\ast:=(\ev_{\Phi(X)^\ast}\otimes\id_{\Phi(X)^\ast})
\circ(\id_{\Phi(X)^\dast}\otimes\chi_X\otimes\id_{\Phi(X^\ast)^\ast})
\circ(\id_{\Phi(X)^\dast}\otimes\coev_{\Phi(X^\ast)})$$
and the equation
$$(\id_{\Phi(X^\ast)}\otimes\ev_{\Phi(X^\ast)})
\circ(\coev_{\Phi(X^\ast)}\otimes\id_{\Phi(X^\ast)})=\id_{\Phi(X^\ast)};$$
\item The left triangle: The definition (\ref{eqn:xi}) of $\xi$.
\end{itemize}

Moreover, since $\Phi$ is a braided monoidal functor with the property (\ref{eqn:braidedmonoidalfunctor}), the diagram
\begin{equation}\label{eqn:preserveDrinfeldmor3}\tiny
\xymatrix{
& \Phi(X)^\dast\otimes\Phi(X)\otimes\Phi(X)^\ast
  \ar[d]_{\id\otimes\id\otimes\chi_X^{-1}}
  \ar[r]^{\sigma^{\Phi(X)^\dast}_{\Phi(X)}\otimes\id}
& \Phi(X)\otimes\Phi(X)^\dast\otimes\Phi(X)^\ast
  \ar[d]^{\id\otimes\id\otimes\chi_X^{-1}}  \\
& \Phi(X)^\dast\otimes\Phi(X)\otimes\Phi(X^\ast)
  \ar[r]^{\sigma^{\Phi(X)^\dast}_{\Phi(X)}\otimes\id}
& \Phi(X)\otimes\Phi(X)^\dast\otimes\Phi(X^\ast)  \\
& \Phi(X^\dast)\otimes\Phi(X)\otimes\Phi(X^\ast)
  \ar[r]^{\sigma^{\Phi(X^\dast)}_{\Phi(X)}\otimes\id}
  \ar[d]_{J_{X^\dast,X}\otimes\id} \ar[u]^{\xi_X\otimes\id\otimes\id}
& \Phi(X)\otimes\Phi(X^\dast)\otimes\Phi(X^\ast)
  \ar[d]^{J_{X,X^\dast}\otimes\id} \ar[u]_{\id\otimes\xi_X\id\otimes\id}  \\
& \Phi(X^\dast\otimes X)\otimes\Phi(X^\ast) \ar[dl]_{J_{X^\dast\otimes X,X^\ast}}
  \ar[r]^{\Phi(\sigma^{X^\dast}_{X})\otimes\id}
& \Phi(X\otimes X^\dast)\otimes\Phi(X^\ast) \ar[dr]^{J_{X\otimes X^\dast,X^\ast}}  \\
\Phi(X^\dast\otimes X\otimes X^\ast) \ar[rrr]^{\Phi(\sigma^{X^\dast}_{X}\otimes\id)}
&&& \Phi(X^\dast\otimes X\otimes X^\ast)
}
\end{equation}
commutes as well.

As a conclusion of the commuting diagrams (\ref{eqn:preserveDrinfeldmor1}), (\ref{eqn:preserveDrinfeldmor2}) and (\ref{eqn:preserveDrinfeldmor3}), we find the equation
\begin{eqnarray*}
&& \big(\id_{\Phi(X)}\otimes\ev_{\Phi(X)^\ast}\big)
\circ\big(\sigma^{\Phi(X)^\dast}_{\Phi(X)}\otimes\id_{\Phi(X)^\ast}\big)
\circ\big(\id_{\Phi(X)^\dast}\otimes\coev_{\Phi(X)}\big)  \\
&=&
\Phi(\id_{X^\dast}\otimes\ev_{X^\ast})\circ\Phi(\sigma^{X^\dast}_{X}\otimes\id_{X^\ast})
  \circ\Phi(\id_{X^\dast}\otimes\coev_{X})\circ\xi_X^{-1}  \\
&=&
\Phi\Big((\id_{X^\dast}\otimes\ev_{X^\ast})\circ(\sigma^{X^\dast}_{X}\otimes\id_{X^\ast})
  \circ(\id_{X^\dast}\otimes\coev_{X})\Big)\circ\xi_X^{-1},
\end{eqnarray*}
which is equivalent to $(u'_{\Phi(X)})^{-1}=\Phi(u_X^{-1})\circ\xi_X^{-1}$ according to (\ref{eqn:Drinfeldelementinverse}) in the definition of the Drinfeld morphism. Thus our desired equation follows.
\end{proof}

In addition, suppose further that the braided rigid monoidal category $\C$ has a spherical structure $j$. Then the \textit{ribbon structure} $\theta$ on $\C$ is the natural automorphism defined as follows:
\begin{equation*}%\label{eqn:Z(C)ribbon}
\theta_X=u^{-1}_X\circ j_X:
X\cong X
\;\;\;\;(\forall X\in\C).
\end{equation*}
%and it satisfies
%\begin{equation}%\label{eqn:Z(C)ribbonproperty}
%(\theta_X)^\ast=\theta_{X^\ast}\;\;\;\;(\forall X\in\C)
%\end{equation}
%according to \cite[Proposition 8.10.12]{EGNO15}.

It is stated in \cite[Section 2]{NS07(b)} that a braided monoidal functor preserving the pivotal structure would also preserve the ribbon structure. We recall this fact as a consequence of Lemma \ref{lem:preserveDrinfeldmor}:

\begin{corollary}(cf. \cite[Section 2]{NS07(b)})\label{cor:preserveribbon}
Let $\C$ and $\D$ be braided monoidal categories, with spherical structures $j$ and $j'$ respectively. Suppose $\Phi:\C\rightarrow\D$ is a braided monoidal functor preserving the pivotal structure. Denote by $\theta$ and $\theta'$ respectively the ribbon structures of $\C$ and $\D$. Then
$$\theta'_{\Phi(X)}=\Phi(\theta_X)$$
holds for each object $X\in\C$.
\end{corollary}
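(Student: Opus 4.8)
The plan is to unwind the definition of the ribbon structure and substitute the two transfer relations that have already been established, one for the Drinfeld morphism and one for the pivotal structure. By definition $\theta_X=u_X^{-1}\circ j_X$ and $\theta'_{\Phi(X)}=(u'_{\Phi(X)})^{-1}\circ j'_{\Phi(X)}$, so it suffices to rewrite each of the composites $(u'_{\Phi(X)})^{-1}$ and $j'_{\Phi(X)}$ in terms of $\Phi$ applied to the corresponding morphism in $\C$, and then to observe that the intervening copies of the natural isomorphism $\xi_X$ cancel against each other.

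First I would invoke Lemma \ref{lem:preserveDrinfeldmor}, which gives $u'_{\Phi(X)}=\xi_X\circ\Phi(u_X)$. Since $\Phi$ is a functor and $\xi_X$ is an isomorphism, inverting this yields $(u'_{\Phi(X)})^{-1}=\Phi(u_X^{-1})\circ\xi_X^{-1}$. Next I would use the hypothesis that $\Phi$ preserves the pivotal structure: the commutativity of diagram (\ref{eqn:pivotalfun}) in Definition \ref{def:pivotalfun} is precisely the identity $\xi_X\circ\Phi(j_X)=j'_{\Phi(X)}$ (already recorded as Equation (\ref{eqn:xiE(jX)})). Combining these two facts, I would compute
$$\theta'_{\Phi(X)}=(u'_{\Phi(X)})^{-1}\circ j'_{\Phi(X)}=\Phi(u_X^{-1})\circ\xi_X^{-1}\circ\xi_X\circ\Phi(j_X)=\Phi(u_X^{-1})\circ\Phi(j_X)=\Phi(u_X^{-1}\circ j_X)=\Phi(\theta_X),$$
where $\xi_X^{-1}\circ\xi_X=\id$ accounts for the middle cancellation and functoriality of $\Phi$ is used at the last step.

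The essential content here is simply chaining together the two preceding results and noticing the cancellation of $\xi_X$; there is no genuine obstacle, since the substantive categorical bookkeeping was already carried out in the (diagram-heavy) proof of Lemma \ref{lem:preserveDrinfeldmor} and in verifying that pivotal-preservation gives Equation (\ref{eqn:xiE(jX)}). The only point requiring care is to track the order of composition when inverting: the relation $(u'_{\Phi(X)})^{-1}=\Phi(u_X^{-1})\circ\xi_X^{-1}$ must place $\xi_X^{-1}$ immediately to the left of $j'_{\Phi(X)}=\xi_X\circ\Phi(j_X)$, so that the two duality transformations meet and cancel cleanly.
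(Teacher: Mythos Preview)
Your proof is correct and follows essentially the same approach as the paper: unwind the ribbon structure as $(u')^{-1}\circ j'$, replace $u'_{\Phi(X)}$ via Lemma~\ref{lem:preserveDrinfeldmor} and $j'_{\Phi(X)}$ via the pivotal-preservation identity (\ref{eqn:pivotalfun}), and cancel the resulting $\xi_X^{-1}\circ\xi_X$. The paper's computation is the same chain of equalities, just written more tersely.
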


\begin{proof}
One might find by the definition of the ribbon structures as well as Lemma \ref{lem:preserveDrinfeldmor} that
\begin{eqnarray*}
\theta'_{\Phi(X)} &=& {u'}^{-1}_{\Phi(X)}\circ j'_{\Phi(X)}
~=~ \Phi(u_X)^{-1}\circ\xi_X^{-1}\circ j'_{\Phi(X)}
\overset{(\ref{eqn:pivotalfun})}{=}
\Phi(u_X)^{-1}\circ\Phi(j_X)  \\
&=& \Phi(u_X^{-1}\circ j_X) ~=~ \Phi(\theta_X)
\end{eqnarray*}
holds for any $X\in\C$.
\end{proof}

\subsection{Module categories and dual tensor categories}

The notion of (left) module categories over monoidal categories could be found in \cite[Sections 7.1 to 7.10]{EGNO15},
and we always denote the module product bifunctor on a left $\C$-module category $\M$ by
$$\ogreaterthan:\C\times\M\rightarrow\M.$$
Also, associativity and unit constraints in $\M$ are abbreviated.

In this subsection, we recall some elementary properties of module categories over finite tensor categors, as well as the corresponding dual categories in the literature.

\begin{lemma}\label{lem:modcats}(\cite[Section 7.5]{EGNO15})
Let $\M$ be a module category over a finite tensor category $\C$.
%\begin{itemize}
%\item[(1)] (\cite[Lemma 3.4]{EO04})
%If $\M$ is %\textcolor{red}{}
%indecomposable and
%exact as a $\C$-module category, then $\M$ is finite.
%\item[(2)] (\cite[Section 7.5]{EGNO15})
If $\M$ is semisimple, then $\M$ is exact;
%\end{itemize}
\end{lemma}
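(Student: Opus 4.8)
The statement to prove is: \emph{If a module category $\M$ over a finite tensor category $\C$ is semisimple, then $\M$ is exact.}

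\textbf{Approach.} The plan is to verify the definition of exactness directly, reducing it to the well-known characterization of exact module categories in terms of projective objects. Recall that an $\M$ over a finite tensor category $\C$ is \emph{exact} precisely when for every projective object $P\in\C$ and every object $M\in\M$, the module product $P\ogreaterthan M$ is a projective object of $\M$ (this is the standard criterion, see \cite[Section 7.5]{EGNO15}). So the whole task collapses to checking this projectivity condition under the hypothesis that $\M$ is semisimple.

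\textbf{Key steps, in order.} First I would recall that in any \emph{semisimple} abelian category, every object is projective (and injective): a semisimple object is a direct sum of simples, and every short exact sequence splits, so $\Hom(N,-)$ is exact for all $N$. In particular, if $\M$ is a semisimple module category, then \emph{every} object of $\M$ is projective. Second, I would take an arbitrary projective $P\in\C$ and an arbitrary $M\in\M$ and consider the object $P\ogreaterthan M\in\M$. Since $\M$ is semisimple, $P\ogreaterthan M$ is automatically a projective object of $\M$ by the first step, with no further argument needed. Third, I would conclude that the criterion for exactness is satisfied, hence $\M$ is exact.

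\textbf{Main obstacle.} The only real subtlety is that the argument is almost vacuous once the right characterization of exactness is invoked, so the ``hard part'' is really just ensuring that I am citing the correct equivalent definition of an exact module category and that semisimplicity of $\M$ as an abelian category indeed forces all objects to be projective. There is no genuine computation; the content is entirely in matching the hypothesis (semisimple $\Rightarrow$ all objects projective) against the projectivity criterion defining exactness. I would therefore keep the proof to two or three lines, citing \cite[Section 7.5]{EGNO15} for both the definition of exactness and the fact that semisimplicity of the underlying abelian category trivializes the required projectivity of $P\ogreaterthan M$.
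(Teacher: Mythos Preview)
Your proposal is correct and is precisely the standard argument behind the citation: the paper does not give its own proof of this lemma but merely records it as a fact from \cite[Section 7.5]{EGNO15}, and your verification of the projectivity criterion via ``semisimple $\Rightarrow$ every object is projective'' is exactly how that reference establishes it.
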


\begin{remark}
Note in \cite[Section 3.1]{EO04} that an exact module category $\M$ over a finite tensor category $\C$ is defined to have only finitely many isoclasses of simple objects, and then $\M$ is finite if it is indecomposable according to \cite[Lemma 3.4]{EO04}.
\end{remark}

Suppose the left $\C$-module category $\M$ is finite and exact.
%and consequently all additive endofunctors on $\M$ are exact functors (\cite[Proposition 3.11]{EO04}).
The \textit{dual category} of $\C$ with respect to $\M$ is defined as
$$\C_\M^\ast:=\Rex_\C(\M)^\mathrm{rev},$$
the category of $\k$-linear right exact $\C$-module endofunctors on $\M$.
We should remark that the tensor product on $\C_\M^\ast$ in this paper is chosen to be opposite to the composition of $\C$-module endofunctors, which would differ with \cite{EO04} and \cite{EGNO15}, etc.
In this paper, each object in $\C_\M^\ast$ is denoted by a pair $\mathbf{F}=(F,s)$, where $s$ is a $\C$-module structure of $\k$-linear endofunctor $F:\M\rightarrow\M$ with notation
$$s_{X,M}:X\ogreaterthan F(M)\cong F(X\ogreaterthan M)
\;\;\;\;\;\;\;\;(X\in\C,\;M\in\M).$$
where $\ogreaterthan:\C\times\M\rightarrow\M$ denotes the module product functor.

\begin{lemma}(\cite[Section 3.3]{EO04})\label{lem:dualcattensor}
Suppose that $\M$ is a finite exact module category over a finite tensor category $\C$. Then $\C_\M^\ast$ is a finite multitensor category, which would be a tensor category if $\M$ is indecomposable as a $\C$-module category.
\end{lemma}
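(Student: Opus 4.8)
The plan is to follow the standard strategy of Etingof--Ostrik by reducing the problem to bimodules over an algebra, and then isolating rigidity as the single point where exactness is essential. First I would use the fact that every finite module category over a finite tensor category is of the form $\M\approx\C_A$ for some algebra $A$ in $\C$, where $\C_A$ is the category of right $A$-modules. Under this identification the category $\Rex_\C(\M)$ of right exact $\C$-module endofunctors is identified with the category ${}_A\C_A$ of $A$-bimodules in $\C$, with composition of functors corresponding to the relative tensor product $\otimes_A$ (and the $\mathrm{rev}$ convention simply reversing the order of the two factors). This already shows that $\C_\M^\ast$ is a finite $\k$-linear abelian category with finitely many simple objects and finite-dimensional morphism spaces, inheriting these properties from $\C$, and that its unit object is the bimodule $A$ itself, corresponding to $\Id_\M$ with its canonical module structure.

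Next I would verify that the monoidal structure is biexact. Here the exactness hypothesis on $\M$ is decisive: over an exact module category every additive $\C$-module functor $\M\to\M$ is automatically exact, so the composite of two such functors is again exact, and the tensor product on $\C_\M^\ast$, being opposite composition, is therefore bilinear and exact in each argument. This is precisely one of the defining requirements for a multitensor category.

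The heart of the matter is rigidity, and I expect this to be the main obstacle. Given $\mathbf{F}=(F,s)\in\C_\M^\ast$, the functor $F$ is an exact endofunctor of a finite abelian category, hence admits both a left and a right adjoint on the underlying categories. The key claim is that each adjoint inherits a canonical $\C$-module structure from $s$ — so that it lies again in $\Rex_\C(\M)$ — and that the unit and counit of the adjunction provide evaluation and coevaluation morphisms exhibiting these adjoints as the left and right duals of $\mathbf{F}$ in $\C_\M^\ast$. The delicate steps are to transport the module structure across the adjunction (using exactness both to guarantee the adjoints exist and to ensure they remain module functors) and to check that the triangle identities translate exactly into the duality axioms of the monoidal category $\Rex_\C(\M)^{\mathrm{rev}}$; this is what upgrades the biexact monoidal category to a multitensor category.

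Finally, to separate the multitensor from the tensor case, I would compute the endomorphism algebra of the unit. We have that $\End_{\C_\M^\ast}(\1)$ is the algebra of $\C$-module natural endomorphisms of $\Id_\M$. A decomposition of $\M$ into a direct sum of two nonzero $\C$-module subcategories yields a nontrivial idempotent in this algebra, and conversely any nontrivial idempotent of $\End_{\C_\M^\ast}(\1)$ splits $\M$ as a module category. Hence $\M$ is indecomposable if and only if $\End_{\C_\M^\ast}(\1)=\k$, which is exactly the condition promoting the multitensor category $\C_\M^\ast$ to a tensor category.
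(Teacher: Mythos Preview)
The paper does not give its own proof of this lemma; it is stated with a citation to \cite[Section 3.3]{EO04} and treated as a known result. Your outline faithfully reproduces the Etingof--Ostrik argument (realize $\M$ as $\C_A$, identify $\C_\M^\ast$ with ${}_A\C_A$, invoke exactness of $\M$ to obtain biexactness of $\otimes_A$ and existence of adjoints furnishing duals, then read off $\End(\1)=\k$ from indecomposability), so there is nothing substantive to compare: your sketch is a correct expansion of the cited reference rather than an alternative route.
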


In particular, we would pay more attention to the case when $\C$ and $\M$ are both semisimple. When $\C$ is a fusion category, it is remarked in \cite[Example 3.3(iii)]{EO04} that a finite $\C$-module category $\M$ is exact if and only if it is semisimple. As for the dual category $\C_\M^\ast$:

\begin{lemma}(\cite[Theorem 2.15]{ENO05})\label{lem:dualcatfusion}
Let $\C$ be a fusion category over an algebraically closed field $\k$ of characteristic $0$. Suppose $\M$ is a finite indecomposable $\C$-module category which is semisimple (or exact). Then
%\begin{itemize}
%\item[(1)] (\cite[Section 3.3]{EO04})
%\;$\M$ is finite as a $\k$-linear abelian category;
%\item[(2)] (\cite[Theorem 2.15]{ENO05})\;
$\C_\M^\ast$ is a fusion category.
%\end{itemize}
\end{lemma}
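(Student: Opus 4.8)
The plan is to read the statement as an upgrade of Lemma \ref{lem:dualcattensor}. Since $\C$ is fusion, the finite $\C$-module category $\M$ is exact precisely because it is semisimple, and it is assumed indecomposable; hence Lemma \ref{lem:dualcattensor} already guarantees that $\C_\M^\ast$ is a finite \emph{tensor} category. In particular $\C_\M^\ast$ is rigid and $\k$-linear with finite-dimensional morphism spaces, has only finitely many isomorphism classes of simple objects, and has a simple unit object. Comparing with the definition of a fusion category, the sole remaining point is that $\C_\M^\ast$ be \emph{semisimple}, and everything below is aimed at this.

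To access semisimplicity I would pass to an algebra model. Choosing a generator $M$ of the semisimple module category $\M$ and forming its internal endomorphism algebra $A=\underline{\End}(M)$ in $\C$, Ostrik's reconstruction produces an equivalence of module categories $\M\simeq\C_A$ and a corresponding tensor equivalence $\C_\M^\ast\simeq {}_A\C_A$ onto the category of $A$-bimodules in $\C$, whose unit object is $A$. The place where semisimplicity of $\M$ enters is that it forces $A$ to be a \emph{separable} algebra, i.e. the multiplication $m\colon A\otimes A\to A$ admits a section as a morphism of $A$-bimodules; over an algebraically closed field of characteristic $0$ this is the standard correspondence between indecomposable semisimple module categories over a fusion category and separable algebras.

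The decisive step is then a Maschke-type argument inside ${}_A\C_A$. The forgetful functor $U\colon {}_A\C_A\to\C$ is exact and has the free-bimodule functor $X\mapsto A\otimes X\otimes A$ as a left adjoint, so this adjoint carries projective objects to projective objects; as $\C$ is semisimple every object of $\C$ is projective, whence every free bimodule $A\otimes X\otimes A$ is projective in ${}_A\C_A$. Separability of $A$ furnishes a bimodule section of $m$, and the resulting separability idempotent splits the counit $A\otimes N\otimes A\to N$ of the adjunction for every bimodule $N$; hence an arbitrary $N$ is a direct summand of the projective object $A\otimes N\otimes A$ and is therefore itself projective. Since in a finite abelian category the property that every object is projective is equivalent to semisimplicity, I conclude that ${}_A\C_A\simeq\C_\M^\ast$ is semisimple, and hence fusion.

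The main obstacle I anticipate is the separability input together with this splitting: one must verify that semisimplicity of the abelian category $\C_A$ genuinely yields a \emph{two-sided} section of the multiplication rather than a mere one-sided module splitting, and that the counit above is computed correctly in the bimodule category, where the relative tensor product $\otimes_A$ intervenes. A more conceptual alternative would be to argue that $\C$ and $\C_\M^\ast$ are Morita equivalent through the bimodule category $\M$ and to invoke that semisimplicity of a finite tensor category over a characteristic-$0$ field is a Morita invariant; but unwinding that invariance leads back to essentially the same separability statement, so I would still regard the splitting argument as the technical heart of the proof.
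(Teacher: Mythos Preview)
The paper does not supply its own proof of this lemma; it simply records the statement with the citation to \cite[Theorem 2.15]{ENO05}. So there is no argument in the paper to compare against, and your proposal is an independent attempt to justify the cited result.

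Your route via Ostrik's reconstruction and a Maschke-type splitting in ${}_A\C_A$ is a legitimate and by now standard strategy, but it is genuinely different from the original proof in \cite{ENO05}, which realises $\C$ as $\Rep(H)$ for a semisimple weak Hopf algebra $H$ and then identifies $\C_\M^\ast$ with $\Rep(H^\ast)$, inheriting semisimplicity from Larson--Radford-type results for weak Hopf algebras. Your approach is more intrinsic and avoids weak Hopf algebras, at the cost of needing the separability of $A$.

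The obstacle you flag at the end is the real one, and it is not merely a verification: semisimplicity of $\C_A$ gives you only a \emph{right} $A$-module section of $m\colon A\otimes A\to A$, and there is no formal trick that upgrades a one-sided splitting to a bimodule splitting. The way through is to observe that $A=\underline{\End}(M)$ is automatically a symmetric Frobenius algebra, so one has a comultiplication $\Delta\colon A\to A\otimes A$ that is already an $A$-bimodule map; the composite $m\circ\Delta$ is then multiplication by a scalar, and it is precisely the characteristic-$0$ hypothesis (via nonvanishing of the relevant categorical dimension) that makes this scalar invertible, yielding the separability idempotent. This step is itself a theorem of roughly the same depth as the statement you are proving, so your sketch is correct in outline but would need this Frobenius-to-separable input made explicit rather than subsumed under ``standard correspondence''.
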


\subsection{The centers as rigid monoidal categories and Schauenburg's equivalence}\label{subsectionOmega}

For any monoidal category $\C$, it is well-known that its (left) \textit{center} $\Z(\C)$ (e.g. \cite{JS91}) is the category consisting of objects $\mathbf{V}=(V,\sigma^V)$, where $V\in\C$ and the \textit{half-braiding} $\sigma^V$ is a family of isomorphisms
$$\sigma^V_X:V\otimes X\cong X\otimes V$$
natural in $X\in\C$, such that
$(\sigma^V_X\otimes\id_Y)\circ(\id_X\otimes\sigma^V_Y)
=\sigma^V_{X\otimes Y}$
holds for all $X,Y\in\C$.

Also, $\Z(\C)$ is a braided monoidal category with the braiding defined by
$$\sigma^V_W:V\otimes W\cong W\otimes V$$
for any $\mathbf{V}=(V,\sigma^V)$ and $\mathbf{W}=(W,\sigma^W)$ in $\Z(\C)$.
Besides, when $\C$ is rigid, its center $\Z(\C)$ would become rigid as well, and each object $\mathbf{V}=(V,\sigma^V)$ has left and right duals:
$$\mathbf{V}^\ast=\big(V^\ast,(\sigma^V_{{}^\ast(-)})^\ast\big)
\;\;\;\;\text{and}\;\;\;\;
{}^\ast\mathbf{V}=\big({}^\ast V,{}^\ast(\sigma^V_{(-)^\ast})\big).$$
Moreover, it is clear that the Drinfeld morphism $u$ of $\Z(\C)$ and its inverse are determined as follows:
\begin{eqnarray*}
u_\mathbf{V}=(\ev_V\otimes\id_{V^\ast})\circ(\id_{V^\ast}\otimes(\sigma^V_{V^\dast})^{-1})
\circ(\coev_{V^\ast}\otimes\id_V) &:& V\rightarrow V^\dast,  \\
(u_\mathbf{V})^{-1}=(\id_V\otimes\ev_{V^\ast})
\circ(\sigma^{V^\dast}_V \otimes\id_{V^\ast})
\circ(\id_{V^\dast}\otimes\coev_V) &:& V^\dast\rightarrow V,
\end{eqnarray*}
for any object $\mathbf{V}=(V,\sigma^V)\in\Z(\C)$.

Now suppose that $\C$ is a finite tensor category over a field $\k$, and $\M$ is a finite (left) $\C$-module category.
Then by \cite[Theorem 3.3]{Sch01}, there is an equivalence $\Omega:\Z(\C)\rightarrow\Z(\C_\M^\ast)$ of $\k$-linear braided monoidal categories, which is referred as \textit{Schauenburg's equivalence}.

Let us recall the details of the functor $\Omega$ according to \cite[Section 3]{Shi20}: For each object $\mathbf{V}=(V,\sigma^V)\in\Z(\C)$, we set $\Omega(\mathbf{V})=(V\ogreaterthan-,\varsigma^{V\ogreaterthan-})\in\Z(\C_\M^\ast)$, where the left $\C$-module structure of $V\ogreaterthan-$ is defined as the natural isomorphism
$$(\sigma^V_X)^{-1}\ogreaterthan\id_M: X\ogreaterthan V\ogreaterthan M
\cong V\ogreaterthan X\ogreaterthan M\;\;\;\;\;\;\;\;(X\in\C,\;M\in\M)$$
and the half-braiding $\varsigma^{V\ogreaterthan-}$ of $\Omega(\mathbf{V})$ is given as follows: For each $\mathbf{F}=(F,s)\in\C_\M^\ast$, define
\begin{equation}\label{eqn:Zequiv-halfbraid}
(\varsigma^{V\ogreaterthan-}_F)_M=
s_{V,M}^{-1}:(\mathbf{F}\circ\Omega(\mathbf{V}))(M)=F(V\ogreaterthan M)
  \xrightarrow{\cong} V\ogreaterthan F(M)=(\Omega(\mathbf{V})\circ\mathbf{F})(M)
\end{equation}
for $M\in\M$, and hence we might write $\varsigma^{V\ogreaterthan-}_F=s_{V,-}^{-1}$.
Also, the monoidal structure of $\Omega$ is given by:
\begin{equation*}
\sigma^W_V\ogreaterthan\id_{(-)}:\;\Omega(\mathbf{W})\circ\Omega(\mathbf{V})
=W\ogreaterthan V\ogreaterthan(-)
\;\xrightarrow{\cong}\; V\ogreaterthan W\ogreaterthan(-)
=\Omega(\mathbf{V}\otimes\mathbf{W})
\end{equation*}
for all objects $\mathbf{V}=(V,\sigma^V)$ and $\mathbf{W}=(W,\sigma^W)$ in $\Z(\C)$.

However in this paper, we would focus on the case when the $\C$-module category $\M$ is finite, indecomposable and exact, and consequently $\C_\M^\ast$ is a finite tensor category by Lemma \ref{lem:dualcattensor}. In this case, Schauenburg's equivalence $\Omega$ might be noted again as:

\begin{lemma}(cf. \cite[Theorem 3.13]{Shi20})
Let $\C$ be a finite tensor category, and let $\M$ be a finite indecomposable left $\C$-module category with is exact. Then $\Omega:\Z(\C)\rightarrow\Z(\C_\M^\ast)$ is an equivalence of braided finite tensor categories.
\end{lemma}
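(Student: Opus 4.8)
The plan is to treat this statement as an \emph{upgrade} of Schauenburg's equivalence rather than a fresh construction: the functor $\Omega$, together with its braided monoidal structure, has already been recalled above from \cite[Theorem 3.3]{Sch01}, where it is shown to be an equivalence of $\k$-linear braided monoidal categories. Thus essentially all that remains is to recognize both the source $\Z(\C)$ and the target $\Z(\C_\M^\ast)$ as finite braided tensor categories, and then to observe that a $\k$-linear braided monoidal equivalence between two finite tensor categories is automatically an equivalence of braided finite tensor categories.

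First I would identify the two centers as finite braided tensor categories. Since $\C$ is assumed to be a finite tensor category, its center $\Z(\C)$ is again a finite braided tensor category by the structural theory of centers of finite tensor categories (e.g.\ \cite{EGNO15}); the braiding and the rigidity of $\Z(\C)$ have in fact already been recorded in this subsection, including the explicit left and right duals $\mathbf{V}^\ast$, ${}^\ast\mathbf{V}$ and the Drinfeld morphism. For the target, I would invoke Lemma \ref{lem:dualcattensor}: because $\M$ is finite, indecomposable and exact over the finite tensor category $\C$, the dual category $\C_\M^\ast$ is itself a finite tensor category. Applying the same structural theory of centers to $\C_\M^\ast$ then yields that $\Z(\C_\M^\ast)$ is a finite braided tensor category as well.

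Next I would argue that the known braided monoidal equivalence $\Omega$ automatically respects all the additional finite-tensor structure: an equivalence of abelian categories is exact, $\Omega$ is $\k$-linear and strong monoidal by construction, and any strong monoidal functor between rigid monoidal categories is automatically compatible with duals via its canonical duality transformation $\chi$ (cf.\ the discussion around \eqref{eqn:tau}). Hence $\Omega$ is a tensor functor, and being an equivalence it is an equivalence of tensor categories; compatibility with the braidings is precisely the content of Schauenburg's theorem. The only genuinely non-formal input — and thus the step I expect to be the main obstacle — is the assertion that the center of a finite tensor category is again a \emph{finite} tensor category; once this finiteness is in hand, the remaining verifications are pure bookkeeping. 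I would therefore isolate and cite this point precisely (for $\Z(\C)$ directly, and for $\Z(\C_\M^\ast)$ after Lemma \ref{lem:dualcattensor}), noting that the whole statement may equally be quoted from \cite[Theorem 3.13]{Shi20}, which is exactly why it appears here as a ``cf.''.
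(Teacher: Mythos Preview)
Your proposal is correct and matches the paper's treatment: the paper does not give a proof of this lemma at all but simply states it with the citation to \cite[Theorem 3.13]{Shi20}, and your analysis correctly unpacks why that citation suffices (Schauenburg's braided monoidal equivalence plus the finiteness of centers via Lemma~\ref{lem:dualcattensor}). You have essentially written out the justification that the paper leaves implicit in the ``cf.''.
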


For the remaining of this subsection, we describe the spherical structure and ribbon structure on $\Z(\C)$ when $\C$ is spherical.

Denote the pivotal structure on a finite tensor category $\C$ by $j$, and then it follows that the center $\Z(\C)$ has a pivotal structure (e.g. \cite[Proposition 3.9]{Mug03}) defined as:
\begin{equation}\label{eqn:Z(C)pivotal}
j_\mathbf{V}:\mathbf{V}=(V,\sigma)\overset{j_V}{\cong}
  \left(V^\dast,(\sigma_{{}^\dast(-)})^\dast\right)=\mathbf{V}^\dast
\end{equation}
%where
%$$(\sigma_{{}^\dast X})^\dast=``graph''$$
for each $\mathbf{V}\in\Z(\C)$.
Moreover, $\Z(\C)$ is spherical if and only if $\C$ is spherical.

Besides,
%when $\C$ is a spherical \textit{fusion} category,
the ribbon structure $\theta$ of a braided spherical tensor category $\Z(\C)$ is the family of automorphism
$\theta_\mathbf{V}=u^{-1}_\mathbf{V}\circ j_\mathbf{V}$
natural in $\mathbf{V}\in\Z(\C)$,
where $u$ is the Drinfeld morphism of $\Z(\C)$.
It also satisfies
\begin{equation}\label{eqn:Z(C)ribbonproperty}
(\theta_{\mathbf{V}})^\ast=\theta_{\mathbf{V}^\ast}\;\;\;\;(\forall \mathbf{V}\in\Z(\C))
\end{equation}
according to \cite[Proposition 8.10.12]{EGNO15}.

We end this section by applying Corollary \ref{cor:preserveribbon} to Schauenburg's equivalence $\Omega:\Z(\C)\approx\Z(\C_\M^\ast)$:

\begin{corollary}\label{cor:dual-ribbon}
Let $\C$ be a finite tensor category with spherical structure $j$, and let $\M$ be a finite indecomposable $\C$-module category which is exact. Denote the ribbon structure of $\Z(\C)$ by $\theta=u^{-1}j$.
%Then the ribbon structure $\Theta$ of $\Z(\C_\M^\ast)$ with respect to the spherical structure determined by $\Omega(j)$ satisfies
If $\C_\M^\ast$ is pivotal and $\Omega$ preserves the pivotal structure, then $\C_\M^\ast$ is also spherical, and
\begin{equation}\label{eqn:Omegaribbon}
\Theta_{\Omega(\mathbf{V})} = \Omega(\theta_{\mathbf{V}})
= \theta_{\mathbf{V}}\ogreaterthan\id_{(-)}
\end{equation}
holds for each $\mathbf{V}\in\Z(\C)$, where $\Theta$ is the ribbon structure of $\Z(\C_\M^\ast)$.
\end{corollary}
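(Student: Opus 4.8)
The plan is to deduce both assertions from the two structural results already established for pivotal-preserving functors: Proposition~\ref{prop:equivspherical}, which transports sphericity from the target of a faithful pivotal-preserving functor back to its source, and Corollary~\ref{cor:preserveribbon}, which says that a braided pivotal-preserving functor intertwines the two ribbon structures. The logical order is forced: the ribbon structure $\Theta$ of $\Z(\C_\M^\ast)$ is only defined once $\C_\M^\ast$ (equivalently $\Z(\C_\M^\ast)$) is known to be spherical, so I would first prove sphericity and only afterwards establish the identity (\ref{eqn:Omegaribbon}).

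For sphericity, I would start from the fact recalled in this subsection that $\C$ being spherical forces $\Z(\C)$ to be spherical, so the induced pivotal structure $j$ on $\Z(\C)$ is spherical. Since $\Omega$ is a braided monoidal equivalence, it admits a quasi-inverse $\Omega^{-1}:\Z(\C_\M^\ast)\to\Z(\C)$, which is again a faithful braided monoidal functor. Applying Proposition~\ref{prop:equivspherical} to $\Phi=\Omega^{-1}$, whose target $\Z(\C)$ carries the spherical structure $j$, yields that the pivotal structure of the source $\Z(\C_\M^\ast)$ is spherical; invoking once more the equivalence between sphericity of $\Z(\C_\M^\ast)$ and of $\C_\M^\ast$ then gives that $\C_\M^\ast$ is spherical. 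The one hypothesis of Proposition~\ref{prop:equivspherical} that is not automatic is that $\Omega^{-1}$ preserve the pivotal structure: this must be checked by transporting the defining square (\ref{eqn:pivotalfun}) for $\Omega$ across the monoidal natural isomorphism $\Omega^{-1}\circ\Omega\cong\Id_{\Z(\C)}$, using the naturality of $j$, of $j'$, and of the duality transformation $\xi$. I expect this verification---that preservation of the pivotal structure passes to the quasi-inverse of a monoidal equivalence---to be the main obstacle.

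With both $\Z(\C)$ and $\Z(\C_\M^\ast)$ now spherical, the identity (\ref{eqn:Omegaribbon}) follows quickly. Corollary~\ref{cor:preserveribbon}, applied to the braided pivotal-preserving functor $\Omega$, gives at once $\Theta_{\Omega(\mathbf{V})}=\Omega(\theta_{\mathbf{V}})$, which is the first equality. For the second equality I would unwind the explicit description of $\Omega$ on morphisms recalled in Subsection~\ref{subsectionOmega}: the twist $\theta_{\mathbf{V}}=u^{-1}_{\mathbf{V}}\circ j_{\mathbf{V}}$ is an endomorphism of $\mathbf{V}$ in $\Z(\C)$, and $\Omega$ sends any morphism $g:\mathbf{V}\to\mathbf{W}$ to the $\C$-module natural transformation $g\ogreaterthan\id_{(-)}$; hence $\Omega(\theta_{\mathbf{V}})=\theta_{\mathbf{V}}\ogreaterthan\id_{(-)}$, where on the right $\theta_{\mathbf{V}}$ denotes the underlying morphism $V\to V$ in $\C$.
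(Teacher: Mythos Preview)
Your proposal is correct and follows exactly the paper's approach, whose proof is the single line ``This is a direct consequence of Proposition~\ref{prop:equivspherical} and Corollary~\ref{cor:preserveribbon}.'' Your added care in applying Proposition~\ref{prop:equivspherical} to the quasi-inverse $\Omega^{-1}$ (together with the routine check that the quasi-inverse of a pivotal-preserving monoidal equivalence again preserves the pivotal structure) is precisely the detail the paper suppresses, and it is not a genuine obstacle.
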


\begin{proof}
This is a direct consequence of Proposition \ref{prop:equivspherical} and Corollary \ref{cor:preserveribbon}.
\end{proof}

\section{Frobenius-Schur indicators of the dual category to spherical fusion categories}\label{section3}

\subsection{Definition and compatibility}

Let $\C$ be a linear pivotal monoidal category with pivotal structure $j$ over a field. The the Frobenius-Schur indicators of objects in $\C$ is defined in \cite[Definition 3.1]{NS07(a)} as follows: Suppose $X\in\C$ and $n$ is a positive integer. Define the linear transformation
$$\begin{array}{ccrcl}
E_X^{(n)} &:&
\Hom_\C(\1,X^{\otimes n}) &\rightarrow &\Hom_\C(\1,X^{\otimes n}),  \\
&& f &\mapsto& (\ev_X\otimes\id_X^{\otimes(n-1)}\otimes j_X^{-1})
           \circ(\id_{X^\ast}\otimes f\otimes \id_{X^\dast})\circ\coev_{X^\ast},
\end{array}.$$
Then the $n$-th \textit{Frobenius-Schur indicator} (or \textit{indicator} for short) of $V$ is the scalar
\begin{equation}\label{eqn:indsdef}
\nu_n(X):=\mathrm{Tr}\left(E_X^{(n)}\right),
%\;\;\;\;(n\geq1),
\end{equation}
which denotes the trace of the linear transformation $E_X^{(n)}$ on the finite-dimensional vector space $\Hom_\C(\1,X^{\otimes n})$.

For the remaining of this subsection, we always assume that $\C$ is furthermore spherical and fusion over the complex field $\mathbb{C}$, and regard the (right) pivotal traces of endomorphisms in $\C$ as complex numbers via the identification $\End_\C(\1)=\mathbb{C}$.

Besides, denote by $\mathcal{O}(\C)$ the set of isoclasses of simple objects in $\C$.
The \textit{categorical dimension} of $\C$ is defined as
\begin{equation}\label{eqn:catdim}
\dim(\C)=\sum_{X\in\mathcal{O}(\C)} \ptr(\id_X) \ptr(\id_{X^\ast}),
\end{equation}
which is a real scalar not less than $1$ due to \cite[Theorem 2.3]{ENO05}.
It was proved in \cite[Theorem 4.1]{NS07(b)} that indicators could be computed through the ribbon structure $\theta$ of the center $\Z(\C)$:

\begin{lemma}(\cite[Theorem 4.1]{NS07(b)})\label{lem:indsorigin}
Let $\C$ be a fusion category over $\mathbb{C}$ with spherical structure $j$. Suppose $u$ is the Drinfeld morphism of $\Z(\C)$. Then for any $X\in\C$,
\begin{equation}\label{eqn:indsformula}
\nu_n(X)=\frac{1}{\dim(\C)} \ptr\left(\theta_{I(X)}^n\right)
\;\;\;\;\;\;\;\;(\forall n\geq1),
\end{equation}
where $\theta=u^{-1}j$ is the ribbon structure of $\Z(\C)$, and $I$ is a two-sided adjoint to the forgetful functor $\Z(\C)\rightarrow\C$.
\end{lemma}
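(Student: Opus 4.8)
The plan is to transport the linear-algebra trace $\nu_n(X)=\mathrm{Tr}(E_X^{(n)})$ of \eqref{eqn:indsdef} into the braided category $\Z(\C)$, where the cyclic rotation defining $E_X^{(n)}$ is implemented by the ribbon twist $\theta$. I would exploit that $I$ is a \emph{two-sided} adjoint of the strong monoidal forgetful functor $U\colon\Z(\C)\to\C$: this endows $A:=I(\1)$ with the structure of a Frobenius algebra in $\Z(\C)$, and makes the composite of the unit and counit of the adjunction equal to multiplication by the scalar $\dim(\C)$ of \eqref{eqn:catdim}. The normalisation $1/\dim(\C)$ in \eqref{eqn:indsformula} enters exactly through this scalar.

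First I would compute the right-hand side intrinsically. Writing $I(X)=\bigoplus_i m_i\,W_i$ for the decomposition into simple objects $W_i\in\mathcal O(\Z(\C))$, on each of which the twist acts by a scalar $\theta_{W_i}$, the additivity of the pivotal trace together with its independence of chosen representatives (Lemma~\ref{lem:independadj}) gives
\[
\ptr\left(\theta_{I(X)}^n\right)=\sum_i m_i\,\theta_{W_i}^n\,\ptr(\id_{W_i}).
\]
By the adjunction $\Hom_{\Z(\C)}(W_i,I(X))\cong\Hom_\C(U(W_i),X)$ the multiplicities are $m_i=\dim\Hom_\C(U(W_i),X)$, so that the target identity \eqref{eqn:indsformula} reduces to showing $\mathrm{Tr}(E_X^{(n)})=\frac{1}{\dim(\C)}\sum_i\dim\Hom_\C(U(W_i),X)\,\theta_{W_i}^n\,\ptr(\id_{W_i})$.

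The heart of the argument is to match the left-hand side with this sum. Using the counit $\varepsilon\colon UI\To\Id_\C$ together with the (co)evaluations of the adjunction, I would rewrite $\mathrm{Tr}(E_X^{(n)})$ as a single pivotal trace of an endomorphism of $I(X)$ in $\Z(\C)$: the $n$-fold cyclic rotation built into $E_X^{(n)}$, once pulled through the adjunction, is realised by composing $n$ copies of the half-braiding of $I(X)$ against itself, which by the explicit form of the Drinfeld morphism $u$ recalled in Subsection~\ref{subsectionOmega} and of $\theta=u^{-1}j$ is precisely $\theta_{I(X)}^n$. Concretely I would isolate the case $n=1$ as a string-diagram lemma---\emph{one rotation equals one twist}---and then iterate. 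Sphericality of $\C$, equivalently of $\Z(\C)$, is used twice here: to guarantee that the two-sided adjoint is compatible with duality so that the rotation closes up correctly (cf.\ \eqref{eqn:Z(C)ribbonproperty}), and to ensure that left and right pivotal traces agree so that $\ptr$ is unambiguous.

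The main obstacle is exactly this middle identification: verifying by a careful string-diagram manipulation that the alternating evaluations, coevaluations, pivotal maps $j$ and half-braidings assemble into $\theta_{I(X)}^n$ with no stray scalar, and that the only global factor produced is the $\dim(\C)$ coming from the unit--counit composite. All the remaining steps---the decomposition of $I(X)$, additivity of $\ptr$, and the adjunction computation of multiplicities---are formal. I would therefore organise the proof so that the entire analytic content is concentrated in the $n=1$ diagrammatic lemma, after which the general statement and the normalisation follow by iteration and by the Frobenius structure of $I$.
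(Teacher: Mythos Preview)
The paper does not prove this lemma at all: it is stated with the citation \cite[Theorem 4.1]{NS07(b)} and used as a black box, so there is no ``paper's own proof'' to compare against. Your proposal is therefore not a variant of the paper's argument but an attempted reconstruction of the Ng--Schauenburg proof itself.

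As a roadmap your outline is broadly faithful to how \cite{NS07(b)} proceeds: decompose $I(X)$ into simples of $\Z(\C)$, use the adjunction to identify multiplicities, and reduce everything to a diagrammatic identity expressing the cyclic rotation $E_X^{(n)}$ via the ribbon twist. However, you have correctly flagged, but not resolved, the only nontrivial step. The claim that ``one rotation equals one twist'' after transporting through the adjunction is precisely the content of \cite[Lemma 3.3 and Theorem 4.1]{NS07(b)}, and it does not follow by a routine string-diagram chase: one needs the explicit Frobenius structure on $I(\1)$ and the compatibility of the adjunction (co)units with the half-braiding, which in \cite{NS07(b)} is established through their induction functor formulas rather than abstractly. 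Your sketch also asserts that the unit--counit composite produces exactly the scalar $\dim(\C)$; this is true but requires the identification of $I(\1)$ with the canonical Frobenius algebra $\bigoplus_{Y\in\mathcal O(\C)} Y\otimes Y^\ast$ and a computation of its (co)unit, which you have not supplied. So the proposal is a correct plan with the hard lemma left as a placeholder; for the purposes of this paper, simply citing \cite[Theorem 4.1]{NS07(b)} as done is appropriate.
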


%\begin{remark}
As noted in \cite[Section 4]{NS07(b)}, the adjoint $I:\C\rightarrow\Z(\C)$ to the forgetful functor could be chosen by a result of \cite[Proposition 8.1]{Mug03} such that
$$I(X)\cong\bigoplus_{(V,\sigma^V)\in\mathcal{O}(\Z(\C))} \dim(\Hom_\C(V,X))\cdot(V,\sigma^V),$$
where $\mathcal{O}(\Z(\C))$ denotes the set of isoclasses of simple objects in the fusion category $\Z(\C)$.
%We would explain in the next paragraph that $I$ could also be replaced by an arbitrary (left or right) adjoint to the forgetful functor.
%\end{remark}
However, it is known in \cite[Proposition 8.1]{Mug03} the forgetful functor $\mathbf{U}:\Z(\C)\rightarrow\C$ has isomorphic left and right adjoints.
Thus we conclude by Lemma \ref{lem:independadj} that the functor $I$ in (\ref{eqn:indsformula}) could be chosen as an arbitrary adjoint to $\mathbf{U}$.

\begin{remark}\label{rmk:adjsiso}
In fact as a fusion category, $\C$ must be unimodular due to \cite[Corollary 6.4]{ENO04}, which means that the distinguished invertible object is the unit object $\1$. Thus according to \cite[Theorem 4.10]{Shi17(a)}, the forgetful functor $\mathbf{U}:\Z(\C)\rightarrow\C$ is Frobenius since the base field $\mathbb{C}$ is perfect, and hence we could as well find that the left and right adjoints to $\mathbf{U}$ are naturally isomorphic.
\end{remark}

%\begin{lemma}\label{lem:independadj}
%Let $\C$ and $\theta$ be as in Lemma \ref{lem:indsorigin}. Suppose $I$ and $I'$ are left or right adjoints to the forgetful functor $\Z(\C)\rightarrow\C$. Then
%$$\ptr\left(\theta_{I(V)}^n\right)=\ptr\left(\theta_{I'(V)}^n\right)$$
%holds for each $V\in\C$.
%\end{lemma}
%
%\begin{proof}
%We have mentioned that there must be a natural isomorphism $\phi$ from $I$ to $I'$ in the cases. The naturalities of the ribbon structure $\theta$ and spherical structure $j$ imply respectively that
%$$\phi_V\circ\theta_{I(V)}=\theta_{I'(V)}\circ\phi_V
%\;\;\;\;\text{and}\;\;\;\;
%j_V\circ \phi_V=\phi_V^\dast\circ j_V$$
%for each $V\in\C$. Thus for any positive integer $n$,
%\begin{eqnarray*}
%\ptr\left(\theta_{I'(V)}^n\right)
%&=& \ptr\left(\phi_V\circ\theta_{I(V)}^n\circ\phi_V^{-1}\right)
%~=~ \tr\left(j_V\circ\phi_V\circ\theta_{I(V)}^n\circ\phi_V^{-1}\right)  \\
%&=& \tr\left(\phi_V^\dast\circ j_V\circ\theta_{I(V)}^n\circ\phi_V^{-1}\right)
%~=~ \tr\left(j_V\circ\theta_{I(V)}^n\circ\phi_V^{-1}\circ\phi_V\right)  \\
%&=& \tr\left(j_V\circ\theta_{I(V)}^n\right)
%~=~ \ptr\left(\theta_{I(V)}^n\right)
%\end{eqnarray*}
%holds, where the forth equality is due to \cite[Proposition 4.7.3(4)]{EGNO15}.
%\end{proof}

In order to study the indicators of objects in the dual category $\C_\M^\ast$, we attempt to introduce an analogous formula, which is supposed to be provided as a definition. This is because we do not know whether $\C_\M^\ast$ is canonically pivotal, even though $\C$ is a spherical fusion category.

%\begin{question}
%What kind of (indecomposable) module category $\M$ over a pivotal (or spherical) fusion category $\C$ would make the dual category $\C_\M^\ast$ pivotal again?
%\end{question}
%
%At least we are not supposed to say that the answer is always positive for every $\C$ and $\M$. Otherwise, we might use the equivalence $(\C_\M^\ast)_\M^\ast\cong\C$ (\cite[Theorem 3.27]{EO04}) to believe the well-known conjecture that all fusion categories are pivotal is true.
%
%The indicators in the dual category $\C_\M^\ast$ under certain assumptions are defined as follows:

\begin{definition}\label{def:dual-inds}
Let $\C$ be a fusion category over $\mathbb{C}$ with spherical structure $j$, and let
$\M$ be a finite indecomposable left $\C$-module category which is semisimple.
Denote by $\mathbf{U}_\M:\Z(\C_\M^\ast)\rightarrow\C_\M^\ast$ the forgetful functor,
and suppose $u$ is the Drinfeld morphism of $\Z(\C)$. For any $\mathbf{F}\in\C_\M^\ast$, %\textcolor{red}{(F)}
the $n$-th Frobenius-Schur indicator (or \textit{indicator} for short) of $\mathbf{F}$ is the defined as the scalar
\begin{equation}\label{eqn:dual-inds}
\nu_n(\mathbf{F})=\frac{1}{\dim(\C)} \ptr\left(\theta_{K(\mathbf{F})}^n\right)
\in\mathbb{C}\;\;\;\;\;\;\;\;(\forall n\geq1),
\end{equation}
where $\theta=u^{-1}j$ is the ribbon structure of $\Z(\C)$, and
$K$ is an arbitrary (left or right) adjoint to $\mathbf{U}_\M\circ\Omega:\Z(\C)\rightarrow\C_\M^\ast$.
\end{definition}

Similarly, we should explain the scalar $\ptr\left(\theta_{K(\mathbf{F})}^n\right)$ is independent on the choice of the functor $K:\C_\M^\ast\rightarrow\Z(\C)$ as a left or right adjoint to $\mathbf{U}_\M\circ\Omega$:

\begin{lemma}\label{lem:independadj2}
Let $\C$, $\M$ and $\theta$ be as in Definition \ref{def:dual-inds}. Suppose $K$ and $K'$ are left or right adjoints to the functor $\mathbf{U}_\M\circ\Omega:\Z(\C)\rightarrow\C_\M^\ast$. Then
$$\ptr\big(\theta_{K(\mathbf{F})}^n\big)=\ptr\big(\theta_{K'(\mathbf{F})}^n\big)
\;\;\;\;\;\;\;\;(\forall n\geq1)$$
holds for each $\mathbf{F}\in\C_\M^\ast$.
\end{lemma}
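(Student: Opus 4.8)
The plan is to reduce everything to Lemma \ref{lem:independadj}. Since $\theta$ is a natural endomorphism of the identity functor $\Id_{\Z(\C)}$ and $\Z(\C)$ carries the pivotal structure (\ref{eqn:Z(C)pivotal}), that lemma (applied with $\C$ replaced by $\Z(\C)$) shows that $\ptr\big(\theta_{K(\mathbf{F})}^n\big)$ depends only on the isomorphism class of $K(\mathbf{F})$ in $\Z(\C)$. Hence it suffices to prove that $K(\mathbf{F})\cong K'(\mathbf{F})$ for every $\mathbf{F}\in\C_\M^\ast$, and I will in fact establish the stronger assertion that all the functors $K,K'$ allowed in the statement are naturally isomorphic as functors $\C_\M^\ast\to\Z(\C)$.

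First I would invoke uniqueness of adjoints: any two left adjoints, and any two right adjoints, to the same functor are naturally isomorphic. Thus the only genuine point is to compare a left adjoint of $\mathbf{U}_\M\circ\Omega$ with a right adjoint of it.

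Next I would reduce this comparison to a statement about $\mathbf{U}_\M$ alone. Because $\Omega\colon\Z(\C)\to\Z(\C_\M^\ast)$ is an equivalence, a chosen quasi-inverse $\Omega^{-1}$ is simultaneously a left and a right adjoint of $\Omega$. Composing adjunctions, if $I_\M$ and $R_\M$ denote respectively a left and a right adjoint of the forgetful functor $\mathbf{U}_\M\colon\Z(\C_\M^\ast)\to\C_\M^\ast$, then $\Omega^{-1}\circ I_\M$ is a left adjoint and $\Omega^{-1}\circ R_\M$ is a right adjoint of $\mathbf{U}_\M\circ\Omega$. Therefore it is enough to verify that $I_\M\cong R_\M$, i.e.\ that $\mathbf{U}_\M$ has naturally isomorphic left and right adjoints.

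This last fact I would obtain exactly as in Remark \ref{rmk:adjsiso}: by Lemma \ref{lem:dualcatfusion} the dual category $\C_\M^\ast$ is again a fusion category, hence unimodular, so its forgetful functor from the center is Frobenius over the perfect field $\mathbb{C}$, and its left and right adjoints are naturally isomorphic. Combining the three steps yields $K\cong K'$, whence $K(\mathbf{F})\cong K'(\mathbf{F})$ for each $\mathbf{F}$, and Lemma \ref{lem:independadj} gives the claimed equality of pivotal traces. The only slightly delicate point is the reduction in the last step, namely checking that the Frobenius/unimodularity argument of Remark \ref{rmk:adjsiso} transfers to $\C_\M^\ast$; but this is immediate once one knows that $\C_\M^\ast$ is fusion.
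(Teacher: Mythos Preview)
Your proposal is correct and follows essentially the same approach as the paper: both reduce the claim, via the equivalence $\Omega$, to the fact that the forgetful functor $\mathbf{U}_\M$ from the center of the fusion category $\C_\M^\ast$ has naturally isomorphic left and right adjoints (the unimodular/Frobenius argument of Remark~\ref{rmk:adjsiso}), and then conclude with Lemma~\ref{lem:independadj}. The only cosmetic difference is that the paper composes $K,K'$ with $\Omega$ to obtain adjoints of $\mathbf{U}_\M$, whereas you compose adjoints of $\mathbf{U}_\M$ with $\Omega^{-1}$ to obtain adjoints of $\mathbf{U}_\M\circ\Omega$; these are equivalent moves.
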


\begin{proof}
At first $\C_\M^\ast$ is a fusion category by \cite[Theorem 2.15]{ENO05}, as the finite $\C$-module category $\M$ is assumed to be indecomposable and semisimple.

Besides, since $\Omega:\Z(\C)\rightarrow\Z(\C_\M^\ast)$ is a tensor equivalence, $\Omega\circ K$ and $\Omega\circ K'$ are both (left or right) adjoint functors to the forgetful functor $\mathbf{U}_\M:\Z(\C_\M^\ast)\rightarrow\C_\M^\ast$, and hence they are naturally isomorphic according to \cite[Theorem 4.10]{Shi17(a)} as well. Consequently, we could know that $K(\mathbf{F})\cong K'(\mathbf{F})$ holds for each $\mathbf{F}\in\C_\M^\ast$. Our desired equality could be obtained with the help of Lemma \ref{lem:independadj} as well.
\end{proof}

For later use, we usually choose the adjoint functor $K$ in the view of Lemma \ref{lem:adjpair}(1) introduced by \cite{Shi20} as
\begin{equation}\label{eqn:radjKM}
\Z(\rho^\mathrm{ra}):\C_\M^\ast\rightarrow\Z(\C),\;\;
  \mathbf{F}=(F,s)\mapsto\left(\int_{M\in\M}\intHom(M,F(M)),\sigma^\mathbf{F}\right),
\end{equation}
where the detailed constructions would be recalled in Subsections \ref{subsection:4.1} and \ref{subsection:4.2}.
%half-braiding $\sigma^\mathbf{F}$ is defined through the commuting diagram (\ref{eqn:sigma^F}).
We also note the following fact, which is also a direct consequence of Lemma \ref{lem:independadj}:
\begin{remark}\label{rmk:isoobjshavesameinds}
For any $n\geq1$, isomorphic objects in $\C_\M^\ast$ always have the same $n$-th indicators in the sense of Definition \ref{def:dual-inds}.
\end{remark}

Now recall again from \cite[Theorem 2.15]{ENO05} that if $\M$ is a finite indecomposable and semisimple module category over a complex fusion category $\C$, then
\begin{equation}\label{eqn:dualcatdims}
\dim(\C_\M^\ast)=\dim(\C)
\end{equation}
holds.
Thus we might formulate the indicators of objects in $\C_\M^\ast$ with the ribbon structure $\Theta$ of its center $\Z(\C_\M^\ast)$, %similarly to (\ref{eqn:indsformula}),
as long as $\C_\M^\ast$ already has a pivotal %\textcolor{red}{(spherical?)}
structure such that Schauenburg's equivalence $\Omega$ preserves the pivotal structure:

%Now on the contrary, let us consider the case that $\C_\M^\ast$ already has a pivotal structure $j'$ such that $j'_\mathbf{F}=\mathbf{U}_\M(\mathfrak{j}_{(\mathbf{F},\varsigma)})$ holds for any $(\mathbf{F},\varsigma)\in\Z(\C_\M^\ast)$. Then by Lemma \ref{lem:indsorigin}, the Frobenius-Schur indicators of objects in the spherical fusion category $\C_\M^\ast$ should be computed according to (\ref{eqn:dual-inds2}).
%It is straightforward from Lemma \ref{lem:dual-inds} that

\begin{proposition}\label{prop:indscoincide}
Let $\C$ be a fusion category over $\mathbb{C}$ with spherical structure $j$, and let
$\M$ be a finite indecomposable left $\C$-module category which is semisimple.
%Suppose $u$ is the Drinfeld morphism of $\Z(\C)$.
Suppose $\C_\M^\ast$ is also pivotal %\textcolor{red}{(pivotal?)} %with structure $j'$
and $\Omega:\Z(\C)\approx\Z(\C_\M^\ast)$ preserves the pivotal structure. Then
for any $\mathbf{F}=(F,s)\in\C_\M^\ast$, its $n$-th indicator in the sense of Definition \ref{def:dual-inds} is equal to
\begin{equation}\label{eqn:indscoincide}
\nu_n(\mathbf{F})=\frac{1}{\dim(\C_\M^\ast)} \ptr\left(\Theta_{I_\M(\mathbf{F})}^n\right),
\end{equation}
where $\Theta$ is the ribbon structure of $\Z(\C_\M^\ast)$, and
$I_\M$ is an adjoint to the forgetful functor $\mathbf{U}_\M:\Z(\C_\M^\ast)\rightarrow\C_\M^\ast$.
In other words, the indicators of any $\mathbf{F}\in\C_\M^\ast$ in the senses of (\ref{eqn:indsformula}) and (\ref{eqn:dual-inds}) coincide.
\end{proposition}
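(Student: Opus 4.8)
The plan is to transport the defining expression for $\nu_n(\mathbf{F})$ from $\Z(\C)$ to $\Z(\C_\M^\ast)$ along Schauenburg's equivalence $\Omega$, exploiting that under the present hypotheses $\Omega$ is a braided tensor equivalence preserving both the pivotal and the ribbon structure. First I would record the consequences of Corollary \ref{cor:dual-ribbon}: since $\C_\M^\ast$ is pivotal and $\Omega$ preserves the pivotal structure, $\C_\M^\ast$ is automatically spherical, so the ribbon structure $\Theta$ of $\Z(\C_\M^\ast)$ and the pivotal trace in $\Z(\C_\M^\ast)$ occurring in (\ref{eqn:indscoincide}) are defined; moreover $\Theta_{\Omega(\mathbf{V})}=\Omega(\theta_{\mathbf{V}})$ for every $\mathbf{V}\in\Z(\C)$.

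Next I would align the two adjoint functors. Let $K:\C_\M^\ast\to\Z(\C)$ be the adjoint to $\mathbf{U}_\M\circ\Omega$ used in Definition \ref{def:dual-inds}. Because $\Omega:\Z(\C)\to\Z(\C_\M^\ast)$ is an equivalence, composing it with $K$ yields an adjoint of the same handedness to $\mathbf{U}_\M$, so I may take $I_\M:=\Omega\circ K$, which then satisfies $I_\M(\mathbf{F})=\Omega(K(\mathbf{F}))$. Any other adjoint to $\mathbf{U}_\M$ is naturally isomorphic to this one by \cite[Theorem 4.10]{Shi17(a)} (as already used in Lemma \ref{lem:independadj2}), and by Lemma \ref{lem:independadj} the scalar $\ptr(\Theta_{(-)}^n)$ is unchanged under replacing an object by an isomorphic one; hence it suffices to verify (\ref{eqn:indscoincide}) for this particular choice.

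The core computation is then immediate. Writing $\mathbf{V}:=K(\mathbf{F})$, the morphism $\theta_{\mathbf{V}}^n$ is an endomorphism of $\mathbf{V}$ in $\Z(\C)$, and since $\Omega$ is a tensor functor preserving the pivotal structure, Corollary \ref{cor:monoidalfunspreseveptr}(2) shows that $\Omega$ preserves its pivotal trace: $\ptr(\theta_{\mathbf{V}}^n)=\ptr(\Omega(\theta_{\mathbf{V}}^n))$. Functoriality gives $\Omega(\theta_{\mathbf{V}}^n)=\Omega(\theta_{\mathbf{V}})^n$, and Corollary \ref{cor:dual-ribbon} in the form $\Omega(\theta_{\mathbf{V}})=\Theta_{\Omega(\mathbf{V})}=\Theta_{I_\M(\mathbf{F})}$ turns this into $\ptr(\theta_{K(\mathbf{F})}^n)=\ptr(\Theta_{I_\M(\mathbf{F})}^n)$. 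Dividing by $\dim(\C)$ and invoking $\dim(\C)=\dim(\C_\M^\ast)$ from (\ref{eqn:dualcatdims}) converts the definition (\ref{eqn:dual-inds}) into the right-hand side of (\ref{eqn:indscoincide}).

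The only point requiring care is the adjoint bookkeeping in the second step: one must confirm that composing the equivalence $\Omega$ with an adjoint of $\mathbf{U}_\M\circ\Omega$ really does produce an adjoint of $\mathbf{U}_\M$ on the correct side, and that the independence statements (Lemmas \ref{lem:independadj} and \ref{lem:independadj2}) legitimately reduce the assertion, stated for an arbitrary $I_\M$, to this single compatible choice. Once this is in place, the remaining equalities are formal consequences of $\Omega$ preserving the ribbon structure and the pivotal trace.
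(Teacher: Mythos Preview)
Your proposal is correct and follows essentially the same approach as the paper's proof: both use Corollary \ref{cor:dual-ribbon} to get sphericality and $\Omega(\theta_{\mathbf{V}})=\Theta_{\Omega(\mathbf{V})}$, apply Corollary \ref{cor:monoidalfunspreseveptr}(2) to transport the pivotal trace through $\Omega$, observe that $\Omega\circ K$ is an adjoint to $\mathbf{U}_\M$ and hence isomorphic to any $I_\M$, and conclude with $\dim(\C)=\dim(\C_\M^\ast)$. The only cosmetic difference is that the paper performs the trace computation first and the adjoint comparison second, while you reverse that order.
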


\begin{proof}
%By Lemma \ref{lem:equivpreservespivot} and Corollary \ref{cor:equivspherical}, $\Z(\C_\M^\ast)$ is regarded to have the spherical structure $\mathfrak{j}$ determined by $\Omega(j)$, and $\Omega$ becomes a tensor equivalence that preserves the pivotal structure.

Since $\Omega:\Z(\C)\approx\Z(\C_\M^\ast)$ preserves the pivotal structure,
$\Z(\C_\M^\ast)$ (and hence $\C_\M^\ast$) is also spherical with structure denoted by $j'$ by Corollary \ref{cor:dual-ribbon}. Besides, we could obtain the equation
\begin{equation}\label{eqn:Omegapreservespivotal}
\xi_\mathbf{V}\circ\Omega(j_{\mathbf{V}})=j'_{\Omega(\mathbf{V})}
\;\;\;\;\;\;\;\;(\forall \mathbf{V}\in\Z(\C))
\end{equation}
according to Definition \ref{def:pivotalfun}, where
$\xi:\Omega((-)^\dast)\cong\Omega(-)^\dast$ is determined by the duality transformation of $\Omega$ as (\ref{eqn:xi}).
%Another formula we need is followed by Lemma \ref{lem:Drinfeldmor} that
%\begin{equation}\label{eqn:OmegapDrinfeldmor}
%\Omega(u_\mathbf{V})^{-1}=\mathfrak{u}_{\Omega(\mathbf{V})}^{-1}\circ\xi_\mathbf{V}
%\;\;\;\;\;\;\;\;(\forall \mathbf{V}\in\Z(\C)),
%\end{equation}
%where $u$ and $\mathfrak{u}$ are Drinfeld morphisms of $\Z(\C)$ and $\Z(\C_\M^\ast)$ respectively.

Now suppose $K:\C_\M^\ast\rightarrow\Z(\C)$ is a (two-sided) adjoint of $\mathbf{U}_\M\circ\Omega$.
With the usage of Corollary \ref{cor:monoidalfunspreseveptr}(2), we compute for any $n\ge1$ that
$$\ptr\big(\theta_{K(\mathbf{F})}^n\big)
\overset{(\ref{eqn:monoidalfunspreseveptr})}{=}
\ptr\left(\Omega(\theta_{K(\mathbf{F})}^n)\right)
=\ptr\left(\Omega(\theta_{K(\mathbf{F})})^n\right)
\overset{(\ref{eqn:Omegaribbon})}{=}
\ptr\left(\Theta_{\Omega(K(\mathbf{F}))}^n\right).$$
%\begin{eqnarray*}
%&& \Omega\left(\ptr\big(\theta_{K(\mathbf{F})}^n\big)\right)
%=
%\Omega\left(\tr\big(j_{K(\mathbf{F})}\circ\theta_{K(\mathbf{F})}^n\big)\right)  \\
%&=&
%\tr\left(\xi_{K(\mathbf{F})}\circ\Omega(j_{K(\mathbf{F})}\circ\theta_{ K(\mathbf{F})}^n)\right)
%=
%\tr\left(\xi_{K(\mathbf{F})}\circ\Omega(j_{K(\mathbf{F})})
%  \circ\Omega(\theta_{ K(\mathbf{F})})^n\right)  \\
%&\overset{(\ref{eqn:Omegapreservespivotal})}{=}&
%\tr\left(j'_{\Omega(K(\mathbf{F}))}
%  \circ\Omega(\theta_{ K(\mathbf{F})})^n\right)
%=
%\ptr\left(\Omega(\theta_{ K(\mathbf{F})})^n\right)  \\
%&\overset{(\ref{eqn:Omegaribbon})}{=}&
%\ptr\left(\Theta_{\Omega(K(\mathbf{F}))}^n\right).
%\end{eqnarray*}
However, it is clear by \cite[Theorem 1 in Section IV.8]{MacL98} that $\Omega\circ K$ is also an adjoint to the forgetful functor $\mathbf{U}_\M:\Z(\C_\M^\ast)\rightarrow\C_\M^\ast$, as $\Omega$ is a tensor equivalence. The same argument with the proof of Lemma \ref{lem:independadj2} would imply that $\Omega\circ K$ and $I_\M$ are naturally isomorphic, and hence
$$\ptr\left(\Theta_{\Omega(K(\mathbf{F}))}^n\right)
=\ptr\left(\Theta_{I_\M(\mathbf{F})}^n\right)$$
holds due to Lemma \ref{lem:independadj} as well.
As a conclusion, we find by Equation (\ref{eqn:dualcatdims}) that
$$\frac{1}{\dim(\C)}\ptr\big(\theta_{K(\mathbf{F})}^n\big)
=\frac{1}{\dim(\C_\M^\ast)}\ptr\left(\Theta_{I_\M(\mathbf{F})}^n\right)$$
as complex numbers for each $n\geq1$.
\end{proof}

\subsection{Invariance of adjoints in the dual category}

In this subsection, let us check that the indicators in the sense of Definition \ref{def:dual-inds} are invariant under the dualities for objects in $\C_\M^\ast$.

%we recall how the right adjoint to the forgetful functor $\mathbf{U}_\M:\Z(\C_\M^\ast)\rightarrow\C_\M^\ast$ is described with the notion of ends.
Before that, we recall some descriptions in \cite{Shi20} on adjoint pairs between dual categories and centers:
Assume that $\M$ is a finite left module category over a finite tensor category $\C$. Define the \textit{action functor}
$$\rho:\C\rightarrow\Rex(\M),\;\;X\mapsto X\otimes-$$
to the category $\Rex(\M)$ of linear right exact endofunctors on $\M$,
and note that $\rho$ is an exact monoidal functor. Thus it has both left adjoint $\rho^\la$ and right adjoint $\rho^\ra$. Moreover, the adjunction could be ``lifted'' to adjoint pairs between the centers:

\begin{lemma}(\cite[Section 3.6]{Shi20})\label{lem:adjpair0}
Suppose that $\M$ is exact as a $\C$-module category. Then there are are adjoint pairs
\begin{equation}\label{eqn:adjpair1}
\Big(\Z(\rho^\la):\Rex_\C(\M)\rightarrow\Z(\C),\;\;\;\;
   \Z(\rho):\Z(\C)\rightarrow\Rex_\C(\M)\Big)
\end{equation}
and
\begin{equation}\label{eqn:adjpair2}
\Big(\Z(\rho):\Z(\C)\rightarrow\Rex_\C(\M),\;\;\;\;
   \Z(\rho^\ra):\Rex_\C(\M)\rightarrow\Z(\C)\Big),
\end{equation}
where $\Z(\rho)$ is a (strong) monoidal functor, and $\Z(\rho^\ra)$ is a right adjoint to $\mathbf{U}_\M\circ\Omega$.
\end{lemma}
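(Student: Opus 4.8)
The plan is to obtain the two adjoint pairs by lifting the ordinary adjunctions $\rho^\la\dashv\rho\dashv\rho^\ra$ to the level of centralizers, and then to recognize the lifted right adjoint as the end-theoretic functor appearing in (\ref{eqn:radjKM}). Throughout I would use that an object of $\Z(\C)$ is a pair $(V,\sigma^V)$ carrying a half-braiding against \emph{all} of $\C$, whereas an object of $\Rex_\C(\M)$ is a pair $(F,s)$ whose module structure $s_{X,M}:X\ogreaterthan F(M)\cong F(X\ogreaterthan M)$ is precisely a half-braiding $s_X:\rho(X)\circ F\cong F\circ\rho(X)$ against the image of $\rho$; thus $\Rex_\C(\M)$ is the centralizer of $\rho$ inside $\Rex(\M)$, while $\Z(\C)$ is the centralizer of $\id_\C$.

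First I would record the input data, which is already available above: since $\M$ is exact, $X\ogreaterthan-$ is exact for every $X$, so $\rho\colon\C\to\Rex(\M)$ is exact and admits both adjoints $\rho^\la,\rho^\ra$, and it is strong monoidal with respect to composition because $\rho(X)\circ\rho(Y)=X\ogreaterthan Y\ogreaterthan-=\rho(X\otimes Y)$. The induced functor $\Z(\rho)\colon\Z(\C)\to\Rex_\C(\M)$ sends $(V,\sigma^V)$ to $(\rho(V),\bar\sigma^V)$ with $\bar\sigma^V_X=(\sigma^V_X)^{-1}\ogreaterthan\id$; the half-braiding axiom for $\sigma^V$ forces the module-functor axiom for $\bar\sigma^V$, and composing these structures shows that $\Z(\rho)$ is strong monoidal. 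Comparing with the description of Schauenburg's equivalence in Subsection \ref{subsectionOmega} identifies $\Z(\rho)$ with $\mathbf{U}_\M\circ\Omega$, so that the final assertion (d) will follow from the second adjoint pair.

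Next comes the technical heart: lifting the adjoints. For a module endofunctor $(F,s)\in\Rex_\C(\M)$ the underlying object of $\Z(\rho^\ra)(F,s)$ is $\rho^\ra(F)$, which by the internal-Hom adjunction and the commutation of $\Hom_\C(X,-)$ with ends is computed as $\int_{M\in\M}\intHom(M,F(M))$, since $\Hom_{\Rex(\M)}(\rho(X),F)\cong\Hom_\C(X,\int_{M\in\M}\intHom(M,F(M)))$ naturally in $X$. I would then equip this object with a half-braiding $\sigma^F$: for each $X\in\C$ the module structure $s$ together with the universal property of the end yields a canonical isomorphism $\rho^\ra(F)\otimes X\cong X\otimes\rho^\ra(F)$, giving the lift $\Z(\rho^\ra)\colon\Rex_\C(\M)\to\Z(\C)$, and dually $\Z(\rho^\la)$. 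It then remains to check that the unit and counit of $\rho\dashv\rho^\ra$ (respectively $\rho^\la\dashv\rho$) are compatible with the half-braidings, i.e. that the counit $\rho\rho^\ra(F)\to F$ is a morphism of $\C$-module endofunctors and the unit $V\to\rho^\ra\rho(V)$ is central; this makes the hom-set bijections of the ground-level adjunctions restrict to bijections of module, respectively central, morphisms, while the triangle identities are inherited verbatim. For mere \emph{existence} of the adjoints one may also argue abstractly: since $\Omega$ is a tensor equivalence and the forgetful functor $\mathbf{U}_\M$ from the center of a finite tensor category has both adjoints, so does $\Z(\rho)=\mathbf{U}_\M\circ\Omega$; the substance of the lemma is then the explicit identification of these adjoints with $\Z(\rho^\la)$ and $\Z(\rho^\ra)$.

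I expect the genuine obstacle to be the construction of the half-braiding $\sigma^F$ on the end $\int_{M\in\M}\intHom(M,F(M))$ and the verification of its half-braiding axiom. This is exactly where the module structure $s$ of $F$, the dinaturality of the end, and the interaction between $\otimes$ on $\C$ and the internal Hom of $\M$ all have to be combined; it is the content deferred to Subsections \ref{subsection:4.1} and \ref{subsection:4.2}. Once $\sigma^F$ is in hand, the compatibility of the (co)units with the half-braidings and the resulting restriction of the adjunction bijections reduce to diagram chases that are routine in principle.
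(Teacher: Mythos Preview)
Your proposal is correct and follows the same line as the paper: the paper's own proof simply cites \cite[Theorems 3.11 and 3.14, Sections 3.6 and 3.7]{Shi20} and remarks that the adjunctions arise by applying the $2$-functor $\Z$ to $\rho$ and its adjoints, which is precisely the centralizer-lifting you sketch, together with the end-theoretic identification of $\Z(\rho^\ra)$. You have supplied more detail than the paper does (it omits the constructions entirely), but the strategy is identical.
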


\begin{proof}
We omit here the detailed constructions of functors $\Z(\rho)$, $\Z(\rho^\la)$ and $\Z(\rho^\ra)$, but note that the latter adjunction desired is \cite[Theorems 3.11 and 3.14]{Shi20}, while the former one could also be obtained by applying the 2-functor $\Z$ to $\rho$ and its left adjoint $\rho^\la$. Details are founded in \cite[Sections 3.6 and 3.7]{Shi20}.
\end{proof}

Note again that in this paper, the tensor product bifunctor on the dual tensor category $\C_\M^\ast:=\Rex_\C(\M)^\mathrm{rev}$ is chosen as
$$(\mathbf{F},\mathbf{G})\mapsto\mathbf{G}\circ\mathbf{F},$$
which is opposite to the composition of $\C$-module endofunctors.
Thus
the left and right duals of every object $\mathbf{F}\in\C_\M^\ast$ would be its left and right adjoint functors, denoted by $\mathbf{F}^\la$ and $\mathbf{F}^\ra$, respectively.
Now we show that their indicators are equal as long as well-defined:

\begin{proposition}
Let $\C$ be a spherical fusion category over $\mathbb{C}$, and let
$\M$ be a finite indecomposable left $\C$-module category which is semisimple. Then for any $\mathbf{F}\in\C_\M^\ast$,
$$\nu_n(\mathbf{F}^\la)=\nu_n(\mathbf{F})=\nu_n(\mathbf{F}^\ra)
\;\;\;\;\;\;\;\;(\forall n\geq1).$$
\end{proposition}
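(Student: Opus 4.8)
The plan is to reduce everything to the corresponding statement for objects in the center $\Z(\C)$, where the relevant duality identity is already available. By Definition \ref{def:dual-inds}, for each $\mathbf{F}\in\C_\M^\ast$ we have $\nu_n(\mathbf{F})=\frac{1}{\dim(\C)}\ptr(\theta_{K(\mathbf{F})}^n)$, where $K$ may be taken to be \emph{any} left or right adjoint to $\mathbf{U}_\M\circ\Omega$ (this freedom is exactly Lemma \ref{lem:independadj2}). The key observation is that the left and right duals $\mathbf{F}^\la$ and $\mathbf{F}^\ra$ in $\C_\M^\ast$ are, by the choice of tensor product as opposite composition, precisely the left and right adjoint endofunctors of $F$. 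I expect this to interact well with the adjunction structure of Lemma \ref{lem:adjpair0}.

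First I would show that the three objects $K(\mathbf{F}^\la)$, $K(\mathbf{F})$, $K(\mathbf{F}^\ra)$ in $\Z(\C)$ are related by duality, namely that $K(\mathbf{F}^\la)\cong K(\mathbf{F})^\ast$ and $K(\mathbf{F}^\ra)\cong{}^\ast K(\mathbf{F})$ (up to the ambiguity in $K$, which is harmless by Lemma \ref{lem:independadj}). The natural route is to use that $\mathbf{U}_\M\circ\Omega$ is a tensor functor between rigid monoidal categories, so any adjoint $K$ intertwines taking adjoints on the functor side with taking duals on the center side; concretely, a left adjoint to $\mathbf{U}_\M\circ\Omega$ sends a left dual to the dual of the image (and symmetrically on the right), since a tensor functor commutes with duals up to canonical isomorphism. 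Thus, choosing $K$ appropriately on each of the three arguments and invoking Lemma \ref{lem:independadj2} to make the choices consistent, I obtain the isomorphisms $K(\mathbf{F}^\la)\cong K(\mathbf{F})^\ast$ and $K(\mathbf{F}^\ra)\cong{}^\ast K(\mathbf{F})$ in $\Z(\C)$.

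Next I would apply the ribbon identity on the center. By Equation (\ref{eqn:Z(C)ribbonproperty}), the ribbon structure satisfies $(\theta_\mathbf{V})^\ast=\theta_{\mathbf{V}^\ast}$ for all $\mathbf{V}\in\Z(\C)$, and the analogous statement holds for ${}^\ast\mathbf{V}$ since $\Z(\C)$ is spherical (so left and right traces agree). Setting $\mathbf{V}=K(\mathbf{F})$ and raising to the $n$-th power, I get $\theta_{K(\mathbf{F}^\la)}^n\cong(\theta_{K(\mathbf{F})}^n)^\ast$ and $\theta_{K(\mathbf{F}^\ra)}^n\cong{}^\ast(\theta_{K(\mathbf{F})}^n)$ under the above isomorphisms. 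Now the sphericality of $\Z(\C)$ (via Equation (\ref{eqn:sphericaltraces})) gives $\ptr(g)=\ptr(g^\ast)$ for every endomorphism $g$, and the same for the left dual, so that
\begin{equation*}
\ptr\left(\theta_{K(\mathbf{F}^\la)}^n\right)
=\ptr\left(\theta_{K(\mathbf{F})}^n\right)
=\ptr\left(\theta_{K(\mathbf{F}^\ra)}^n\right).
\end{equation*}
Dividing by $\dim(\C)$ yields the desired equalities $\nu_n(\mathbf{F}^\la)=\nu_n(\mathbf{F})=\nu_n(\mathbf{F}^\ra)$.

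The main obstacle I anticipate is making the first step — that an adjoint $K$ to the tensor functor $\mathbf{U}_\M\circ\Omega$ sends duals in $\C_\M^\ast$ to duals in $\Z(\C)$ — fully rigorous, rather than merely plausible. The subtlety is that $K$ is only an adjoint, not itself monoidal, so the commutation with duals is not immediate from $K$ alone; it comes from the general fact that for a tensor functor $\Psi$ between rigid categories, a left adjoint of $\Psi$ relates to a right adjoint via the duality functors (and conversely), together with the uniqueness of adjoints up to natural isomorphism provided by \cite[Theorem 4.10]{Shi17(a)} as used in Lemma \ref{lem:independadj2}. I would handle this by exploiting that $\mathbf{U}_\M\circ\Omega$ is a \emph{Frobenius} tensor functor in the present setting (its left and right adjoints coincide, as noted in Remark \ref{rmk:adjsiso}), so that the left/right adjoint ambiguity collapses and the compatibility with duals follows from the standard interplay between adjunctions and rigidity; the remaining verification is then routine and controlled entirely by Lemma \ref{lem:independadj}.
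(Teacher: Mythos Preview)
Your proposal is correct and follows essentially the same route as the paper. The paper makes your first step precise by invoking \cite[Lemma 3.5]{BV12}: applied to the monoidal functor $\Z(\rho)$ with its left adjoint $\Z(\rho^\la)$ and right adjoint $\Z(\rho^\ra)$ (from Lemma \ref{lem:adjpair0}), it yields ${}^\ast\Z(\rho^\la)(\mathbf{F}^\la)\cong\Z(\rho^\ra)(\mathbf{F})$, which is exactly the duality--adjunction compatibility you anticipated; the paper then checks that both $\Z(\rho^\la)$ and $\Z(\rho^\ra)$ are admissible choices of $K$ just as you planned. One minor difference: the paper first observes $\mathbf{F}^\la\cong\mathbf{F}^\ra$ in the fusion category $\C_\M^\ast$ (via \cite[Section 2.2]{Ost03}), so only one of the two equalities needs a direct argument, whereas you treat both symmetrically.
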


\begin{proof}
At first by a fact stated in \cite[Section 2.2]{Ost03}, our assumptions imply that the left dual $\mathbf{F}^\la$ and right dual $\mathbf{F}^\ra$ are isomorphic as objects in the fusion category $\C_\M^\ast$. Thus $\nu_n(\mathbf{F}^\la)=\nu_n(\mathbf{F}^\ra)$ holds for all $n\geq1$ due to Remark \ref{rmk:isoobjshavesameinds}
and it suffices to show $\nu_n(\mathbf{F}^\la)=\nu_n(\mathbf{F})$.

%Now recall in Lemma \ref{lem:adjpair0} that there are adjoint pairs \begin{equation}\label{eqn:adjpair1}
%\Big(\Z(\rho^\la):\Rex_\C(\M)\rightarrow\Z(\C),\;\;\;\;
%   \Z(\rho):\Z(\C)\rightarrow\Rex_\C(\M)\Big)
%\end{equation}
%and
%\begin{equation}\label{eqn:adjpair2}
%\Big(\Z(\rho):\Z(\C)\rightarrow\Rex_\C(\M),\;\;\;\;
%   \Z(\rho^\ra):\Rex_\C(\M)\rightarrow\Z(\C)\Big).
%\end{equation}

%Now with the same argument as the proof of \cite[Equation (3.2)]{Shi20}, we try to show that
Now since $\Z(\rho)$ is a (strong) monoidal functor, we could apply \cite[Lemma 3.5]{BV12} to the adjoint pairs $(\Z(\rho^\la),\Z(\rho))$ and $(\Z(\rho),\Z(\rho^\ra))$, and obtain an isomorphism
\begin{equation}\label{eqn:adjinv}
{}^\ast\Z(\rho^\la)(\mathbf{F}^\la)\cong\Z(\rho^\ra)(\mathbf{F})
\end{equation}
in $\Z(\C)$, where $\Z(\rho)$, $\Z(\rho^\la)$ and $\Z(\rho^\ra)$ are denoted in Lemma \ref{lem:adjpair0}.
%Indeed, the duality adjunctions follow the isomorphism
%\begin{eqnarray*}
%\Hom_{\Z(\C)}\left(\mathbf{V},{}^\ast\Z(\rho^\la)(\mathbf{F}^\la)\right)
%&\cong& \Hom_{\Z(\C)}\left(\Z(\rho^\la)(\mathbf{F}^\la),\mathbf{V}^\ast\right)  \\
%&\overset{(\ref{eqn:adjpair1})}{\cong}& \Nat_{\Rex_\C(\M)}\left(\mathbf{F}^\la,\Z(\rho)(\mathbf{V}^\ast)\right),
%\end{eqnarray*}
%natural in $\mathbf{V}\in\Z(\C)$.
%Moreover, one could verify that the functor $\Z(\rho)$ is (strong) monoidal, since $\rho$ is so. Therefore we obtain isomorphisms $\Z(\rho)(\mathbf{V}^\ast)\cong\Z(\rho)(\mathbf{V})^\la$ and
%\begin{eqnarray*}
%&& \Nat_{\Rex_\C(\M)}(\mathbf{F}^\la,\Z(\rho)(\mathbf{V}^\ast))
%~\cong ~
%\Nat_{\Rex_\C(\M)}(\mathbf{F}^\la,\Z(\rho)(\mathbf{V})^\la)  \\
%&\cong&
%\Nat_{\Rex_\C(\M)}(\Z(\rho)(\mathbf{V}),\mathbf{F})
%~\overset{(\ref{eqn:adjpair2})}{\cong}~
%\Hom_{\Z(\C)}(\mathbf{V},\Z(\rho^\ra)(\mathbf{F})),
%\end{eqnarray*}
%which are natural in $\mathbf{V}\in\Z(\C)$.
%Thus according to Yoneda Lemma, the natural isomorphism
%$\Hom_{\Z(\C)}\left(\mathbf{V},{}^\ast\Z(\rho^\la)(\mathbf{F}^\la)\right)
%\cong\Hom_{\Z(\C)}(\mathbf{V},\Z(\rho^\ra)(\mathbf{F}))$
%concluded above implies (\ref{eqn:adjinv}), which implies also that
This implies also that
\begin{equation}\label{eqn:adjinv2}
\Z(\rho^\la)(\mathbf{F}^\la)\cong\Z(\rho^\ra)(\mathbf{F})^\ast
\end{equation}

On the other hand, more adjunctions could be found by the adjoint pairs
$$\left(\Z(\rho^\la),\Z(\rho)\right),\;\;\;\;
\left(\Z(\rho),\Z(\rho^\ra)\right)\;\;\;\;
\text{and}\;\;\;\;
\left(\mathbf{U}_\M\circ\Omega,\Z(\rho^\ra)\right),$$
according to \cite[Theorem 1 in Section IV.8]{MacL98}:
Both $\Z(\rho)\circ\Omega^{-1}$ and the forgetful functor $\mathbf{U}_\M$ are both left adjoints of $\Omega\circ\Z(\rho^\ra)$, and hence they would be naturally isomorphic.
Therefore, as the left and right adjoints of the forgetful functor $\mathbf{U}_\M$ are naturally isomorphic in this case (by the argument in Remark \ref{rmk:adjsiso} due to \cite[Theorem 4.10]{Shi17(a)}), we conclude that $\Omega\circ\Z(\rho^\la)$ and $\Omega\circ\Z(\rho^\ra)$ are both adjoints to $\mathbf{U}_\M$. It follows that
$$\Z(\rho^\la)\;\;\;\;\text{and}\;\;\;\;\Z(\rho^\ra)$$
are both (left or right) adjoints to
$\mathbf{U}_\M\circ\Omega:\Z(\C)\rightarrow\C_\M^\ast$.

Finally, since the definition of ribbon structure provides Equation (\ref{eqn:Z(C)ribbonproperty}) that
$\theta_{\mathbf{V}^\ast}=(\theta_\mathbf{V})^\ast$
for any $\mathbf{V}\in\Z(\C)$, we find that
\begin{equation}\label{eqn:ptr(thetaV*)=ptr(thetaV)}
\ptr\left(\theta_{\mathbf{V}^\ast}^n\right)
=\ptr\left((\theta_\mathbf{V}^n)^\ast\right)
\overset{(\ref{eqn:sphericaltraces})}{=}
\ptr\left(\theta_\mathbf{V}^n\right)
\;\;\;\;(\forall n\geq1)
\end{equation}
always holds in the spherical braided category $\Z(\C)$.
Thus for any $n\geq1$, we could compute by Lemma \ref{lem:independadj} that
$$
\ptr\left(\theta_{\Z(\rho^\la)(\mathbf{F}^\la)}^n\right)
\overset{(\ref{eqn:adjinv2})}{=}
\ptr\left(\theta_{\Z(\rho^\ra)(\mathbf{F})^\ast}^n\right)
\overset{(\ref{eqn:ptr(thetaV*)=ptr(thetaV)})}{=}
\ptr\left(\theta_{\Z(\rho^\ra)(\mathbf{F})}^n\right).
$$
Consequently, the desired invariance property $\nu_n(\mathbf{F}^\la)=\nu_n(\mathbf{F})$ follows due to Definition \ref{def:dual-inds}, which is independent of the choices of adjoint functors $\Z(\rho^\la)$ or $\Z(\rho^\ra)$.
\end{proof}

\subsection{Invariance under tensor equivalences}

In this subsection, we aim to show the invariance of indicators in $\C_\M^\ast$ under certain tensor equivalences between $\C$ and other spherical fusion categories. Suppose that $\Phi:\C\rightarrow\D$ is a tensor functor, and $\M$ is an exact $\D$-module category with module product bifunctor $\ogreaterthan$. Recall in \cite[Section 3.5]{EO04} that $\M$ becomes naturally a $\C$-module category via $\Phi$ as follows:
\begin{itemize}
\item
The module product bifunctor is
$\C\times\M\rightarrow \M,\;\;(X,M)\mapsto \Phi(X)\ogreaterthan M$;
\item
The module associativity constraint is $J^{-1}\ogreaterthan\id$, where $J$ is the monoidal structure of $\Phi$. Specifically,
\begin{equation}\label{eqn:Edeterminemodasso}
J_{X,Y}^{-1}\ogreaterthan\id_M:
\Phi(X\otimes Y)\ogreaterthan M\cong \Phi(X)\ogreaterthan \Phi(Y)\ogreaterthan M.
\end{equation}
Here we abbreviate the module associativity of the $\D$-module category $\M$.
\end{itemize}

%Note that the morphism
%$$f\otimes\id_M:X\otimes M\rightarrow Y\otimes M$$
%is defined as $E(f)\otimes\id_M$ for $f:X\rightarrow Y$ in $\C$.

If the $\C$-module category $\M$ determined as above is also exact (in this case $(\Phi,\M)$ is called an \textit{exact pair} by \cite[Definition 3.44]{EO04}), then we have the \textit{dual tensor functor}
\begin{equation}\label{eqn:E*}
\Phi^\ast:\D_\M^\ast\rightarrow\C_\M^\ast,
\end{equation}
which sends $\mathbf{F}=(F,s)\in\D_\M^\ast$ to the $\C$-module functor $(F,s_{\Phi(-),-})\in\C_\M^\ast$.

\begin{proposition}
Let $\C$ and $\D$ be spherical fusion categories over $\mathbb{C}$, and let $\M$ be a finite indecomposable left $\D$-module category which is semisimple. Suppose that $\Phi:\C\rightarrow\D$ is a tensor equivalence preserving the pivotal structure. Then for any $\mathbf{F}=(F,s)\in\D_\M^\ast$,
$$\nu_n(\Phi^\ast(\mathbf{F}))=\nu_n(\mathbf{F})
\;\;\;\;\;\;\;\;(\forall n\geq1).$$
\end{proposition}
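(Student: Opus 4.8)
The plan is to transport the entire computation of Definition \ref{def:dual-inds} from $\C_\M^\ast$ to $\D_\M^\ast$ along the equivalence induced by $\Phi$ on the centers. Since $\Phi$ is a tensor equivalence, the $\C$-module structure it induces on $\M$ is again indecomposable and semisimple, so $\C_\M^\ast$ is a fusion category, $\nu_n(\Phi^\ast(\mathbf F))$ is defined, and $\Phi^\ast$ is a tensor equivalence. First I would invoke functoriality of the center: $\Phi$ induces a braided tensor equivalence $\Z(\Phi)\colon\Z(\C)\to\Z(\D)$, acting as $\Phi$ on underlying objects and transporting half-braidings. Because $\Phi$ preserves the pivotal structure and the pivotal structure on a center is the one inherited from the base via (\ref{eqn:Z(C)pivotal}), a short check with the duality transformation $\xi$ shows that $\Z(\Phi)$ again preserves the pivotal structure; Corollary \ref{cor:preserveribbon} then gives $\theta^\D_{\Z(\Phi)(\mathbf V)}=\Z(\Phi)(\theta^\C_{\mathbf V})$ for the ribbon structures $\theta^\C,\theta^\D$ of $\Z(\C),\Z(\D)$. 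Likewise, being a pivotal-preserving tensor equivalence, $\Phi$ carries simples to simples bijectively and preserves pivotal traces by Corollary \ref{cor:monoidalfunspreseveptr}(2), so the defining sum (\ref{eqn:catdim}) yields $\dim(\C)=\dim(\D)$.

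The heart of the argument is a compatibility square between the two adjoint functors of Definition \ref{def:dual-inds}. Writing $\Omega_\C,\Omega_\D$ for Schauenburg's equivalences of $\C$ and $\D$, I claim that
\[
\Phi^\ast\circ\mathbf U_\M\circ\Omega_\D\circ\Z(\Phi)\;\cong\;\mathbf U_\M\circ\Omega_\C
\]
as functors $\Z(\C)\to\C_\M^\ast$. Concretely, for $\mathbf V=(V,\sigma^V)\in\Z(\C)$ both sides have underlying endofunctor $\Phi(V)\ogreaterthan-$ of $\M$ (recall the induced $\C$-action is $X\ogreaterthan_\C M=\Phi(X)\ogreaterthan M$), and one checks the two $\C$-module structures agree, using that $\Omega_\D$ builds its structure from the inverse half-braiding of $\Z(\Phi)(\mathbf V)$ while $\Phi^\ast$ restricts along $\Phi$, the two being matched precisely by the module associativity (\ref{eqn:Edeterminemodasso}). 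Granting this, and since $\Z(\Phi)$ and $\Phi^\ast$ are equivalences, taking adjoints shows that an adjoint $K_\C$ to $\mathbf U_\M\circ\Omega_\C$ may be chosen as $\Z(\Phi)^{-1}\circ K_\D\circ(\Phi^\ast)^{-1}$, where $K_\D$ is the adjoint used for $\D$. By Lemma \ref{lem:independadj2} this choice is harmless, and it gives the key isomorphism
\[
K_\C(\Phi^\ast(\mathbf F))\;\cong\;\Z(\Phi)^{-1}\big(K_\D(\mathbf F)\big)\qquad\text{in }\Z(\C).
\]

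Finally I would compute the pivotal trace. Writing $\mathbf W=K_\D(\mathbf F)\in\Z(\D)$ and applying Corollary \ref{cor:monoidalfunspreseveptr}(2) to the pivotal-preserving tensor functor $\Z(\Phi)$, together with $\Z(\Phi)(\theta^\C_{\Z(\Phi)^{-1}(\mathbf W)})=\theta^\D_{\mathbf W}$ from the ribbon compatibility above, I obtain
\[
\ptr\big((\theta^\C_{\Z(\Phi)^{-1}(\mathbf W)})^n\big)
=\ptr\big(\Z(\Phi)\big((\theta^\C_{\Z(\Phi)^{-1}(\mathbf W)})^n\big)\big)
=\ptr\big((\theta^\D_{\mathbf W})^n\big).
\]
Combining this with the isomorphism of the previous paragraph, Lemma \ref{lem:independadj}, and $\dim(\C)=\dim(\D)$ yields $\nu_n(\Phi^\ast(\mathbf F))=\nu_n(\mathbf F)$ straight from Definition \ref{def:dual-inds}. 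The main obstacle is the compatibility square of the second paragraph: verifying that $\Omega_\C,\Omega_\D$, the forgetful functors and $\Phi^\ast$ interact correctly with $\Z(\Phi)$ is the one place where half-braidings and module structures must be pinned down coherently, and everything after it is formal manipulation of adjoints and traces.
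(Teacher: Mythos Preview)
Your proposal is correct and follows essentially the same route as the paper: both construct the induced braided equivalence $\Z(\Phi)$ on centers, verify the compatibility $\Phi^\ast\circ\mathbf U_\M\circ\Omega_\D\circ\Z(\Phi)\cong\mathbf U_\M\circ\Omega_\C$ (the paper's diagram (\ref{eqn:equivinvdiagram})), pass to adjoints to identify $K_\C(\Phi^\ast(\mathbf F))$ with $\Z(\Phi)^{-1}(K_\D(\mathbf F))$, and then conclude via preservation of ribbon structure and pivotal traces together with $\dim(\C)=\dim(\D)$. The paper simply fills in the explicit verification of the compatibility square that you rightly flag as the main checkpoint.
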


\begin{proof}
Evidently, %since $E$ is an equivalence of fusion categories,
recall in \cite[Example 3.3(iii)]{EO04} that the semisimple category $\M$ is also exact as a $\C$-module category. Thus the indicators of objects $\Phi^\ast(\mathbf{F})\in\C_\M^\ast$ are well-defined for all $\mathbf{F}\in\D_\M^\ast$.

At first, it is mentioned in \cite[Section 2]{NS07(b)} that $\Phi$ induces a braided tensor equivalence between centers $\Z(\C)$ and $\Z(\D)$.
Specifically, suppose $\psi:\Id_\D\cong \Phi\Phi^{-1}$ is a monoidal natural isomorphism, where $\Phi^{-1}$ denotes a quasi-inverse of $\Phi$.
%A direct computation shows that
%$$E^\ast\circ \Omega'\circ E(j)=j\otimes\id=\Omega(j)$$
%holds in $\Z(\C_\M^\ast)$ for the pivotal structure $j$ of $\C$. It follows that the tensor equivalence $$E^\ast:\Z(\D_\M^\ast)\rightarrow\Z(\C_\M^\ast)$$ preserves pivotal structure, if their pivotal structures are chosen respectively as $\Omega'\circ E(j)$ and $\Omega(j)$. Consequently, we could find the ribbon structure
%$$\Theta=\Omega(\theta)???$$
Then for each $\mathbf{V}=(V,\sigma^V)\in\Z(\C)$, we define $\Phi(\mathbf{V})=(\Phi(V),\sigma^{\Phi(V)})\in\Z(\D)$, where the half-braiding $\sigma^{\Phi(V)}$ is defined through the commuting diagram for all $X'\in\D$:
\begin{equation}\label{eqn:Z(C)equivtoZ(D)}
\xymatrix{
\Phi(V)\otimes X' \ar[rr]^{\id_{\Phi(V)}\otimes\psi_{X'}} \ar@{-->}[d]_{\sigma^{\Phi(V)}_{X'}}
&& \Phi(V)\otimes \Phi\Phi^{-1}(X') \ar[rr]^{J_{V,\Phi^{-1}(X')}}
&& \Phi(V\otimes \Phi^{-1}(X')) \ar[d]^{\Phi(\sigma^V_{\Phi^{-1}(X')})}  \\
X'\otimes \Phi(V) \ar[rr]_{\psi_{X'}\otimes\id_{\Phi(V)}}
&& \Phi\Phi^{-1}(X')\otimes \Phi(V) \ar[rr]_{J_{\Phi^{-1}(X'),V}}
&& \Phi(\Phi^{-1}(X')\otimes V) \;.
}
\end{equation}
Morphisms in $\Z(\C)$ are mapped to $\Z(\D)$ in the same way as $\Phi:\C\rightarrow\D$.
Indeed, one could find that the diagram
\begin{equation}\label{eqn:Z(C)equivtoZ(D)2}
\xymatrix{
\Phi(V)\otimes \Phi(X)  \ar[r]^{J_{V,X}} \ar[d]_{\sigma^{\Phi(V)}_{\Phi(X)}}
& \Phi(V\otimes X) \ar[d]^{\Phi(\sigma^V_X)}  \\
\Phi(X)\otimes \Phi(V) \ar[r]_{J_{X,V}}
& \Phi(X\otimes V)
}
\end{equation}
commutes for all $X\in\C$ as a consequence of the commuting diagram (\ref{eqn:Z(C)equivtoZ(D)}), and hence $\Phi:\Z(\C)\rightarrow\Z(\D)$ is an equivalence between braided tensor categories.

Now we conclude relevant fusion categories and tensor functors with the following diagram of categories and functors:
\begin{equation}\label{eqn:equivinvdiagram}
\xymatrix{
\Z(\C) \ar[r]^{\Omega} \ar[d]_{\Phi}
& \Z(\C_\M^\ast) \ar[r]^{\mathbf{U}_\M} & \C_\M^\ast  \\
\Z(\D) \ar[r]_{\Omega'}
& \Z(\D_\M^\ast)  \ar[r]_{\mathbf{U}'_\M} %\ar@{-->}[u]^{E^\ast}
& \D_\M^\ast \ar[u]_{\Phi^\ast}
},
\end{equation}
where $\Omega'$ and $\mathbf{U}'_\M$ denote respectively Schauenburg's equivalence and the forgetful functor in the same ways.
Here we try to verify that the diagram (\ref{eqn:equivinvdiagram}) is commutative.

In fact according to Subsection \ref{subsectionOmega}, the composition $\mathbf{U}_\M\circ\Omega$ sends each object
$\mathbf{V}=(V,\sigma^V)\in\Z(\C)$ to the endofunctor $\Phi(V)\ogreaterthan -$ on $\M$ with $\C$-module structure determined as the left vertical arrow of the following commuting diagram:
\begin{equation}\label{eqn:Votimes-modstru}
\xymatrix{
\Phi(X)\ogreaterthan \Phi(V)\ogreaterthan M
  \ar[rr]^{J_{X,V}\ogreaterthan\id_M} \ar@{-->}[d]_{\cong}
&& \Phi(X\otimes V)\ogreaterthan M
  \ar[d]^{\Phi((\sigma^V_X)^{-1})\ogreaterthan\id_M}  \\
\Phi(V)\ogreaterthan \Phi(X)\ogreaterthan M  \ar[rr]_{J_{V,X}\ogreaterthan\id_M}
&& \Phi(V\otimes X)\ogreaterthan M
}
\end{equation}
for all $X\in\C$ and $\M\in M$. This is because horizontal arrows are module associativity constraints (\ref{eqn:Edeterminemodasso}) of the $\C$-module category $\M$. However, according to the commuting diagram (\ref{eqn:Z(C)equivtoZ(D)2}), the left vertical arrow of (\ref{eqn:Votimes-modstru}) is $(\sigma^{\Phi(V)}_{\Phi(X)})^{-1}\otimes\id_M$. As a conclusion:
\begin{equation}\label{eqn:UOgema(V)}
\mathbf{U}_\M\circ\Omega\left(V,\sigma^V\right)
=\left(\Phi(V)\ogreaterthan -,
    (\sigma^{\Phi(V)}_{\Phi(-)})^{-1}\ogreaterthan\id_{-}\right)
\in\C_\M^\ast.
\end{equation}

On the other hand, we obtain similarly that $\mathbf{U}'_\M\circ\Omega'\circ \Phi$ sends $\mathbf{V}=(V,\sigma^V)\in\Z(\C)$ to the endofunctor $\Phi(V)\otimes-$ on $\M$ with $\D$-module structure $(\sigma^{\Phi(V)}_{-})^{-1}\otimes\id_{-}$. Afterwards, it follows from the definition of the dual tensor equivalence $\Phi^\ast$ (\ref{eqn:E*}) that
$$\Phi^\ast\left(\Phi(V)\ogreaterthan-,
    (\sigma^{\Phi(V)}_{-})^{-1}\ogreaterthan\id_{-}\right)
=\left(\Phi(V)\ogreaterthan -,
    (\sigma^{\Phi(V)}_{\Phi(-)})^{-1}\ogreaterthan\id_{-}\right)
\in\C_\M^\ast,$$
which coincides with (\ref{eqn:UOgema(V)}). Therefore, we have shown that (\ref{eqn:equivinvdiagram}) is commutative.

%$$(J_{-,V}^{-1}\otimes\id_{-})\circ(E(\sigma^V_{-})\otimes\id_{-})
%\circ(J_{V,-}\otimes\id_{-}):
%E(V)\otimes E(-)\otimes-\cong E(-)\otimes E(V)\otimes-.$$

In order to compute the indicators desired,
suppose that $K'$ is a (right) adjoint to $\mathbf{U}'_\M\circ\Omega'$.
Then we apply \cite[Theorem 1 in Section IV.8]{MacL98} to find that
$$\left(\Phi^\ast\circ\mathbf{U}'_\M\circ\Omega'\circ \Phi,
\;\;\Phi^{-1}\circ K'\circ (\Phi^\ast)^{-1}\right),$$
or identically (by the commutativity of (\ref{eqn:equivinvdiagram})),
$$\left(\mathbf{U}_\M\circ\Omega,
\;\;\Phi^{-1}\circ K'\circ (\Phi^\ast)^{-1}\right)$$
would be an adjoint pair. Denote $K:=\Phi^{-1}\circ K'\circ (\Phi^\ast)^{-1}$ for simplicity, and hence there is a natural isomorphism
\begin{equation}\label{eqn:K'_Miso}
K'\cong \Phi\circ K\circ \Phi^\ast.
\end{equation}

Moreover, it is clear that the equivalence $\Phi:\Z(\C)\rightarrow\Z(\D)$ also preserves the pivotal structure, and hence we know by \cite[Section 2]{NS07(b)} or Corollary \ref{cor:preserveribbon} that
\begin{equation}\label{eqn:Epreserveribbon}
\theta'_{\Phi(\mathbf{V})}=\Phi(\theta_\mathbf{V})
\;\;\;\;\;\;\;\;(\forall \mathbf{V}\in\Z(\C)),
\end{equation}
where $\theta$ and $\theta'$ are the ribbon structures of $\Z(\C)$ and $\Z(\D)$ respectively. Therefore, we find by Lemma \ref{lem:independadj} that
\begin{eqnarray*}
\ptr\left({\theta'}_{K'(\mathbf{F})}^n\right)
&\overset{(\ref{eqn:K'_Miso})}{=}&
\ptr\left({\theta'}_{\Phi\circ K\circ \Phi^\ast(\mathbf{F})}^n\right)
\overset{(\ref{eqn:Epreserveribbon})}{=}
\ptr\left(\Phi\left(\theta_{K\circ \Phi^\ast(\mathbf{F})}\right)^n\right) \nonumber \\
&=&
\ptr\left(\Phi\left(\theta_{K\circ \Phi^\ast(\mathbf{F})}^n\right)\right)
\overset{(\ref{eqn:monoidalfunspreseveptr})}{=}
\ptr\left(\theta_{K\circ \Phi^\ast(\mathbf{F})}^n\right)
\end{eqnarray*}
holds for each $\mathbf{F}\in\D_\M^\ast$.
%, where the last equality is followed by Corollary \ref{cor:monoidalfunspreseveptr}(2).

Finally, since $\Phi:\C\rightarrow\D$ is a tensor equivalence between fusion categories preserving the pivotal structure,
\begin{eqnarray*}
\dim(\C)
&\overset{(\ref{eqn:catdim})}{=}&
\sum_{X\in\mathcal{O}(\C)}\ptr(\id_X)\ptr(\id_{X^\ast})
\overset{(\ref{eqn:monoidalfunspreseveptr})}{=}
\sum_{X\in\mathcal{O}(\C)}\ptr(\Phi(\id_X))\ptr(\Phi(\id_{X^\ast}))  \\
&=&
\sum_{X\in\mathcal{O}(\C)}\ptr(\id_{\Phi(X)})\ptr(\id_{\Phi(X^\ast)})
=
\sum_{X'\in\mathcal{O}(\D)}\ptr(\id_{X'})\ptr(\id_{{X'}^\ast})
\overset{(\ref{eqn:catdim})}{=}
\dim(\D).
\end{eqnarray*}
As a conclusion, for each $n\geq1$,
$$\nu_n(\Phi^\ast(\mathbf{F}))
=\frac{1}{\dim(\C)}\ptr\left({\theta}_{K'(\mathbf{F})}^n\right)
=\frac{1}{\dim(\D)}\ptr\left({\theta'}_{K\circ \Phi^\ast(\mathbf{F})}^n\right)
=\nu_n(\mathbf{F}).$$
\end{proof}

\section{Invariance for certain objects under the duality of categories}\label{section4}

\subsection{Ends and the central Hopf comonad}\label{subsection:4.1}

The notions of ends play an essential role in the construction of (right) adjoint to the forgetful functor from the center of a finite tensor category. We start by recalling relevant definitions, which could be found in \cite[Chapter IX]{MacL98}.

Suppose that $\C$ and $\D$ are categories, and $S,T:\C^\vee\times\C\rightarrow\D$ are functors, where $\C^\vee$ denotes the category having same objects but reversed directions of morphisms with $\C$.
A \textit{dinatural transformation} from $S$ to $T$ is a family
$$\varphi=\{\varphi_X:S(X,X)\rightarrow T(X,X)\}_{X\in\C}$$
of morphisms in $\D$ satisfying that
$$T(\id_X,f)\circ\varphi_X\circ S(f,\id_X)=T(f,\id_Y)\circ\varphi_Y\circ S(\id_Y,f)$$
for all morphisms $f:X\rightarrow Y$ in $\C$.

An \textit{end} of $S$ is an object $E\in\D$ equipped with a dinatural transformation $\pi$ from $E$ (as a constant functor) to $S$ such that:
For every dinatural transformation $\pi'$ from a constant functor $E'$ to $S$, there is a unique morphism $g:E'\rightarrow E$ satisfying $\pi'_X=\pi_X\circ g$ for all $X\in\C$. An end of $S$ is denoted by $\int_{X\in\C}S(X,X)$, which is unique up to a unique isomorphism,
and $\pi$ is called its \textit{universal dinatural transformation}.

%The definition of a \textit{coend} $\int^{X\in\C}S(X,X)$ of $S$ is completely dual to an end. We omit the details, since the coend construction seldom in this paper. See \cite[Chapter IX]{MacL98} for the details.

Now suppose $\C$ is a tensor category.
For any $V\in\C$, consider the functor
\begin{equation}\label{eqn:comonadfunctor}
\C^\vee\times\C\rightarrow \C,\;\;(X,Y)\mapsto Y\otimes V\otimes X^\ast.
\end{equation}
We would denote the end of (\ref{eqn:comonadfunctor}) by $\mathsf{Z}(V)=\int_{X\in\C} X\otimes V\otimes X^\ast$,
which makes $\mathsf{Z}$ an endofunctor called the \textit{central Hopf comonad} on $\C$ (see \cite[Definition 3.2]{Shi17(b)}) if such ends exist for all $V\in\C$. In this case, the universal dinatural transformations would be denoted by
$$\pi^{\mathsf{Z}(V)}_X:\mathsf{Z}(V)\rightarrow X\otimes V\otimes X^\ast\;\;\;\;(X\in\C).$$
Furthermore, the comonad structure of $\mathsf{Z}$ means that there are morphisms
$$\delta_V:\mathsf{Z}(V)\rightarrow\mathsf{Z}^2(V)\;\;\;\;\text{and}\;\;\;\;
  \epsilon_V:\mathsf{Z}(V)\rightarrow V\;\;\;\;\;\;\;\;(V\in\C)$$
uniquely determined by the properties that $\epsilon_V=\pi^{\mathsf{Z}(\1)}_V$ and
$$\left(\id_X\otimes\pi^{\mathsf{Z}(V)}_Y\otimes\id_{X^\ast}\right)
\circ \pi^{\mathsf{Z}^2(V)}_X\circ \delta_V=\pi^{\mathsf{Z}(V)}_{X\otimes Y}$$
hold for all objects $V,X,Y\in\C$ (see also \cite[Section 3.2]{Shi17(b)} for details).

\begin{lemma}\label{lem:centralHopfcomonadZ(C)}
(\cite[Section 3]{Shi17(b)} or \cite[Section 3.3]{Shi19})\label{lem:Z(C)radj}
Let $\C$ be a finite tensor category. Then the forgetful functor
$\mathbf{U}:\Z(\C)\rightarrow\C$
has a right adjoint:
$$I:\C\rightarrow\Z(\C),
\;\;V\mapsto \left(\mathsf{Z}(V),\sigma^{\mathsf{Z}(V)}\right),$$
where the half-braiding $\sigma^{\mathsf{Z}(V)}$ is defined as the composition
$$\sigma^{\mathsf{Z}(V)}_X:\mathsf{Z}(V)\otimes X
  \xrightarrow{\delta_V\otimes\id_X}
\mathsf{Z}^2(V)\otimes X
  \xrightarrow{\pi^{\mathsf{Z}^2(V)}_X\otimes\id_X}
X\otimes \mathsf{Z}(V)\otimes X^\ast\otimes X
  \xrightarrow{\id_X\otimes\id_{\mathsf{Z}(V)}\otimes\ev_X} X\otimes\mathsf{Z}(V)$$
natural in $X\in\C$.
\end{lemma}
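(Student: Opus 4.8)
The plan is to realize $I$ as an honest right adjoint to the forgetful functor $\mathbf{U}$ by producing, for every $\mathbf{W}=(W,\sigma^W)\in\Z(\C)$ and every $V\in\C$, a bijection
$$\Hom_{\Z(\C)}(\mathbf{W},I(V))\cong\Hom_\C(W,V)$$
natural in both arguments, whose counit is the comonad counit $\epsilon_V=\pi^{\mathsf{Z}(V)}_{\1}:\mathsf{Z}(V)\to V$. Before constructing this bijection I must check that $I$ is well defined, that is, that $\sigma^{\mathsf{Z}(V)}$ is genuinely a half-braiding on $\mathsf{Z}(V)$ and that $I$ is functorial; the existence of all the relevant ends is guaranteed because $\C$ is a finite tensor category.

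For well-definedness I would first observe that $\sigma^{\mathsf{Z}(V)}_X$ is natural in $X$, since each of its three constituent arrows is: $\delta_V\otimes\id_X$ trivially, $\pi^{\mathsf{Z}^2(V)}_X\otimes\id_X$ by dinaturality of the universal transformation $\pi^{\mathsf{Z}^2(V)}$, and the evaluation by naturality of $\ev$. Invertibility of $\sigma^{\mathsf{Z}(V)}_X$ together with the half-braiding hexagon $(\id_X\otimes\sigma^{\mathsf{Z}(V)}_Y)\circ(\sigma^{\mathsf{Z}(V)}_X\otimes\id_Y)=\sigma^{\mathsf{Z}(V)}_{X\otimes Y}$ would be verified by composing both sides with the projections $\pi^{\mathsf{Z}(V)}_{Z}$ and invoking the uniqueness clause of the end, using the defining relation $(\id_X\otimes\pi^{\mathsf{Z}(V)}_Y\otimes\id_{X^\ast})\circ\pi^{\mathsf{Z}^2(V)}_X\circ\delta_V=\pi^{\mathsf{Z}(V)}_{X\otimes Y}$ of the comultiplication $\delta_V$. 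Functoriality of $I$ reduces, via the same universal property, to checking that $\mathsf{Z}(\phi)$ is a central morphism for each $\phi:V\to V'$.

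For the adjunction itself I would define the comparison map $\Hom_\C(W,V)\to\Hom_{\Z(\C)}(\mathbf{W},I(V))$ as follows. Given $f:W\to V$, form the family
\begin{align*}
\phi_X:\; W &\xrightarrow{\id_W\otimes\coev_X} W\otimes X\otimes X^\ast
   \xrightarrow{\sigma^W_X\otimes\id_{X^\ast}} X\otimes W\otimes X^\ast \\
&\xrightarrow{\id_X\otimes f\otimes\id_{X^\ast}} X\otimes V\otimes X^\ast,
\end{align*}
which is dinatural in $X$ by a diagram chase using naturality of $\sigma^W$ in $X$ and the rigidity (zig-zag) axioms. By the universal property of $\mathsf{Z}(V)=\int_{X}X\otimes V\otimes X^\ast$ there is then a unique $\tilde f:W\to\mathsf{Z}(V)$ with $\pi^{\mathsf{Z}(V)}_X\circ\tilde f=\phi_X$; taking $X=\1$ gives $\phi_{\1}=f$, hence $\epsilon_V\circ\tilde f=f$, so the prospective inverse map is simply $g\mapsto\epsilon_V\circ g$.

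The crux, and the step I expect to be the main obstacle, is verifying that $\tilde f$ lands in $\Z(\C)$, i.e.\ that $(\id_X\otimes\tilde f)\circ\sigma^W_X=\sigma^{\mathsf{Z}(V)}_X\circ(\tilde f\otimes\id_X)$ for all $X$, and conversely that centrality of any $g$ with $\epsilon_V\circ g=f$ forces $\pi^{\mathsf{Z}(V)}_X\circ g=\phi_X$ and hence $g=\tilde f$. Both directions require unwinding the definition of $\sigma^{\mathsf{Z}(V)}$ through $\delta_V$ and $\pi^{\mathsf{Z}^2(V)}$, composing with the projections $\pi^{\mathsf{Z}(V)}_{Z}$, and cancelling the coevaluation against the evaluation so that the identity collapses to the hexagon axiom for the half-braiding $\sigma^W$. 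Once this compatibility is established, naturality of the bijection in $\mathbf{W}$ and $V$ is routine, and the adjunction $(\mathbf{U},I)$ follows. (Conceptually this is the statement that $\Z(\C)$ is the Eilenberg--Moore category of $\mathsf{Z}$-comodules and $I$ is the cofree-comodule functor, but I would prefer the concrete verification above in order to keep the half-braiding $\sigma^{\mathsf{Z}(V)}$ explicit for later use.)
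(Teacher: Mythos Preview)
The paper does not prove this lemma: it is stated with citations to \cite[Section 3]{Shi17(b)} and \cite[Section 3.3]{Shi19} and used as a black box. Your proposal is a correct direct verification of the adjunction $(\mathbf{U},I)$ via the universal property of the end $\mathsf{Z}(V)$, and it is essentially the argument one finds in those references (the conceptual summary you give at the end, that $\Z(\C)$ is the category of $\mathsf{Z}$-comodules with $I$ the cofree functor, is exactly how Shimizu frames it). So there is no genuine discrepancy to discuss; your write-up simply supplies what the paper leaves to citation.
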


The half-braiding $\sigma^{\mathsf{Z}(V)}$ determined in Lemma \ref{lem:Z(C)radj} is referred to as the \textit{canonical half-braiding} for $\mathsf{Z}(V)$. Here we provide another description for later use:

\begin{lemma}
Let $\C$ be a finite tensor category. Suppose $V\in\C$, and $\sigma^{\mathsf{Z}(V)}$ is the canonical half-braiding for $\mathsf{Z}(V)$. Then
\begin{equation}\label{eqn:sigma^Z(V)2}
\left(\id_X\otimes\pi^{\mathsf{Z}(V)}_Y\right)\circ\sigma^{\mathsf{Z}(V)}_X
=\Big(\id_X\otimes\id_Y\otimes\id_V\otimes\id_{Y^\ast}\otimes\ev_X\Big)
\circ\left(\pi^{\mathsf{Z}(V)}_{X\otimes Y}\otimes\id_X\right)
\end{equation}
holds for all $X,Y\in\C$.
\end{lemma}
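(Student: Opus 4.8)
The plan is to expand the left-hand side by inserting the explicit formula for the canonical half-braiding from Lemma \ref{lem:Z(C)radj}, and then to rearrange the resulting composite until the defining relation of the comultiplication $\delta_V$ becomes directly applicable. The whole argument is a clean verification driven by the universal property of the end, with no conceptual subtlety beyond careful bookkeeping.

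First I would write out
$$\sigma^{\mathsf{Z}(V)}_X=\left(\id_X\otimes\id_{\mathsf{Z}(V)}\otimes\ev_X\right)\circ\left(\pi^{\mathsf{Z}^2(V)}_X\otimes\id_X\right)\circ\left(\delta_V\otimes\id_X\right),$$
and post-compose with $\id_X\otimes\pi^{\mathsf{Z}(V)}_Y$. The central manipulation is the bifunctoriality (interchange law) of $\otimes$: since $\pi^{\mathsf{Z}(V)}_Y$ acts only on the $\mathsf{Z}(V)$-factor while $\ev_X$ acts on the disjoint $X^\ast\otimes X$-factor, I would commute $\id_X\otimes\pi^{\mathsf{Z}(V)}_Y$ past $\id_X\otimes\id_{\mathsf{Z}(V)}\otimes\ev_X$, so that $\pi^{\mathsf{Z}(V)}_Y$ is applied \emph{before} the evaluation. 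This recasts the left-hand side as
$$\left(\id_X\otimes\id_Y\otimes\id_V\otimes\id_{Y^\ast}\otimes\ev_X\right)\circ\left(\id_X\otimes\pi^{\mathsf{Z}(V)}_Y\otimes\id_{X^\ast\otimes X}\right)\circ\left(\pi^{\mathsf{Z}^2(V)}_X\otimes\id_X\right)\circ\left(\delta_V\otimes\id_X\right).$$

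Applying the interchange law once more, I would pull the trailing copy of $\id_X$ out as a global tensor factor, thereby recognizing the middle block as $\left[\left(\id_X\otimes\pi^{\mathsf{Z}(V)}_Y\otimes\id_{X^\ast}\right)\circ\pi^{\mathsf{Z}^2(V)}_X\circ\delta_V\right]\otimes\id_X$. By the defining property of the comultiplication $\delta_V$, namely $\left(\id_X\otimes\pi^{\mathsf{Z}(V)}_Y\otimes\id_{X^\ast}\right)\circ\pi^{\mathsf{Z}^2(V)}_X\circ\delta_V=\pi^{\mathsf{Z}(V)}_{X\otimes Y}$, this block collapses to $\pi^{\mathsf{Z}(V)}_{X\otimes Y}\otimes\id_X$, which produces exactly the right-hand side of (\ref{eqn:sigma^Z(V)2}).

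No deep obstacle arises; the only real care is in tracking the tensor factors through the two interchange rearrangements. In particular I would use the identification $(X\otimes Y)^\ast\cong Y^\ast\otimes X^\ast$ so that the codomain of $\pi^{\mathsf{Z}(V)}_{X\otimes Y}$ reads $X\otimes Y\otimes V\otimes Y^\ast\otimes X^\ast$, matching the factors appearing after the first interchange, and I would make sure at each step that $\ev_X$ continues to act on the intended adjacent pair $X^\ast\otimes X$. These are the points where a string-diagram check is worthwhile before committing to the composite formulas above.
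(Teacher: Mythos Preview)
Your proposal is correct and follows essentially the same approach as the paper: both use the explicit formula for $\sigma^{\mathsf{Z}(V)}_X$ from Lemma~\ref{lem:Z(C)radj}, bifunctoriality of $\otimes$, and the defining relation for $\delta_V$. The paper packages these two steps as the commutativity of a two-cell diagram (one cell for bifunctoriality, one for the $\delta_V$ relation, citing \cite[Equation~(3.8)]{Shi17(b)}), while you write them out as a linear chain of composites, but the content is identical.
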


\begin{proof}
Note that the canonical half-braiding for $\mathsf{Z}(V)$ is defined in Lemma \ref{lem:Z(C)radj} as
$$\sigma^{\mathsf{Z}(V)}_X
=(\id_X\otimes\id_{\mathsf{Z}(V)}\otimes\ev_X)
\circ(\pi^{\mathsf{Z}^2(V)}_X\otimes\id_X)
\circ(\delta_V\otimes\id_X).$$
Thus our goal is to explain the commutativity of the diagram
%\begin{equation*}
%\xymatrix{
%X\otimes Y\otimes V\otimes Y^\ast\otimes X^\ast\otimes X
%\ar[ddd]_{\id_X\otimes\id_Y\otimes\id_V\otimes\id_{Y^\ast}\otimes\ev_X}
%&&& \mathsf{Z}(V)\otimes X  \ar[d]^{\delta_V\otimes\id_X}
%  \ar[lll]_{\pi^{\mathsf{Z}(V)}_{X\otimes Y}\otimes\id_X}  \\
%&&& \mathsf{Z}^2(V)\otimes X  \ar[d]^{\pi^{\mathsf{Z}^2(V)}_X\otimes\id_X}  \\
%&&& X\otimes\mathsf{Z}(V)\otimes X^\ast\otimes X
%  \ar[d]^{\id_X\otimes\id_{\mathsf{Z}(V)}\otimes\ev_X}
%  \ar[uulll]^{\id_X\otimes\pi^{\mathsf{Z}(V)}_Y\otimes\id_{X^\ast}\otimes\id_X} \\
%X\otimes Y\otimes V\otimes Y^\ast
%&&& X\otimes\mathsf{Z}(V)  \ar[lll]^{\id_X\otimes\pi^{\mathsf{Z}(V)}_Y}
%}
%\end{equation*}
\begin{equation*}
\xymatrix{
\mathsf{Z}(V)\otimes X  \ar[d]_{\delta_V\otimes\id_X}
  \ar[rrr]^{\pi^{\mathsf{Z}(V)}_{X\otimes Y}\otimes\id_X}
&&& X\otimes Y\otimes V\otimes Y^\ast\otimes X^\ast\otimes X
  \ar[ddd]^{\id_X\otimes\id_Y\otimes\id_V\otimes\id_{Y^\ast}\otimes\ev_X}  \\
\mathsf{Z}^2(V)\otimes X  \ar[d]_{\pi^{\mathsf{Z}^2(V)}_X\otimes\id_X}  \\
X\otimes\mathsf{Z}(V)\otimes X^\ast\otimes X
  \ar[d]_{\id_X\otimes\id_{\mathsf{Z}(V)}\otimes\ev_X}
  \ar[uurrr]_{\;\;\;\;\;\;\;\;\id_X\otimes\pi^{\mathsf{Z}(V)}_Y\otimes\id_{X^\ast}\otimes\id_X} \\
X\otimes\mathsf{Z}(V)  \ar[rrr]_{\id_X\otimes\pi^{\mathsf{Z}(V)}_Y}
&&& X\otimes Y\otimes V\otimes Y^\ast
}
\end{equation*}
for all $X,Y\in\C$:
The top-left quadrangle commutes due to \cite[Equation (3.8)]{Shi17(b)}, while the bottom quadrangle is commutative since $\otimes$ is a bifunctor on $\C$.

Consequently,
Equation (\ref{eqn:sigma^Z(V)2}) is a result of the commutativity of the outside hexagon of this diagram.
\end{proof}

%\begin{proof}
%As explained in \cite[Section 4.1]{Shi17(b)}, the coend $\int^{Y\in\C} X^\ast\otimes Y\otimes X$ exists for all $X\in\C$ if $\C$ is a finite tensor category.
%Thus the claim holds according to \cite[Section 3]{Shi17(b)}.
%\end{proof}

\begin{remark}(\cite[Proposition 5.4]{ENO05})
If $\C$ is furthermore a fusion category, then
$$\mathbf{U}\circ I(X)\cong\bigoplus_{Y\in\mathcal{O}(\C)} Y\otimes X\otimes Y^\ast$$
holds for each $X\in\C$, where $I$ is a (right) adjoint functor to the forgetful functor $\mathbf{U}:\Z(\C)\rightarrow\C$.
\end{remark}

\subsection{The internal Hom functor and adjoints to the forgetful functor}\label{subsection:4.2}

Suppose $\M$ is a finite exact and indecomposable left module category over a finite tensor category $\C$. Then $\M$ is known to be \textit{closed}, which means that the functor
$$\C\rightarrow\M,\;\;X\mapsto X\ogreaterthan M$$
has a right adjoint for every $M\in\M$. Consequently the \textit{internal Hom functor}
$$\intHom(-,-):\M^\vee\times\M\rightarrow \C$$
could be defined (\cite[Section 3.2]{EO04}). It is furthermore described in \cite{Shi20} etc. that we could choose $\C$-module structures
\begin{eqnarray*}
\a_{X,M,N} &:& X\otimes\intHom(M,N)\xrightarrow{\cong}\intHom(M,X\ogreaterthan N)
\;\;\;\;\;\;\;\;(X\in\C),  \\
\b_{M,N,Y} &:& \intHom(M,N)\otimes Y^\ast\xrightarrow{\cong}\intHom(Y\ogreaterthan M,N)
\;\;\;\;\;\;\;\;(Y\in\C),  \\
\c_{X,M,N,Y} &:&
  X\otimes\intHom(M,N)\otimes Y^\ast
  \xrightarrow{\cong}\intHom(Y\ogreaterthan M,X\ogreaterthan N)
\;\;\;\;\;\;\;\;(X,Y\in\C)
\end{eqnarray*}
with relations listed in the following lemma,
and they are all dinatural in $(M,N)\in\M^\vee\times\M$. Note that $\c_{X,M,N,Y}$ is also dinatural in $(Y,X)\in\C^\vee\times\C$. Here our notations are the same with
\cite[Section 2.3]{Shi24} as a reference for a number of formulas on them.

\begin{lemma}\label{lem:abc}
There exist natural isomorphisms $\a$, $\b$ and $\c$ with notations above, such that:
\begin{itemize}
\item[(1)]
For all $X,Y\in\C$ and $M,N\in\M$,
$$\c_{X,M,N,Y}=\a_{X,Y\ogreaterthan M,N}\circ(\id_X\otimes \b_{M,N,Y})
=\b_{M,X\ogreaterthan N,Y}\circ(\a_{X,M,N}\otimes\id_{Y^\ast});$$
\item[(2)]
For all $X_1,X_2,Y_1,Y_2\in\C$ and $M,N\in\M$,
\begin{eqnarray}
\a_{X_1\otimes X_2,M,N}
&=&
\a_{X_1,M,X_2\ogreaterthan N}\circ(\id_{X_1}\otimes\a_{X_2,M,N}), \label{eqn:a=aa}  \\
\b_{M,N,Y_1\otimes Y_2}
&=&
\b_{Y_2\ogreaterthan M,N,Y_1}\circ(\b_{M,N,Y_2}\otimes\id_{Y_1^\ast}) \label{eqn:b=bb}
\end{eqnarray}
and
\begin{equation}\label{eqn:c=cc}
\c_{X_1\otimes X_2,M,N,Y_2^\ast\otimes Y_1^\ast}
=\c_{X_1,Y_2\ogreaterthan M,X_2\ogreaterthan N,Y_1}
\circ(\id_{X_1}\otimes\c_{X_2,M,N,Y_2}\otimes\id_{Y_1^\ast}).
\end{equation}
\end{itemize}
\end{lemma}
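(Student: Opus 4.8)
The plan is to construct $\a$, $\b$ and $\c$ directly from the defining adjunction of the internal Hom, combined with the duality adjunctions of the rigid category $\C$ and the module-associativity constraint of $\M$, and then to obtain the relations (1) and (2) as instances of the Yoneda lemma together with coherence. Recall that closedness of $\M$ means precisely that for each pair $(M,N)$ there is an isomorphism
$$\Hom_\C(X,\intHom(M,N))\cong\Hom_\M(X\ogreaterthan M,N)$$
natural in $X\in\C$ and dinatural in $(M,N)\in\M^\vee\times\M$.

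First I would define $\a_{X,M,N}$ via the Yoneda lemma: for $W\in\C$,
$$\Hom_\C(W,X\otimes\intHom(M,N))\cong\Hom_\C(X^\ast\otimes W,\intHom(M,N))\cong\Hom_\M((X^\ast\otimes W)\ogreaterthan M,N),$$
where the first isomorphism is the rigidity adjunction $(X^\ast\otimes-)\dashv(X\otimes-)$. Inserting the module-associativity constraint $(X^\ast\otimes W)\ogreaterthan M\cong X^\ast\ogreaterthan(W\ogreaterthan M)$ and using the module adjunction $\Hom_\M(X^\ast\ogreaterthan P,N)\cong\Hom_\M(P,X\ogreaterthan N)$ identifies this functor with $\Hom_\M(W\ogreaterthan M,X\ogreaterthan N)\cong\Hom_\C(W,\intHom(M,X\ogreaterthan N))$; the representing isomorphism is the desired $\a_{X,M,N}$. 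I would build $\b_{M,N,Y}$ symmetrically from $(-\otimes Y)\dashv(-\otimes Y^\ast)$ and the constraint $(W\otimes Y)\ogreaterthan M\cong W\ogreaterthan(Y\ogreaterthan M)$, and then simply \emph{define} $\c_{X,M,N,Y}:=\a_{X,Y\ogreaterthan M,N}\circ(\id_X\otimes\b_{M,N,Y})$.

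With these definitions relation (1) splits into two parts: $\c=\a\circ(\id\otimes\b)$ holds by definition, while $\c=\b\circ(\a\otimes\id)$ asserts that the two ways of reassociating $X\otimes\intHom(M,N)\otimes Y^\ast$ agree; transported through Yoneda this becomes the equality of the two reassociations of $(X^\ast\otimes W\otimes Y)\ogreaterthan M$, which follows from the pentagon axiom of $\M$ and MacLane's coherence theorem \cite{MacL63}. The pseudofunctoriality relations \eqref{eqn:a=aa} and \eqref{eqn:b=bb} are proved the same way: each says that the adjunction for $X_1\otimes X_2$ (resp.\ $Y_1\otimes Y_2$) factors as the composite of the adjunctions for the two factors, i.e.\ the compatibility of the iterated associativity constraint with the duality adjunctions, again a coherence statement. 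Finally \eqref{eqn:c=cc} need not be checked directly: it follows formally by expanding each $\c$ through relation (1) and then applying \eqref{eqn:a=aa}, \eqref{eqn:b=bb} and the naturality of $\a$ and $\b$.

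The hard part will not be any single computation but the bookkeeping needed to make the relations hold \emph{on the nose} rather than up to a further canonical isomorphism. Concretely, one must fix once and for all the evaluation/coevaluation data for left duals and the module constraints, confirm that the Yoneda representatives $\a,\b,\c$ are independent of the auxiliary object $W$, and verify that the left dual $Y^\ast$ entering $\b$ meshes correctly with the left-dual $X^\ast$ entering $\a$ — the asymmetry between the two slots of $\intHom$ is where a spurious mismatch could creep in. Tracking these conventions carefully, for which \cite[Section 2.3]{Shi24} and \cite[Section 3.2]{EO04} supply the explicit formulas, is what turns the coherence principle into the precise equalities (1) and (2).
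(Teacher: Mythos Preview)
Your proposal is correct and matches the paper's approach in all essential points: both define $\c$ as the composite $\a\circ(\id\otimes\b)$, treat the second equality in (1) and the relations \eqref{eqn:a=aa}, \eqref{eqn:b=bb} as coherence/axiom facts for the internal Hom (the paper simply cites \cite{Shi20,Shi24} rather than spelling out the Yoneda argument you sketch), and then deduce \eqref{eqn:c=cc} from (1), \eqref{eqn:a=aa}, \eqref{eqn:b=bb} via a diagram chase --- which is exactly your ``follows formally by expanding each $\c$ through relation (1)''.
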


\begin{proof}
\begin{itemize}
\item[(1)]
The latter equality could be found as \cite[Equation (2.10)]{Shi24}, while the former one is in fact the definition of $\c$ (See \cite[Equation (2.18)]{Shi20} for example).

\item[(2)]
Equation (\ref{eqn:a=aa}) is introduced in \cite[Equation (2.10)]{Shi20} as the axiom for $\a$ being a left $\C$-module structure of the functor $\intHom(M,-)$. As for Equation (\ref{eqn:b=bb}), it is essentially proved in \cite[Lemmas 2.3 and A.1]{Shi20}, where the notation of $\b$ appearing is defined to be the inverse of ours in this paper and \cite{Shi24}.

In order to show Equation (\ref{eqn:c=cc}), let us verify the commutativity of the following diagram:
\begin{equation*}
\xymatrix{
X_1\otimes X_2\otimes\intHom(M,N)\otimes Y_2^\ast\otimes Y_1^\ast
\ar[rr]^{\id\otimes\id\otimes\b_{M,N,Y_2}\otimes\id}
\ar[d]_{\id\otimes\id\otimes\b_{M,N,Y_1\otimes Y_2}}
&& X_1\otimes X_2\otimes\intHom(Y_2\ogreaterthan M,N)\otimes Y_1^\ast
\ar[dll]^{\id\otimes\id\otimes\b_{Y_2\ogreaterthan M,N,Y_1}}
\ar[d]^{\id\otimes\a_{X_2,Y_2\ogreaterthan M,N}\otimes\id}  \\
X_1\otimes X_2\otimes \intHom(Y_1\ogreaterthan Y_2\ogreaterthan M,N)
\ar[d]_{\a_{X_1\otimes X_2,Y_1\ogreaterthan Y_2\ogreaterthan M,N}}
\ar[drr]^{\id\otimes\a_{X_2,Y_1\ogreaterthan Y_2\ogreaterthan M,N}}
&& X_1\otimes \intHom(Y_2\ogreaterthan M,X_2\ogreaterthan N)\otimes Y_1^\ast
\ar[d]^{\id\otimes\b_{Y_2\ogreaterthan M,X_2\ogreaterthan N,Y_1}}  \\
\intHom(Y_1\ogreaterthan Y_2\ogreaterthan M,X_1\ogreaterthan X_2\ogreaterthan  N)
&& X_1\otimes \intHom(Y_1\ogreaterthan Y_2\ogreaterthan M,X_2\ogreaterthan N)\;.
\ar[ll]_{\a_{X_1,Y_1\ogreaterthan Y_2\ogreaterthan M,X_2\ogreaterthan N}}
}
\end{equation*}
In fact, the commutativity of the two triangles on the left are due to Equations (\ref{eqn:a=aa}) and (\ref{eqn:b=bb}), and the right-sided quadrangle commutes since both compositions equal to $\id_{X_1}\otimes\c_{X_2,Y_2\ogreaterthan M,N,Y_1}$ according to (1).

Finally by (1) again, the compositions of the outside hexagon are respectively
$\c_{X_1\otimes X_2,M,N,Y_2^\ast\otimes Y_1^\ast}$ and
$\c_{X_1,Y_2\ogreaterthan M,X_2\ogreaterthan N,Y_1}
\circ(\id_{X_1}\otimes\c_{X_2,M,N,Y_2}\otimes\id_{Y_1^\ast})$.
Thus Equation (\ref{eqn:c=cc}) is implied by the commutativity of the diagram above.
\end{itemize}
\end{proof}

With the languages of ends, the internal Hom functor and the module structures above, we
introduce the construction of
$\Z(\rho^\ra)$ appearing in Lemma \ref{lem:adjpair0} according to \cite[Section 3.7]{Shi20}. Recall that for an object $\mathbf{F}=(F,s)\in\Rex_\C(\M)$, we have
$\Z(\rho^\mathrm{ra})(\mathbf{F})=(\rho^\mathrm{ra}(F),\sigma^\mathbf{F})\in\Z(\C),$
where
\begin{equation}\label{eqn:rho^ra(F)}
\rho^\mathrm{ra}(F)=\int_{M\in\M}\intHom(M,F(M))\;\in\C
\end{equation}
is an end of $\intHom(-,F(-))$,
and the half-braiding $\sigma^\mathbf{F}$ is the unique natural transformation such that the following diagram commutes for all objects $X\in\C$ and $M\in\M$:
\begin{equation}\label{eqn:sigma^F}
\xymatrix{
\rho^\mathrm{ra}(F)\otimes X
   \ar[rrr]^{\pi_{X\ogreaterthan M}\otimes\id_X\hspace{40pt}}
   \ar[dddd]_{\sigma^\mathbf{F}_X}
&&& \intHom(X\ogreaterthan M,F(X\ogreaterthan M))\otimes X
   \ar[d]^{\mathfrak{b}_{M,F(X\ogreaterthan M),X}^{-1}\otimes\id_X}  \\
&&& \intHom(M,F(X\ogreaterthan M))\otimes X^\ast\otimes X
   \ar[d]^{\id_{\intHom(M,F(X\ogreaterthan M))}\otimes\ev_X}  \\
&&& \intHom(M,F(X\ogreaterthan M))
   \ar[d]^{\intHom(\id_M,s^{-1}_{X,M})}  \\
&&& \intHom(M,X\ogreaterthan F(M))
   \ar[d]^{\mathfrak{a}^{-1}_{X,M,F(M)}}  \\
X\otimes \rho^\mathrm{ra}(F)
   \ar[rrr]^{\id_X\otimes \pi_{M}}
&&& X\otimes\intHom(M,F(M))\;\;,
}
\end{equation}
where $\pi$ is the universal dinatural transformation of the end $\rho^\mathrm{ra}(F)$.
We remark that the commutativity of (\ref{eqn:sigma^F}) is due to \cite[Lemma 3.12]{Shi20}.

At the final of this subsection, we recall the following lemma in order to provide our usual choice of an adjoint functor $K$ in Definition \ref{def:dual-inds}:
\begin{lemma}(\cite[Theorem 3.14]{Shi20} and \cite[Corollary 3.15]{Shi20})\label{lem:adjpair}
Let $\mathbf{U}_\M:\Z(\C_\M^\ast)\rightarrow\C_\M^\ast$ be the forgetful functor. Then:
\begin{itemize}
\item[(1)]
\;$\Big(\mathbf{U}_\M\circ\Omega:\Z(\C)\rightarrow\C_\M^\ast,\;\;\;\;
  \Z(\rho^\mathrm{ra}):\C_\M^\ast\rightarrow\Z(\C)\Big)$
is an adjoint pair;
\item[(2)]
\;$\Big(\mathbf{U}_\M:\Z(\C_\M^\ast)\rightarrow\C_\M^\ast,\;\;\;\;
   \Omega\circ\Z(\rho^\mathrm{ra}):\C_\M^\ast\rightarrow\Z(\C_\M^\ast)\Big)$
is an adjoint pair.
\end{itemize}
\end{lemma}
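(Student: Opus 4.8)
The plan is to deduce both statements from the adjunctions already recorded in Lemma~\ref{lem:adjpair0}, treating $\Omega$ as a categorical equivalence and invoking the composition of adjoints. The genuinely hard analytic content---the explicit construction of $\Z(\rho^\ra)$ through the end $\int_{M\in\M}\intHom(M,F(M))$, together with the verification that the assigned half-braiding $\sigma^\mathbf{F}$ really makes $\Z(\rho^\ra)$ land in $\Z(\C)$ and be right adjoint to $\mathbf{U}_\M\circ\Omega$---is precisely \cite[Theorem 3.14]{Shi20}, so I would cite it rather than reprove it.

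For part (1), I would observe that since $\C_\M^\ast=\Rex_\C(\M)^\mathrm{rev}$ shares its underlying $\k$-linear abelian category with $\Rex_\C(\M)$, and the $\mathrm{rev}$ convention only reverses the tensor product, the adjunctions between the underlying functors are unaffected. Thus the final assertion of Lemma~\ref{lem:adjpair0}, that $\Z(\rho^\ra)$ is a right adjoint to $\mathbf{U}_\M\circ\Omega$, is exactly the statement that $(\mathbf{U}_\M\circ\Omega,\Z(\rho^\ra))$ is an adjoint pair.

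For part (2), I would compose part (1) with the adjunction furnished by the equivalence $\Omega$. Since $\Omega:\Z(\C)\approx\Z(\C_\M^\ast)$ is an equivalence, a quasi-inverse $\Omega^{-1}$ is simultaneously a left and a right adjoint of $\Omega$; in particular $(\Omega^{-1},\Omega)$ is an adjoint pair. Writing $\mathbf{U}_\M=(\mathbf{U}_\M\circ\Omega)\circ\Omega^{-1}$ and applying \cite[Theorem 1 in Section IV.8]{MacL98} to the composable adjoint pairs $(\Omega^{-1},\Omega)$ and $(\mathbf{U}_\M\circ\Omega,\Z(\rho^\ra))$ yields that $(\mathbf{U}_\M,\Omega\circ\Z(\rho^\ra))$ is an adjoint pair, which is \cite[Corollary 3.15]{Shi20}. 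Equivalently, one could exhibit the natural isomorphism directly: for $\mathbf{W}\in\Z(\C_\M^\ast)$ and $\mathbf{F}\in\C_\M^\ast$,
\[
\Hom_{\C_\M^\ast}(\mathbf{U}_\M(\mathbf{W}),\mathbf{F})
\cong\Hom_{\Z(\C)}(\Omega^{-1}(\mathbf{W}),\Z(\rho^\ra)(\mathbf{F}))
\cong\Hom_{\Z(\C_\M^\ast)}(\mathbf{W},\Omega\circ\Z(\rho^\ra)(\mathbf{F})),
\]
where the first isomorphism is the adjunction of part (1) applied to $\mathbf{V}=\Omega^{-1}(\mathbf{W})\in\Z(\C)$, and the second is the full faithfulness of $\Omega$.

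The main obstacle is not in this short derivation but in keeping the conventions straight: one must remember that the $\mathrm{rev}$ in $\C_\M^\ast$ does not interfere with the underlying adjunctions, and that Shimizu's $\Z(\rho^\ra)$ is set up precisely so that $\mathbf{U}_\M\circ\Omega$---rather than the bare forgetful functor---is its left adjoint. Granting those conventions, the argument is a formal consequence of Lemma~\ref{lem:adjpair0} and the composition of adjoints.
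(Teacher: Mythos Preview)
Your proposal is correct and matches the paper's own approach: the paper offers no proof for this lemma beyond the citations to \cite[Theorem 3.14]{Shi20} and \cite[Corollary 3.15]{Shi20}, and you have done exactly the same---deferred the substantive content to Shimizu via Lemma~\ref{lem:adjpair0}, then added the routine deduction of (2) from (1) by composition of adjoints through the equivalence $\Omega$. Your extra remarks on the $\mathrm{rev}$ convention and the explicit natural-isomorphism chain are sound and harmless elaborations.
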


\subsection{An equality between indicators of certain objects in $\C_\M^\ast$ and $\C$}\label{subsection:mainthm}
We still suppose $\C$ is a finite tensor category. It is known in \cite[Corollary 7.10.5]{EGNO15} that if $\M$ is a finite left $\C$-module category, we could assume that $\M=\C_A$, the category of right $A$-modules in $\C$, for some algebra $A\in\C$. In this case, there is a tensor equivalence
\begin{equation}\label{eqn:bimodcat}
{}_A \C_A\rightarrow \C_\M^\ast,\;\;\;\;P\mapsto (-)\otimes_A P,
\end{equation}
according to the proof of \cite[Proposition 3.23]{EO04}.

This subsection is devoted to studying indicators of certain objects in the dual category $\C_{\C_A}^\ast$ under the equivalence (\ref{eqn:bimodcat}). Specifically, for each object $M\in\C_A$, there is an $A$-bimodule structure on $A\otimes M\in\C$ via the regular left $A$-module and the right $A$-module structure on $M$. This would be sent by (\ref{eqn:bimodcat}) to the functor $(-)\otimes_A(A\otimes M)\in\C_{\C_A}^\ast$, which is canonically isomorphic to $-\otimes M$ with $\C$-module structure as the associativity constraint of $\C$ (and hence abbreviated).

\begin{lemma}
Let $\C$ be a finite tensor category. Suppose that $A$ is an algebra in $\C$ such that $\M=\C_A$ is an indecomposable $\C$-module category which is exact. For any $M\in \C_A$,
denote
$$\mathsf{E}:=\rho^\mathrm{ra}(-\otimes M)
=\int_{N\in\C_A}\intHom(N,N\otimes M)\;\in\C$$
as in (\ref{eqn:rho^ra(F)}) with half-braiding $\sigma^{-\otimes M}$ defined such that Diagram (\ref{eqn:sigma^F}) commutes.
Then the following diagram commutes for each $X\in\C$:
\begin{equation}\label{eqn:sigma^E}
\xymatrix{
\mathsf{E}\otimes X
   \ar[rrr]^{\pi^\mathsf{E}_{X\otimes N}\otimes\id_X\hspace{40pt}}
   \ar[dd]_{\sigma^{-\otimes M}_X}
&&& \intHom(X\otimes N,X\otimes N\otimes M)\otimes X
   \ar[d]^{\c_{X,N,N\otimes M,X}^{-1}\otimes\id_X}  \\
&&& X\otimes\intHom(N,N\otimes M)\otimes X^\ast\otimes X
   \ar[d]^{\id_X\otimes\id_{\intHom(N,N\otimes M)}\otimes\ev_X}  \\
X\otimes \mathsf{E}
   \ar[rrr]^{\id_X\otimes \pi^\mathsf{E}_{X}\hspace{40pt}}
&&& X\otimes\intHom(N,N\otimes M)\;.
}
\end{equation}
\end{lemma}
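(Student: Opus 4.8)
The plan is to read both diagrams as two expressions for the \emph{same} morphism $(\id_X\otimes\pi^{\mathsf{E}}_N)\circ\sigma^{-\otimes M}_X\colon \mathsf{E}\otimes X\to X\otimes\intHom(N,N\otimes M)$, where $N$ denotes the end-variable of $\mathsf{E}=\int_{N\in\C_A}\intHom(N,N\otimes M)$, and to check that they agree. Since $\sigma^{-\otimes M}$ is by definition the unique half-braiding making Diagram (\ref{eqn:sigma^F}) commute for the functor $F=-\otimes M$, the left-hand route of (\ref{eqn:sigma^E}) already equals the right-hand route of (\ref{eqn:sigma^F}); hence it suffices to match the two right-hand routes. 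Both of these begin, reading from $\mathsf{E}\otimes X$, with the common factor $\pi^{\mathsf{E}}_{X\otimes N}\otimes\id_X$, so the problem reduces to a purely formal identity among the internal-Hom module isomorphisms $\mathfrak{a},\mathfrak{b},\mathfrak{c}$ and the evaluation, with source $\intHom(X\otimes N,X\otimes N\otimes M)\otimes X$.

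First I would specialise (\ref{eqn:sigma^F}) to $F=-\otimes M$, so that $F(N)=N\otimes M$ and $F(X\ogreaterthan N)=X\otimes N\otimes M$. The decisive simplification is that the $\C$-module structure $s_{X,N}$ of $-\otimes M$ is the associativity constraint of $\C$, which is abbreviated throughout the paper; hence $s_{X,N}^{-1}=\id$ and the arrow $\intHom(\id_N,s_{X,N}^{-1})$ in (\ref{eqn:sigma^F}) reduces to the identity. The right-hand route of (\ref{eqn:sigma^F}) then reads $\mathfrak{a}^{-1}_{X,N,N\otimes M}\circ(\id\otimes\ev_X)\circ(\mathfrak{b}^{-1}_{N,X\otimes N\otimes M,X}\otimes\id_X)$, while that of (\ref{eqn:sigma^E}) reads $(\id\otimes\ev_X)\circ(\mathfrak{c}^{-1}_{X,N,N\otimes M,X}\otimes\id_X)$.

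To identify these I would invoke Lemma \ref{lem:abc}(1) in its second form, which after the substitution $(X,M,N,Y)\mapsto(X,N,N\otimes M,X)$ gives
$$\mathfrak{c}_{X,N,N\otimes M,X}=\mathfrak{b}_{N,X\otimes N\otimes M,X}\circ(\mathfrak{a}_{X,N,N\otimes M}\otimes\id_{X^\ast}),$$
and therefore $\mathfrak{c}^{-1}_{X,N,N\otimes M,X}=(\mathfrak{a}^{-1}_{X,N,N\otimes M}\otimes\id_{X^\ast})\circ\mathfrak{b}^{-1}_{N,X\otimes N\otimes M,X}$. Substituting this into the (\ref{eqn:sigma^E})-route and using bifunctoriality of $\otimes$ (the interchange law) to slide $\ev_X$ on the $X^\ast\otimes X$-strand past $\mathfrak{a}^{-1}$ on the $\intHom$-strand collapses it exactly to the (\ref{eqn:sigma^F})-route; composing back with $\pi^{\mathsf{E}}_{X\otimes N}\otimes\id_X$ then yields the commutativity of (\ref{eqn:sigma^E}). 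The computation is entirely formal, and the only points demanding care are the correct index substitution into Lemma \ref{lem:abc}(1) and the observation that $s$ is trivial here; I do not expect a genuine obstacle, only bookkeeping of the tensor strands.
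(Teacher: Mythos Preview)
Your proposal is correct and follows essentially the same approach as the paper: both specialise Diagram~(\ref{eqn:sigma^F}) to $F=-\otimes M$ (noting that the module structure $s$ becomes the abbreviated associativity, so the arrow $\intHom(\id,s^{-1})$ drops out), then use the second identity of Lemma~\ref{lem:abc}(1) to factor $\c^{-1}_{X,N,N\otimes M,X}=(\a^{-1}_{X,N,N\otimes M}\otimes\id_{X^\ast})\circ\b^{-1}_{N,X\otimes N\otimes M,X}$ and bifunctoriality of $\otimes$ to slide $\ev_X$ past $\a^{-1}$. The paper packages these two steps as the ``top triangle'' and ``bottom quadrangle'' of its auxiliary diagram~(\ref{eqn:sigma^-otimesM}), which is precisely your computation drawn out.
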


\begin{proof}
By comparison to Diagram (\ref{eqn:sigma^F}), it suffices to show that the following diagram commutes for all $X\in\C$ and $N\in\C_A$:
\begin{equation}\label{eqn:sigma^-otimesM}
\xymatrix{
\intHom(X\otimes N,X\otimes N\otimes M)\otimes X
  \ar[d]_{\b_{X\otimes N,X\otimes N\otimes M,X}^{-1}\otimes \id_X}
  \ar[drrr]^{\c_{X,N,N\otimes M,X}^{-1}\otimes\id_X}  \\
\intHom(N,X\otimes N\otimes M)\otimes X^\ast\otimes X
  \ar[rrr]_{\a_{X,N,N\otimes M}^{-1}\otimes\id_{X^\ast}\otimes\id_X}
  \ar[d]_{\id_{\intHom(N,X\otimes N\otimes M)}\otimes\ev_X}
&&& X\otimes\intHom(N,N\otimes M)\otimes X^\ast\otimes X \;,
  \ar[ddlll]^{\;\;\;\;\;\;\;\;\;\;\;\;\;\;\;\;\id_X\otimes\id_{\intHom(N,N\otimes M)}\otimes\ev_X}  \\
\intHom(N,X\otimes N\otimes M)
  \ar[d]_{\a_{X,N,N\otimes M}^{-1}}  \\
X\otimes\intHom(N,N\otimes M)
}
\end{equation}
where the $\C$-module structure of $-\otimes M$ is abbreviated.
Specifically, the top triangle is commutative due to Lemma \ref{lem:abc}(1), and the bottom quadrangle commutes as $\otimes$ is a bifunctor on $\C$.
\end{proof}

\begin{remark}
In fact, for an arbitrary $\mathbf{F}\in\Rex_\C(\M)$, a diagram analogous to (\ref{eqn:sigma^-otimesM}) for $\sigma^{\mathbf{F}}$ commutes as well, which follows from the commutativity of (\ref{eqn:sigma^F}).
\end{remark}

\begin{proposition}\label{prop:endsiso}
Let $\C$ be a finite tensor category. Suppose that $A$ is an algebra in $\C$ such that $\M=\C_A$ is an indecomposable $\C$-module category which is exact. Then for any $M\in \C_A$,
$$\left(\int_{N\in\C_A} \intHom(N,N\otimes M),\sigma^{-\otimes M}\right)
\cong
\left(\mathsf{Z}(\intHom(A,M)),\sigma^{\mathsf{Z}(\intHom(A,M))}\right)$$
as objects in $\Z(\C)$.
\end{proposition}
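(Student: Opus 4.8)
The plan is to prove the isomorphism directly in $\Z(\C)$ by producing a comparison morphism between the two underlying ends and then checking that it lives in the center, i.e. that it intertwines the two half-braidings. The conceptual backbone is that both objects are values of right adjoints: $\mathsf{E}=\rho^{\mathrm{ra}}(-\otimes M)$ underlies $\Z(\rho^{\mathrm{ra}})(-\otimes M)$ (Lemma \ref{lem:adjpair}(1)), while $\mathsf{Z}(\intHom(A,M))$ underlies $I(\intHom(A,M))$ (Lemma \ref{lem:Z(C)radj}). The useful simplification hovering in the background is that $\intHom(A,P)\cong P$ in $\C$ for every $P\in\C_A$ (comparing the universal property of $\intHom(A,-)$ with the free-forgetful adjunction $(-)\otimes A\dashv(\text{forgetful})$); this is what lets the factor $A$ be absorbed and eventually identifies $\intHom(A,M)$ with $M$.

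First I would construct the comparison $\psi\colon\mathsf{Z}(\intHom(A,M))\to\mathsf{E}$ using the universal property of the end $\mathsf{E}=\int_{N\in\C_A}\intHom(N,N\otimes M)$. For each $N\in\C_A$, regarding its underlying object in $\C$, set
\[
\psi_N=\intHom(\iota_N,\id)\circ\c_{N,A,M,N}\circ\pi^{\mathsf{Z}(\intHom(A,M))}_{N},
\]
where $\c_{N,A,M,N}\colon N\otimes\intHom(A,M)\otimes N^\ast\xrightarrow{\cong}\intHom(N\ogreaterthan A,N\ogreaterthan M)=\intHom(N\otimes A,N\otimes M)$ is the coherence isomorphism of Lemma \ref{lem:abc}, and $\iota_N=\id_N\otimes\eta\colon N\to N\otimes A$ is induced by the unit $\eta\colon\1\to A$, which is a right $A$-module map. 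The essential point forcing this shape is an index mismatch: $\mathsf{E}$ runs over $\C_A$ with integrand $\intHom(N,N\otimes M)$, whereas $\mathsf{Z}$ runs over $\C$ with integrand $\intHom(X\otimes A,X\otimes M)$, and the two do \emph{not} agree under the free functor $X\mapsto X\otimes A$ (the second arguments differ by a factor of $A$). The role of $\iota_N$, together with the dinaturality of $\pi^{\mathsf{Z}(\intHom(A,M))}$, is precisely to absorb this discrepancy. I would then check that $\{\psi_N\}$ is dinatural in $N$ from the dinaturality of $\c$ in $(N,N)$, the naturality of $\iota$, and the dinaturality of the universal transformation of $\mathsf{Z}$, obtaining the induced morphism $\psi$.

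That $\psi$ is an isomorphism I would establish through the chain of adjunction isomorphisms underlying the two sides: for $\mathbf{W}=(W,\sigma^W)\in\Z(\C)$ one has $\mathbf{U}_\M\Omega(\mathbf{W})=W\otimes(-)$ as a $\C$-module endofunctor of $\C_A$ (Subsection \ref{subsectionOmega}), whence
\[
\Hom_{\C_\M^\ast}\!\big(W\otimes(-),\,-\otimes M\big)\cong\Hom_{{}_A\C_A}(W\otimes A,\,A\otimes M)\cong\Hom_{\C_A}(W\otimes A,M)\cong\Hom_\C(W,\intHom(A,M)),
\]
where the first step is (\ref{eqn:bimodcat}), the middle step is the adjunction $A\otimes(-)\dashv(\text{forget left action})$ between $\C_A$ and ${}_A\C_A$, and the last is the universal property of $\intHom$. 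Comparing with Lemma \ref{lem:adjpair}(1) and Lemma \ref{lem:Z(C)radj} and invoking Yoneda in $\Z(\C)$ shows the two objects are isomorphic, and tracing $\id$ through the chain identifies that isomorphism with $\psi$; alternatively, invertibility of $\psi$ at the level of $\C$ follows from the monadic presentation of $\C_A$ over $\C$ (every module is a reflexive coequalizer of free modules) together with right exactness.

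The main obstacle, in the explicit route that the preceding lemma was prepared for, is the last step: verifying that $\psi$ is a morphism in $\Z(\C)$, i.e. that $\sigma^{-\otimes M}_X\circ(\psi\otimes\id_X)=(\id_X\otimes\psi)\circ\sigma^{\mathsf{Z}(\intHom(A,M))}_X$ for all $X$. I would postcompose this square with the universal dinatural transformations $\pi^{\mathsf{E}}_N$ and $\id_X\otimes\pi^{\mathsf{E}}_N$, reducing it to an equality of morphisms into $X\otimes\intHom(N,N\otimes M)$, and then feed in the description (\ref{eqn:sigma^E}) of $\sigma^{-\otimes M}$ on the $\mathsf{E}$-side and the description (\ref{eqn:sigma^Z(V)2}) of the canonical half-braiding on the $\mathsf{Z}$-side. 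Using the naturality and dinaturality of $\c$ together with the compatibilities (\ref{eqn:a=aa})--(\ref{eqn:c=cc}) of Lemma \ref{lem:abc}, this becomes a long but mechanical diagram chase; the delicate part is the bookkeeping of the two evaluation morphisms coming from the two half-braidings and their cancellation against the duality data packaged inside $\c$. (I note that organizing the entire invertibility argument inside $\Z(\C)$ via Yoneda would make this half-braiding compatibility automatic, at the cost of not exhibiting $\psi$ explicitly; I would nevertheless carry out the explicit verification to stay within the diagrammatic framework already set up.)
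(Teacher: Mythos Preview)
Your overall strategy---construct an explicit isomorphism between the two ends and then verify compatibility with the half-braidings---is exactly what the paper does. However, your explicit comparison map $\psi$ has a genuine gap. You define $\psi_N$ using $\intHom(\iota_N,\id)$ with $\iota_N=\id_N\otimes\eta\colon N\to N\otimes A$, but for this to make sense $\iota_N$ must be a morphism in $\M=\C_A$, since $\intHom(-,-)\colon\M^\vee\times\M\to\C$. It is not: with $N\otimes A$ carrying the free right $A$-action via multiplication, the module-map condition $(\id_N\otimes m)\circ(\iota_N\otimes\id_A)=\iota_N\circ a_N$ becomes $\id_{N\otimes A}=(\id_N\otimes\eta)\circ a_N$, forcing the action map $a_N$ to be an isomorphism---false for general $N$. (Nor is $\eta\colon\1\to A$ itself a right $A$-module map absent an augmentation.) Thus your $\psi_N$ is not defined, and the dinaturality check cannot begin.

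The paper avoids this by going in the opposite direction, constructing $h\colon\mathsf{E}\to\mathsf{C}$. It first applies the change-of-index lemma for ends \cite[Lemma 2.2]{Shi20} to the free/forgetful adjunction $(L,R)$ between $\C$ and $\C_A$, yielding $f\colon\mathsf{E}\cong\int_{X\in\C}\intHom(X\otimes A,X\otimes M)=\mathsf{E}'$; then $\c_{X,A,M,X}$ gives $g\colon\mathsf{E}'\cong\mathsf{C}$. The map appearing in the characterization of $h=g\circ f$ (diagram (\ref{eqn:endsisodiagram2})) is $\intHom(\mu_N,\id)$ with $\mu_N\colon N\otimes A\to N$ the \emph{counit}, i.e.\ the action map, which \emph{is} a morphism in $\C_A$. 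The half-braiding compatibility is then checked by one large diagram pasting (\ref{eqn:sigma^E}) and (\ref{eqn:sigma^Z(V)2}) against Lemma \ref{lem:abc}. Your alternative Yoneda route could also succeed, but as written it is loose: identifying $\mathbf{U}_\M\Omega(\mathbf{W})=W\otimes(-)$ with an $A$-bimodule under (\ref{eqn:bimodcat}) requires the half-braiding $\sigma^W$ to furnish the left $A$-action, and you must check naturality of your chain in $\mathbf{W}\in\Z(\C)$ (not just $W\in\C$) before invoking Yoneda there.
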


\begin{proof}
Recall from \cite[Lemma 3.2]{Ost03} that there is an adjoint pair $(L,R)$ defined as
\begin{equation}\label{eqn:adjpair(L,R)}
\left\{
\begin{array}{lll}
L :\C\rightarrow\C_A, & X\mapsto X\otimes A; & \\
R :\C_A\rightarrow\C, & N\mapsto N & \text{(the forgetful functor)},
\end{array}
\right.
\end{equation}
with the unit and counit
\begin{equation}\label{eqn:adjunitcounit(L,R)}
\eta:\Id_\C\rightarrow -\otimes A=RL,\;\;\;\;\mu:-\otimes A=LR\rightarrow\Id_{\C_A}.
\end{equation}
Note that $\mu$ could be chosen such that $\mu_N$ is the right $A$-module structure on every $N\in\C_A$ according to the proof of \cite[Lemma 7.8.12]{EGNO15}, and thus
\begin{equation}\label{eqn:adjcounit=mod}
\id_X\otimes\mu_N=\mu_{X\otimes N}\;\;\;\;\;\;\;\;
(\forall X\in\C,\;\;\forall N\in\C_A)
\end{equation}
as the right $A$-module structure of $X\otimes N$.

Suppose $M\in\C_A$. We apply \cite[Lemma 2.2]{Shi20} (which is the dual form of \cite[Lemma 3.9]{BV12}) to the adjunction (\ref{eqn:adjpair(L,R)})
and the functor
$$\C\times(\C_A)^\vee\rightarrow\C,
\;\;\;(X,N)\mapsto\intHom(N,X\otimes M),$$
and then obtain an isomorphism $f$:
\begin{equation}\label{eqn:endsiso-f}
\mathsf{E}=\int_{N\in\C_A} \intHom(N,N\otimes M)
\overset{f}{\cong} \int_{X\in\C} \intHom(X\otimes A,X\otimes M)=\mathsf{E}'.
\end{equation}
We remark that these ends do exist according to \cite[Theorem 3.4]{Shi20}.

However, recall that there is an isomorphism
$$\mathfrak{c}_{Y,A,M,X}:Y\otimes \intHom(A,M)\otimes X^\ast
\cong\intHom(X\otimes A,Y\otimes M),$$
introduced in Subsection \ref{subsection:4.2}, which is dinatural in $(X,Y)\in\C^\vee\times\C$.
%\textcolor{red}{such that the diagram
%\begin{equation}
%\xymatrix{
%\Hom_\C(Z,\,\intHom(X\otimes A,Y\otimes M))  \ar[rr]^{\phi_{Z,X\otimes A,Y\otimes M}}
%  \ar@{-->}[d]_{\Hom_\C(\id_Z,\omega_{X,Y})}
%&& \Hom_{\C_A}(Z\otimes X\otimes A,Y\otimes M)
%  \ar[d]^{\lambda:\;\text{adjunction\;(\ref{eqn:adjpair(L,R)})}}  \\
%\Hom_\C(Z\otimes X, Y\otimes M)
%&& \Hom_\C(Z,Y\otimes M\otimes X^\ast)  \ar[ll]^{\text{duality}}
%}
%\end{equation}
%commutes for all $X,Y,Z\in\C$.}
Consequently,
another isomorphism $g$:
\begin{equation}\label{eqn:endsiso-g}
\mathsf{E}'=\int_{X\in\C} \intHom(X\otimes A,X\otimes M)
\overset{g}{\cong}
  \int_{X\in\C} X\otimes \intHom(A,M)\otimes X^\ast
  =\mathsf{Z}(\intHom(A,M))=\mathsf{C}
\end{equation}
between ends could be obtained.

Furthermore, let us explain that the isomorphisms $f$ (\ref{eqn:endsiso-f}) and $g$ (\ref{eqn:endsiso-g}) between ends could be chosen
such that the following diagram commutes for all objects $N\in\C_A$ and $X\in\C$:
\begin{equation}\label{eqn:endsisodiagram}
\xymatrix{
\intHom(N,N\otimes M) \ar[d]_{\intHom(\mu_N,\id)}
& \int_{N\in\C_A}\intHom(N,N\otimes M) \ar@{-->}[d]_{f}^{\cong}
  \ar[l]_{\pi^\mathsf{E}_N} \ar[r]^{\pi^\mathsf{E}_{X\otimes A}}
& \intHom(X\otimes A,X\otimes A\otimes M)  \\
\intHom(N\otimes A,N\otimes M)
  \ar[d]_{\c_{N,A,M,N}^{-1}}
& \int_{X\in\C} \intHom(X\otimes A,X\otimes M) \ar@{-->}[d]_{g}^{\cong}
  \ar[l]_{\pi^{\mathsf{E}'}_N} \ar[r]^{\pi^{\mathsf{E}'}_X}
& \intHom(X\otimes A,X\otimes M) \ar[u]_{\intHom(\id,\eta_X\otimes\id_M)}  \\
N\otimes \intHom(A,M)\otimes N^\ast
& \int_{X\in\C} X\otimes\intHom(A,M)\otimes X^\ast \ar[l]^{\pi^{\mathsf{C}}_N} \ar[r]_{\pi^{\mathsf{C}}_X}
& X\otimes \intHom(A,M)\otimes X^\ast\;, \ar[u]_{\c_{X,A,M,X}}
}
\end{equation}
where
$$\pi^\mathsf{E}:\mathsf{E}\rightarrow\intHom(-,-\otimes M),
\;\;\pi^{\mathsf{E}'}:\mathsf{E}'\rightarrow\intHom(-\otimes A,-\otimes M)
\;\;\text{and}\;\;
\pi^{\mathsf{C}}:
\mathsf{C}%=\mathsf{Z}(\intHom(A,M))
\rightarrow(-)\otimes \intHom(A,M)\otimes (-)^\ast$$
denote the corresponding universal dinatural transformations for the ends respectively:
%and
%\begin{equation}\label{eqn:adjunitcounit(L,R)}
%\eta:\Id_\C\rightarrow -\otimes A=RL,\;\;\;\;\e:-\otimes A=LR\rightarrow\Id_{\C_A}
%\end{equation}
%are the unit and counit of the adjoint pair $(L,R)$ defined in (\ref{eqn:adjpair(L,R)}):
More explicitly, the commutativity of the top rectangles of (\ref{eqn:endsisodiagram}) could be satisfied due to the paragraph after \cite[Lemma 2.2]{Shi20}, while the bottom rectangles of (\ref{eqn:endsisodiagram}) are commutative by \cite[Proposition 1 in Section IX.7]{MacL98}.

We point out the left rectangles of the commuting diagram (\ref{eqn:endsisodiagram}) as follows, where the isomorphism $g\circ f$ is denoted by $h$ for simplicity:
\begin{equation}\label{eqn:endsisodiagram2}
\xymatrix{
\mathsf{E} \ar[d]_{\pi^\mathsf{E}_N}  \ar[rr]^{h}
&& \mathsf{C}   \ar[d]^{\pi^{\mathsf{C}}_N} \\
\intHom(N,N\otimes M)  \ar[r]_{\intHom(\mu_N,\id)}
& \intHom(N\otimes A,N\otimes M)
& N\otimes\intHom(A,M)\otimes N^\ast \;.
\ar[l]^{\c_{N,A,M,N}}
}
\end{equation}
On the other hand, one might apply \cite[Theorem 1 in Section IV.1]{MacL98} to the adjunction (\ref{eqn:adjpair(L,R)}) to obtain that
\begin{equation}\label{eqn:adjcounitunit=id}
\mu_N\circ\eta_N=\id_N,
\;\;\;\;\text{and hence}\;\;\;\;
\intHom(\eta_N,\id)\circ\intHom(\mu_N,\id)=\id
\;\;\;\;\;\;\;\;(\forall N\in \C_A).
\end{equation}

%\textcolor{red}{Then for every $N\in \C_A$, the commutativity
%$$\intHom(\e_N,\id)\circ\pi^\mathsf{E}_N
%=\omega_{N,N}^{-1}\circ\pi^{\mathsf{Z}(M)}_N\circ h$$
%of the left outside rectangle of (\ref{eqn:endsisodiagram}) provides the following commuting diagram in $\C$:
%\begin{equation}\label{eqn:endsisodiagram2}
%\xymatrix{
%\mathsf{E} \ar[d]_{\pi^\mathsf{E}_N}
%&&& \mathsf{C}  \ar[lll]_{h^{-1}} \ar[d]^{\pi^{\mathsf{C}}_N} \\
%\intHom(N,N\otimes M)
%&& \intHom(N\otimes A,N\otimes M) \ar[ll]_{\intHom(\eta_N,\id)}
%& N\otimes\intHom(A,M)\otimes N^\ast \;.
%\ar[l]_{\c_{N,A,M,N}}
%}
%\end{equation}
%}

Now we focus on the following diagram (\ref{eqn:endsisodiagram3}) and try to prove its commutativity for all $X\in\C$ and $N\in\C_A$.
Please note that due to Equations (\ref{eqn:adjcounitunit=id}), if the arrow $\id\otimes\intHom(\mu_N,\id)$ in Diagram (\ref{eqn:endsisodiagram3}) is replaced by the dotted arrow $\id\otimes\intHom(\eta_N,\id)$ with inverse direction, then the new diagram will be also commutative as a consequence.
\begin{equation}\label{eqn:endsisodiagram3}\tiny
\xymatrix{
\mathsf{E}\otimes X
  \ar[rr]^{h\otimes\id}  \ar[d]_{\pi^\mathsf{E}_{X\otimes N}\otimes\id}
&& \mathsf{C}\otimes X
  \ar[d]^{\pi^\mathsf{C}_{X\otimes N}\otimes\id}  \\
\intHom(X\otimes N,X\otimes N\otimes M)\otimes X
  \ar[r]^{\intHom(\mu_{X\otimes N},\id)\otimes\id}
  \ar[d]_{\c_{X,N,N\otimes M,X}^{-1}\otimes\id}
& \intHom(X\otimes N\otimes A,X\otimes N\otimes M)\otimes X
  \ar[d]_{\c_{X,N\otimes A,N\otimes M,X}^{-1}\otimes\id}
& X\otimes N\otimes\intHom(A,M)\otimes N^\ast\otimes X^\ast\otimes X
  \ar[l]_{\mathfrak{c}_{X\otimes N,A,M,X\otimes N}}
  \ar[dd]^{\id\otimes\id\otimes\id\otimes\id\otimes\ev_X}
  \ar[dl]^{\id\otimes\c_{N,A,M,N}\otimes\id\otimes\id}  \\
X\otimes\intHom(N,N\otimes M)\otimes X^\ast\otimes X
  \ar[r]^{\id\otimes\intHom(\mu_{N},\id)\otimes\id\otimes\id}
  \ar[d]_{\id\otimes\id\otimes\ev_X}
& X\otimes\intHom(N\otimes A,N\otimes M)\otimes X^\ast\otimes X
  \ar[d]_{\id\otimes\id\otimes\ev_X}  \\
X\otimes\intHom(N,N\otimes M)
  \ar[r]^{\id\otimes\intHom(\mu_N,\id)}
& X\otimes\intHom(N\otimes A,N\otimes M)
  \ar@<1.5ex>@{-->}[l]^{\id\otimes\intHom(\eta_N,\id)}
& X\otimes N\otimes\intHom(A,M)\otimes N^\ast
  \ar[l]_{\id\otimes\c_{N,A,M,N}}  \\
X\otimes\mathsf{E}
  \ar[u]^{\id\otimes\pi^\mathsf{E}_N}  \ar[rr]_{\id\otimes h}
&& X\otimes\mathsf{C} \;.
  \ar[u]_{\id\otimes\pi^\mathsf{C}_N}
}
\end{equation}
Our reasons for the commutativity of the cells in this diagram (with original arrow $\id\otimes\intHom(\mu_N,\id)$) are stated respectively as follows:
\begin{itemize}
\item
The two (top and bottom) pentagons: Diagram (\ref{eqn:endsisodiagram2}) commutes;
\item
The upper rectangle:
This is
obtained by the naturality of $\c_{X,-,N,N\otimes M,X}$ for the morphism $\mu_N$ in $\C$, where Equation (\ref{eqn:adjcounit=mod}) will be used;
\item
The triangle: Equation (\ref{eqn:c=cc});
\item
The remaining two quadrangles (the right trapezoid and the lower rectangle): $\otimes$ is a bifunctor on $\C$.
\end{itemize}
Then consider the following two commuting diagrams, which are (\ref{eqn:sigma^E}) as well as (\ref{eqn:sigma^Z(V)2}) where $\mathsf{Z}(\intHom(A,M))=\mathsf{C}$:
$$\tiny
\xymatrix{
\mathsf{E}\otimes X
   \ar@/_35ex/[dddd]_{\sigma^{\mathsf{E}}_X}
   \ar[d]^{\pi^\mathsf{E}_{X\otimes N}\otimes\id\hspace{40pt}}
&& \mathsf{C}\otimes X
   \ar[d]_{\pi^\mathsf{C}_{X\otimes N}\otimes\id}
   \ar@/^35ex/[dddd]^{\sigma^{\mathsf{E}}_X}
\\
 \intHom(X\otimes N,X\otimes N\otimes M)\otimes X
   \ar[d]^{\c_{X,N,N\otimes M,X}^{-1}\otimes\id}
&& X\otimes N\otimes\intHom(A,M)\otimes N^\ast\otimes X^\ast\otimes X
   \ar[dd]_{\id\otimes\id\otimes\id\otimes\id\otimes\ev_X}
\\
 X\otimes\intHom(N,N\otimes M)\otimes X^\ast\otimes X
   \ar[d]^{\id\otimes\id_{\intHom(N,N\otimes M)}\otimes\ev_X}
&  \text{and}
\\
 X\otimes\intHom(N,N\otimes M)
&& X\otimes N\otimes\intHom(A,M)\otimes N^\ast
\\
X\otimes \mathsf{E}
   \ar[u]_{\id\otimes \pi^\mathsf{E}_{X}}
&& X\otimes\mathsf{C}  \;,
   \ar[u]^{\id\otimes\pi^\mathsf{C}_N}
}
$$
If we put them at the left and right sides of (\ref{eqn:endsisodiagram}) respectively, then we could conclude by the commutativity (with the dotted arrow $\id\otimes\intHom(\eta_N,\id)$ instead) that
$$(\id_X\otimes\pi^\mathsf{E}_N)\circ\sigma^\mathsf{E}_X
=(\id_X\otimes\pi^\mathsf{E}_N)
\circ\left[(\id_X\otimes h)\circ\sigma^\mathsf{C}_X\circ(h^{-1}\otimes\id_X)\right]
\;\;\;\;(\forall X\in\C,\;\forall N\in\C_A).$$
It follows from %Fubini Theorem and
the universal property for ends that
$$\sigma^\mathsf{E}_X\circ(h\otimes\id_X)
=(\id_X\otimes h)\circ\sigma^\mathsf{C}_X
\;\;\;\;\;\;\;\;(\forall X\in\C).$$
Finally as a result, $(\mathsf{E},\sigma^\mathsf{E})$ and $(\mathsf{C},\sigma^\mathsf{C})$ are isomorphic objects in $\Z(\C)$ as desired.
\end{proof}

%\textcolor{red}{Should this equiv induce pivotal stru of ${}_A \C_A$????}

The following corollary could be a generalization of the last isomorphism in \cite[Remark 6]{FS23}:

\begin{corollary}\label{cor:endsiso2}
Let $\C$ be a finite tensor category. Suppose that $A$ is an algebra in $\C$ such that $\M=\C_A$ is an indecomposable $\C$-module category which is exact. Then for any $M\in \C_A$,
\begin{equation*}%\label{eqn:endsiso2}
\Z(\rho^\mathrm{ra})(-\otimes M)
=\left(\int_{N\in\C_A} \intHom(N,N\otimes M),\sigma^{-\otimes M}\right)
\cong(\mathsf{Z}(M),\sigma^{\mathsf{Z}(M)}),
\end{equation*}
where $\Z(\rho^\mathrm{ra})$ is defined by (\ref{eqn:rho^ra(F)}) and (\ref{eqn:sigma^F}) as a right adjoint of $\mathbf{U}_{\C_A}\circ\Omega:\Z(\C)\rightarrow\C_{\C_A}^\ast$.
\end{corollary}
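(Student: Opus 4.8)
The plan is to combine Proposition \ref{prop:endsiso} with the single observation that $\intHom(A,M)\cong M$ as objects of $\C$. Indeed, Proposition \ref{prop:endsiso} already furnishes an isomorphism
$$\left(\int_{N\in\C_A}\intHom(N,N\otimes M),\sigma^{-\otimes M}\right)\cong\left(\mathsf{Z}(\intHom(A,M)),\sigma^{\mathsf{Z}(\intHom(A,M))}\right)$$
in $\Z(\C)$, so all that remains is to replace $\intHom(A,M)$ by $M$ inside the central Hopf comonad.

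First I would establish $\intHom(A,M)\cong M$ in $\C$ using the free--forgetful adjunction $(L,R)$ recorded in (\ref{eqn:adjpair(L,R)}) in the proof of Proposition \ref{prop:endsiso}. The defining adjunction of the internal Hom functor for $\M=\C_A$ gives $\Hom_{\C_A}(X\ogreaterthan A,M)\cong\Hom_\C(X,\intHom(A,M))$, naturally in $X\in\C$. Since $X\ogreaterthan A=X\otimes A=L(X)$ as right $A$-modules, the adjunction $(L,R)$ yields $\Hom_{\C_A}(L(X),M)\cong\Hom_\C(X,R(M))=\Hom_\C(X,M)$. Comparing the two representing objects and invoking the Yoneda lemma produces a natural isomorphism $\intHom(A,-)\cong R$; in particular $\intHom(A,M)\cong M$ as objects of $\C$.

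Next I would push this isomorphism through the functor $I\colon\C\to\Z(\C)$, $V\mapsto(\mathsf{Z}(V),\sigma^{\mathsf{Z}(V)})$, which is a genuine functor by Lemma \ref{lem:Z(C)radj}, being a right adjoint to the forgetful functor with half-braiding $\sigma^{\mathsf{Z}(-)}$ natural in the argument. Functoriality of $I$ converts the isomorphism $\intHom(A,M)\cong M$ in $\C$ into an isomorphism $\left(\mathsf{Z}(\intHom(A,M)),\sigma^{\mathsf{Z}(\intHom(A,M))}\right)\cong(\mathsf{Z}(M),\sigma^{\mathsf{Z}(M)})$ in $\Z(\C)$, the half-braidings matching up automatically because $I$ lands in $\Z(\C)$. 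Composing this with the isomorphism supplied by Proposition \ref{prop:endsiso} delivers $\Z(\rho^\mathrm{ra})(-\otimes M)\cong(\mathsf{Z}(M),\sigma^{\mathsf{Z}(M)})$, as claimed.

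No genuine obstacle remains once Proposition \ref{prop:endsiso} is available; the only point demanding care is to take the isomorphism $\intHom(A,M)\cong M$ in $\C$ (via the forgetful functor $R$) rather than merely in $\C_A$, so that the functor $I$ applies directly and the canonical half-braidings are identified without any further verification.
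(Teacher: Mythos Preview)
Your proposal is correct and follows essentially the same approach as the paper's own proof: invoke Proposition \ref{prop:endsiso}, then use the adjunction $(L,R)$ of (\ref{eqn:adjpair(L,R)}) together with the defining property of the internal Hom and the Yoneda lemma to obtain $\intHom(A,M)\cong M$ in $\C$, and finally use naturality of the canonical half-braiding $\sigma^{\mathsf{Z}(-)}$ (equivalently, functoriality of $I$) to upgrade this to an isomorphism in $\Z(\C)$. The paper states the last step more tersely by observing that $\sigma^{\mathsf{Z}(V)}_X$ is natural in $V$, but the content is identical to your appeal to the functoriality of $I$.
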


\begin{proof}
As the central Hopf comonad $\mathsf{Z}$ is an endofunctor on $\C$, the isomorphism $\sigma^{\mathsf{Z}(V)}_X$ defined in Lemma \ref{lem:centralHopfcomonadZ(C)} will be natural in $V,X\in\C$. Thus by Proposition \ref{prop:endsiso}, we only need to show that $M\cong\intHom(A,M)$ as objects in $\C$. In fact due to Yoneda Lemma, this is known by the following isomorphism
$$\Hom_\C(X,M)\overset{(\ref{eqn:adjpair(L,R)})}{\cong}\Hom_{\C_A}(X\otimes A,M)
\cong\Hom_\C(X,\intHom(A,M))$$
which is natural in $X\in\C$.
\end{proof}

\begin{theorem}\label{thm:bimod-inds}
Let $\C$ be a spherical fusion category over $\mathbb{C}$. Suppose that $A$ is an algebra in $\C$ such that $\M=\C_A$ is an indecomposable $\C$-module category which is semisimple. Then for any $M\in \C_A$, the $n$-th indicator of
$-\otimes M\in\C_{\C_A}^\ast$ is
$$\nu_n(-\otimes M)=\nu_n(M)\;\;\;\;\;\;\;\;(\forall n\geq1),$$
where:
\begin{itemize}
\item
The $\C$-module functor $-\otimes M$ is canonically isomorphic to $(-)\otimes_A(A\otimes M)$ with structure as the associativity constraint of $\C$;

\item
The object $M$ in the right side is regarded in $\C$ (as the image under the forgetful functor from $\C_A$ to $\C$).
\end{itemize}
\end{theorem}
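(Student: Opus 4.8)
The plan is to reduce both sides to pivotal traces of powers of the ribbon structure $\theta$ on the center $\Z(\C)$ and then to match the two relevant objects of $\Z(\C)$ by means of the end computation already in hand. Since $\C$ is fusion and $\M=\C_A$ is indecomposable and semisimple, it is in particular exact (as remarked after Lemma \ref{lem:dualcattensor}), so the hypotheses of Corollary \ref{cor:endsiso2} are met.

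First I would evaluate the left-hand side through Definition \ref{def:dual-inds}. The scalar $\ptr(\theta_{K(-\otimes M)}^n)$ does not depend on the chosen adjoint $K$ to $\mathbf{U}_\M\circ\Omega$ by Lemma \ref{lem:independadj2}, so I would take $K=\Z(\rho^{\ra})$, which is a right adjoint by Lemma \ref{lem:adjpair}(1). This gives
$$\nu_n(-\otimes M)=\frac{1}{\dim(\C)}\,\ptr\left(\theta_{\Z(\rho^{\ra})(-\otimes M)}^n\right)\qquad(\forall n\geq 1).$$

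Next I would invoke the key isomorphism. Corollary \ref{cor:endsiso2} yields $\Z(\rho^{\ra})(-\otimes M)\cong(\mathsf{Z}(M),\sigma^{\mathsf{Z}(M)})$ in $\Z(\C)$, and the object on the right is precisely $I(M)$, where $I$ is the right adjoint to the forgetful functor $\mathbf{U}:\Z(\C)\rightarrow\C$ of Lemma \ref{lem:centralHopfcomonadZ(C)}. Because isomorphic objects of $\Z(\C)$ produce equal pivotal traces of powers of the natural endomorphism $\theta$ (Lemma \ref{lem:independadj}), I obtain
$$\ptr\left(\theta_{\Z(\rho^{\ra})(-\otimes M)}^n\right)=\ptr\left(\theta_{I(M)}^n\right).$$
Finally, Lemma \ref{lem:indsorigin} identifies $\tfrac{1}{\dim(\C)}\ptr(\theta_{I(M)}^n)$ with $\nu_n(M)$ for $M$ viewed in $\C$, and assembling these equalities gives $\nu_n(-\otimes M)=\nu_n(M)$.

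The substantive content has already been absorbed into Corollary \ref{cor:endsiso2} (and hence Proposition \ref{prop:endsiso}), so what remains is a formal assembly. The only points requiring care are that the adjoint in Definition \ref{def:dual-inds} may be taken to be $\Z(\rho^{\ra})$, and that the isomorphism furnished by Corollary \ref{cor:endsiso2} is one of objects in $\Z(\C)$, that is, it is compatible with the half-braidings; it is this compatibility, rather than a mere isomorphism in $\C$, that legitimizes the pivotal-trace comparison. Granting these, I expect no essential obstacle.
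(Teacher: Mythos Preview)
Your proposal is correct and follows essentially the same argument as the paper: choose $K=\Z(\rho^{\ra})$ in Definition~\ref{def:dual-inds}, apply Corollary~\ref{cor:endsiso2} to identify $\Z(\rho^{\ra})(-\otimes M)\cong(\mathsf{Z}(M),\sigma^{\mathsf{Z}(M)})$ in $\Z(\C)$, and then compare with Lemma~\ref{lem:indsorigin} via Lemma~\ref{lem:independadj}. Your explicit remark that the isomorphism must respect half-braidings is exactly the point that makes the pivotal-trace comparison valid.
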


%\begin{remark}
%\textcolor{red}{
%Evidently, if we replace $-\otimes M$ by a suitable $\C$-module functor $F\in\C_\M^\ast$, the conclusion in Theorem \ref{thm:bimod-inds} would be
%\begin{equation}\label{eqn:functor-inds}
%\nu_n(F)=\nu_n(F(A)).
%\end{equation}
%}
%\end{remark}

\begin{proof}
It is clear that $\C_A$ is finite according to \cite[Exercise 7.8.16]{EGNO15}, and we
compute the indicators in the left side of this equation
by Definition \ref{def:dual-inds}. Choose %the adjoint functor
$K_{\C_A}=\Z(\rho_{\C_A}^\mathrm{ra})$ as in Corollary \ref{cor:endsiso2},
which is a two-sided adjoint of $\mathbf{U}_\M\circ\Omega$.
Thus for any $n\geq1$,
\begin{equation}\label{eqn:nu_n(-otimesM)}
\nu_n(-\otimes M)
=\frac{1}{\dim(\C)} \ptr\left(\theta_{\Z(\rho^\mathrm{ra})(-\otimes M)}^n\right).
\end{equation}

On the other hand,
the $n$-th indicator of $M\in\C$ is computed according to Lemma \ref{lem:indsorigin}
as
\begin{equation}\label{eqn:nu_n(M)}
\nu_n(M)=\frac{1}{\dim(\C)}
\ptr\left(\theta_{(\mathsf{Z}(M),\sigma^{\mathsf{Z}(M)})}^n\right),
\end{equation}
because we know by Lemma \ref{lem:centralHopfcomonadZ(C)} that
$I:\C\rightarrow\Z(\C),\;\;V\mapsto (\mathsf{Z}(V),\sigma^{\mathsf{Z}(V)})$ is a right adjoint to the forgetful functor $\mathbf{U}:\Z(\C)\rightarrow\C$.

Finally, it follows from Lemma \ref{lem:independadj} that the right sides of Equations (\ref{eqn:nu_n(-otimesM)}) and (\ref{eqn:nu_n(M)}) coincide, since
$\Z(\rho^\mathrm{ra})(-\otimes M)\cong (\mathsf{Z}(M),\sigma^{\mathsf{Z}(M)})$ in $\Z(\C)$ due to Corollary \ref{cor:endsiso2}.
\end{proof}

%\begin{corollary}
%\textcolor{red}{$\nu_n(\mathbf{F})=?(\mathbf{F}(\1))$?? for what kind of $\mathbf{F}$?}
%\end{corollary}

%\subsection{(Compatibility of the definition)}\label{subsection:compatible}
We end this subsection by considering
the particular case when $\M=\C$ as the regular left $\C$-module category. It is known that there is a tensor equivalence
$$\C\rightarrow \C_\C^\ast,\;\;X\mapsto -\otimes X,$$
according to \cite[Example 7.12.3]{EGNO15} for example. Under this equivalence, we
remark that indicators of objects in $\C_\C^\ast$ coincides with those in $\C$ originally.

\begin{corollary}
Let $\C$ be a spherical fusion category over $\mathbb{C}$. Then for each $X\in\C$,
\begin{equation}\label{eqn:indscompatible}
\nu_n(-\otimes X)=\nu_n(X)\;\;\;\;\;\;\;\;(\forall n\geq1),
\end{equation}
where $-\otimes X$ is an object in $\C_\C^\ast$ with $\C$-module structure as the inverse of the associativity constraint in $\C$.
\end{corollary}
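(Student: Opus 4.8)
The plan is to obtain this corollary as the special case $A=\1$ of Theorem \ref{thm:bimod-inds}. Taking $A$ to be the unit object $\1$ with its canonical algebra structure, the category $\C_A=\C_\1$ of right $\1$-modules is $\C$ itself, and the left $\C$-module structure given by tensor multiplication recovers the regular left $\C$-module category appearing in the statement. First I would verify the hypotheses of Theorem \ref{thm:bimod-inds} for $\M=\C$: it is semisimple because $\C$ is fusion, and it is indecomposable as a left $\C$-module category, since the unit object $\1$ is simple and any module subcategory containing $\1$ must be all of $\C$ (as $X\cong X\ogreaterthan\1$ for every $X\in\C$). This is the standard fact underlying the equivalence $\C\to\C_\C^\ast$, $X\mapsto-\otimes X$, cited from \cite[Example 7.12.3]{EGNO15}.

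Next I would trace the relevant objects through the equivalence (\ref{eqn:bimodcat}). Specializing ${}_A\C_A\to\C_\M^\ast$, $P\mapsto(-)\otimes_A P$ to $A=\1$ yields $\C\to\C_\C^\ast$, $P\mapsto-\otimes P$, precisely the equivalence used in the statement. Under it the $\1$-bimodule $A\otimes M=\1\otimes M$ is identified with $M$, and its image is the endofunctor $-\otimes M$; since the forgetful functor $\C_\1\to\C$ is the identity, $M$ is just $X\in\C$. Theorem \ref{thm:bimod-inds} then yields $\nu_n(-\otimes X)=\nu_n(X)$ for all $n\ge1$, which is the desired equality (\ref{eqn:indscompatible}).

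The one point requiring attention is the $\C$-module structure carried by $-\otimes X$. Theorem \ref{thm:bimod-inds} describes it as the associativity constraint of $\C$, while the corollary phrases it as the inverse of the associativity constraint; I would note that the canonical structure $s_{Y,M}\colon Y\ogreaterthan(M\otimes X)\cong(Y\otimes M)\otimes X$ on the endofunctor $-\otimes X$ is in both cases the appropriately oriented instance of the associator, the two wordings differing only by the directional convention for writing associativity constraints, which are abbreviated throughout the paper. Because indicators in $\C_\C^\ast$ depend only on the isomorphism class of the object by Remark \ref{rmk:isoobjshavesameinds}, any such reconciliation suffices. This matching of conventions is the main, and entirely formal, obstacle; once settled, the corollary is immediate from Theorem \ref{thm:bimod-inds}.
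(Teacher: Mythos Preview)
Your proposal is correct and follows essentially the same approach as the paper: specialize Theorem \ref{thm:bimod-inds} to $A=\1$, identify $\C_\1$ with the regular left $\C$-module category, and observe that $-\otimes X$ is the image of $\1\otimes X\cong X$ under (\ref{eqn:bimodcat}). Your additional verifications (indecomposability of $\C$ as a $\C$-module category, reconciling the associator convention) are reasonable elaborations of details the paper leaves implicit.
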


\begin{proof}
%\begin{itemize}
%\item[(0)]
We show this with the help of Theorem \ref{thm:bimod-inds}. Since the left regular $\C$-module category could be identified with the category $\C_\1$ of right $\1$-modules in $\C$, where the unit object $\1\in\C$ is trivially an algebra. Moreover, the $\C$-module functor $-\otimes X$ is clearly isomorphic to $-\otimes_\1(\1\otimes X)$ for each $X\in\C$.
\end{proof}

\subsection{Frobenius-Schur exponent of the dual fusion cateogry $\C_\M^\ast$}
%\textcolor{red}{(DELETE THIS SUBSECTION?)}

The Frobenius-Schur exponents of a pivotal monoidal category and its objects are defined in \cite[Definition 5.1]{NS07(b)}. In order to recall these definitions, let us use convention $\min \varnothing=\infty$ in this paper.

For any object $X$ in a pivotal monoidal category $\C$ with pivotal structure $j$, the \textit{Frobenius-Schur exponent of $X$} is defined as
\begin{equation}\label{eqn:objFSexp}
\FSexp(X):=\min\{n\geq1\mid\nu_n(X)=\tr(j_X)\},
\end{equation}
where $\tr$ denotes the (left) categorical trace. Moreover, the \textit{Frobenius-Schur exponent} of $\C$ is defined to be
\begin{equation}\label{eqn:FSexp}
\FSexp(\C):=\min\{n\geq1\mid \forall X\in\C,\;\;\nu_n(X)=\tr(j_X)\}.
\end{equation}

Since $\C_\M^\ast$ is not known to be consequently pivotal, the Frobenius-Schur exponents of its objects might not be defined frequently.
However, when the dual fusion category $\C_\M^\ast$ is assumed to be pivotal such that Schauenburg's equivalence $\Omega$ preserves the pivotal structure, we could compare the Frobenius-Schur exponents of $\C$ and $\C_\M^\ast$ by a combination of \cite[Corollary 4.4]{NS07(a)} and \cite[Corollary 7.8]{NS07(b)}:

\begin{corollary}\label{cor:FSexpscoincide}
Let $\C$ be any spherical fusion category over $\mathbb{C}$ with spherical structure $j$, and let
$\M$ be a finite indecomposable left $\C$-module category which is semisimple.
%Suppose $u$ is the Drinfeld morphism of $\Z(\C)$.
Suppose $\C_\M^\ast$ is also spherical %\textcolor{red}{(pivotal?)} %with structure $j'$
and $\Omega:\Z(\C)\approx\Z(\C_\M^\ast)$ preserves the pivotal structure. Then
%\textcolor{red}{(TRUE?)}
%\begin{itemize}
%\item[(1)]
%For any object $\mathbf{F}=(F,s)\in\C_\M^\ast$, its Frobenius-Schur exponent $\FSexp(\mathbf{F})$ in the senses of (\ref{eqn:dual-objFSexp}) and (\ref{eqn:objFSexp}) coincide;
%
%\item[(2)]
%The Frobenius-Schur exponent $\FSexp(\C_\M^\ast)$ of $\C_\M^\ast$ in the senses of (\ref{eqn:dual-FSexp}) and (\ref{eqn:FSexp}) coincide.
%\end{itemize}
\begin{equation}\label{eqn:FSexpscoincide}
\FSexp(\C_\M^\ast)=\FSexp(\C),
\end{equation}
where $\FSexp(\C_\M^\ast)$ is the Frobenius-Schur exponent of the spherical fusion category $\C_\M^\ast$ in the sense of (\ref{eqn:FSexp}).
\end{corollary}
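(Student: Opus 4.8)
The plan is to reduce both Frobenius-Schur exponents to a single invariant of the common Drinfeld center, namely the order of its ribbon structure, and then transport this invariant across Schauenburg's equivalence $\Omega$. First I would record that both $\C$ and $\C_\M^\ast$ are spherical fusion categories over $\mathbb{C}$: the former by hypothesis, and the latter because $\C_\M^\ast$ is fusion by Lemma \ref{lem:dualcatfusion} and is assumed pivotal with $\Omega$ preserving the pivotal structure, whence spherical by Corollary \ref{cor:dual-ribbon}. In particular the right-hand side of (\ref{eqn:FSexp}) makes sense for $\C_\M^\ast$; moreover, by Proposition \ref{prop:indscoincide} the indicators $\nu_n(\mathbf{F})$ of objects $\mathbf{F}\in\C_\M^\ast$ used in (\ref{eqn:FSexp}) agree with the genuine Ng-Schauenburg indicators of the spherical fusion category $\C_\M^\ast$, so that $\FSexp(\C_\M^\ast)$ as defined here coincides with the classical one.

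The central step is the external characterization obtained by combining \cite[Corollary 4.4]{NS07(a)} with \cite[Corollary 7.8]{NS07(b)}: for any spherical fusion category $\D$ over $\mathbb{C}$, the Frobenius-Schur exponent $\FSexp(\D)$ equals the order of the ribbon structure of its center $\Z(\D)$, i.e. the least $N\ge1$ with $\theta^N=\id$ as a natural automorphism of $\Id_{\Z(\D)}$ (equivalently, $\theta_{\mathbf{V}}^N=\id_{\mathbf{V}}$ for every simple $\mathbf{V}\in\Z(\D)$). Applying this to $\D=\C$ gives $\FSexp(\C)=\ord(\theta)$ for the ribbon structure $\theta$ of $\Z(\C)$, and applying it to $\D=\C_\M^\ast$ gives $\FSexp(\C_\M^\ast)=\ord(\Theta)$ for the ribbon structure $\Theta$ of $\Z(\C_\M^\ast)$.

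It then remains to show $\ord(\theta)=\ord(\Theta)$. By Corollary \ref{cor:dual-ribbon}, equation (\ref{eqn:Omegaribbon}), we have $\Theta_{\Omega(\mathbf{V})}=\Omega(\theta_{\mathbf{V}})$ for every $\mathbf{V}\in\Z(\C)$, hence $\Theta_{\Omega(\mathbf{V})}^N=\Omega(\theta_{\mathbf{V}}^N)$ for all $N\ge1$. Since $\Omega$ is an equivalence, every object of $\Z(\C_\M^\ast)$ is isomorphic to some $\Omega(\mathbf{V})$, and since $\Omega$ is faithful, $\Omega(\theta_{\mathbf{V}}^N)=\id$ if and only if $\theta_{\mathbf{V}}^N=\id$. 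Therefore $\Theta^N=\id$ holds on $\Id_{\Z(\C_\M^\ast)}$ exactly when $\theta^N=\id$ holds on $\Id_{\Z(\C)}$, giving $\ord(\Theta)=\ord(\theta)$ and hence (\ref{eqn:FSexpscoincide}). I expect the main obstacle to be not the transport across $\Omega$, which is a direct consequence of (\ref{eqn:Omegaribbon}) and faithfulness, but rather the bookkeeping ensuring the hypotheses of the cited Ng-Schauenburg results are genuinely in force for both categories and that the two notions of indicator for $\C_\M^\ast$ agree; once the center-order characterization is available, the comparison is formal.
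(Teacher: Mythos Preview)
Your proof is correct and follows essentially the same plan as the paper: reduce to the centers via \cite[Corollary 7.8]{NS07(b)} and then transport across Schauenburg's equivalence $\Omega$. The only difference is in how the transport step is executed. The paper invokes \cite[Corollary 4.4]{NS07(a)} to say that $\Omega$, preserving the pivotal structure, preserves all indicators (and pivotal dimensions), so $\FSexp(\Z(\C))=\FSexp(\Z(\C_\M^\ast))$ directly from the definition \eqref{eqn:FSexp}. You instead use the ribbon-order characterization of $\FSexp$ and the equation $\Theta_{\Omega(\mathbf{V})}=\Omega(\theta_{\mathbf{V}})$ from Corollary~\ref{cor:dual-ribbon} to compare $\ord(\theta)$ and $\ord(\Theta)$. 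Both are valid; your version has the small advantage of reusing an internal result of the paper, while the paper's version keeps the argument entirely at the level of indicators and avoids the extra input that $\FSexp$ of a modular category equals the order of its ribbon (which is in \cite{NS07(b)} but is not quite the content of Corollary~7.8 alone). Your preliminary remark invoking Proposition~\ref{prop:indscoincide} is harmless but unnecessary here, since the statement already stipulates that $\FSexp(\C_\M^\ast)$ is taken in the sense of \eqref{eqn:FSexp} for the spherical category $\C_\M^\ast$.
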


\begin{proof}
%We still denote $N:=\FSexp(\C)$, which is a positive integer according to \cite[Theorem 5.5]{NS07(b)}.
%\begin{itemize}
%\item[(1)]
%Suppose $\mathbf{F}\in\C_\M^\ast$, and
%note by Proposition \ref{prop:indscoincide} that its indicators $\nu_n(\mathbf{F})\;(n\geq1)$ in the senses of (\ref{eqn:indsdef}) and (\ref{eqn:dual-inds}) coincide under the assumptions.
%
%On the other hand, since $\Omega$ is an equivalence, there exists an object $X$ in $\C$ such that $\mathbf{F}\cong\Omega(X)$ \textcolor{red}{(Wrong!)}. Then it is followed by Lemma \ref{lem:independadj} and Corollary \ref{cor:monoidalfunspreseveptr}(2) that
%$$\ptr(\id_\mathbf{F})=\ptr(\id_{\Omega(X)})
%=\ptr(\Omega(\id_X))=\ptr(\id_X).$$
%\item[(2)]
%\end{itemize}

We know according to \cite[Corollary 4.4]{NS07(a)} that $\Omega:\Z(\C)\approx\Z(\C_\M^\ast)$ would preserves the indicators of all the corresponding objects, as it is assumed to preserve the pivotal structure. Consequently, the definition of the Frobenius-Schur exponent (\ref{eqn:FSexp}) implies that
$$\FSexp(\Z(\C_\M^\ast))=\FSexp(\Z(\C)).$$
On the other hand,
it is clear by \cite[Corollary 7.8]{NS07(b)} that
$$\FSexp(\C_\M^\ast)=\FSexp(\Z(\C_\M^\ast))
\;\;\;\;\;\;\;\;\text{and}\;\;\;\;\;\;\;\;\FSexp(\C)=\FSexp(\Z(\C))$$
both hold. Therefore we obtain Equation (\ref{eqn:FSexpscoincide}) as a conclusion.
\end{proof}

%The regular object $R$ of $\C$ is usually defined with Frobenius-Perron dimensions. In order to make $R$ become an object of $\C$, we should assume that $\C$ is an integral spherical fusion category, which would be equivalent to the category of representations of a semisimple quasi-Hopf algebra (\cite[Theorem 8.33]{ENO05} or \cite[Propsition 2.6]{EO04}). In this case, Frobenius-Perron dimensions and categorical dimensions are equal for all objects in $\C$, and
%$$R=\bigoplus_{X\in\mathcal{O}(\C)} \FPdim(X)X=\bigoplus_{X\in\mathcal{O}(\C)} \dim(X)X.$$
%
%\begin{conjecture}
%Let $\C$ be an integral spherical fusion category, and let $\M$ be an indecomposable $\C$-module category which is semisimple. Then
%$$\nu_n(\mathbf{R})=\nu_n(R),$$
%where $\mathbf{R}$ and $R$ are regular elements of $\C_\M^\ast$ and $\C$ respectively.
%\end{conjecture}

\section{Applications: Indicators and exponents of semisimple Hopf algebras and its left partial dualization}\label{section5}

%\section{Specific cases for the module categories}

In this section, our results are applied to the case when $\C=\Rep(H)$ is the category of finite-dimensional representations of a semisimple Hopf algebra $H$ over $\mathbb{C}$. The comultiplication and counit of $H$ are denoted by $\Delta$ and $\varepsilon$ respectively, and the Sweedler notation is frequently used for coproducts in $H$.

\subsection{Canonical pivotal structures of integral fusion categories, and indicators for semisimple quasi-Hopf algebras}

Recall in \cite[Definition 9.6.1]{EGNO15} that a fusion category  is said to be \textit{integral} if the Frobenius-Perron dimensions of all the objects in $\C$ are integers. An integral fusion category is evidently \textit{weakly integral}. Moreover, one could combine \cite[Propositions 8.24 and 8.23]{ENO05} to know that an integral fusion category over $\mathbb{C}$ is \textit{pseudo-unitary} (\cite[Section 8.4]{ENO05}) and hence admits a unique spherical structure satisfying
\begin{equation}\label{eqn:canonicalptr}
\ptr(\id_X)=\FPdim(X)%\textcolor{red}{\text{ What is FPdim? When = dim?}}
\;\;\;\;\;\;\;\;(\forall X\in\C),
\end{equation}
which is referred to as the \textit{canonical pivotal structure} of $\C$ by \cite[Section 2]{NS08} (it is also called the canonical spherical structure in \cite[Section 9.5]{EGNO15}).

\begin{lemma}\label{lem:intfusioncatcenter}
Let $\C$ be an integral %\textcolor{red}{(pseudo-unitary?)}
fusion category over $\mathbb{C}$ with canonical pivotal structure $j$.
Then:
\begin{itemize}
\item[(1)]
Its center $\Z(\C)$ is integral as well;
\item[(2)]
The spherical structure on $\Z(\C)$ induced by $j$ via (\ref{eqn:Z(C)pivotal}) is also the canonical pivotal structure.
\end{itemize}
\end{lemma}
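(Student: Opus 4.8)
The plan is to derive both statements from a single structural fact: the forgetful functor $\mathbf{U}\colon\Z(\C)\to\C$ is a strict tensor functor between fusion categories which, moreover, preserves the pivotal structure.

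For part~(1), I would use that any tensor functor between fusion categories preserves Frobenius--Perron dimensions (see, e.g., \cite[Proposition 6.3.3]{EGNO15}). Thus for every object $\mathbf{V}=(V,\sigma^V)\in\Z(\C)$ we have $\FPdim(\mathbf{V})=\FPdim(\mathbf{U}(\mathbf{V}))=\FPdim(V)$, and since $V$ lies in the integral category $\C$, this is an integer. As this holds for all objects $\mathbf{V}$, the center $\Z(\C)$ is integral, giving~(1).

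For part~(2) I would first note that, by~(1), $\Z(\C)$ is integral, hence pseudo-unitary, and therefore carries a \emph{unique} canonical pivotal structure---the unique spherical structure $j'$ characterised by $\ptr(\id_{\mathbf{V}})=\FPdim(\mathbf{V})$ for all $\mathbf{V}\in\Z(\C)$ (combining \cite[Propositions 8.24 and 8.23]{ENO05}, as in~(\ref{eqn:canonicalptr})). On the other hand, since the canonical structure $j$ of $\C$ is spherical, the structure on $\Z(\C)$ induced by~(\ref{eqn:Z(C)pivotal}) is spherical as well, by the equivalence recalled in Subsection~\ref{subsectionOmega}. It then suffices to check that this induced structure satisfies the same trace--dimension identity, since uniqueness will force the two to coincide.

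To establish the identity I would verify that $\mathbf{U}$ preserves the pivotal structure in the sense of Definition~\ref{def:pivotalfun}. Because $\mathbf{U}$ is strict monoidal and the dual of $\mathbf{V}=(V,\sigma^V)$ in $\Z(\C)$ has underlying object $V^\ast$ together with the evaluation and coevaluation of $\C$, the duality transformation~(\ref{eqn:tau}) of $\mathbf{U}$ reduces to a zig-zag identity and is the identity; hence so is $\xi$ in~(\ref{eqn:xi}). Since $j_{\mathbf{V}}=j_V$ by~(\ref{eqn:Z(C)pivotal}), the triangle~(\ref{eqn:pivotalfun}) commutes, so $\mathbf{U}$ preserves the pivotal structure. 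Corollary~\ref{cor:monoidalfunspreseveptr}(2) then yields
$$\ptr(\id_{\mathbf{V}})=\ptr\big(\mathbf{U}(\id_{\mathbf{V}})\big)=\ptr(\id_V)=\FPdim(V)=\FPdim(\mathbf{V}),$$
where the penultimate equality is the defining property~(\ref{eqn:canonicalptr}) of the canonical structure of $\C$ and the last is~(1). By uniqueness of the canonical pivotal structure on $\Z(\C)$, the induced structure~(\ref{eqn:Z(C)pivotal}) equals it, proving~(2).

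The step I expect to require the most care is the verification that $\mathbf{U}$ preserves the pivotal structure, i.e.\ that its duality transformation $\chi$ (and hence $\xi$) is genuinely the identity. This rests on confirming that the canonical evaluation and coevaluation of the dual object $\mathbf{V}^\ast$ in $\Z(\C)$ restrict under $\mathbf{U}$ to the chosen $\ev_V$ and $\coev_V$ of $\C$. Once this compatibility of duals is in place, the remainder of the argument is a formal consequence of the preservation of Frobenius--Perron dimensions and of pivotal traces.
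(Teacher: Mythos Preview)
Your proposal is correct and follows essentially the same approach as the paper: both parts rest on the forgetful functor $\mathbf{U}:\Z(\C)\to\C$ preserving Frobenius--Perron dimensions, and part~(2) is established by the chain $\ptr(\id_{\mathbf{V}})=\tr(j_{\mathbf V})=\tr(j_V)=\FPdim(V)=\FPdim(\mathbf{V})$. The only difference is cosmetic: the paper observes directly that $j_{\mathbf{V}}=j_V$ and that the evaluation and coevaluation in $\Z(\C)$ agree with those in $\C$, so the trace computation is immediate, whereas you route the same equality through the formalism of Definition~\ref{def:pivotalfun} and Corollary~\ref{cor:monoidalfunspreseveptr}(2).
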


\begin{proof}
%\textcolor{red}{(2)? Def (\ref{eqn:Z(C)pivotal}) and ptr; The forgetful $\Z(\C)\rightarrow\C$ preserves FPdims.}
\begin{itemize}
\item[(1)]
This is mentioned in the proof of \cite[Proposition 9.6.11]{EGNO15}, as the forgetful functor $\Z(\C)\rightarrow\C$ preserves Frobenius-Perron dimensions.
\item[(2)]
Recall in (\ref{eqn:Z(C)pivotal}) that the spherical structure on $\Z(\C)$ induced is the same as $j$ at the objects in $\C$. Thus for each object $\mathbf{V}=(V,\sigma^V)\in\Z(\C)$, we find
$$\ptr(\id_\mathbf{V})
= \tr(j_{\mathbf{V}}) = \tr(j_V) =\FPdim(V)=\FPdim(\mathbf{V}),$$
where the last equality holds since the forgetful functor $\Z(\C)\rightarrow\C$ preserves Frobenius-Perron dimensions as well.
\end{itemize}
\end{proof}

\begin{proposition}\label{prop:intindscoincide}
Let $\C$ be an integral fusion category over $\mathbb{C}$ with canonical pivotal structure $j$. Suppose $\M$ is a finite indecomposable left $\C$-module category which is semisimple. Then the indicators of any $\mathbf{F}\in\C_\M^\ast$ in the senses of Lemma \ref{lem:indsorigin} and Definition \ref{def:dual-inds} coincide.
\end{proposition}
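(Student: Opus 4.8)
The plan is to obtain this as a direct application of Proposition~\ref{prop:indscoincide}: it suffices to verify its two hypotheses in the integral case, namely that $\C_\M^\ast$ admits a pivotal structure $j'$ and that Schauenburg's equivalence $\Omega\colon\Z(\C)\approx\Z(\C_\M^\ast)$ preserves the pivotal structure in the sense of Definition~\ref{def:pivotalfun}. Once both are in place, Proposition~\ref{prop:indscoincide} gives that $\nu_n(\mathbf F)$ of Definition~\ref{def:dual-inds} equals the scalar computed from the ribbon structure $\Theta$ of $\Z(\C_\M^\ast)$ and the adjoint $I_\M$, which is exactly the indicator in the sense of Lemma~\ref{lem:indsorigin}. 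Thus the whole content is the verification of integrality and of pivotal preservation. Note first that $\C_\M^\ast$ is a fusion category by Lemma~\ref{lem:dualcatfusion}, so it makes sense to speak of its Frobenius--Perron dimensions and canonical spherical structure.

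First I would show that $\C_\M^\ast$ is again integral. Since $\C$ is integral, $\Z(\C)$ is integral by Lemma~\ref{lem:intfusioncatcenter}(1). As $\Omega$ is a braided tensor equivalence it induces an isomorphism of Grothendieck rings and hence preserves Frobenius--Perron dimensions, so $\Z(\C_\M^\ast)\approx\Z(\C)$ is integral too. It then remains to descend integrality along the forgetful functor $\mathbf U_\M\colon\Z(\C_\M^\ast)\to\C_\M^\ast$. For this I would invoke the description $\mathbf U_\M\circ I_\M(X)\cong\bigoplus_{Y\in\mathcal O(\C_\M^\ast)}Y\otimes X\otimes Y^\ast$ of the forgetful--induction composite (\cite[Proposition~5.4]{ENO05}), which yields
\[
\FPdim\bigl(I_\M(X)\bigr)=\FPdim(\C_\M^\ast)\,\FPdim(X)
\]
for every $X\in\C_\M^\ast$. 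Taking $X=\1$ shows $\FPdim(\C_\M^\ast)\in\Z$, while integrality of $\Z(\C_\M^\ast)$ forces $\FPdim(I_\M(X))\in\Z$ for all simple $X$; hence $\FPdim(X)=\FPdim(I_\M(X))/\FPdim(\C_\M^\ast)\in\mathbb Q$. Since Frobenius--Perron dimensions are algebraic integers, a rational one is an ordinary integer, so $\C_\M^\ast$ is integral.

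Being integral, $\C_\M^\ast$ carries its canonical spherical structure $j'$, characterized by \eqref{eqn:canonicalptr} as $\ptr(\id_X)=\FPdim(X)$. By Lemma~\ref{lem:intfusioncatcenter}(2) applied to $\C_\M^\ast$, the pivotal structure that $j'$ induces on $\Z(\C_\M^\ast)$ through \eqref{eqn:Z(C)pivotal} is exactly the canonical pivotal structure of $\Z(\C_\M^\ast)$; likewise the structure induced by $j$ on $\Z(\C)$ is the canonical one. Now $\Omega$ is a tensor equivalence between integral fusion categories, so the structure it transports from the canonical $j$-induced structure on $\Z(\C)$ is again a spherical structure whose pivotal traces equal Frobenius--Perron dimensions on $\Z(\C_\M^\ast)$ (as $\Omega$ preserves both pivotal traces and Frobenius--Perron dimensions). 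By uniqueness of the canonical spherical structure it coincides with the $j'$-induced one, which means precisely that $\Omega$ preserves the pivotal structure in the sense of Definition~\ref{def:pivotalfun}. With pivotality of $\C_\M^\ast$ and this preservation property established, Proposition~\ref{prop:indscoincide} applies and shows that the two notions of $\nu_n(\mathbf F)$ coincide.

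The main obstacle is the middle step, descending integrality from $\Z(\C_\M^\ast)$ back to $\C_\M^\ast$: the identity $\FPdim(I_\M(X))=\FPdim(\C_\M^\ast)\FPdim(X)$ together with the algebraic-integrality of Frobenius--Perron dimensions is what rules out the genuinely non-integral (for instance, weakly integral Tambara--Yamagami-type) alternatives for $\C_\M^\ast$, and so guarantees the canonical pivotal structure needed to feed into Proposition~\ref{prop:indscoincide}. Everything else is bookkeeping built on Lemma~\ref{lem:intfusioncatcenter} and the fact that $\Omega$ is a braided tensor equivalence.
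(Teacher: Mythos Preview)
Your proof is correct and follows the same route as the paper's: reduce to Proposition~\ref{prop:indscoincide} by checking that $\C_\M^\ast$ is integral (hence canonically spherical) and that $\Omega$ preserves the canonical pivotal structures on the centers. The only difference is that the paper cites \cite[Theorem~8.35]{ENO05} and \cite[Corollary~6.2]{NS07(a)} directly for these two facts, whereas you supply explicit arguments reproducing them.
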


\begin{proof}
Since $\C$ is an integral fusion category, we conclude by \cite[Theorem 8.35 and Proposition 8.24]{ENO05} that its dual fusion category $\C_\M^\ast$ will be also integral and hence pseudo-unitary with canonical structure $j'$ (which is in fact spherical).

According to Lemma \ref{lem:intfusioncatcenter}, the centers $\Z(\C)$ and $\Z(\C_\M^\ast)$ are both pseudo-unitary with canonical pivotal structures $j$ and $j'$ respectively. It follows from \cite[Corollary 6.2]{NS07(a)} that Schauenburg's equivalence $\Omega:\Z(\C)\approx\Z(\C_\M^\ast)$ will preserve their canonical pivotal structures. Thus the claim desired holds according to Proposition \ref{prop:indscoincide}.
\end{proof}

Now we consider the category $\Rep(K)$ of finite-dimensional representations of a finite-dimensional semisimple quasi-Hopf algebra $K$ over $\mathbb{C}$. It is known in \cite[Theorem 8.33]{ENO05} that $\Rep(K)$ is an integral fusion category (with Frobenius-Perron dimensions of objects being their dimensions as vector spaces), and hence $\Rep(K)$ admits the canonical pivotal structure. Details could be found in \cite{NS08}, where the notion of higher Frobenius-Schur indicators for a semisimple quasi-Hopf algebra over $\mathbb{C}$ is then defined as a consequence:

%. Specifically:
%
%With the language of the canonical pivotal structure,
%the notion of higher Frobenius-Schur indicators for a semisimple quasi-Hopf algebra over $\mathbb{C}$ is defined in \cite{NS08}. Specifically:

\begin{definition}\label{def:inds-quasiHopfalg}
Let $K$ be a finite-dimensional semisimple quasi-Hopf algebra over $\mathbb{C}$.
Regard $\Rep(K)$ as the integral fusion category with the canonical pivotal structure.
\begin{itemize}
\item[(1)] (\cite[Definition 3.1]{NS08})
For any finite-dimensional left $K$-module $V$, its $n$-th Frobenius-Schur indicator is defined to be $\nu_n(V)$ in the sense of Equation (\ref{eqn:indsdef}), where $V$ is regarded as an object in $\Rep(K)$;
\item[(2)] (\cite[Proposition 5.3]{NS07(b)})
The Frobenius-Schur exponent $\FSexp(K)$ of $K$ is defined in the sense of (\ref{eqn:objFSexp}) for the regular module $K\in\Rep(K)$, which coincides with $\FSexp(\Rep(K))$ in the sense of (\ref{eqn:FSexp}) as well.
\end{itemize}
\end{definition}

\begin{remark}\label{rmk:inds-Hopfalgcoincide}
Suppose the semisimple complex quasi-Hopf algebra $K$ is in fact a Hopf algebra. Then
%\begin{itemize}
%\item[(1)]
it is shown in
\cite[Remark 3.4]{NS08} that the $n$-th indicator of $V\in\Rep(K)$ defined above coincides with \cite[Definition 2.3]{KSZ06} for semisimple Hopf algebras:
\begin{equation}\label{eqn:inds-Hopfalg}
\nu_n(V)=\langle\chi,\Lambda^{[n]}\rangle\;\;\;\;\;\;\;\;(\forall n\geq1),
\end{equation}
where $\chi_V$ is the character of the $K$-module $V$, and $\Lambda$ is the normalized integral of $K$ with
$n$-th Sweedler power $\Lambda^{[n]}=\sum\Lambda_{(1)}\Lambda_{(2)}\cdots\Lambda_{(n)}$.
%\item[(2)]
%\textcolor{red}{Since $\exp(K)$ defined in \cite{EG99} is known to be the period of the sequence $\{\nu_n(K)\}_{n\geq1}$ of indicators for the regular representation, one might find that $\exp(K)=\FSexp(K)$?}
%\end{itemize}
\end{remark}

At final of this subsection, we list some basic formulas on indicators of semisimple Hopf algebras for later use.
In what follows, the indicator $\nu_n(V)$ (\ref{eqn:inds-Hopfalg}) would be also denoted by $\nu_n^K(V)$ in order to distinguish the Hopf algebra $K$.

\begin{lemma}\label{lem:indicatorformula}
Let $H$ and $K$ be semisimple Hopf algebras. Then:
\begin{itemize}
\item[(1)]
For any $V\in\Rep(H)$ and $W\in\Rep(K)$, the indicators of $V\otimes W\in\Rep(H\otimes K)$ satisfy \begin{equation}\label{eqn:indicatorformula1}
\nu_n(V\otimes W)=\nu_n^H(V)\nu_n^K(W)\;\;\;\;\;\;\;\;(\forall n\geq1);
\end{equation}

\item[(2)]
For any $V_1,V_2\in\Rep(H)$,
\begin{equation}\label{eqn:indicatorformula2}
\nu_n(V_1\oplus V_2)=\nu_n(V_1)+\nu_n(V_2)\;\;\;\;\;\;\;\;(\forall n\geq1);
\end{equation}

\item[(3)]
For any $W\in\Rep(K)$,
\begin{equation}\label{eqn:indicatorformula3}
\nu_n^{K^\op}(W^\ast)=\nu_n^{K^\cop}(W)\;\;\;\;\;\;\;\;(\forall n\geq1),
\end{equation}
where $(W^\ast,\cdot)\in\Rep(K^\op)$ is defined by the contravariant Hom functor $\Hom_\mathbb{C}(-,\mathbb{C})$, meaning that
$k\cdot w^\ast:=\langle w^\ast,k(-)\rangle$ for any $k\in K^\op$ and $w^\ast\in W^\ast$.
\end{itemize}
\end{lemma}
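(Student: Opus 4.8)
The plan is to derive all three identities from the character formula (\ref{eqn:inds-Hopfalg}), namely $\nu_n(V)=\langle\chi_V,\Lambda^{[n]}\rangle$, by reducing each statement to an elementary comparison of characters and of Sweedler powers of normalized integrals.

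For (1), I first recall that $H\otimes K$ is again a semisimple Hopf algebra over $\mathbb{C}$, whose normalized integral is $\Lambda_H\otimes\Lambda_K$ and whose comultiplication is $(h\otimes k)_{(1)}\otimes(h\otimes k)_{(2)}=(h_{(1)}\otimes k_{(1)})\otimes(h_{(2)}\otimes k_{(2)})$. Consequently the $n$-th Sweedler power factorizes as $(\Lambda_H\otimes\Lambda_K)^{[n]}=\Lambda_H^{[n]}\otimes\Lambda_K^{[n]}$. Since the character of $V\otimes W\in\Rep(H\otimes K)$ is $\chi_{V\otimes W}=\chi_V\otimes\chi_W$ (the action being $\rho_V\otimes\rho_W$), evaluating through (\ref{eqn:inds-Hopfalg}) gives $\nu_n(V\otimes W)=\langle\chi_V,\Lambda_H^{[n]}\rangle\langle\chi_W,\Lambda_K^{[n]}\rangle=\nu_n^H(V)\nu_n^K(W)$. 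Part (2) is even more immediate: characters are additive, $\chi_{V_1\oplus V_2}=\chi_{V_1}+\chi_{V_2}$, and the pairing $\langle-,\Lambda^{[n]}\rangle$ is linear, so (\ref{eqn:inds-Hopfalg}) yields the claim at once.

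The substance of the lemma lies in (3). The key observation is that the single element $\Lambda$ serves as the normalized two-sided integral of both $K^\op$ and $K^\cop$: indeed $K$ is unimodular (being semisimple over $\mathbb{C}$), so $\Lambda$ is simultaneously a left and a right integral, and passing to $K^\op$ or $K^\cop$ leaves the counit unchanged. I then compare the $n$-th Sweedler powers of $\Lambda$ computed in these two algebras. In $K^\cop$ the comultiplication is flipped, so its iterated coproduct sends $\Lambda$ to $\sum\Lambda_{(n)}\otimes\cdots\otimes\Lambda_{(1)}$, and multiplying with the unchanged product gives $\Lambda^{[n]}_{K^\cop}=\sum\Lambda_{(n)}\Lambda_{(n-1)}\cdots\Lambda_{(1)}$. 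In $K^\op$ the comultiplication is unchanged but the product is reversed, whence $\Lambda^{[n]}_{K^\op}=\sum\Lambda_{(n)}\Lambda_{(n-1)}\cdots\Lambda_{(1)}$ as well; thus the two Sweedler powers coincide as elements of the underlying vector space $K$. Finally, the $K^\op$-action on $W^\ast$ is the transpose of the $K$-action on $W$, so its trace is unchanged and $\chi_{W^\ast}=\chi_W$ as linear functionals on $K$. Combining these facts, (\ref{eqn:inds-Hopfalg}) gives $\nu_n^{K^\op}(W^\ast)=\langle\chi_W,\Lambda^{[n]}_{K^\op}\rangle=\langle\chi_W,\Lambda^{[n]}_{K^\cop}\rangle=\nu_n^{K^\cop}(W)$.

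The only genuine care required is the bookkeeping in (3): one must track precisely how reversing the product (for $K^\op$) versus reversing the coproduct (for $K^\cop$) each reverse the order of the factors $\Lambda_{(1)},\dots,\Lambda_{(n)}$, and confirm that these two reversals produce the identical element of $K$. I expect this identification of the two Sweedler powers to be the main (though routine) point, the remaining steps being formal consequences of the character formula.
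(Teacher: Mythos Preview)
Your proposal is correct and follows essentially the same approach as the paper: both reduce everything to the character formula $\nu_n(V)=\langle\chi_V,\Lambda^{[n]}\rangle$, and for (3) both arrive at the identical computation $\nu_n^{K^\op}(W^\ast)=\chi_W(\Lambda_{(n)}\cdots\Lambda_{(1)})=\nu_n^{K^\cop}(W)$ via the transpose action on $W^\ast$ and the reversal of the Sweedler factors. The only difference is that the paper dispatches (1) and (2) by citation (to \cite{NS07(b)} and \cite{NS07(a)} respectively) while you write out the direct character-formula argument, which is in fact the content behind those citations.
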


\begin{proof}
(1) appears in the proof of \cite[Proposition 5.11]{NS07(b)}, and (2) is a consequence of \cite[Corollary 7.8]{NS07(a)}.

In order to show (3), denote by $\Lambda$ the normalized integral of $K$.
Suppose $\{w_i\}$ is a linear basis of $W$ with dual basis $\{w^\ast_i\}$ of $W^\ast$. Then we could compute for any $n\geq1$ that
\begin{eqnarray*}
\nu_n^{K^\op}(W^\ast)
&=& \sum_i\big\langle \Lambda_{(n)}\Lambda_{(n-1)}\cdots \Lambda_{(1)}\cdot w^\ast_i,w_i\big\rangle  \\
&=& \sum_i\big\langle w^\ast_i,\Lambda_{(n)}\Lambda_{(n-1)}\cdots \Lambda_{(1)}w_i\big\rangle
~=~ \nu_n^{K^\cop}(W).
\end{eqnarray*}
\end{proof}

\subsection{Left partially dualized quasi-Hopf algebras, and some situations arising from Hopf algebra extensions}\label{subsection:5.2}

We work over a general field $\k$ in this subsection.
Let $H$ be a finite-dimensional Hopf algebra over $\k$, and let $\Rep(H)$ be the category of finite-dimensional representations of $H$. A well-known result \cite[Proposition 1.19]{AM07} states that each indecomposable exact $\Rep(H)$-module category has form $\Rep(B)$ for some indecomposable exact left $H$-comodule algebra $B$. When $B$ is in particular a left coideal subalgebra of $H$, it is reconstructed in \cite{Li23} a quasi-Hopf algebra from the dual tensor category $\Rep(H)_{\Rep(B)}^\ast$.

In this subsection, we recall some of the relevant notions and results. The first one is \cite[Definition 2.6]{Li23}:
%where only the case $C=H/B^+H$ is necessary in this paper (see \cite[Remark 2.7]{Li23} for details):

\begin{definition}\label{def:PAMS}
Let $H$ be a finite-dimensional Hopf algebra. Suppose that

\begin{itemize}
\item[(1)]
$\iota:B\rightarrowtail H$ is an injection of left $H$-comodule algebras, and
$\pi:H\twoheadrightarrow C$ is a surjection of right $H$-module coalgebras;
\item[(2)]
The image of $\iota$ equals the space of the coinvariants of the right $C$-comodule $H$ with structure $(\id_H\otimes\pi)\circ\Delta$.
\end{itemize}
Then the pair of $\k$-linear diagrams
\begin{equation}\label{eqn:admissiblemapsys}
\begin{array}{ccc}
\xymatrix{
B \ar@<.5ex>[r]^{\iota} & H \ar@<.5ex>@{-->}[l]^{\zeta} \ar@<.5ex>[r]^{\pi}
& C \ar@<.5ex>@{-->}[l]^{\gamma}  }
&\;\;\text{and}\;\;&
\xymatrix{
C^\ast \ar@<.5ex>[r]^{\pi^\ast}
& H^\ast \ar@<.5ex>@{-->}[l]^{\gamma^\ast} \ar@<.5ex>[r]^{\iota^\ast}
& B^\ast \ar@<.5ex>@{-->}[l]^{\zeta^\ast}  },
\end{array}
\end{equation}
is said to be a {\pams} for $\iota$, denoted by $(\zeta,\gamma^\ast)$ for simplicity, if all the conditions
\begin{itemize}
\item[(3)]
$\zeta$ and $\gamma$ have convolution inverses $\overline{\zeta}$ and $\overline{\gamma}$ respectively;
\item[(4)]
$\zeta$ preserves left $B$-actions, and $\gamma$ preserves right $C$-coactions;
\item[(5)]
$\zeta$ and $\gamma$ preserve both the units and counits, meaning that
$$\zeta(1_H)=1_B,\;\;\;\;\e_H\mid_B\circ\zeta=\e_H,\;\;\;\;
\gamma[\pi(1_H)]=1_H\;\;\;\;\text{and}\;\;\;\;\e_H\circ\gamma=\e_C;$$
%(where the counit of $B$ and unit of $H/B^+H$ are induced by those of $H$ via $\iota$ and $\pi$ respectively);
\item[(6)]
$(\iota\circ\zeta)\ast(\gamma\circ\pi)=\id_H$,
\end{itemize}
and the dual forms of (3) to (6) hold equivalently.
\end{definition}

\begin{remark}
Under the assumptions in (1), it is clear that $B$ could be identified with a left coideal subalgebra of $H$, and (2) is equivalent to say that $C\cong H/B^+H$ holds
as mentioned in \cite[Remark 2.7]{Li23}.
Moreover, one could find additional formulas such as
\begin{equation}\label{eqn:piiotazetagammatrivial}
\pi[\iota(b)]=\langle\e_H,\iota(b)\rangle\pi(1_H)\;\;\;\;\text{and}\;\;\;\;
\zeta[\gamma(x)]=\langle\e_C,x\rangle 1_B
\end{equation}
and
\begin{equation}\label{eqn:zetaiotapigammaidentity}
\zeta[\iota(b)]=b\;\;\;\;\text{and}\;\;\;\;
\pi[\gamma(x)]=x
\end{equation}
for all $b\in B$ and $x\in C$ according to \cite[Proposition 2.9]{Li23}.
\end{remark}

For a pair of maps $\iota:B\hookrightarrow H$ and $\pi:H\twoheadrightarrow C$ satisfying (1) and (2) in Definition \ref{def:PAMS},
its {\pams}s always exist but are not unique. However, each system $(\zeta,\gamma^\ast)$ would determine a \textit{left partially dualized quasi-Hopf algebra} denoted by $C^\ast\#B$. We collect necessary descriptions of it as the definition below, which is essentially the same as a combination of some contents in \cite[Theorem 3.1, Definition 3.3 and Remark 3.4]{Li23}.

For the purpose, we could regard $B$ as a left coideal subalgebra of $H$, and regard
$C^\ast$ as a right coideal subalgebra of $H^\ast$ via $\pi^\ast$. Consequently, the following notations would be used for $b\in B$ and $f\in C^\ast$ that
\begin{equation}\label{eqn:b(1)b(2)f(1)f(2)}
\sum b_{(1)}\otimes b_{(2)}\in H\otimes B
\;\;\;\;\;\;\text{and}\;\;\;\;\;\;\sum f_{(1)}\otimes f_{(2)}\in C^\ast\otimes H^\ast
\end{equation}
to represent the structures of the left and right coideals (or comodules) respectively. Then we could also write equations
\begin{equation}\label{eqn:iotapi*}
\sum\iota(b)_{(1)}\otimes\iota(b)_{(2)}=\sum b_{(1)}\otimes \iota(b_{(2)})
\;\;\text{and}\;\;
\sum\pi^\ast(f)_{(1)}\otimes\pi^\ast(f)_{(2)}=\sum \pi^\ast(f_{(1)})\otimes f_{(2)}.
\end{equation}

\begin{definition}
Let $H$ be a finite-dimensional Hopf algebra with a {\pams} (\ref{eqn:admissiblemapsys}).
The left partially dualized quasi-Hopf algebra (or left partial dual) $C^\ast\#B$ determined by a $(\zeta,\gamma^\ast)$ is defined with the following structures: %For a linear basis $\{b_i\}$ of $B$ with dual basis $\{b_i^\ast\}$ of $B^\ast$,
\begin{itemize}
\item[(1)]
As an algebra, $C^\ast\#B$ is the smash product algebra with underlying vector space $C^\ast\otimes B$: The multiplication is given by
\begin{equation}\label{eqn:smashprod}
(f\#b)(g\#c)
:=\sum f(b_{(1)}\rightharpoonup g)\#b_{(2)}c
\;\;\;\;\;\;\;\;(\forall f,g\in C^\ast,\;\;\forall b,c\in B),
\end{equation}
and the unit element is $\e\#1$;

\item[(2)]
The ``comultiplication''
$\pd{\Delta}$ and ``counit'' are described in
\cite[Remark 3.4(2) and Definition 3.1(2)]{Li23};

\item[(3)]
The associator $\pd{\phi}$ is the inverse of the element
\begin{equation}\label{eqn:phi^-1}
\pd{\phi}^{-1}=\sum_{i,j}\left(\e\#\zeta[\gamma(x_i)\gamma(x_j)_{(1)}]\right)
\otimes\left(x_i^\ast\#\zeta[\gamma(x_j)_{(2)}]\right)
\otimes(x_j^\ast\#1),
\end{equation}
where $\{x_i\}$ is a linear basis of $C$ with dual basis $\{x_i^\ast\}$ of $C^\ast$;

\item[(4)]
The antipodes are described in \cite[Definition 3.1(4)]{Li23}.
\end{itemize}
If the associator $\pd{\phi}$ is trivial, then the left partial dual $C^\ast\#B$ is called a left partial dualized Hopf algebra.
\end{definition}

\begin{remark}\label{rmk:partialdualdim}
It is known by \cite[Theorem 2.1(6)]{Mas92} that $\dim(C^\ast\#B)=\dim(B)\dim(C)=\dim(H)$ holds.
\end{remark}

The \textit{reconstruction theorem for left partial duals} should be remarked as \cite[Theorem 4.22]{Li23}. Specifically, it reconstructs a left partial dualized quasi-Hopf algebra $C^\ast\#B$ from the dual tensor category $\Rep(H)_{\Rep(B)}^\ast$, which implies that the finite tensor categories $\Rep(C^\ast\#B)$ and $\Rep(H)$ are \textit{categorically Morita equivalent}.
On the other hand, since $\Rep(B)$ would be an indecomposable and exact $\Rep(H)$-module category, one could know by Lemma \ref{lem:dualcatfusion} that $\Rep(H)_{\Rep(B)}^\ast$ would be fusion as long as $\Rep(H)$ is fusion in characteristic $0$. In other words:

\begin{corollary}\label{cor:leftpdsemisimple}
Let $H$ be a semisimple Hopf algebra over an algebraically closed field $\k$ of characteristic $0$.
Then every left partial dualized quasi-Hopf algebra of $H$ is semisimple.
\end{corollary}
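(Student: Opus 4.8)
The plan is to read off the semisimplicity of $C^\ast\#B$ from the fusion property of its representation category, exactly along the lines sketched in the paragraph preceding the statement. The whole argument is formal once the categorical input is in place, so the real work is concentrated in checking that the hypotheses of Lemma \ref{lem:dualcatfusion} are met and in invoking the dictionary between finite-dimensional algebras and their module categories.

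First I would record that $\Rep(H)$ is a fusion category over $\k$: semisimplicity of $H$ makes $\Rep(H)$ a semisimple abelian category with finitely many isoclasses of simple objects and finite-dimensional Hom-spaces, the unit object $\k$ is simple, and the antipode of the finite-dimensional Hopf algebra $H$ supplies left and right duals. Next, the left coideal subalgebra $B$ entering the partially admissible mapping system gives rise to $\Rep(B)$ as a finite $\Rep(H)$-module category which is indecomposable and exact (this is part of the reconstruction data; cf. \cite{AM07} and the setup of \cite{Li23}). In characteristic $0$ exactness and semisimplicity coincide for finite module categories over a fusion category (the remark following Lemma \ref{lem:modcats}, together with \cite[Example 3.3(iii)]{EO04}), so $\Rep(B)$ is in fact semisimple.

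With these hypotheses verified, Lemma \ref{lem:dualcatfusion} applies directly and shows that the dual category $\Rep(H)_{\Rep(B)}^\ast$ is again a fusion category. The reconstruction theorem \cite[Theorem 4.22]{Li23} then provides a tensor equivalence $\Rep(C^\ast\#B)\approx\Rep(H)_{\Rep(B)}^\ast$, whence $\Rep(C^\ast\#B)$ is fusion and in particular a semisimple abelian category. To conclude, I would invoke the standard fact that a finite-dimensional quasi-Hopf algebra $K$ is semisimple if and only if $\Rep(K)$ is a semisimple category; applied to $K=C^\ast\#B$ this yields the semisimplicity of $C^\ast\#B$.

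I expect the only genuinely delicate point to be the verification that $\Rep(B)$ is an indecomposable exact $\Rep(H)$-module category, since this is precisely what activates Lemma \ref{lem:dualcatfusion}; everything downstream (the fusion property of the dual, transport along the reconstruction equivalence, and the algebra–category dictionary) is routine. One should be careful not to assume $B$ itself semisimple a priori, and instead obtain semisimplicity of $\Rep(B)$ from exactness in characteristic $0$, thereby avoiding any circularity with the conclusion about $C^\ast\#B$.
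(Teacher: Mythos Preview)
Your proposal is correct and follows essentially the same route as the paper: verify that $\Rep(B)$ is an indecomposable exact $\Rep(H)$-module category, apply Lemma~\ref{lem:dualcatfusion} to conclude that $\Rep(H)_{\Rep(B)}^\ast$ is fusion, and then transport this to $\Rep(C^\ast\#B)$ via the reconstruction equivalence of \cite[Theorem~4.22]{Li23}. The paper's argument is in fact the short paragraph immediately preceding the corollary, and your write-up simply fleshes out the same steps with a bit more care (noting, for instance, the passage back from semisimplicity of $\Rep(C^\ast\#B)$ to that of the algebra itself).
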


It is known that the quasi-Hopf algebra $C^\ast\#B$ would become a Hopf algebra when its
associator $\pd{\phi}$ (or its inverse $\pd{\phi}^{-1}$) is trivial. In this case, we also say that $C^\ast\#B$ is a \textit{left partially dualized Hopf algebra}.
Some examples are collected in Subsection \ref{subsection:5.4} in order to study their indicators, including \textit{bismash products of matched pair of groups} (\cite{Tak81}) and \textit{(generalized) quantum doubles} (\cite{DT94}).

Finally, let us focus on left partial duals determined by {\pams}s arising from extensions of Hopf algebras (\cite{Mas94,AD95}).
Recall in \cite[Definitions 1.3 and 2.4]{Mas94} that if a diagram
\begin{equation}\label{eqn:Hopfalgext}
\xymatrix{B \ar[r]^{\iota} & H \ar[r]^{\pi} & C }
\end{equation}
of Hopf algebras satisfies that
\begin{itemize}
\item[(1)]
$\iota$ is an injection, and $\pi$ is a projection;
\item[(2)]
$\mathrm{Im}(\iota)=\{h\in H\mid \sum h_{(1)}\otimes\pi(h_{(2)})=h\otimes\pi(1_H)\}$
(or other equivalent conditions stated in \cite[Lemma 1.2]{Mas94}) holds,\
\end{itemize}
then (\ref{eqn:Hopfalgext}) is called an \textit{extension of Hopf algebras}.
In particular, the extension called \textit{abelian} if $B$ is commutative and $C$ is cocommutative (cf. \cite{Mas02}).

It is clear that for an extension (\ref{eqn:Hopfalgext}) of Hopf algebras, there always exists {\pams} of form
\begin{equation*}
\begin{array}{ccc}
\xymatrix{
B \ar@<.5ex>[r]^{\iota} & H \ar@<.5ex>@{-->}[l]^{\zeta} \ar@<.5ex>[r]^{\pi}
& C \ar@<.5ex>@{-->}[l]^{\gamma}  }
&\;\;\text{and}\;\;&
\xymatrix{
C^\ast \ar@<.5ex>[r]^{\pi^\ast}
& H^\ast \ar@<.5ex>@{-->}[l]^{\gamma^\ast} \ar@<.5ex>[r]^{\iota^\ast}
& B^\ast \ar@<.5ex>@{-->}[l]^{\zeta^\ast}  },
\end{array}
\end{equation*}
for $\iota$.
In this situation, the algebra structure of the left partial dual determined by $(\zeta,\gamma^\ast)$ is shown to be special:

\begin{lemma}\label{lem:abelextleftPD}
Let $H$ be a finite-dimensional Hopf algebra fitting into an extension $B\xrightarrow{\iota}H\xrightarrow{\pi}C$
of Hopf algebras.
Then every left partial dualized quasi-Hopf algebra
$$C^\ast\#B=C^\ast\otimes B$$
%$(H/B^+H)^\ast\#B$ for $B\subseteq H$ is the tensor product $(H/B^+H)^\ast\otimes B$
as an algebra. In particular, if the extension is abelian, then $C^\ast\#B$ is commutative.
\end{lemma}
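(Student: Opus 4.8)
The plan is to show that the smash product multiplication \eqref{eqn:smashprod} defining the algebra $C^\ast\#B$ degenerates to the componentwise multiplication of $C^\ast\otimes B$, the whole point being that in an \emph{extension} of Hopf algebras the left action $\rightharpoonup$ of $B$ on $C^\ast$ is trivial. I would first observe that the underlying algebra of any left partial dual depends only on the pair $(\iota,\pi)$ through the action $\rightharpoonup$ occurring in \eqref{eqn:smashprod}, and not at all on the auxiliary data $(\zeta,\gamma^\ast)$, which enters only the coproduct, the associator \eqref{eqn:phi^-1} and the antipode. This is precisely what justifies the word ``every'' in the statement, and it reduces the problem to analysing $\rightharpoonup$.

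Next I would record the two structural consequences of $B\xrightarrow{\iota}H\xrightarrow{\pi}C$ being an extension of Hopf algebras. First, $\iota$ is a Hopf algebra embedding, so $B$ is a genuine Hopf subalgebra and the left coideal comodule structure $\sum b_{(1)}\otimes b_{(2)}$ of \eqref{eqn:iotapi*} is the restriction of $\Delta_H$, with \emph{both} legs lying in $B$. Second, $\pi\circ\iota=\e_B\cdot 1_C$, which I would deduce from the coinvariance condition (2) of Definition \ref{def:PAMS} by applying $\e_H\otimes\id_C$ to the identity $\sum\iota(b)_{(1)}\otimes\pi(\iota(b)_{(2)})=\iota(b)\otimes 1_C$. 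The action $\rightharpoonup$ itself comes from the right $H$-module coalgebra structure on $C=H/B^+H$, namely $\pi(k)\leftharpoonup h=\pi(kh)$, so that for $b\in B$ and $g\in C^\ast$ one computes $\langle b\rightharpoonup g,\pi(k)\rangle=\langle g,\pi(kb)\rangle=\langle g,\pi(k)\pi(b)\rangle=\e(b)\langle g,\pi(k)\rangle$, using that $\pi$ is multiplicative together with $\pi(b)=\e(b)1_C$. Surjectivity of $\pi$ then forces $b\rightharpoonup g=\e(b)g$.

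Feeding this trivial action into \eqref{eqn:smashprod}, and using that $b_{(1)}\in B$ together with the counit axiom $\sum\e(b_{(1)})b_{(2)}=b$, the product $(f\#b)(g\#c)$ collapses to $fg\#bc$, which is exactly the algebra structure of $C^\ast\otimes B$. For the abelian case I would then simply note that cocommutativity of $C$ makes $C^\ast$ a commutative algebra while $B$ is commutative by hypothesis, so the tensor product algebra $C^\ast\otimes B$ is commutative. The only real obstacle is the second paragraph: one must identify the precise action $\rightharpoonup$ entering \eqref{eqn:smashprod} and make sure the argument genuinely uses $b_{(1)}\in B$, which is valid only because $B$ is a Hopf subalgebra in the extension setting and not merely a left coideal subalgebra. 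Indeed it is exactly the failure of $\pi$ to be multiplicative on all of $H$ in the general \pams\ that would otherwise obstruct triviality of the action.
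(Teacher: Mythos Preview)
Your proposal is correct and follows essentially the same approach as the paper's proof. Both arguments reduce to showing that $b_{(1)}\rightharpoonup g=\e(b_{(1)})g$ by exploiting that $\iota$ and $\pi$ are Hopf algebra maps and $\pi\circ\iota=\e\cdot 1_C$; the paper computes this via the right $H^\ast$-comodule structure on $C^\ast$ (writing $h\rightharpoonup g=\sum g_{(1)}\langle\pi^\ast(g_{(2)}),h\rangle$ and then pairing), while you compute it via the dual right $H$-module structure on $C$, which is merely a cosmetic difference.
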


\begin{proof}
Note here that $\iota:B\rightarrowtail H$ and $\pi:H\twoheadrightarrow C$ are Hopf algebra maps, and hence the right $H^\ast$-comodule structure of $C^\ast$ and the left $H$-comodule structure of $B$ are respectively:
$$\begin{array}{ccc}
\begin{array}{ccc}
C^\ast &\rightarrow& C^\ast\otimes H^\ast  \\
f &\mapsto& \sum f_{(1)}\otimes \pi^\ast(f_{(2)})
\end{array}
&\;\;\;\;\text{and}\;\;\;\;&
\begin{array}{ccc}
B &\rightarrow& H\otimes B  \\
b &\mapsto& \sum \iota(b_{(1)})\otimes b_{(2)},
\end{array}
\end{array}$$
where the Sweedler notations stand for the comultiplications of respective Hopf algebras $C^\ast$ and $B$.
Then we calculate in the left partially dualized quasi-Hopf algebra $C^\ast\#B$ with the formula (\ref{eqn:smashprod}) that: For all $f,g\in C^\ast$ and $b,c\in B$,
\begin{eqnarray*}
(f\#b)(g\#c)
&=&
\sum f(\iota(b_{(1)})\rightharpoonup g)\# b_{(2)}c
~=~
\sum fg_{(1)}\langle\pi^\ast(g_{(2)}),\iota(b_{(1)})\rangle\# b_{(2)}c  \\
&=&
\sum fg_{(1)}\langle g_{(2)},\pi[\iota(b_{(1)})]\rangle\# b_{(2)}c
\overset{(\ref{eqn:piiotazetagammatrivial})}{=}
\sum fg_{(1)}\langle g_{(2)},\e(b_{(1)})1\rangle\# b_{(2)}c  \\
&=& fg\#bc.
\end{eqnarray*}

Furthermore, if $B$ is commutative and $C$ is cocommutative, then the tensor product $C^\ast\otimes B$ of algebras becomes commutative, since $C^\ast$ is commutative as well.
\end{proof}

\begin{remark}
Suppose $H$ is a semisimple Hopf algebra over $\mathbb{C}$ fitting into an abelian extension of Hopf algebras. Then we could infer by Lemma \ref{lem:abelextleftPD} that the fusion category $\Rep(H)$ is categorically Morita equivalent to a pointed one, and hence $\Rep(H)$ is \textit{group-theoretical} in the sense of \cite[Definition 8.40]{ENO05}. This fact was shown by Natale in \cite[Theorem 1.3]{Nat03}.
\end{remark}

Another particular case for extension of Hopf algebras is the split condition:
\begin{definition}(\cite[Definition 6.5.2]{Sch02})\label{def:splitHopfalgext}
An extension
$B\xrightarrow{\iota} H\xrightarrow{\pi} C$
of Hopf algebras is said to be split, if there exist a left $B$-module coalgebra map $\zeta:H\rightarrow B$ and a right $C$-comodule algebra map $\gamma:C\rightarrow H$, such that $(\iota\circ\zeta)\ast(\gamma\circ\pi)=\id_H$ holds.
\end{definition}

Under the conditions in Definition \ref{def:splitHopfalgext},
it is clear that $\zeta$ preserves the counits, and $\gamma$ preserves the units. Moreover, due to the arguments in the proof of \cite[Theorem 9]{DT86} and
\cite[Lemma 2.15]{Mas92}, we could assume that $\zeta$ and $\gamma$ both preserve the units and the counits since $\zeta(1)$ must be a group-like element in this case. Details are also found in the paragraphs before \cite[Lemma 2.5]{Li23}.

Consequently, a split extension (\ref{eqn:Hopfalgext}) of finite-dimensional Hopf algebras implies a {\pams} $(\zeta,\gamma^\ast)$ of $\iota$, such that $\zeta$ is a left $B$-module coalgebra map and $\gamma$ is a right $C$-comodule algebra map. Howerver in this situation, partially dualized Hopf algebras should be determined according to the following lemma:

\begin{lemma}\label{lem:leftPDsplitext}
Let $H$ be a finite-dimensional Hopf algebra fitting into a split extension $B\xrightarrow{\iota}H\xrightarrow{\pi}C$
of Hopf algebras.
Suppose that $(\zeta,\gamma^\ast)$ is a {\pams} for $\iota$, where $\zeta$ is a left $B$-module coalgebra map and $\gamma$ is a right $C$-comodule algebra map.
Then
$(\pi,\iota^\ast)$ is a {\pams}
\begin{equation}\label{eqn:admissiblemapsyssplit}
\begin{array}{ccc}
\xymatrix{
C^\biop \ar@<.5ex>[r]^{\gamma} & H^\biop \ar@<.5ex>[l]^{\pi} \ar@<.5ex>[r]^{\zeta}
& B^\biop \ar@<.5ex>[l]^{\iota}  }
&\text{and}&
\xymatrix{
B^{\ast\biop} \ar@<.5ex>[r]^{\zeta^\ast}
& H^{\ast\biop} \ar@<.5ex>[l]^{\iota^\ast} \ar@<.5ex>[r]^{\gamma^\ast}
& C^{\ast\biop} \ar@<.5ex>[l]^{\pi^\ast}  },
\end{array}
\end{equation}
for $\gamma:C^\biop\rightarrowtail H^\biop$, and it
determines a left partially dualized Hopf algebra $B^{\ast\biop}\#C^\biop$ of $H^\biop$, such that
$$B^{\ast\biop}\#C^\biop=B^{\ast\biop}\otimes C^\biop$$
as a coalgebra.
\end{lemma}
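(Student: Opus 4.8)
The plan is to verify directly that the data $(\pi,\iota^\ast)$ satisfies every condition of Definition \ref{def:PAMS} relative to the injection $\gamma:C^\biop\rightarrowtail H^\biop$, and then to read off the associator and comultiplication of the resulting left partial dual. Under the dictionary forced by the diagram (\ref{eqn:admissiblemapsyssplit}), the roles of the four structure maps are exchanged: the new injection and surjection are $\gamma$ and $\zeta$, while the new pair of ``dashed'' maps playing the parts of $\zeta$ and $\gamma$ in Definition \ref{def:PAMS} are $\pi$ and $\iota$. The whole point is that a split extension supplies exactly the extra regularity needed for this exchange, namely that $\iota$ and $\pi$ are Hopf algebra maps, $\zeta$ is a coalgebra map, and $\gamma$ is an algebra map.

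First I would dispose of the essentially formal conditions. Since $\pi$ and $\iota$ are Hopf algebra maps, they preserve units and counits (Definition \ref{def:PAMS}(5)) and admit convolution inverses obtained by composing with the antipodes (Definition \ref{def:PAMS}(3)). For Definition \ref{def:PAMS}(6) I would unwind the convolution product of $H^\biop$: for $F,G:H^\biop\to H^\biop$ one has $(F\ast G)(h)=\sum G(h_{(1)})\,F(h_{(2)})$ in $H$, so the required identity $(\gamma\pi)\ast(\iota\zeta)=\id_{H^\biop}$ is literally the original $(\iota\circ\zeta)\ast(\gamma\circ\pi)=\id_H$. Condition \ref{def:PAMS}(4) is equally direct: that $\pi$ preserves the left $C^\biop$-actions reduces, after translating the opposite multiplication, to $\pi(h\,\gamma(x))=\pi(h)\,x$, which holds because $\pi$ is an algebra map with $\pi\circ\gamma=\id_C$ (see (\ref{eqn:zetaiotapigammaidentity})); dually, that $\iota$ preserves the right $B^\biop$-coactions reduces to $\sum\iota(b_{(2)})\otimes\zeta(\iota(b_{(1)}))=\sum\iota(b_{(2)})\otimes b_{(1)}$, which holds because $\iota$ is a coalgebra map with $\zeta\circ\iota=\id_B$. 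The dual forms of (3)--(6) then hold by the equivalence built into Definition \ref{def:PAMS}.

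The technical heart is Definition \ref{def:PAMS}(1)--(2) for the new system, and I expect this to be the main obstacle. Injectivity of $\gamma$ and surjectivity of $\zeta$ follow from $\pi\circ\gamma=\id_C$ and $\zeta\circ\iota=\id_B$, and that $\gamma$ is an algebra map while $\zeta$ is a coalgebra map is just the op-cop reformulation of the split data. What requires real work is matching the comodule and module structures: I must produce the left $H^\biop$-comodule algebra structure on $C^\biop$ transported along $\gamma$ and identify $\gamma(C^\biop)$ with the $\zeta$-coinvariants $(H^\biop)^{\mathrm{co}\,B^\biop}=\{h\mid\sum\zeta(h_{(1)})\otimes h_{(2)}=1_B\otimes h\}$ of the right $B^\biop$-comodule $H^\biop$ with coaction $(\id\otimes\zeta)\Delta^\biop$. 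Here one cannot simply invoke that $\gamma(C)$ is a coideal subalgebra of $H$, since $\gamma$ need not be a coalgebra map and indeed $\gamma(C)$ is generally neither a left nor a right coideal. Instead I would combine the comodule-algebra identity $\sum\gamma(x)_{(1)}\otimes\pi(\gamma(x)_{(2)})=\sum\gamma(x_{(1)})\otimes x_{(2)}$ with $\zeta\circ\gamma$ being trivial (see (\ref{eqn:piiotazetagammatrivial})) and the decomposition coming from condition (6): applying $\iota\zeta\otimes\gamma$ to the comodule-algebra identity already yields $\sum\iota\zeta(\gamma(x)_{(1)})\otimes\gamma\pi(\gamma(x)_{(2)})=1_H\otimes\gamma(x)$, and the full coinvariance statement (together with closure under the opposite multiplication) would be extracted from this using the bijectivity of $b\otimes x\mapsto\iota(b)\gamma(x)$, which follows from condition (6) and the dimension count $\dim H=\dim B\,\dim C$.

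Finally, granting that $(\pi,\iota^\ast)$ is a {\pams} for $\gamma$, the associated left partial dual is $B^{\ast\biop}\#C^\biop$, whose associator and comultiplication are built from the new dashed maps $\pi$ and $\iota$, which are now Hopf algebra maps. This is the situation complementary to Lemma \ref{lem:abelextleftPD}, where the solid maps were Hopf and the \emph{algebra} structure degenerated to a tensor product; here the dashed maps are Hopf. Substituting $\pi$ and $\iota$ into the associator (\ref{eqn:phi^-1}) produces terms of the form $\pi[\iota(\cdots)]$, which are scalars because $\pi\circ\iota$ is trivial by (\ref{eqn:piiotazetagammatrivial}), so $\pd{\phi}^{-1}$ collapses to the trivial associator and $B^{\ast\biop}\#C^\biop$ is a genuine Hopf algebra. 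Dually, because $\iota$ is a coalgebra map, the comultiplication of $B^{\ast\biop}\#C^\biop$ factors as that of the tensor-product coalgebra $B^{\ast\biop}\otimes C^\biop$, by the computation mirroring Lemma \ref{lem:abelextleftPD} with the roles of product and coproduct interchanged; this gives the claimed coalgebra identification.
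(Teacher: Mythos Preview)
Your handling of conditions (3)--(6) and the final coalgebra identification is correct and matches the paper's approach closely; the paper cites an external reference (\cite[Lemma 2.12]{HKL25}) for the last step rather than spelling out the dual of Lemma \ref{lem:abelextleftPD}, but your direct argument via the associator formula and (\ref{eqn:piiotazetagammatrivial}) is exactly what that reference contains.

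The genuine gap is in your treatment of conditions (1)--(2). You assert that ``$\gamma(C)$ is generally neither a left nor a right coideal'' and therefore propose an indirect route through the bijectivity of $b\otimes x\mapsto\iota(b)\gamma(x)$. This assertion is mistaken in the split context: the paper proves precisely that $\mathrm{Im}(\gamma)$ \emph{is} a right coideal of $H$, using the hypotheses that $\zeta$ is a coalgebra map and $\gamma$ is a right $C$-comodule algebra map. The computation decomposes $\sum\gamma(x)_{(1)}\otimes\gamma(x)_{(2)}$ via condition (6), then uses the $C$-comodule property of $\gamma$ together with $\zeta$ being a coalgebra map to collapse the first tensorand. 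Dually, $\mathrm{Ker}(\zeta)$ is shown to be a left ideal of $H$ using that $\pi$ and $\gamma$ are algebra maps. Once these hold, $\gamma$ is automatically an injection of left $H^\biop$-comodule algebras and $\zeta$ a surjection of right $H^\biop$-module coalgebras, giving condition (1) directly; condition (2) then follows from (\ref{eqn:piiotazetagammatrivial}) and the dimension equality $\dim H=\dim B\,\dim C$.

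Your proposed workaround is not clearly complete as written: the single identity $\sum\iota\zeta(\gamma(x)_{(1)})\otimes\gamma\pi(\gamma(x)_{(2)})=1_H\otimes\gamma(x)$ does not by itself furnish a left $H^\biop$-comodule structure on $C^\biop$ compatible with $\gamma$, and extracting one from the multiplication bijection would essentially force you to re-derive the coideal property you sought to avoid.
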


\begin{proof}
Firstly, we aim to show that $\mathrm{Im}(\gamma)$ is a right coideal of $H$, which implies that $\gamma:C\rightarrow H$ is a right $H$-comodule injection. In fact, since $\gamma$ is a right $C$-comodule map and $\zeta$ is a coalgebra map, we could calculate for any $x\in C$ that:
\begin{eqnarray*}
\sum\zeta[\gamma(x)_{(1)}]\otimes\gamma(x)_{(2)}
&\overset{\text{Definition}\;\ref{def:PAMS}(6)}=&
\sum\zeta[\gamma(x)_{(1)}]\otimes
  \iota\left(\zeta[\gamma(x)_{(2)}]\right)\gamma\left(\pi[\gamma(x)_{(3)}]\right)  \\
&=&
\sum\zeta[\gamma(x_{(1)})_{(1)}]\otimes
  \iota\left(\zeta[\gamma(x_{(1)})_{(2)}]\right)\gamma(x_{(2)})  \\
&=&
\sum\zeta[\gamma(x_{(1)})]_{(1)}\otimes
  \iota\left(\zeta[\gamma(x_{(1)})]_{(2)}\right)\gamma(x_{(2)})  \\
&\overset{(\ref{eqn:piiotazetagammatrivial})}=&
1_B\otimes \gamma(x),
\end{eqnarray*}
and hence
\begin{eqnarray*}
\sum\gamma(x)_{(1)}\otimes\gamma(x)_{(2)}
&\overset{\text{Definition}\;\ref{def:PAMS}(6)}=&
\sum\iota\left(\zeta[\gamma(x)_{(1)}]\right)\gamma\left(\pi[\gamma(x)_{(2)}]\right)
  \otimes\gamma(x)_{(3)}  \\
&=&
\sum\iota(1_B)\gamma\left(\pi[\gamma(x)_{(1)}]\right)\otimes\gamma(x)_{(2)}  \\
&=&
\sum\gamma\left(\pi[\gamma(x)_{(1)}]\right)\otimes\gamma(x)_{(2)}
\;\;\in \mathrm{Im}(\gamma)\otimes H.
\end{eqnarray*}

Dually, we could also find that $\mathrm{Ker}(\zeta)$ is a left ideal of $H$ in an analogous way. Indeed,
since $\pi$ and $\gamma$ are algebra maps
which imply that $\overline{\gamma}=\gamma\circ S_C$ holds,
we could calculate for any $h\in H$ and $k\in\mathrm{Ker}(\zeta)$ that
\begin{eqnarray*}
\iota[\zeta(hk)]
&\overset{\text{Definition}\;\ref{def:PAMS}(6)}=&
\sum h_{(1)}k_{(1)}\overline{\gamma}[\pi(h_{(2)}k_{(2)})]
~=~
\sum h_{(1)}k_{(1)}\gamma\big(S_C[\pi(h_{(2)}k_{(2)})]\big)   \\
&=&
\sum h_{(1)}k_{(1)}\gamma\big(S_C[\pi(k_{(2)})]\big)\gamma\big(S_C[\pi(h_{(2)})]\big)  \\
&\overset{\text{Definition}\;\ref{def:PAMS}(6)}=&
\sum h_{(1)}\iota[\zeta(k)]\gamma\big(S_C[\pi(h_{(2)})]\big)
~=~ 0.
\end{eqnarray*}
and thus $\zeta(hk)=0$ as $\iota$ is an injection.
Therefore, it follows that $\zeta:H\rightarrow B$ is a left $H$-module surjection.

As a conclusion, $\zeta:C^\biop\rightarrow H^\biop$ is an injection of left $H^\biop$-comodule algebras, and $\gamma:H^\biop\rightarrow B^\biop$ is a surjection of right $H^\biop$-module coalgebras. Furthermore, it follows from Equation (\ref{eqn:piiotazetagammatrivial}) and $\dim(H)=\dim(B)\dim(C)$ by Remark \ref{rmk:partialdualdim} that
$$\mathrm{Im}(\gamma)=\{h\in H\mid \sum \pi(h_{(1)})\otimes h_{(2)}=\pi(1)\otimes h\}.$$

Then in order to explain that $\pi:H^\biop\rightarrow C^\biop$ preserves left $C^\biop$-actions, it suffices to note by Equation (\ref{eqn:zetaiotapigammaidentity}) that
$$\pi(h)x=\pi(h)\pi[\gamma(x)]=\pi[h\gamma(x)]\;\;\;\;\;\;\;\;(\forall x\in C,\;\forall h\in H)$$
holds in $C$.
It follows from similar calculations that $\iota:B^\biop\rightarrow H^\biop$ would preserve right $B^\biop$-coactions. Besides,
the requirement in Definition \ref{def:PAMS}(6) is the same as the equation $(\iota\circ\zeta)\ast(\gamma\circ\pi)=\id_H$.
Therefore,
(\ref{eqn:admissiblemapsyssplit}) is a {\pams} for $\gamma:C^\biop\rightarrowtail H^\biop$ as desired.

At final, we might apply \cite[Lemma 2.12]{HKL25} to the {\pams} $(\pi,\iota^\ast)$ to know that $B^{\ast\biop}\#C^\biop$ of $H^\biop$ is a partially dualized Hopf algebra, whose coalgebra structure is
the tensor product $B^{\ast\biop}\otimes C^\biop$.
\end{proof}

\begin{corollary}(cf. \cite{Sch02})\label{cor:splitabelext}
Suppose $H$ is a finite-dimensional Hopf algebra fitting into a split abelian extension of Hopf algebras.
Then $\Rep(H)$ is categorically Morita equivalent to $\Rep(K)$ for some finite-dimensional cocommutative Hopf algebra $K$.
\end{corollary}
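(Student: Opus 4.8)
The strategy is to realize the required cocommutative Hopf algebra as a left partial dual of $H^\biop$ and then transport the resulting Morita equivalence back to $\Rep(H)$ through the antipode.

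To begin, I would fix a {\pams} $(\zeta,\gamma^\ast)$ for the given split extension $B\xrightarrow{\iota}H\xrightarrow{\pi}C$, with $\zeta$ a left $B$-module coalgebra map and $\gamma$ a right $C$-comodule algebra map. Lemma \ref{lem:leftPDsplitext} then produces a left partially dualized Hopf algebra $K:=B^{\ast\biop}\#C^\biop$ of $H^\biop$, finite-dimensional by Remark \ref{rmk:partialdualdim}, whose underlying coalgebra is the tensor product $B^{\ast\biop}\otimes C^\biop$. The next task is to verify that $K$ is cocommutative. Because the extension is abelian, $B$ is commutative and $C$ is cocommutative; commutativity of $B$ makes $B^\ast$ cocommutative, and passing to $\biop$ alters the comultiplication only by the flip $\Delta^\cop$, which coincides with $\Delta$ once cocommutativity holds, so $B^{\ast\biop}$ is cocommutative, and for the same reason $C^\biop$ is cocommutative. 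Since a tensor product of cocommutative coalgebras is again cocommutative, $K$ is a cocommutative Hopf algebra.

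It remains to exhibit the categorical Morita equivalence between $\Rep(H)$ and $\Rep(K)$. By the reconstruction theorem for left partial duals \cite[Theorem 4.22]{Li23}, the finite tensor categories $\Rep(K)$ and $\Rep(H^\biop)$ are categorically Morita equivalent. On the other hand, the antipode of $H$ is bijective since $H$ is finite-dimensional, and it provides an isomorphism of Hopf algebras $S:H\xrightarrow{\cong}H^\biop$, which induces a tensor equivalence $\Rep(H)\approx\Rep(H^\biop)$. As categorical Morita equivalence is preserved under tensor equivalences of finite tensor categories, $\Rep(H)$ is categorically Morita equivalent to $\Rep(K)$, which is the desired conclusion.

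The argument is a chaining of results already established, so no deep obstacle arises; the point demanding the most care is the bookkeeping of the $\op\cop$ conventions in the cocommutativity check --- confirming that cocommutativity of $B^\ast$ and of $C$ genuinely survives the passage to $B^{\ast\biop}$ and $C^\biop$ --- together with ensuring that the antipode identifies $H$ with $H^\biop$ rather than with one of the single-sided variants $H^\op$ or $H^\cop$.
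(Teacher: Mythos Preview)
Your proposal is correct and follows essentially the same argument as the paper: apply Lemma~\ref{lem:leftPDsplitext} to obtain $K=B^{\ast\biop}\#C^\biop$ as a left partial dual of $H^\biop$ with tensor-product coalgebra structure, observe that the abelian hypothesis makes both tensor factors (and hence $K$) cocommutative, and then use the antipode isomorphism $H\cong H^\biop$ together with the reconstruction theorem to transport the Morita equivalence from $\Rep(H^\biop)$ to $\Rep(H)$. The only difference is that you spell out the cocommutativity check and the role of the reconstruction theorem in slightly more detail than the paper does.
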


\begin{proof}
Suppose $B\rightarrow H\rightarrow C$ is a split abelian extension. Then we know by Lemma \ref{lem:leftPDsplitext} that $H^\biop$ has a left partially dualized Hopf algebra denoted by $K=B^{\ast\,\biop}\#C^\biop$, whose coalgebra structure is the tensor product $B^{\ast\,\biop}\otimes C^\biop$.

However, the Hopf algebra $H^\biop$ is isomorphic to $H$ via the antipode.
Consequently, the finite tensor categories $\Rep(H)$ and $\Rep(H^\biop)$ are categorically Morita equivalent to $\Rep(K)$.
On the other hand, since the extension is abelian, the Hopf algebras $B^{\ast\,\biop}$ and $C^\biop$ are both cocommutative, and hence $K$ is a cocommutative Hopf algebra.
\end{proof}

\subsection{Relations between the indicators of a semisimple Hopf algebra and its left partial duals}\label{subsection:mainthmPD}

In order to study left partial duals by using formulas in \cite[Section 4]{Li23} frequently, let us consider the case when $B\subseteq H$ is a left coideal subalgebra and $C=H/B^+H$ in the {\pams} (\ref{eqn:admissiblemapsys}), and the notations in (\ref{eqn:b(1)b(2)f(1)f(2)}) become
\begin{equation}\label{eqn:b(1)b(2)f(1)f(2)new}
\sum b_{(1)}\otimes b_{(2)}\in H\otimes B
\;\;\;\;\;\;\text{and}\;\;\;\;\;\;\sum f_{(1)}\otimes f_{(2)}\in (H/B^+H)^\ast\otimes H^\ast
\end{equation}
for all $b\in B$ and $f\in(H/B^+H)^\ast$.

This subsection is devoted to applying Theorem \ref{thm:bimod-inds} to the reconstruction $\Rep(H)_{\Rep(B)}^\ast\approx\Rep((H/B^+H)^\ast\#B)$ mentioned in the previous subsection, where $H$ is a semisimple Hopf algebra over $\mathbb{C}$ with left coideal subalgebra $B$. Our result would be an equation on indicators of two related modules over $(H/B^+H)^\ast\#B$ and $H$ respectively. At first we introduce the structures desired in the following, some of which are known in \cite{Mas94} for example:

Suppose $L$ is a finite-dimensional left $B$-module, or $L\in\Rep(B)$ for simplicity. We denote by
$(H/B^+H)^\ast\#L$ the left $(H/B^+H)^\ast\#B$-module whose underlying vector space is $(H/B^+H)^\ast\otimes L$, and its module structure is defined as
\begin{eqnarray}\label{eqn:smashLmodstru}
[(H/B^+H)^\ast\#B]\otimes[(H/B^+H)^\ast\#L] &\rightarrow& (H/B^+H)^\ast\#L  \\
(f\#b)\otimes(g\#l) &\mapsto& \sum f(b_{(1)}\rightharpoonup g)\#b_{(2)}l  \nonumber
\end{eqnarray}
with notations in Subsection \ref{subsection:5.2}. Furthermore, this induces a restricted action of the subalgebra $B\cong\e\#B$ on $(H/B^+H)^\ast\#L$, which induces then a right $B^\ast$-comodule structure
$$[(H/B^+H)^\ast\#L]\rightarrow [(H/B^+H)^\ast\#L]\otimes B^\ast,\;\;
f\#l\mapsto\sum (f\#l)_{\langle0\rangle}\otimes (f\#l)_{\langle1\rangle}$$
satisfies that the equation
$\sum (f\#l)_{\langle0\rangle}\langle (f\#l)_{\langle1\rangle},b\rangle=(\e\#b)(f\#l)\
=\sum(b_{(1)}\rightharpoonup f)\#b_{(2)}l$
holds for each $b\in B$.

On the other hand, $L\in\Rep(B)$ becomes similarly a right $B^\ast$-comodule, such that the coaction
\begin{equation}\label{eqn:Lrightcomodstru}
L\rightarrow L\otimes B^\ast,\;\;l\mapsto\sum l_{\langle0\rangle}\otimes l_{\langle1\rangle}
\end{equation}
satisfying that the equation
$\sum l_{\langle0\rangle}\langle l_{\langle1\rangle},b\rangle=bl$
holds for each $b\in B$. Meanwhile, if we regard $H^\ast$ as a left $B^\ast$-comodule induced by the coalgebra map $\iota^\ast:H^\ast\rightarrow B^\ast$, then the \textit{cotensor product} (e.g. \cite[Section 0]{Tak77}) of $L$ and $H^\ast$ could be defined as
\begin{equation}\label{eqn:cotensorprod0}
L\square_{B^\ast}H^\ast
:=\Big\{\sum_i l_i\otimes h^\ast_i\in L\otimes H^\ast\mid
\sum_i {l_i}_{\langle0\rangle}\otimes {l_i}_{\langle1\rangle}\otimes h^\ast_i
=\sum_i l_i\otimes \iota^\ast({h^\ast_i}_{(1)})\otimes{h^\ast_i}_{(2)}\Big\}.
\end{equation}
Then $L\square_{B^\ast}H^\ast$ is a right $H^\ast$-comodule via the comultiplication on the latter (co)tensorand $H^\ast$ (similarly to \cite[Equation (4.14)]{Li23}), which induces naturally a left $H$-action:
\begin{equation}\label{eqn:cotensorH*modstru}
H\otimes(L\square_{B^\ast}H^\ast)\rightarrow L\square_{B^\ast}H^\ast,\;\;
h\otimes\Big(\sum_i l_i\otimes h^\ast_i\Big)\mapsto \sum_i l_i\otimes (h\rightharpoonup h^\ast_i).
\end{equation}
Besides, $L\square_{B^\ast}H^\ast$ is a right $(H/B^+H)^\ast$-module (also as in \cite[Equation (4.14)]{Li23}) with structure
\begin{equation}\label{eqn:cotensorH*modstru2}
(L\square_{B^\ast}H^\ast)\otimes (H/B^+H)^\ast\rightarrow L\square_{B^\ast}H^\ast,\;\;
\Big(\sum_i l_i\otimes h^\ast_i\Big)\otimes f\mapsto \sum_i l_i\otimes h^\ast_i\pi^\ast(f^\ast).
\end{equation}

\begin{lemma}
With notations above, there is an isomorphism
\begin{eqnarray}\label{eqn:psi}
\psi~:~(H/B^+H)^\ast\otimes(L\square_{B^\ast}H^\ast)
&\cong& [(H/B^+H)^\ast\# L]\,\square_{B^\ast}H^\ast,  \\
f\otimes\Big(\sum_i l_i\otimes h^\ast_i\Big)
&\mapsto& \sum_i (f_{(1)}\#l_i)\otimes f_{(2)}h^\ast_i  \nonumber
\end{eqnarray}
in ${}_{(H/B^+H)^\ast}\Rep(H)_{(H/B^+H)^\ast}$, where:
\begin{itemize}
\item
$(H/B^+H)^\ast\otimes(L\square_{B^\ast}H^\ast)\in\Rep(H)$ via the diagonal $H$-actions, and it is similar to (\ref{eqn:cotensorH*modstru}) that
$[(H/B^+H)^\ast\# L]\,\square_{B^\ast}H^\ast\in\Rep(H)$ via the hit $H$-action on the last (co)tensorand $H^\ast$;
\item
The left and right $(H/B^+H)^\ast$-module structures on $(H/B^+H)^\ast\otimes(L\square_{B^\ast}H^\ast)$ are determined on respective tensorands. However as for $[(H/B^+H)^\ast\# L]\,\square_{B^\ast}H^\ast$, its right $(H/B^+H)^\ast$-action is similar to (\ref{eqn:cotensorH*modstru2}), while the left $(H/B^+H)^\ast$-action is diagonal at the cotensor product (\cite[Equation (4.13)]{Li23}).
\end{itemize}
\end{lemma}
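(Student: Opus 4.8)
The plan is to prove the isomorphism by producing an explicit two-sided inverse for $\psi$ and then verifying that $\psi$ is a morphism for each of the structures involved. Write $C=H/B^+H$, and recall that $C^\ast$ sits inside $H^\ast$ as a right coideal subalgebra via $\pi^\ast$, so that $\Delta(f)=\sum f_{(1)}\otimes f_{(2)}\in C^\ast\otimes H^\ast$ for $f\in C^\ast$. With $S_{H^\ast}$ the (bijective) antipode of $H^\ast$, I would define
\begin{equation*}
\psi^{-1}\Big(\sum_j(g_j\#m_j)\otimes k_j^\ast\Big)
=\sum_j g_{j(1)}\otimes\big(m_j\otimes S_{H^\ast}(g_{j(2)})\,k_j^\ast\big),
\end{equation*}
where $\Delta(g_j)=\sum g_{j(1)}\otimes g_{j(2)}\in C^\ast\otimes H^\ast$. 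Granting that both maps are well defined, the identities $\psi^{-1}\circ\psi=\id$ and $\psi\circ\psi^{-1}=\id$ follow from coassociativity on $C^\ast\subseteq H^\ast$ together with the antipode and counit axioms; for example $\psi^{-1}\psi(f\otimes\omega)=\sum f_{(1)}\otimes\big(l_i\otimes S_{H^\ast}(f_{(2)})f_{(3)}h_i^\ast\big)=\sum f_{(1)}\e(f_{(2)})\otimes\big(l_i\otimes h_i^\ast\big)=f\otimes\omega$, writing $\omega=\sum_i l_i\otimes h_i^\ast$ and summing over $i$ tacitly.

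The first and most delicate task is to show that $\psi$ and $\psi^{-1}$ respect the coinvariance conditions cutting out the two cotensor products. For $\psi$ this amounts to checking that $\sum_i(f_{(1)}\#l_i)\otimes f_{(2)}h_i^\ast$ satisfies the defining equation of $[C^\ast\#L]\square_{B^\ast}H^\ast$, and dually for $\psi^{-1}$ one must verify that $\sum_j m_j\otimes S_{H^\ast}(g_{j(2)})k_j^\ast$ lies in $L\square_{B^\ast}H^\ast$. I expect this to be the main obstacle, because the right $B^\ast$-comodule structure on the smash product $C^\ast\#L$ is prescribed only indirectly, through $\sum(f\#l)_{\langle0\rangle}\langle(f\#l)_{\langle1\rangle},b\rangle=\sum(b_{(1)}\rightharpoonup f)\#b_{(2)}l$, and must first be made explicit. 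The two auxiliary facts I would isolate for this purpose are that the hit action interacts with the coproduct of $H^\ast$ only in its second leg, i.e. $\Delta(h\rightharpoonup f)=\sum f_{(1)}\otimes(h\rightharpoonup f_{(2)})$, and that $B^+H$ is a right ideal, so that the hit action preserves $C^\ast$; combined with the cotensor condition already satisfied by $\sum_i l_i\otimes h_i^\ast$, these reduce both checks to Sweedler-notation manipulations.

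It then remains to check that $\psi$ intertwines the three structures making both objects live in ${}_{C^\ast}\Rep(H)_{C^\ast}$. For $H$-linearity I would compare the diagonal action on the source (with $H$ acting on $C^\ast$ by the restriction of the hit action on $H^\ast$) with the hit action on the final $H^\ast$-tensorand of the target; using $\Delta(h\rightharpoonup f)=\sum f_{(1)}\otimes(h\rightharpoonup f_{(2)})$ and the module-algebra identity $\sum(h_{(1)}\rightharpoonup a)(h_{(2)}\rightharpoonup b)=h\rightharpoonup(ab)$ in $H^\ast$, both sides collapse to $\sum f_{(1)}\#l_i\otimes\big(h\rightharpoonup(f_{(2)}h_i^\ast)\big)$. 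The right $C^\ast$-linearity is essentially immediate: the right action on the target, given by (\ref{eqn:cotensorH*modstru2}), affects only the far $H^\ast$-tensorand, exactly as it affects the right tensorand $L\square_{B^\ast}H^\ast$ of the source, and $\psi$ leaves that operation untouched because it merely inserts $f_{(2)}$ to the left of $h_i^\ast$. Finally, left $C^\ast$-linearity follows from the multiplicativity of the coproduct of $H^\ast$: splitting the acting element $f'$ via its right-comodule structure $\sum f'_{(1)}\otimes f'_{(2)}\in C^\ast\otimes H^\ast$, the diagonal left action of \cite[Equation (4.13)]{Li23}, namely $f'\cdot\big((g\#m)\otimes k^\ast\big)=\sum(f'_{(1)}g\#m)\otimes f'_{(2)}k^\ast$, matches $\psi(f'f\otimes\omega)$ because $(f'f)_{(1)}\otimes(f'f)_{(2)}=\sum f'_{(1)}f_{(1)}\otimes f'_{(2)}f_{(2)}$.
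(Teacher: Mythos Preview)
Your proposal is correct and follows essentially the same route as the paper: the candidate inverse you write down is exactly the map the paper uses, and your verification of the $H$-, left $C^\ast$-, and right $C^\ast$-linearity matches the paper's computations. The one genuine difference is in how bijectivity is established. You propose to show that your $\psi^{-1}$ is a two-sided inverse by checking directly that its output lands in $L\square_{B^\ast}H^\ast$; the paper instead only verifies that this map is a \emph{left} inverse of $\psi$ (so $\psi$ is injective) and then concludes by a dimension count, using the chain of linear isomorphisms $L\square_{B^\ast}H^\ast\cong L\square_{B^\ast}(B^\ast\otimes C^\ast)\cong L\otimes C^\ast$ to see that source and target have the same dimension. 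Your approach is slightly more self-contained but requires the extra coinvariance check for $\psi^{-1}$, which is of comparable difficulty to the one for $\psi$; the paper's dimension argument trades that check for an appeal to standard cotensor-product identities.
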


\begin{proof}
Firstly, one might prove directly by the definition of the cotensor product (\ref{eqn:cotensorprod0}) that
\begin{equation}\label{eqn:cotensorprod}
\sum_i l_i\otimes h^\ast_i\in L\square_{B^\ast}H^\ast
\;\;\;\;\Longleftrightarrow\;\;\;\;
\forall b\in B,\;\;\sum_i bl_i\otimes h^\ast_i
  =\sum_i l_i\otimes\left(h^\ast_i\leftharpoonup\iota(b)\right)
\end{equation}
for $l_i\in L$ and $h^\ast_i\in H^\ast$.
Thus
in order to show that $\psi$ (\ref{eqn:psi}) is well-defined, suppose $f\in (H/B^+H)^\ast$ and $\sum_i l_i\otimes h^\ast_i\in L\square_{B^\ast}H^\ast$. We compute for any $b\in B$ that
\begin{eqnarray*}
\sum_i (\e\#b)(f_{(1)}\#l_i)\otimes f_{(2)}h^\ast_i
&\overset{(\ref{eqn:smashLmodstru})}{=}&
\sum_i \left[(b_{(1)}\rightharpoonup f_{(1)})\#b_{(2)}l_i\right]
  \otimes f_{(2)}h^\ast_i  \\
&\overset{(\ref{eqn:cotensorprod})}{=}&
\sum_i \left[(b_{(1)}\rightharpoonup f_{(1)})\#l_i\right]
  \otimes f_{(2)}(h^\ast_i\leftharpoonup \iota(b_{(2)}))  \\
&=&
\sum_i (f_{(1)}\langle f_{(2)},b_{(1)}\rangle\#l_i)
  \otimes f_{(3)}(h^\ast_i\leftharpoonup \iota(b_{(2)}))  \\
&=&
\sum_i (f_{(1)}\#l_i)
  \otimes (f_{(2)}\leftharpoonup b_{(1)})(h^\ast_i\leftharpoonup \iota(b_{(2)}))  \\
&\overset{(\ref{eqn:iotapi*})}{=}&
\sum_i (f_{(1)}\#l_i)\otimes
  (f_{(2)}\leftharpoonup \iota(b)_{(1)})(h^\ast_i\leftharpoonup \iota(b)_{(2)})  \\
&=&
\sum_i (f_{(1)}\#l_i)
  \otimes (f_{(2)}h^\ast_i\leftharpoonup \iota(b)),
\end{eqnarray*}
which means that $\sum_i (f_{(1)}\#l_i)\otimes f_{(2)}h^\ast_i\in L\square_{B^\ast}H^\ast$ according to (\ref{eqn:cotensorprod}).

Now let us verify that $\psi$ is a morphism in ${}_{(H/B^+H)^\ast}\Rep(H)_{(H/B^+H)^\ast}$:
Suppose $f\in (H/B^+H)^\ast$ and $\sum_i l_i\otimes h^\ast_i\in L\square_{B^\ast}H^\ast$.
\begin{itemize}
\item
($\psi$ preserves left $H$-actions)
For any $h\in H$,
\begin{eqnarray*}
\psi\left(h\cdot \Big[f\otimes\Big(\sum_i l_i\otimes h^\ast_i\Big)\Big]\right)
&=&
\psi\left(\sum (h_{(1)}\rightharpoonup f)
  \otimes\Big[\sum_i l_i\otimes (h_{(2)}\rightharpoonup h^\ast_i)\Big]\right)  \\
&=&
\sum\psi\left( f_{(1)}\otimes\Big(\sum_i l_i\otimes {h^\ast_i}_{(1)}\Big)
  \langle f_{(2)}{h^\ast_i}_{(2)},h\rangle\right)  \\
&=&
\sum_i (f_{(1)}\# l_i)\otimes f_{(2)}{h^\ast_i}_{(1)}
  \langle f_{(3)}{h^\ast_i}_{(2)},h\rangle  \\
&=&
\sum_i (f_{(1)}\# l_i)\otimes \left(h\rightharpoonup f_{(2)}{h^\ast_i}\right)  \\
&=&
h\cdot \psi\left(f\otimes\Big(\sum_i l_i\otimes h^\ast_i\Big)\right);
\end{eqnarray*}

\item
($\psi$ preserves left $(H/B^+H)^\ast$-actions)
This is easy to know by the definition of $\psi$;

\item
($\psi$ preserves right $(H/B^+H)^\ast$-actions)
This is direct to verify, as the right $(H/B^+H)^\ast$-module structures of both sides of (\ref{eqn:psi}) are determined by the rightmost tensorand $H^\ast$ via the injection $\pi^\ast$ of algebras.
\end{itemize}

Finally, it is straightforward to verify that the linear map
\begin{eqnarray*}
[(H/B^+H)^\ast\# L]\,\square_{B^\ast}H^\ast
&\rightarrow& (H/B^+H)^\ast\otimes(L\otimes H^\ast),  \\
\sum_i (f_i\#l_i)\otimes h^\ast_i
&\mapsto& \sum_i {f_i}_{(1)}\otimes \Big[l_i\otimes S({f_i}_{(2)})h^\ast_i \Big]
\end{eqnarray*}
would be a left inverse of $\psi$, which implies that $\psi$ is a monomorphism in ${}_{(H/B^+H)^\ast}\Rep(H)_{(H/B^+H)^\ast}$.
However, one could find the following linear isomorphisms
\begin{equation}\label{eqn:cotensoriso0}
L\square_{B^\ast}H^\ast\cong L\square_{B^\ast}[B^\ast\otimes (H/B^+H)^\ast]
\cong (L\square_{B^\ast}B^\ast)\otimes (H/B^+H)^\ast\cong L\otimes (H/B^+H)^\ast
\end{equation}
for every right $B^\ast$-comodule $L$, where the the first one is mentioned in \cite[Equation (4.39)]{Li23}, and the second and third ones are mentioned in \cite[Section 0]{Tak77} as properties of cotensor products.
It
would imply that
\begin{eqnarray*}
\dim\left([(H/B^+H)^\ast\# L]\,\square_{B^\ast}H^\ast\right)
&\overset{(\ref{eqn:cotensoriso})}{=}&
\dim\left([(H/B^+H)^\ast\# L]\otimes(H/B^+H)^\ast\right)  \\
&=&
\dim\left((H/B^+H)^\ast\right)\dim\left(L\otimes (H/B^+H)^\ast\right)  \\
&\overset{(\ref{eqn:cotensoriso})}{=}&
\dim\left((H/B^+H)^\ast\right)\dim\left(L\square_{B^\ast}H^\ast\right)  \\
&=&
\dim\left((H/B^+H)^\ast\otimes(L\square_{B^\ast}B^\ast)\right).
\end{eqnarray*}
Thus $\psi$ is indeed an isomorphism as claimed.
\end{proof}

\begin{theorem}\label{thm:inds-leftpartialdual}
Let $H$ be a semisimple Hopf algebra over $\mathbb{C}$. Suppose $B$ is a left coideal subalgebra, and $(H/B^+H)^\ast\#B$ is a left partially dualized quasi-Hopf algebra of $H$.
Then for any $L\in\Rep(B)$,
\begin{equation}\label{eqn:inds(smL)=(Lsq)}
\nu_n((H/B^+H)^\ast\# L)=\nu_n(L\square_{B^\ast}H^\ast)
\;\;\;\;\;\;\;\;(\forall n\geq1),
\end{equation}
where $(H/B^+H)^\ast\#L\in\Rep((H/B^+H)^\ast\#B)$ and $L\square_{B^\ast}H^\ast\in\Rep(H)$.
\end{theorem}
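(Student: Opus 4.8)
The plan is to derive the statement from the categorical Theorem \ref{thm:bimod-inds}, applied to $\C=\Rep(H)$ — which is an integral, hence spherical, fusion category with its canonical pivotal structure — and to the indecomposable semisimple $\C$-module category $\M=\Rep(B)$. First I would fix the algebra realizing $\M$. Writing $C=H/B^+H$ and $A:=C^\ast$, the object $A$ is an algebra in $\Rep(H)$ and there is an equivalence $\Rep(B)\simeq\C_A=\Rep(H)_{C^\ast}$ of $\C$-module categories (the Frobenius--Perron dimensions match, since $\FPdim(\C)/\FPdim(A)=\dim H/\dim C=\dim B=\FPdim(\M)$). Under this identification a $B$-module $L$ corresponds to the right $C^\ast$-module $L\square_{B^\ast}H^\ast$ equipped with the $H$-action (\ref{eqn:cotensorH*modstru}) and the $C^\ast$-action (\ref{eqn:cotensorH*modstru2}). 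In particular the forgetful functor $\C_A\to\C$ sends the $A$-module $M:=L\square_{B^\ast}H^\ast$ to the object $L\square_{B^\ast}H^\ast\in\Rep(H)$, which will produce the right-hand side of the desired equality.

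The second step is to match the left-hand side. By the equivalence (\ref{eqn:bimodcat}), $P\mapsto(-)\otimes_A P$ between ${}_A\C_A$ and $\C_\M^\ast$, the functor $-\otimes M\in\C_{\C_A}^\ast$ is the $\C$-module endofunctor associated with the $A$-bimodule $A\otimes M=C^\ast\otimes(L\square_{B^\ast}H^\ast)$. I would then invoke the reconstruction theorem \cite[Theorem 4.22]{Li23}, $\Rep(H)_{\Rep(B)}^\ast\approx\Rep(C^\ast\#B)$, and determine the $C^\ast\#B$-module corresponding to this bimodule. This is exactly where the isomorphism $\psi$ of the preceding lemma does the work: by (\ref{eqn:psi}), $\psi$ identifies $C^\ast\otimes(L\square_{B^\ast}H^\ast)$ with $(C^\ast\#L)\square_{B^\ast}H^\ast$ as objects of ${}_{C^\ast}\Rep(H)_{C^\ast}$, and the latter is precisely the $C^\ast$-bimodule assigned to the module $(H/B^+H)^\ast\#L\in\Rep((H/B^+H)^\ast\#B)$ under the reconstruction. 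Hence $-\otimes M$ corresponds to $(H/B^+H)^\ast\#L$.

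With both objects identified, Theorem \ref{thm:bimod-inds} yields $\nu_n(-\otimes M)=\nu_n(M)$ for every $n\geq1$, where the indicators are taken in the categorical senses of Definition \ref{def:dual-inds} and Lemma \ref{lem:indsorigin}. It remains to translate these categorical indicators into the Hopf-algebraic ones appearing in the statement. Since $\C=\Rep(H)$ is integral fusion with its canonical pivotal structure, $\nu_n(M)=\nu_n(L\square_{B^\ast}H^\ast)$ is the indicator of Definition \ref{def:inds-quasiHopfalg}(1) by Remark \ref{rmk:inds-Hopfalgcoincide}. On the other side, $\C_\M^\ast\approx\Rep((H/B^+H)^\ast\#B)$ is again integral fusion (the dual of an integral fusion category is integral, and its center carries the canonical pivotal structure by Lemma \ref{lem:intfusioncatcenter}), so Proposition \ref{prop:intindscoincide} guarantees that the Definition \ref{def:dual-inds} indicator of $-\otimes M$ coincides with the quasi-Hopf indicator of the corresponding module $(H/B^+H)^\ast\#L$. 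Combining these identifications gives Equation (\ref{eqn:inds(smL)=(Lsq)}).

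The main obstacle I expect lies in the second step: pinning down that the $A$-bimodule $A\otimes M$ reconstructs to the module $(H/B^+H)^\ast\#L$ on the nose, rather than to some twist of it. This requires tracking how the reconstruction equivalence \cite[Theorem 4.22]{Li23} acts on the bimodule $C^\ast\otimes(L\square_{B^\ast}H^\ast)$ and verifying that $\psi$ is not merely a linear isomorphism but an isomorphism in ${}_{C^\ast}\Rep(H)_{C^\ast}$ compatible with that equivalence — which is exactly the content of the lemma preceding the theorem, where $\psi$ is checked to preserve the two $C^\ast$-actions and the $H$-action and to be bijective via the dimension count (\ref{eqn:cotensoriso0}). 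The remaining verifications, namely integrality of $\C_\M^\ast$ and the coincidence of the two indicator conventions, are routine consequences of the cited results.
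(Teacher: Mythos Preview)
Your proposal is correct and follows essentially the same route as the paper's proof: both identify $\Rep(B)$ with $\C_A$ for $A=(H/B^+H)^\ast$, use the isomorphism $\psi$ of the preceding lemma to show that the reconstruction equivalence $\Rep((H/B^+H)^\ast\#B)\approx\Rep(H)_\M^\ast$ carries $(H/B^+H)^\ast\#L$ to $-\otimes(L\square_{B^\ast}H^\ast)$, apply Theorem~\ref{thm:bimod-inds}, and then invoke integrality/pseudo-unitarity together with Proposition~\ref{prop:intindscoincide} and \cite[Corollaries~6.2 and~4.4]{NS07(a)} to match the categorical indicators with the quasi-Hopf ones. The only cosmetic difference is that the paper works directly with $\M=\Rep(H)_{(H/B^+H)^\ast}$ rather than $\Rep(B)$, and separates the last translation into two explicit steps (first \cite[Corollary~4.4]{NS07(a)} for the equivalence $\Psi$, then Proposition~\ref{prop:intindscoincide}), whereas you combine them in one sentence.
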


\begin{proof}
Denote $\M=\Rep(H)_{(H/B^+H)^\ast}$, the category of right $(H/B^+H)^\ast$-modules in $\Rep(H)$. This is a finite indecomposable $\Rep(H)$-module category, which is exact and then semisimple. It is implied by \cite[Lemma 1.8]{Mas94} that $L\square_{B^\ast}H^\ast$ becomes an object in $\Rep(H)_{(H/B^+H)^\ast}$ with the left $H$-module structure
(\ref{eqn:cotensorH*modstru})
%$$H\otimes(L\square_{B^\ast}H^\ast)\rightarrow L\square_{B^\ast}H^\ast,\;\;
%h\otimes\Big(\sum_i l_i\otimes h^\ast_i\Big)
%\mapsto\sum_i l_i\otimes (h\rightharpoonup h^\ast_i)$$
and right $(H/B^+H)^\ast$-module structure
%$$(L\square_{B^\ast}H^\ast)\otimes(H/B^+H)^\ast\rightarrow L\square_{B^\ast}H^\ast,\;\;
%\Big(\sum_i l_i\otimes h^\ast_i\Big)\otimes f
%\mapsto\sum_i l_i\otimes h^\ast_i\pi^\ast(f).$$
(\ref{eqn:cotensorH*modstru2}).

It is mentioned in \cite[Corollary 4.23]{Li23} that the dual category $\Rep(H)_\M^\ast$ is tensor equivalent to $\Rep((H/B^+H)^\ast\#B)$.
Let us describe a tensor equivalence
\begin{equation}\label{eqn:Psi0}
\Psi:\Rep((H/B^+H)^\ast\#B)\approx\Rep(H)_\M^\ast
\end{equation}
in details, which is essentially the composition of \cite[(4.20) and (4.5)]{Li23}: Specifically, for each $V\in\Rep((H/B^+H)^\ast\#B)$, we have
$$\Psi(V)= -\otimes_{(H/B^+H)^\ast}(V\square_{B^\ast}H^\ast)\in \Rep(H)_\M^\ast.$$
Thus the tensor equivalence $\Psi$ will send the object $(H/B^+H)^\ast\#L$ to \begin{eqnarray}\label{eqn:Psi2}
\Psi((H/B^+H)^\ast\#L)
&=&
-\otimes_{(H/B^+H)^\ast}\left(\left[(H/B^+H)^\ast\#L\right]\,\square_{B^\ast}H^\ast\right)  \nonumber  \\
&\overset{(\ref{eqn:psi})}{\cong}&
-\otimes_{(H/B^+H)^\ast}[(H/B^+H)^\ast\otimes(L\square_{B^\ast}H^\ast)]  \nonumber \\
&\cong&
-\otimes(L\square_{B^\ast}H^\ast)\in \Rep(H)_\M^\ast.
\end{eqnarray}
It is straightforward that $L\square_{B^\ast}H^\ast$ in (\ref{eqn:Psi2}) is indeed an object in $\M=\Rep(H)_{(H/B^+H)^\ast}$ with structures (\ref{eqn:cotensorH*modstru}) and (\ref{eqn:cotensorH*modstru2}), and the $\Rep(H)$-module structure of the functor $-\otimes(L\square_{B^\ast}H^\ast)$ is the associativity constraint in $\Rep(H)$.

On the other hand, note according to \cite[Theorem 8.33]{ENO05} that $\Rep(H)$ (as well as $\Rep((H/B^+H)^\ast\#B)$) is an integral fusion category. By \cite[Theorem 8.35 and Proposition 8.24]{ENO05}, its dual fusion category $\Rep(H)_\M^\ast$ will be also
%integral and hence
pseudo-unitary.
It follows from \cite[Corollary 6.2]{NS07(a)} that $\Psi$ (\ref{eqn:Psi0}) will preserve their canonical pivotal structures.
Consequently, one could apply \cite[Corollary 4.4]{NS07(a)} to (\ref{eqn:Psi2}) and obtain
\begin{equation}\label{eqn:inds(smL)=(-otimesLsq)}
\nu_n((H/B^+H)^\ast\#L)=\nu_n(-\otimes(L\square_{B^\ast}H^\ast))
\;\;\;\;\;\;\;\;(\forall n\geq1),
\end{equation}
where both indicators are in the sense of Lemma \ref{lem:indsorigin} with respect to canonical pivotal structures.

However, we know by Proposition \ref{prop:intindscoincide} that the indicator $\nu_n(-\otimes(L\square_{B^\ast}H^\ast))$ in Equation (\ref{eqn:inds(smL)=(-otimesLsq)}) could be identified in the sense of Definition \ref{def:dual-inds}. Therefore due to Theorem \ref{thm:bimod-inds}, we have
\begin{equation}\label{eqn:inds(-otimesLsq)=(Lsq)}
\nu_n(-\otimes(L\square_{B^\ast}H^\ast))=\nu_n(L\square_{B^\ast}H^\ast)
\;\;\;\;\;\;\;\;(\forall n\geq1).
\end{equation}
At final, Equation (\ref{eqn:inds(smL)=(Lsq)}) is obtained as a combination of Equations (\ref{eqn:inds(smL)=(-otimesLsq)}) and (\ref{eqn:inds(-otimesLsq)=(Lsq)}).
\end{proof}

\begin{remark}
Evidently, Equation (\ref{eqn:inds(smL)=(Lsq)}) holds for {\pams}s of general forms: Let $H$ be a semisimple Hopf algebra over $\mathbb{C}$ with a {\pams} (\ref{eqn:admissiblemapsys}). Then for any $L\in\Rep(B)$,
\begin{equation}%\label{eqn:inds(smL)=(Lsq)2}
\nu_n(C^\ast\# L)=\nu_n(L\square_{B^\ast}H^\ast)
\;\;\;\;\;\;\;\;(\forall n\geq1),
\end{equation}
where $C^\ast\# L\in\Rep(C^\ast\#B)$ and $L\square_{B^\ast}H^\ast\in\Rep(H)$.
\end{remark}

We could point out two extreme situations for $L\in\Rep(B)$ being the regular module $B$ and the trivial module $\mathbb{C}1$:

\begin{corollary}\label{cor:inds-leftpartialdual2}
Let $H$ be a semisimple Hopf algebra over $\mathbb{C}$. Suppose that $B$ is a left coideal subalgebra of $H$.
Then:
\begin{itemize}
\item[(1)]
The indicators of regular representations in $\Rep((H/B^+H)^\ast\#B)$ and $\Rep(H)$ coincide:
$$\nu_n((H/B^+H)^\ast\#B)=\nu_n(H)
\;\;\;\;\;\;\;\;(\forall n\geq1);$$

\item[(2)]
The indicators of $(H/B^+H)^\ast$ regarded as objects in $\Rep((H/B^+H)^\ast\#B)$ and $\Rep(H)$ coincide.
Specifically, with the notations in (\ref{eqn:b(1)b(2)f(1)f(2)new}), the module structure on $(H/B^+H)^\ast\in\Rep((H/B^+H)^\ast\#B)$ is
\begin{eqnarray}\label{eqn:rightoidealsubalgmod1}
[(H/B^+H)^\ast\#B]\otimes(H/B^+H)^\ast &\rightarrow& (H/B^+H)^\ast,  \\
(f\#b)\otimes g &\mapsto& \sum fg_{(1)}\langle g_{(2)},\iota(b)\rangle,  \nonumber
\end{eqnarray}
while the module structure on $(H/B^+H)^\ast\in\Rep(H)$ is
\begin{equation}\label{eqn:rightoidealsubalgmod2}
H\otimes(H/B^+H)^\ast\rightarrow(H/B^+H)^\ast,\;\;
h\otimes g\mapsto \sum g_{(1)}\langle g_{(2)},h\rangle.
\end{equation}
\end{itemize}
\end{corollary}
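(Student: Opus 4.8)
The plan is to deduce both parts of the corollary directly from Theorem~\ref{thm:inds-leftpartialdual} by specialising the $B$-module $L$: for part~(1) I would take $L=B$, the left regular module, and for part~(2) I would take $L=\mathbb{C}1$, the trivial module. In each case the only work is to identify the two objects $(H/B^+H)^\ast\#L$ and $L\square_{B^\ast}H^\ast$ occurring in (\ref{eqn:inds(smL)=(Lsq)}) with the modules named in the statement; once these identifications are in place the asserted equality of indicators is immediate, since Frobenius--Schur indicators depend only on the isomorphism class of a module.

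Part~(2) is the routine one. Substituting the trivial action $b\cdot 1=\varepsilon(b)1$ into the module structure (\ref{eqn:smashLmodstru}) collapses the second tensorand and gives $(f\#b)\cdot(g\#1)=f\,(b\rightharpoonup g)\#1=\sum fg_{(1)}\langle g_{(2)},\iota(b)\rangle\#1$, which is exactly (\ref{eqn:rightoidealsubalgmod1}) under the identification $(H/B^+H)^\ast\#\mathbb{C}1\cong(H/B^+H)^\ast$. For the right-hand side I would use the characterisation (\ref{eqn:cotensorprod}): an element $1\otimes h^\ast$ lies in $\mathbb{C}1\square_{B^\ast}H^\ast$ precisely when $h^\ast\leftharpoonup\iota(b)=\varepsilon(b)h^\ast$ for all $b\in B$, i.e. when $h^\ast$ annihilates $B^+H$. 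This identifies $\mathbb{C}1\square_{B^\ast}H^\ast$ with $\pi^\ast((H/B^+H)^\ast)\subseteq H^\ast$, and the hit action (\ref{eqn:cotensorH*modstru}) restricts to (\ref{eqn:rightoidealsubalgmod2}). Both identifications are direct, so part~(2) follows.

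For part~(1) the left-hand identification is again immediate: with $L=B$ the action (\ref{eqn:smashLmodstru}) is literally the multiplication (\ref{eqn:smashprod}), so $(H/B^+H)^\ast\#B$ is the left regular module of the algebra $(H/B^+H)^\ast\#B$. The substance of the corollary is therefore the claim that $B\square_{B^\ast}H^\ast\cong H$ as left $H$-modules. I would approach this through the map $q\colon B\square_{B^\ast}H^\ast\to H^\ast$, $\sum_i b_i\otimes h^\ast_i\mapsto\sum_i h^\ast_i\leftharpoonup\iota(b_i)$, where $H^\ast$ carries the hit action $\langle h\rightharpoonup h^\ast,x\rangle=\langle h^\ast,xh\rangle$ of (\ref{eqn:cotensorH*modstru}); since the left hit and right translation by $\iota(b_i)$ commute, and $H$ acts only on the $H^\ast$-slot, $q$ is a morphism of left $H$-modules. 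By the linear isomorphism (\ref{eqn:cotensoriso0}) we have $\dim(B\square_{B^\ast}H^\ast)=\dim(B)\dim(H/B^+H)=\dim H=\dim H^\ast$, so it suffices to prove $q$ bijective. Finally, for semisimple $H$ the hit-module $H^\ast$ is the linear dual of the right regular representation and hence isomorphic to the left regular module $H$, giving $\nu_n(B\square_{B^\ast}H^\ast)=\nu_n(H)$ as required.

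The hard part is exactly the bijectivity of $q$: this is a coflatness/Galois-type property of the extension $B\subseteq H$, and it is where the structure of $B$ and the splitting maps $\zeta\colon H\to B$, $\gamma\colon C\to H$ of the partially admissible mapping system must enter. I would either build an explicit inverse to $q$ out of $\zeta$ and $\gamma$, or invoke the cotensor-product identities of \cite{Li23,Tak77} already used to establish (\ref{eqn:cotensoriso0}) in order to realise $B\square_{B^\ast}H^\ast\cong H$ as left $H$-modules directly; either route settles the one technical point, the remainder of the corollary being a formal specialisation of Theorem~\ref{thm:inds-leftpartialdual}.
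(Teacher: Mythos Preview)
Your overall plan---specialise Theorem~\ref{thm:inds-leftpartialdual} to $L=B$ for part~(1) and to $L=\mathbb{C}1$ for part~(2), then identify the two modules on each side---is exactly what the paper does, and your treatment of part~(2) is essentially identical to the paper's.

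Where you diverge is in the key identification $B\square_{B^\ast}H^\ast\cong H^\ast$ for part~(1). You introduce the map $q\colon\sum_i b_i\otimes h_i^\ast\mapsto\sum_i h_i^\ast\leftharpoonup\iota(b_i)$ and then, as you yourself flag, leave its bijectivity as an unresolved ``hard part'' to be handled via an unspecified inverse built from $\zeta,\gamma$ or via unspecified cotensor identities. The paper avoids this entirely by a different and much cleaner device: since $H$ is semisimple over $\mathbb{C}$, so is $B$, and by \cite[Lemma~2.8]{CW17} there exists $\lambda\in B^\ast$ such that $\varphi\colon B\to B^\ast$, $b\mapsto b\rightharpoonup\lambda$, is a left $B$-module isomorphism. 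Under the dictionary between left $B$-actions and right $B^\ast$-coactions (\ref{eqn:Lrightcomodstru}), this $\varphi$ is a right $B^\ast$-comodule isomorphism from $B$ to the regular comodule $B^\ast$. Hence
\[
B\square_{B^\ast}H^\ast\;\cong\;B^\ast\square_{B^\ast}H^\ast\;\cong\;H^\ast
\]
by the standard counit isomorphism for cotensor products, and the induced $H$-action on $H^\ast$ is the hit action, which is isomorphic to the regular module. This Frobenius trick is the missing idea in your proposal: it replaces the bijectivity argument for $q$ by a one-line appeal to a known structural property of semisimple (left coideal) subalgebras, and does not require the maps $\zeta,\gamma$ at all.
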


\begin{proof}
\begin{itemize}
\item[(1)]
If we choose $L$ to be the regular left $B$-module in Theorem \ref{thm:inds-leftpartialdual}, then the equation
$$\nu_n((H/B^+H)^\ast\# B)=\nu_n(B\square_{B^\ast}H^\ast)
\;\;\;\;\;\;\;\;(\forall n\geq1)$$
is obtained, where the module $(H/B^+H)^\ast\#B$ with structure as in (\ref{eqn:smashLmodstru}) is exactly the regular representation in $\Rep((H/B^+H)^\ast\#B)$. Now let us focus on the left $H$-module $B\square_{B^\ast}H^\ast$:

Consider at first the left $B$-module structure $\rightharpoonup$ on the coalgebra $B^\ast$, and it follows from \cite[Lemma 2.8]{CW17} that there exists a left $B$-module isomorphism
$$\varphi:B\cong B^\ast,\;\;b\mapsto b\rightharpoonup\lambda$$
for some $\lambda\in B^\ast$.
However,
since the right $B^\ast$-coation on $B$ is defined as in (\ref{eqn:Lrightcomodstru}), it is isomorphic to the regular right $B^\ast$-comodule $B^\ast$ via
$$\varphi:b\mapsto b\rightharpoonup\lambda
=\sum\lambda_{(1)}\langle \lambda_{(2)},b\rangle
=\sum\langle\lambda,b_{\langle0\rangle}\rangle b_{\langle1\rangle}$$
defined above.
Thus according to one of the canonical isomorphisms in \cite[Section 0]{Tak77} on cotensor products, we might conclude a linear isomorphism
$$B\square_{B^\ast}H^\ast\cong B^\ast\square_{B^\ast}H^\ast\cong H^\ast,\;\;
\sum_i b_i\otimes h^\ast_i\mapsto\sum_i\langle\lambda,b_i\rangle h^\ast_i,$$
by which the left $H$-action defined in (\ref{eqn:cotensorH*modstru}) is mapped to the hit action $\rightharpoonup$ on $H$.
Furthermore, it is known in \cite[Theorem 2.1.3(3)]{Mon93} that $(H^\ast,\rightharpoonup)$ is also isomorphic to the regular left $H$-module. Therefore we find the equation
$$\nu_n(B\square_{B^\ast}H^\ast)=\nu_n(H)
\;\;\;\;\;\;\;\;(\forall n\geq1)$$
on the indicators of objects in $\Rep(H)$,
and hence the desired claim holds as a consequence.

\item[(2)]
If we choose $L$ to be the trivial left $B$-module $\mathbb{C}1$ (induced by the counit $\e$ restricted on $B$) in Theorem \ref{thm:inds-leftpartialdual}, then the equation
$$\nu_n((H/B^+H)^\ast\#1)=\nu_n(\mathbb{C}1\square_{B^\ast}H^\ast)
\;\;\;\;\;\;\;\;(\forall n\geq1)$$
is obtained. Here the left $(H/B^+H)^\ast\#B$-module $(H/B^+H)^\ast\#1$ could be directly verified isomorphic to $(H/B^+H)^\ast$ with structure (\ref{eqn:rightoidealsubalgmod1}). As for the left $H$-module $\mathbb{C}1\square_{B^\ast}H^\ast$, we could also use the isomorphism
(\ref{eqn:cotensoriso0}) to know that
\begin{eqnarray*}%\label{eqn:cotensoriso}
\mathbb{C}1\square_{B^\ast}H^\ast
&\cong& \mathbb{C}1\square_{B^\ast}[B^\ast\otimes (H/B^+H)^\ast]
~\cong~ (\mathbb{C}1\square_{B^\ast}B^\ast)\otimes (H/B^+H)^\ast  \\
&\cong& \mathbb{C}1\otimes (H/B^+H)^\ast  \\
&\cong& (H/B^+H)^\ast,  \\
1\otimes\pi^\ast(f) &\mapsfrom& ~~~~~~~~f
\end{eqnarray*}
holds.
According to the latter equation in (\ref{eqn:iotapi*}), this isomorphism preserves left $H$-actions if $(H/B^+H)^\ast$ is regarded to have the structure (\ref{eqn:rightoidealsubalgmod2}).
\end{itemize}
\end{proof}

If the left coideal subalgebra $B$ is furthermore chosen to be the trivial Hopf subalgebra $\mathbb{C}1$, then a semisimple case of \cite[Theorem 3.13(f)]{Shi12} or \cite[Proposition 3.4(1)]{Shi15(b)} could be obtained as a corollary:

\begin{corollary}\label{cor:Hopfdualind}
Let $H$ be a semisimple Hopf algebra over $\mathbb{C}$. Then
$$\nu_n(H^\ast)=\nu_n(H)\;\;\;\;\;\;\;\;(\forall n\geq1).$$
\end{corollary}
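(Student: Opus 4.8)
The plan is to specialize the reconstruction machinery to the trivial left coideal subalgebra $B=\mathbb{C}1$ and then read the statement off from Corollary \ref{cor:inds-leftpartialdual2}(1). First I would observe that for $B=\mathbb{C}1$ the augmentation ideal $B^+=\ker(\e\mid_B)$ vanishes, so $B^+H=0$ and therefore $C=H/B^+H=H$; consequently $C^\ast=H^\ast$. Thus the data of Definition \ref{def:PAMS} degenerate: $\iota\colon\mathbb{C}1\hookrightarrow H$ is the inclusion of scalars, $\pi=\id_H$, and the only choices compatible with conditions (5) and (6) are $\zeta=\e$ (valued in $B=\mathbb{C}1$) and $\gamma=\id_H$.

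The key step is to identify the resulting left partial dual $(H/B^+H)^\ast\#B=H^\ast\#\mathbb{C}1$ with $H^\ast$ as a Hopf algebra. On underlying spaces this is clear, and the smash product multiplication (\ref{eqn:smashprod}) collapses, since with $b=c=1$ one gets $(f\#1)(g\#1)=\sum f(1_{(1)}\rightharpoonup g)\#1_{(2)}=fg\#1$, which is precisely the convolution product of $H^\ast$. It then remains to check that the associator is trivial: substituting $\zeta=\e$ and $\gamma=\id_H$ into (\ref{eqn:phi^-1}) and using $\sum_i\e(x_i)x_i^\ast=\e$ together with $\sum_j\e(x_{j(1)})\e(x_{j(2)})x_j^\ast=\sum_j\e(x_j)x_j^\ast=\e$, each of the three tensor slots reduces to the unit $\e\#1$, so that $\pd{\phi}^{-1}=(\e\#1)\otimes(\e\#1)\otimes(\e\#1)$ is trivial. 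Hence $(H/B^+H)^\ast\#B$ is a genuine Hopf algebra, and matching the remaining comultiplication and antipode data (which are the standard ones of the construction in \cite{Li23}) shows it is isomorphic to $H^\ast$ as a Hopf algebra.

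Finally I would invoke Corollary \ref{cor:inds-leftpartialdual2}(1), which gives $\nu_n((H/B^+H)^\ast\#B)=\nu_n(H)$ for all $n\geq1$. The Hopf algebra isomorphism $(H/B^+H)^\ast\#B\cong H^\ast$ induces an equivalence of the pivotal fusion categories $\Rep((H/B^+H)^\ast\#B)\approx\Rep(H^\ast)$ carrying one regular representation to the other, so it preserves indicators; in particular $\nu_n((H/B^+H)^\ast\#B)=\nu_n(H^\ast)$, and the claimed equality $\nu_n(H^\ast)=\nu_n(H)$ follows. The only genuine obstacle is the bookkeeping that confirms the partial dual truly reduces to $H^\ast$ — above all the triviality of the associator verified above, since without it one would only obtain a quasi-Hopf algebra rather than $H^\ast$ itself. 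Once this identification is in place the indicator statement is immediate, all the categorical content having been absorbed into Theorem \ref{thm:bimod-inds} and its application in Corollary \ref{cor:inds-leftpartialdual2}(1).
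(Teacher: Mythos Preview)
Your proposal is correct and follows essentially the same route as the paper: specialize to the trivial coideal subalgebra $B=\mathbb{C}1$, identify the resulting left partial dual with $H^\ast$, and apply Corollary \ref{cor:inds-leftpartialdual2}(1). The only difference is that the paper outsources the identification $(H/B^+H)^\ast\#B\cong H^\ast$ to \cite[Corollary 6.1(2)]{Li23}, whereas you carry out this verification (including the triviality of $\pd{\phi}$) explicitly.
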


\begin{proof}
It is known in \cite[Corollary 6.1(2)]{Li23} that the dual Hopf algebra $H^\ast$ is also a left partial dual of $H$. Thus the desired claim is implied by Corollary \ref{cor:inds-leftpartialdual2}(1).
%Since the algebra $\intHom(\mathbb{C},\mathbb{C})=H^\ast$ by Examples in \cite[Section 3.3]{Ost03}, the left $\Rep(H)$-module category $\Vec$ could be identified with $\Rep(H)_{H^\ast}$ under the equivalence (\ref{eqn:bimodcat}), which is determined by
%$$\Vec\rightarrow \Rep(H)_{H^\ast},\;\;\;\;
%  \mathbb{C}\mapsto \intHom(\mathbb{C},\mathbb{C})=H^\ast.$$
%The left $H$-module structure on $H^\ast$ is given by the hit action $\rightharpoonup$, which makes $H^\ast$ a right $H^\ast$-module in $\Rep(H)$ via the multiplication of $H^\ast$.
%
%Consider the left $\Rep(H)$-module functor $-\otimes H^\ast:\Rep(H)_{H^\ast}\rightarrow\Rep(H)_{H^\ast}$. Recall that the equivalence (\ref{eqn:Hopfdualequiv}) sends $-\otimes H^\ast$ to the left $H^\ast$-module $\mathbb{C}\otimes H^\ast=H^\ast$. Thus we could describe by Theorem \ref{thm:bimod-inds} the indicators $\nu_n(H^\ast)$ of the regular left $H^\ast$-module, which would equal to $\nu_n(H)$, since the object $H^\ast\in\Rep(H)$ is isomorphic to the left regular $H$-module by \cite[Theorem 2.1.3(3)]{Mon93}.
\end{proof}

%\begin{question}
%What is the monoidal structure of the equivalence (\ref{eqn:dualequiv-Hopfalg}) when $B\#(H/B^+H)^\ast$ is also a Hopf algebra?
%\end{question}

\subsection{Some equations on indicators of bismash products and quantum doubles}\label{subsection:5.4}

%\textcolor{red}{----List props instead of a subsection.}
Let $(F,G,\triangleright,\triangleleft)$ be a matched pair of finite groups (\cite[Definition 2.1]{Tak81}), where $\triangleright:G\times F\rightarrow F$ and $\triangleleft:G\times F\rightarrow G$ are group actions satisfying
$$x\triangleright bc=(x\triangleright b)((x\triangleleft b)\triangleright c)
\;\;\;\;\;\;\text{and}\;\;\;\;\;\;
xy\triangleleft b=(x\triangleleft(y\triangleright b))(y\triangleleft b)$$
for all $b,c\in F$ and $x,y\in G$. We know that it determines two bismash products $\mathbb{C}^G\#\mathbb{C}F$ and $\mathbb{C}G\#\mathbb{C}^F$, which are semisimple complex Hopf algebras dual to each other. The detailed notions and constructions could be found in \cite[Section 1]{Mas02} and \cite[Sections 1 and 2]{BGM96}.

Firstly, note according to \cite[Proposition 6.8]{Li23} that $\mathbb{C}^G\#\mathbb{C}F$ is a left partial dualized Hopf algebra of the group algebra $\mathbb{C}(F\bowtie G)$. Thus
Corollary \ref{cor:inds-leftpartialdual2}(1) could be applied to induce that the indicators of their regular representations coincide:

\begin{corollary}(\cite[Corollary 5.7]{KMN12})
Let $(F,G)$ be a matched pair of finite groups. Then
$$\nu_n(\mathbb{C}^G\#\mathbb{C}F)=\nu_n(F\bowtie G)
\;\;\;\;\;\;\;\;(\forall n\geq1).$$
\end{corollary}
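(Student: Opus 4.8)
The plan is to realize this as a direct specialization of Corollary \ref{cor:inds-leftpartialdual2}(1), which already records the invariance of regular-representation indicators under left partial dualization. First I would set $H:=\mathbb{C}(F\bowtie G)$, the group algebra of the double cross product group. Since $F\bowtie G$ is a finite group, $H$ is a semisimple cocommutative Hopf algebra over $\mathbb{C}$, so the hypotheses of Corollary \ref{cor:inds-leftpartialdual2} are satisfied; moreover, by the standard convention $\nu_n(F\bowtie G)$ denotes the $n$-th indicator of the regular representation of $F\bowtie G$, which is $\nu_n(H)$ for the regular $H$-module.

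Next I would invoke \cite[Proposition 6.8]{Li23}, as quoted in the paragraph preceding the statement, to exhibit $\mathbb{C}^G\#\mathbb{C}F$ as a left partial dual of $H$. Concretely, the subgroup inclusion $F\hookrightarrow F\bowtie G$ makes $B:=\mathbb{C}F$ a Hopf subalgebra of $H$, in particular a left coideal subalgebra, while the factorization $F\bowtie G=FG$ identifies the quotient coalgebra $C=H/B^+H$ with $\mathbb{C}G$. Hence $C^\ast\cong\mathbb{C}^G$, and the associated left partially dualized Hopf algebra is $C^\ast\#B=\mathbb{C}^G\#\mathbb{C}F$.

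With these identifications, Corollary \ref{cor:inds-leftpartialdual2}(1) applied to the pair $(H,B)$ gives
$$\nu_n\big((H/B^+H)^\ast\#B\big)=\nu_n(H)\qquad(\forall n\geq1),$$
which is exactly $\nu_n(\mathbb{C}^G\#\mathbb{C}F)=\nu_n(F\bowtie G)$ for all $n\geq1$.

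There is no genuine obstacle remaining, since the substantive input---Theorem \ref{thm:inds-leftpartialdual} with its Corollary \ref{cor:inds-leftpartialdual2}, together with the partial-dual identification \cite[Proposition 6.8]{Li23}---has already been established. The one point deserving a moment's care is to confirm that the left coideal subalgebra $B$ and the matched-pair data furnished by \cite[Proposition 6.8]{Li23} are precisely those producing $\mathbb{C}^G\#\mathbb{C}F$ in the form $(H/B^+H)^\ast\#B$, so that the abstract equality specializes to the stated formula.
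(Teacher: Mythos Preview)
Your proposal is correct and follows essentially the same approach as the paper: the paper's proof is precisely the sentence preceding the corollary, which cites \cite[Proposition 6.8]{Li23} to realize $\mathbb{C}^G\#\mathbb{C}F$ as a left partial dual of $\mathbb{C}(F\bowtie G)$ and then applies Corollary~\ref{cor:inds-leftpartialdual2}(1). Your added detail about $B=\mathbb{C}F$ and $C\cong\mathbb{C}G$ is consistent with that reference.
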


As for the other bismash product $\mathbb{C}G\#\mathbb{C}^F$, it should be a right partially dualized (coquasi-)Hopf algebra of $\mathbb{C}(F\bowtie G)$ in the sense of \cite[Definition 5.4]{Li23}.
However, with the help of Theorem \ref{thm:inds-leftpartialdual}, we could study its indicators more explicitly by proving that $\mathbb{C}G\#\mathbb{C}^F$ is a left partial dual of $\mathbb{C}^{F\bowtie G}$. Specifically:

\begin{lemma}\label{lem:bismashprodPD}
With notations above, suppose $\{p_a\}_{a\in F}$ is the basis of $\mathbb{C}^G$ which is dual to $G$, and suppose $\{p_{(a,x)}\}_{(a,x)\in F\times G}$ is the basis of $\mathbb{C}^{F\bowtie G}$ dual to $F\bowtie G$, and
define the maps
$$\begin{array}{rclcrclcrclcrcl}
\iota_F:F&\rightarrow&F\times G,&& \pi_F:F\times G&\rightarrow&F,  &&
\iota_G:G&\rightarrow&F\times G,&& \pi_G:F\times G&\rightarrow&G,  \\
a&\mapsto&(a,1),&&(a,x)&\mapsto&a,  &&
x&\mapsto&(1,x),&&(a,x)&\mapsto&x.
\end{array}$$
Then the bismash product $\mathbb{C}G\#\mathbb{C}^F$ is the left partially dualized Hopf algebra determined by the {\pams}
\begin{equation}\label{eqn:PAMSbismashprod}
\begin{array}{ccc}
\xymatrix{
\mathbb{C}^F \ar@<.5ex>[r]^{\pi_F^\ast\;\;\;\;}
& \mathbb{C}^{F\bowtie G} \ar@<.5ex>@{-->}[l]^{\iota_F^\ast\;\;\;\;}
  \ar@<.5ex>[r]^{\;\;\;\;\iota_G^\ast}
& \mathbb{C}^G \ar@<.5ex>@{-->}[l]^{\;\;\;\;\pi_F^\ast}   }
&\;\;\text{and}\;\;&
\xymatrix{
\mathbb{C} G \ar@<.5ex>[r]^{\iota_G\;\;\;\;\;\;}
& \mathbb{C}(F\bowtie G) \ar@<.5ex>@{-->}[l]^{\pi_G\;\;\;\;\;\;} \ar@<.5ex>[r]^{\;\;\;\;\;\;\pi_F}
& \mathbb{C} F \ar@<.5ex>@{-->}[l]^{\;\;\;\;\;\;\iota_F}
},
\end{array}
\end{equation}
where the left $\mathbb{C}^{F\bowtie G}$-comodule structure on $\mathbb{C}^F$ is given by
\begin{equation}\label{eqn:leftcomodmatchedpair}
p_a\mapsto
\sum_{\substack{(b,x)\in F\bowtie G,\;c\in F, \\ b(x\triangleright c)=a}}
p_{(b,x)}\otimes p_c
\;\;\;\;\;\;(\forall a\in F).
\end{equation}
\end{lemma}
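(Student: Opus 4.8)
The plan is to verify directly that the pair of maps in (\ref{eqn:PAMSbismashprod}) is a {\pams} for $\iota=\pi_F^\ast\colon\mathbb{C}^F\rightarrowtail\mathbb{C}^{F\bowtie G}$ in the sense of Definition \ref{def:PAMS}, and then to identify the resulting left partial dual $C^\ast\#B=(\mathbb{C}^G)^\ast\#\mathbb{C}^F=\mathbb{C}G\#\mathbb{C}^F$ with the bismash product. Throughout I set $H=\mathbb{C}^{F\bowtie G}$ with dual basis $\{p_{(a,x)}\}$ and exploit that $\iota_F,\iota_G$ are group homomorphisms (so $\iota_F^\ast,\iota_G^\ast$ are Hopf maps), whereas $\pi_F,\pi_G$ are only maps of sets (so $\pi_F^\ast,\pi_G^\ast$ are merely algebra maps, being pullbacks of functions). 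First I would record the coaction: applying $\Delta_H$ to $\iota(p_a)=\sum_x p_{(a,x)}$ and summing over the freed $G$-coordinate shows $\iota(\mathbb{C}^F)$ is a left coideal subalgebra whose coaction is exactly (\ref{eqn:leftcomodmatchedpair}), which simultaneously settles condition (1) and justifies the stated comodule structure.

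Next I would check conditions (2)--(6). The guiding observation is that each axiom is the linear dual of a group-theoretic identity attached to the exact factorization $F\bowtie G=\iota_F(F)\,\iota_G(G)$: namely $\pi_F\iota_F=\id_F$, $\pi_G\iota_G=\id_G$, the fact that $\pi_F\iota_G$ and $\pi_G\iota_F$ are constant at the identity, and above all the factorization $(a,x)=(a,1)(1,x)=\iota_F(\pi_F(a,x))\,\iota_G(\pi_G(a,x))$. Dualizing this last identity yields condition (6), $(\iota\zeta)\ast(\gamma\pi)=\id_H$: using $\Delta_H(p_{(a,x)})=\sum_{(b,s)(c,t)=(a,x)}p_{(b,s)}\otimes p_{(c,t)}$, the map $\iota\zeta=(\iota_F\pi_F)^\ast$ picks out a factor of the form $(\,\cdot\,,1)$ and $\gamma\pi=(\iota_G\pi_G)^\ast$ a factor of the form $(1,\,\cdot\,)$, leaving only the term $p_{(a,x)}$. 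Conditions (3)--(5), namely convolution invertibility of $\zeta,\gamma$, their compatibility with the left $B$-action and right $C$-coaction, and preservation of units and counits, are of the same routine dualizing nature, since $\zeta,\gamma$ arise by restriction and pullback along the splittings.

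Having established the {\pams}, I would compute the partial dual. Identifying $C^\ast=(\mathbb{C}^G)^\ast=\mathbb{C}G$ and $B=\mathbb{C}^F$, I would unwind the smash product (\ref{eqn:smashprod}): the action $\rightharpoonup$ is the hit action of $H$ on $H^\ast=\mathbb{C}(F\bowtie G)$, restricted through $C^\ast\hookrightarrow H^\ast$ by $\pi^\ast=\iota_G$ together with the coaction (\ref{eqn:leftcomodmatchedpair}); a direct evaluation converts it into the $\triangleright$-action of $G$ on $F$ and reproduces the bismash multiplication of $\mathbb{C}G\#\mathbb{C}^F$, and likewise for the comultiplication. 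The decisive step is to show the associator is trivial, i.e.\ that $\pd{\phi}^{-1}$ from (\ref{eqn:phi^-1}) equals the unit cubed. With $x_i=p_{g_i}$ and $x_i^\ast=g_i$, the algebra-map property of $\gamma=\pi_G^\ast$ forces $\gamma(x_i)\,\gamma(x_j)_{(1)}$ to be supported where the $G$-coordinate of the first leg equals $g_i$, while $\zeta=\iota_F^\ast$ (restriction to $(\,\cdot\,,1)$) kills any leg whose $G$-coordinate is nontrivial; combined with $1\triangleleft d=1$ this collapses $\zeta[\gamma(x_i)\gamma(x_j)_{(1)}]\otimes\zeta[\gamma(x_j)_{(2)}]$ to $\delta_{g_i,1}\delta_{g_j,1}\,1_B\otimes 1_B$, so only the $g_i=g_j=1$ summand survives and, since the counit $\e$ of $\mathbb{C}^G$ is the unit $1_G$ of $\mathbb{C}G$, gives $\pd{\phi}^{-1}=(\e\#1)^{\otimes 3}$. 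I expect this associator computation to be the main obstacle, being the one place where the non-cocommutativity of $H$ and the matched-pair actions genuinely interact; once it is carried out, triviality of $\pd{\phi}$ upgrades $C^\ast\#B$ to an honest Hopf algebra isomorphic to $\mathbb{C}G\#\mathbb{C}^F$.
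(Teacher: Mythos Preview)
Your proposal is correct: the axioms of Definition \ref{def:PAMS} do reduce to the set-theoretic identities $\pi_F\iota_F=\id_F$, $\pi_G\iota_G=\id_G$, $\pi_G\iota_F\equiv 1$, $\pi_F\iota_G\equiv 1$, and most importantly $(a,x)=\iota_F(\pi_F(a,x))\cdot\iota_G(\pi_G(a,x))$ coming from the exact factorization, and your associator computation (using $\zeta(p_{(a,x)})=\delta_{x,1}p_a$, $\gamma(p_g)=\sum_a p_{(a,g)}$, and $1\triangleleft d=1$) is the right way to see $\pd{\phi}^{-1}$ collapse.

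Your route, however, is not the one the paper takes. The paper does not reverify the {\pams} axioms or recompute the associator at all; it simply observes that the stated left $\mathbb{C}^{F\bowtie G}$-comodule structure on $\mathbb{C}^F$ is dual to the left $\mathbb{C}(F\bowtie G)$-module structure $(a,x)\otimes b\mapsto a(x\triangleright b)$ on $\mathbb{C}F$, and then declares the rest ``completely analogous to \cite[Lemma 6.6 and Proposition 6.8]{Li23}'', where the mirror statement for the other bismash product $\mathbb{C}^G\#\mathbb{C}F$ as a left partial dual of $\mathbb{C}(F\bowtie G)$ was already proved in detail. In effect the paper transports an existing computation through the duality $\mathbb{C}(F\bowtie G)\leftrightarrow\mathbb{C}^{F\bowtie G}$, whereas you redo that computation from scratch on the dual side. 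Your approach buys self-containment (no appeal to \cite{Li23}) and makes explicit why the associator vanishes; the paper's approach buys brevity by recycling work already done. Both are valid, and what you outline is essentially what unfolding the cited analogy would produce.
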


\begin{proof}
It is clear that the left $\mathbb{C}^{F\bowtie G}$-comodule structure (\ref{eqn:leftcomodmatchedpair}) on $\mathbb{C}^F$ corresponds to the left $\mathbb{C}(F\bowtie G)$-module structure
$$(a,x)\otimes b\mapsto a(x\triangleright b)\;\;\;\;\;\;\;\;(\forall a,b\in F,\;\forall x\in G)$$
on $\mathbb{C}F$, which fits the requirement of a {\pams} for the latter diagram in (\ref{eqn:PAMSbismashprod}).
Thus one could find that
the claims desired are completely analogous to \cite[Lemma 6.6 and Proposition 6.8]{Li23}.
\end{proof}

\begin{proposition}\label{prop:indicatorsbismashprod}
Let $(F,G,\triangleright,\triangleleft)$ be a matched pair of finite groups.
For any $a\in F$, regard $\mathbb{C}p_a\in\Rep(\mathbb{C}^F)$ as the 1-dimensional representation. Then the indicator
$\nu_n(\mathbb{C}^G\#p_a)$ of the object $\mathbb{C}^G\#p_a\in\Rep(\mathbb{C}^G\#\mathbb{C}F)$ equals the cardinal number of
$$\{x\in G\mid(a,x)^n=(1,1)\text{ in }F\bowtie G\}$$
for each $n\geq1$.
\end{proposition}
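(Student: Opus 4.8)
The plan is to reduce the claim, via Lemma \ref{lem:bismashprodPD} and Theorem \ref{thm:inds-leftpartialdual}, to an indicator computation in the category $\Rep(\mathbb{C}^{F\bowtie G})$ of the dual group algebra, where the answer is read off from a grading. By Lemma \ref{lem:bismashprodPD}, the bismash product in question is the left partial dual $C^\ast\#B$ of $H=\mathbb{C}^{F\bowtie G}$ attached to the {\pams} (\ref{eqn:PAMSbismashprod}), with left coideal subalgebra $B=\mathbb{C}^F$ and quotient $C=\mathbb{C}^G$, so that $C^\ast=\mathbb{C}G$, $B^\ast=\mathbb{C}F$ and $H^\ast=\mathbb{C}(F\bowtie G)$. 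Taking $L=\mathbb{C}p_a\in\Rep(\mathbb{C}^F)=\Rep(B)$ in Theorem \ref{thm:inds-leftpartialdual}, the module $C^\ast\#L=\mathbb{C}G\#p_a$ of the statement (with structure (\ref{eqn:smashLmodstru})) satisfies
\[
\nu_n(\mathbb{C}G\#p_a)=\nu_n\big(\mathbb{C}p_a\,\square_{\mathbb{C}F}\,\mathbb{C}(F\bowtie G)\big)\qquad(\forall n\geq1),
\]
the right-hand side being taken in $\Rep(\mathbb{C}^{F\bowtie G})$. As in the proof of Theorem \ref{thm:inds-leftpartialdual}, the two senses of $\nu_n$ agree here by Proposition \ref{prop:intindscoincide}, since $\Rep(\mathbb{C}^{F\bowtie G})$ is integral.

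The central step is to identify $V:=\mathbb{C}p_a\,\square_{\mathbb{C}F}\,\mathbb{C}(F\bowtie G)$ as a graded object of $\Rep(\mathbb{C}^{F\bowtie G})$. First I would record the two comodule structures entering (\ref{eqn:cotensorprod0}): the right $\mathbb{C}F$-coaction (\ref{eqn:Lrightcomodstru}) on $L=\mathbb{C}p_a$ sends $p_a\mapsto p_a\otimes a$, so $L$ sits in degree $a\in F$; and the left $\mathbb{C}F$-coaction on $H^\ast=\mathbb{C}(F\bowtie G)$ induced by $\iota^\ast=\pi_F$ sends a group-like $(b,y)\mapsto b\otimes(b,y)$. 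Substituting these into the defining condition (\ref{eqn:cotensorprod0}) matches the middle $\mathbb{C}F$-tensorands and forces $b=a$, so $V$ has basis $\{\,p_a\otimes(a,y)\mid y\in G\,\}$ and is $|G|$-dimensional. Next, tracing the left $H=\mathbb{C}^{F\bowtie G}$-action (\ref{eqn:cotensorH*modstru}) through the (group-like) comultiplication of $\mathbb{C}(F\bowtie G)$, the idempotent $p_g$ acts on $p_a\otimes(a,y)$ by $\delta_{g,(a,y)}$; hence $p_a\otimes(a,y)$ is homogeneous of degree $(a,y)$, each graded component $V_{(a,y)}$ is one-dimensional, and $V_g=0$ for every $g\in F\bowtie G$ not of the form $(a,y)$.

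Finally I would evaluate the indicator directly from (\ref{eqn:inds-Hopfalg}) for the semisimple Hopf algebra $\mathbb{C}^{F\bowtie G}$. Its normalized integral is $\Lambda=p_{(1,1)}$, and since $p_sp_t=\delta_{s,t}p_s$, the $n$-th Sweedler power collapses to $\Lambda^{[n]}=\sum_{g^n=(1,1)}p_g$. Pairing with the character of $V$, which sends $p_g$ to $\dim V_g$, yields
\[
\nu_n(V)=\sum_{g\in F\bowtie G,\ g^n=(1,1)}\dim V_g=\big|\{\,y\in G\mid (a,y)^n=(1,1)\,\}\big|,
\]
which is the desired count. The main obstacle is the middle paragraph: correctly tracking the several right/left comodule structures through the cotensor product (\ref{eqn:cotensorprod0}) and verifying that the resulting $(F\bowtie G)$-grading places $p_a\otimes(a,y)$ precisely in degree $(a,y)$; the reduction and the final integral computation are then routine.
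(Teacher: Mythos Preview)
Your proposal is correct and follows essentially the same approach as the paper's own proof: reduce via Lemma~\ref{lem:bismashprodPD} and Theorem~\ref{thm:inds-leftpartialdual} to the cotensor product $p_a\square_{\mathbb{C}F}\mathbb{C}(F\bowtie G)$ in $\Rep(\mathbb{C}^{F\bowtie G})$, identify its basis as $\{p_a\otimes(a,y)\mid y\in G\}$ with $p_{(b,y)}$ acting by $\delta_{a,b}\delta_{x,y}$, and pair the resulting character with $\Lambda^{[n]}=\sum_{g^n=(1,1)}p_g$. The only cosmetic difference is that the paper packages the identification as an explicit isomorphism $p_a\otimes\mathbb{C}G\cong p_a\square_{\mathbb{C}F}\mathbb{C}(F\bowtie G)$, $p_a\otimes x\mapsto p_a\otimes(a,x)$, whereas you read off the basis directly from the cotensor condition; the computations are otherwise identical.
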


\begin{proof}
By applying Theorem \ref{thm:inds-leftpartialdual} to the left partial dual determined in Lemma \ref{lem:bismashprodPD}, one could obtain that
\begin{equation}\label{eqn:inds(smL)=(Lsq)bismashprod}
\nu_n(\mathbb{C}^G\#p_a)=\nu_n(p_a\square_{\mathbb{C}F}\mathbb{C}(F\bowtie G))
\;\;\;\;\;\;\;\;(\forall n\geq1).
\end{equation}

Let us focus on the object appearing at the right side of Equation (\ref{eqn:inds(smL)=(Lsq)bismashprod}): The left $\mathbb{C}F$-comodule structure on $\mathbb{C}(F\bowtie G)$ should be induced via $\pi_F$ as
$$\mathbb{C}(F\bowtie G)\rightarrow \mathbb{C}F\otimes\mathbb{C}(F\bowtie G),\;\;
(a,x)\mapsto b\otimes(b,x)\;\;\;\;\;\;(\forall b\in F,\;\forall x\in G).$$
It follows that the isomorphism $\mathbb{C}(F\bowtie G)\cong \mathbb{C}F\otimes\mathbb{C}G$, $(b,x)\mapsto b\otimes x$ preserves left $\mathbb{C}F$-coactions. As a consequence,
\begin{equation}\label{eqn:cotensoriso}
p_a\otimes\mathbb{C}G \cong p_a\square_{\mathbb{C}F}\mathbb{C}(F\bowtie G),\;\;
p_a\otimes x\mapsto p_a\otimes(a,x)\;\;\;\;\;\;(\forall x\in G)
\end{equation}
is a linear isomorphism, since it is evident that $\mathbb{C}p_a\in\Rep(\mathbb{C}^F)$ has the right $\mathbb{C}F$-comodule structure $p_a\mapsto p_a\otimes a$.

Now we define a left $\mathbb{C}^{F\bowtie G}$-action on $p_a\otimes\mathbb{C}G$ as
$$\begin{array}{cccl}
\mathbb{C}^{F\bowtie G}\otimes(p_a\otimes\mathbb{C}G) &\rightarrow& p_a\otimes\mathbb{C}G, \\
p_{(b,y)}\otimes(p_a\otimes x) &\mapsto& \delta_{a,b}\delta_{x,y}(p_a\otimes x)
\end{array}
\;\;\;\;\;\;(\forall b\in F,\;\forall x,y\in G).$$
Then one could directly verify that (\ref{eqn:cotensoriso}) is also an isomorphism of left $\mathbb{C}^{F\bowtie G}$-modules, as the left $\mathbb{C}^{F\bowtie G}$-action on the object $p_a\square_{\mathbb{C}F}\mathbb{C}(F\bowtie G)$ is given in (\ref{eqn:cotensorH*modstru}) such that
$$p_{(b,y)}\cdot[p_a\otimes (a,x)]
=p_a\otimes[ p_{(b,y)}\rightharpoonup(a,x)]=\delta_{a,b}\delta_{x,y}(p_a\otimes x)
\;\;\;\;\;\;(\forall b\in F,\;\forall x,y\in G).$$

Furthermore, it is clear that $\{p_a\otimes x\}_{x\in G}$ is a basis of $p_a\otimes\mathbb{C}G$ with dual basis $\{a\otimes p_x\}_{x\in G}$. Thus the character $\chi$ of $p_a\otimes\mathbb{C}G\in\Rep(\mathbb{C}^{F\bowtie G})$ satisfies that
$$\langle\chi,p_{(b,y)}\rangle=\sum_{x\in G}\langle a\otimes p_x,p_{(b,y)}\cdot(p_a\otimes x)\rangle
=\sum_{x\in G}\langle a\otimes p_x,\delta_{a,b}\delta_{x,y}(p_a\otimes x)\rangle=\delta_{a,b}
\;\;\;\;(\forall b\in F,\;\forall y\in G).$$

On the other hand, the normalized integral of the Hopf algebra $\mathbb{C}^{F\bowtie G}$ is $p_{(1,1)}$, whose $n$-th Sweedler power is
$$p_{(1,1)}{}^{[n]}=\sum_{\substack{(b,y)^n=(1,1) \\\text{in }F\bowtie G}}p_{(b,y)}$$
for all $n\geq1$.
As a conclusion, we find by Remark \ref{rmk:inds-Hopfalgcoincide} the following equations on the indicators of left $\mathbb{C}^{F\bowtie G}$-modules:
\begin{eqnarray*}
\nu_n(\mathbb{C}^G\#p_a)
&\overset{(\ref{eqn:inds(smL)=(Lsq)bismashprod})}=&
= \nu_n(p_a\square_{\mathbb{C}F}\mathbb{C}(F\bowtie G))
~=~ \nu_n(p_a\otimes\mathbb{C}G) ~=~ \langle\chi,p_{(1,1)}{}^{[n]}\rangle   \\
&=& \sum_{\substack{(b,y)^n=(1,1) \\\text{in }F\bowtie G}}\langle\chi,p_{(b,y)}\rangle
= \sum_{\substack{(b,y)^n=(1,1) \\\text{in }F\bowtie G}} \delta_{a,b},
\end{eqnarray*}
which equals the cardinal number of
$\{x\in G\mid(a,x)^n=(1,1)\text{ in }F\bowtie G\}$
for each $n\geq1$.
\end{proof}

Another example of left partially dualized Hopf algebra is the finite-dimensional (generalized) quantum double introduced in \cite{DT94}. Let us recall the concept in terms of Hopf pairings instead of skew ones:

Suppose $H$ and $K$ are
finite-dimensional Hopf algebras over $\k$ with Hopf pairing $\sigma:K^{\ast}\otimes H\rightarrow\k$,
which induces two Hopf algebra maps
\begin{equation*}\label{eqn: sigma lr}
\sigma_l: K^\ast\rightarrow H^\ast,\;\;k^\ast\mapsto\sigma(k^\ast, -)
\;\;\;\;\text{and}\;\;\;\;\sigma_r: H\rightarrow K,\;\;h\mapsto\sigma(-, h).
\end{equation*}
Then it determines the \textit{quantum double} denoted by $K^{\ast\,\cop}\bowtie_\sigma H$,
which will become the Drinfeld double $D(H)$ (\cite{Dri86}) if $K=H$ and $\sigma$ is the evaluation.

\begin{lemma}\label{lem:quantumdoublePD}
(\cite[Section 3.1]{HKL25})
There is a {\pams}
\begin{equation}\label{eqn:PAMSdouble}
\begin{array}{ccc}
\xymatrix{
H\ar@<.5ex>[r]^{\iota\;\;\;\;\;\;}
& K^\op\otimes H
 \ar@<.5ex>@{-->}[l]^{\zeta\;\;\;\;\;\;} \ar@<.5ex>[r]^{\;\;\;\;\;\;\pi}
& K^\op \ar@<.5ex>@{-->}[l]^{\;\;\;\;\;\;\gamma}  }
&\;\;\text{and}\;\;&
\xymatrix{
K^{\ast\cop}\ar@<.5ex>[r]^{\pi^\ast\;\;\;\;\;\;}
& K^{\ast\cop}\otimes H^\ast
 \ar@<.5ex>@{-->}[l]^{\gamma^\ast\;\;\;\;\;\;}
 \ar@<.5ex>[r]^{\;\;\;\;\;\;\;\;\iota^\ast}
& H^\ast \ar@<.5ex>@{-->}[l]^{\;\;\;\;\;\;\;\;\zeta^\ast}  }
\end{array}
\end{equation}
Furthermore, the left partially dualized Hopf algebra determined by (\ref{eqn:PAMSdouble}) is exactly the quantum double $K^{\ast\cop}\bowtie_\sigma H$ .

In particular, the Drinfeld double $D(H)$ is a left partially dualized Hopf algebra of $H^\op\otimes H$.
\end{lemma}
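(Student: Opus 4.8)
The plan is to produce the four structure maps explicitly, verify the conditions of Definition~\ref{def:PAMS}, and then match the resulting smash product with the quantum double by a direct comparison of multiplications and comultiplications; showing that the associator~(\ref{eqn:phi^-1}) is trivial then identifies the partial dual as an honest Hopf algebra. Here $B=H$ plays the role of the left coideal subalgebra, the ambient Hopf algebra is the tensor product $K^\op\otimes H$, and $C=K^\op$ is the quotient, so that the left partial dual $C^\ast\#B=K^{\ast\cop}\#H$ coincides with the quantum double as a vector space. I would first record the maps supplied by \cite[Section~3.1]{HKL25}: the projection $\pi\colon K^\op\otimes H\to K^\op$ and the two sections $\zeta\colon K^\op\otimes H\to H$ and $\gamma\colon K^\op\to K^\op\otimes H$ are built from the units and counits together with the pairing, whereas the embedding $\iota\colon H\to K^\op\otimes H$ is the $\sigma$-twisted inclusion obtained from $\sigma_r\colon H\to K$; it is precisely this twisting that injects the crossing relation of the double into the construction. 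Conditions (1) and (2) are then checked directly: $\iota$ is an injection of left comodule algebras (an algebra map because $\sigma_r$ is a Hopf algebra map and the antipode reverses order in $K^\op$), $\pi$ is a surjection of right module coalgebras, and $\im(\iota)$ is the space of right $K^\op$-coinvariants.

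Next I would verify conditions (3)--(6): convolution-invertibility of $\zeta$ and $\gamma$, preservation of units and counits, the compatibility $(\iota\circ\zeta)\ast(\gamma\circ\pi)=\id_H$, and the facts that $\zeta$ preserves left $B$-actions while $\gamma$ preserves right $C$-coactions. After substituting the explicit formulas, each of these reduces to one of the defining axioms of the Hopf pairing $\sigma$ (its compatibility with products, coproducts, units, counits, and antipodes). This is the computational heart of the argument: the compatibility~(6), together with the convolution inverse of the twisted $\iota$, is where the bialgebra axioms for $\sigma$ are used essentially.

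With the {\pams} in hand, I would compute the multiplication~(\ref{eqn:smashprod}) on $K^{\ast\cop}\#H$. The left comodule structure of $B=H$ along the twisted $\iota$ carries a $\sigma_r$-factor in its $K^\op$-leg; pairing this leg against $g\in K^{\ast\cop}$ through the action $\rightharpoonup$ of the ambient algebra on $C^\ast$ produces exactly the evaluation factors of the crossing relation of $K^{\ast\cop}\bowtie_\sigma H$, and a term-by-term comparison identifies the two algebras. The dual computation matches the coalgebra structures, and since $\gamma$ is an algebra map the associator~(\ref{eqn:phi^-1}) collapses to the trivial element, so $K^{\ast\cop}\#H$ is a genuine Hopf algebra equal to the quantum double. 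Specializing $K=H$ and $\sigma=\ev$ the evaluation pairing finally gives $K^{\ast\cop}=H^{\ast\cop}$, ambient algebra $H^\op\otimes H$, and $K^{\ast\cop}\#H=H^{\ast\cop}\bowtie_{\ev}H=D(H)$, which exhibits the Drinfeld double as a left partial dual of $H^\op\otimes H$.

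The step I expect to be the main obstacle is this last computation: verifying that the action $\rightharpoonup$ induced through the twisted comodule structure of $B=H$ reproduces \emph{precisely} the crossing relation of the double. This is the single place where the full Hopf pairing is needed and where antipode and ordering bookkeeping is most error-prone.
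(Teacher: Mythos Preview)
The paper does not supply its own proof of this lemma: it is stated with a citation to \cite[Section~3.1]{HKL25} and followed only by a remark recording the explicit formula~(\ref{eqn:iotadouble}) for $\iota$. Your sketch is therefore not being compared against an argument in the present paper but against the cited source, and what you outline is exactly the natural direct verification one expects there: write down $\iota,\pi,\zeta,\gamma$ explicitly, check Definition~\ref{def:PAMS}(1)--(6) using the Hopf-pairing axioms, and then identify the smash product~(\ref{eqn:smashprod}) with the double's crossing relation. That plan is sound. Two small comments: first, the formula~(\ref{eqn:iotadouble}) for $\iota$ already tells you that the twist sits in the \emph{inclusion} rather than in $\zeta$ or $\gamma$, so the latter two really are the obvious unit/counit maps and the associator~(\ref{eqn:phi^-1}) collapses immediately (cf.\ \cite[Lemma~2.12]{HKL25}, invoked in the proof of Lemma~\ref{lem:leftPDsplitext}); second, the step you flag as the main obstacle---matching the smash multiplication with the double---is in practice shorter than the PAMS verification, because once $\iota$ is given by~(\ref{eqn:iotadouble}) the left $K^\op\otimes H$-coaction on $B=H$ reads off directly and the pairing factors appear automatically.
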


\begin{remark}
The detailed construction of the {\pams} (\ref{eqn:PAMSdouble}) is found in \cite[Section 3.1]{HKL25}. Here we remark the definition of $\iota$ for subsequent usages:
\begin{equation}\label{eqn:iotadouble}
\iota(h)=\sum \sigma_r(S^{-1}(h_{(2)}))\otimes h_{(1)}\in K^\op\otimes H\;\;\;\;\;\;\;\;(\forall h\in H).
\end{equation}
\end{remark}

\begin{proposition}\label{prop:indicatorsquantumdouble}
Let $H$ and $K$ be semisimple Hopf algebras over $\mathbb{C}$ with Hopf pairing $\sigma:K^\ast\otimes H\rightarrow\mathbb{C}$. Then for any $V\in\Rep(H)$,
$$\nu_n(K^{\ast\cop}\bowtie_\sigma V)=\sum_{W\in\mathsf{Irr}(K)}\nu_n^H(V\otimes W)\nu_n^{K^\cop}(W)
\;\;\;\;\;\;\;\;(\forall n\geq1),$$
where $\mathsf{Irr}(K)$ denotes the set of isoclasses of irreducible left $K$-modules, and:
\begin{itemize}
\item
$K^{\ast\cop}\bowtie_\sigma V\in\Rep(K^{\ast\cop}\bowtie_\sigma H)$ with structure similar to (\ref{eqn:smashLmodstru}), namely:
$$(k^\ast\bowtie h)\otimes(l^\ast\bowtie v)
\mapsto\sum \sigma(l^\ast_{(3)}, h_{(1)})(k^\ast l^\ast_{(2)}\bowtie h_{(2)}v)\sigma(S^{-1}(l^\ast_{(1)}), h_{(3)}).$$

\item
For each $W\in\mathsf{Irr}(K)$, the object
$V\otimes W\in\Rep(H)$ with diagonal $H$-action via:
\begin{equation}\label{eqn:VotimesWmodstru}
H\otimes(V\otimes W)\rightarrow V\otimes W,\;\;
h\otimes(v\otimes w)\mapsto\sum h_{(1)}v\otimes \sigma_r(h_{(2)})w,
\end{equation}
and $\nu_n^{K^\cop}(W)$ is the $n$-th indicator of $W\in\Rep(K^\cop)$.
\end{itemize}
\end{proposition}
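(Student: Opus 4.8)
The plan is to deduce this from Theorem~\ref{thm:inds-leftpartialdual} applied to the {\pams} (\ref{eqn:PAMSdouble}) of Lemma~\ref{lem:quantumdoublePD}. In the notation of that lemma the ambient Hopf algebra is $K^\op\otimes H$, the left coideal subalgebra is $H$ embedded through $\iota$, and the quotient is $C=K^\op$, so that $C^\ast=K^{\ast\cop}$. Taking $L=V\in\Rep(H)=\Rep(B)$, the remark following Theorem~\ref{thm:inds-leftpartialdual} gives
\begin{equation*}
\nu_n(K^{\ast\cop}\#V)=\nu_n\big(V\square_{H^\ast}(K^\op\otimes H)^\ast\big)\qquad(\forall n\ge1),
\end{equation*}
where the left-hand module lives in $\Rep(K^{\ast\cop}\bowtie_\sigma H)$ and the right-hand one in $\Rep(K^\op\otimes H)$. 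First I would unwind the smash-product action (\ref{eqn:smashLmodstru}) using the explicit formula (\ref{eqn:iotadouble}) for $\iota$, checking that $K^{\ast\cop}\#V$ is precisely the module $K^{\ast\cop}\bowtie_\sigma V$ with the action written in the statement; this is a direct computation with the Hopf pairing $\sigma$.

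It then remains to compute the indicator of $M:=V\square_{H^\ast}(K^\op\otimes H)^\ast$ as an object of $\Rep(K^\op\otimes H)$. Since $K^\op\otimes H$ is a tensor product of semisimple Hopf algebras, $\Rep(K^\op\otimes H)$ is the external product $\Rep(K^\op)\boxtimes\Rep(H)$, and by Lemma~\ref{lem:indicatorformula}(1) the $n$-th indicator of an external tensor product $W'\boxtimes V'$ factors as $\nu_n^{K^\op}(W')\,\nu_n^H(V')$. The heart of the proof is therefore to establish the decomposition
\begin{equation*}
M\;\cong\;\bigoplus_{W\in\mathsf{Irr}(K)} W^\ast\boxtimes(V\otimes W)
\end{equation*}
in $\Rep(K^\op\otimes H)$, where $W^\ast\in\Rep(K^\op)$ is the dual module of Lemma~\ref{lem:indicatorformula}(3) and $V\otimes W\in\Rep(H)$ carries the diagonal action (\ref{eqn:VotimesWmodstru}) through $\sigma_r$.

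To obtain this decomposition I would start from $(K^\op\otimes H)^\ast=K^{\ast\cop}\otimes H^\ast$, and use the semisimplicity of $K$ to split $K^{\ast\cop}$ into its matrix-coefficient (Peter--Weyl) blocks indexed by $W\in\mathsf{Irr}(K)$. The cotensor product over $H^\ast$, whose left $H^\ast$-comodule structure on $(K^\op\otimes H)^\ast$ is induced by $\iota^\ast$ and is therefore twisted by $\sigma_r$, then glues the $H$-module structure of $V$ onto the $H^\ast$-tensorand; tracking this gluing on a matrix-coefficient basis should show that each block $W^\ast$ is paired with the $H$-module $V\otimes W$ equipped with exactly the diagonal action (\ref{eqn:VotimesWmodstru}). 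A dimension count, $\dim M=\dim V\cdot\dim K=\sum_{W}\dim V\,(\dim W)^2$, confirms that no summands are missed. Verifying that the $H$-action produced by the $\sigma_r$-twist in $\iota$ coincides with (\ref{eqn:VotimesWmodstru}), and that the resulting blocks are genuinely the external tensor products claimed, is the main obstacle, since it requires careful bookkeeping of the pairing $\sigma$ inside the cotensor product.

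Granting the decomposition, the computation closes formally: by additivity of indicators (Lemma~\ref{lem:indicatorformula}(2)) together with the product formula (Lemma~\ref{lem:indicatorformula}(1)),
\begin{equation*}
\nu_n(M)=\sum_{W\in\mathsf{Irr}(K)}\nu_n^{K^\op}(W^\ast)\,\nu_n^H(V\otimes W),
\end{equation*}
and Lemma~\ref{lem:indicatorformula}(3), namely $\nu_n^{K^\op}(W^\ast)=\nu_n^{K^\cop}(W)$, converts this into $\sum_{W}\nu_n^H(V\otimes W)\,\nu_n^{K^\cop}(W)$. Combining with the first displayed equality yields the asserted formula.
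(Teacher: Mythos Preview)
Your proposal is correct and follows essentially the same approach as the paper: apply Theorem~\ref{thm:inds-leftpartialdual} via the {\pams} of Lemma~\ref{lem:quantumdoublePD}, decompose the resulting cotensor product using the Peter--Weyl decomposition of $K^\ast$, and finish with Lemma~\ref{lem:indicatorformula}. The only organizational difference is that the paper handles your ``main obstacle'' by first writing down an explicit $K^\op\otimes H$-module isomorphism $\phi:V\square_{H^\ast}(K^{\ast\cop}\otimes H^\ast)\to V\otimes K^\ast$ (with an explicit inverse) and then decomposing $V\otimes K^\ast$ via the simple subcoalgebras $D\cong W\otimes W^\ast$, which makes the verification of the $\sigma_r$-twisted $H$-action a straightforward check rather than a basis computation inside the cotensor product.
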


\begin{proof}
By applying Theorem \ref{thm:inds-leftpartialdual} to the left partial dual determined in Lemma \ref{lem:quantumdoublePD}, one could obtain that
\begin{equation}\label{eqn:inds(smL)=(Lsq)double}
\nu_n(K^{\ast\cop}\bowtie_\sigma V)=\nu_n(V\square_{H^\ast}(K^{\ast\cop}\otimes H^\ast))
\;\;\;\;\;\;\;\;(\forall n\geq1).
\end{equation}
However, consider the left $K^\op\otimes H$-module $V\otimes K^\ast$ with structure defined as
\begin{equation}\label{eqn:VotimesK*modstru}
\begin{array}{ccccc}
\cdot&:&(K^\op\otimes H)\otimes(V\otimes K^\ast) &\rightarrow& V\otimes K^\ast,\;\; \\
&& (k\otimes h)\otimes (v\otimes k^\ast) &\mapsto& \sum h_{(1)}v\otimes [\sigma_r(h_{(2)})\rightharpoonup k^\ast\leftharpoonup k].
\end{array}
\end{equation}
Then we explain similarly to \cite[Lemma 3.12]{HKL25} that
$$\phi:V\square_{H^\ast}(K^{\ast\cop}\otimes H^\ast)\rightarrow V\otimes K^\ast,\;\;
\sum_i v_i\otimes(k^\ast_i\otimes h^\ast_i)\mapsto\sum_i v_i\otimes k^\ast_i\langle h^\ast_i,1\rangle$$
is a well-defined isomorphism of left $K^\op\otimes H$-modules.
In fact, it is straightforward to verify that
$$\psi:V\otimes K^\ast\rightarrow V\square_{H^\ast}(K^{\ast\cop}\otimes H^\ast),\;\;
v\otimes k^\ast\mapsto
\sum v_{\langle0\rangle}\otimes (k^\ast_{(1)}\otimes v_{\langle1\rangle}\sigma_l(k^\ast_{(2)}))$$
is the inverse of $\phi$ in the same way as the proof of \cite[Lemma 3.12]{HKL25}. Moreover, we could find by the following calculation that $\phi$ preserves $K^\op\otimes H$-actions: For any element
$\sum_i v_i\otimes(k^\ast_i\otimes h^\ast_i)\in V\square_{H^\ast}(K^{\ast\cop}\otimes H^\ast)$, the definition of cotensor product $-\square_{H^\ast}-$ provides that
\begin{eqnarray*}\label{eqn:cotensordouble}
\sum_i hv_i\otimes(k^\ast_i\otimes h^\ast_i)
&=&
\sum_i v_i\otimes[(k^\ast_i\otimes h^\ast_i)\leftharpoonup\iota(h)]  \nonumber  \\
&\overset{(\ref{eqn:iotadouble})}=&
\sum_i v_i\otimes[(k^\ast_i\otimes h^\ast_i)\leftharpoonup(\sigma_r(S^{-1}(h_{(2)}))\otimes h_{(1)})]  \nonumber  \\
&=&
\sum_i v_i\otimes[(\sigma_r(S^{-1}(h_{(2)}))\rightharpoonup k^\ast_i)\otimes (h^\ast_i\leftharpoonup h_{(1)})]
\;\;\;\;\;\;\;\;(\forall h\in H),
\end{eqnarray*}
which implies that
\begin{eqnarray*}
&& \phi\Big((k\otimes h)\cdot\sum_i v_i\otimes(k^\ast_i\otimes h^\ast_i)\Big)
~=~ \phi\Big(\sum_i v_i\otimes[(k\otimes h)\rightharpoonup(k^\ast_i\otimes h^\ast_i)]\Big)  \\
&=& \phi\Big(\sum_i v_i\otimes[(k^\ast_i\leftharpoonup k)\otimes (h\rightharpoonup h^\ast_i)]\Big)
~=~ \sum_i v_i\otimes(k^\ast_i\leftharpoonup k)\langle h\rightharpoonup h^\ast_i,1\rangle  \\
&=& \sum_i v_i\otimes(k^\ast_i\leftharpoonup k)\langle h^\ast_i,h\rangle
~=~ \sum_i v_i\otimes\big(\sigma_r(h_{(3)})\sigma_r(S^{-1}(h_{(2)}))\rightharpoonup k^\ast_i\leftharpoonup k\big)\langle h^\ast_i\leftharpoonup h_{(1)},1\rangle  \\
&=& \sum_i h_{(1)}v_i\otimes\big(\sigma_r(h_{(2)})\rightharpoonup k^\ast_i\leftharpoonup k\big)\langle h^\ast_i,1\rangle
\overset{(\ref{eqn:VotimesK*modstru})}=
(k\otimes h)\cdot \phi\Big(\sum_i v_i\otimes(k^\ast_i\otimes h^\ast_i)\Big)
\end{eqnarray*}
hold for all $k\in K$ and $h\in H$.

Now denote by $\mathcal{S}$ the set of simple subcoalgebras of the cosemisimple Hopf algebra $K^\ast$, and then $K^\ast=\bigoplus_{D\in\mathcal{S}}D$ would also be a decomposition of simple $K$-$K$-bimodules with actions $\rightharpoonup$ and $\leftharpoonup$. Furthermore, it is clear that there is a bijection
$$\mathcal{S}\rightarrow \mathsf{Irr}(K),\;\;D\mapsto W$$
satisfying that $D\cong W\otimes W^\ast$ as $K$-$K$-bimodules, where the left and right module structures on $W\otimes W^\ast$ are determined at the respective tensorands, namely:
$$k\cdot(w\otimes w^\ast)=kw\otimes w^\ast\;\;\;\;\text{and}\;\;\;\;
(w\otimes w^\ast)\cdot k=w\otimes w^\ast k\;\;\;\;
(\forall k\in K,\;\forall w\in W,\;\forall w^\ast\in W^\ast).$$

As a conclusion, we could obtain following $K^\op\otimes H$-module isomorphisms from $V\otimes K^\ast$ with structure (\ref{eqn:VotimesK*modstru}):
$$V\otimes K^\ast\cong V\otimes \Big(\bigoplus_{D\in\mathcal{S}}D\Big)
\cong V\otimes \Big(\bigoplus_{W\in\mathsf{Irr}(K)}W\otimes W^\ast\Big)
\cong \bigoplus_{W\in\mathsf{Irr}(K)}(V\otimes W)\otimes W^\ast,$$
where $V\otimes W\in\Rep(H)$ with diagonal $H$-module structure (\ref{eqn:VotimesWmodstru}), and the $K^\op\otimes H$-action on $(V\otimes W)\otimes W^\ast$ is determined via
$$(k\otimes h)\cdot((v\otimes w)\otimes w^\ast)=[h\cdot(v\otimes w)]\otimes w^\ast k
\;\;\;\;\;\;\;\;(\forall k\in K,\;\forall v\in V,\;\forall w\in W,\;\forall w^\ast\in W^\ast).$$
Consequently, we compute its indicators with the help of Lemma \ref{lem:indicatorformula} that
\begin{eqnarray*}
\nu_n(V\otimes K^\ast)
&=& \nu_n\Big(\bigoplus_{W\in\mathsf{Irr}(K)}(V\otimes W)\otimes W^\ast\Big)
\overset{(\ref{eqn:indicatorformula2})}=
\sum_{W\in\mathsf{Irr}(K)}\nu_n ((V\otimes W)\otimes W^\ast)  \\
&\overset{(\ref{eqn:indicatorformula1})}=&
\sum_{W\in\mathsf{Irr}(K)}\nu_n^H (V\otimes W)\nu_n^{K^\op}(W^\ast)
\overset{(\ref{eqn:indicatorformula3})}=
\sum_{W\in\mathsf{Irr}(K)}\nu_n^H (V\otimes W)\nu_n^{K^\cop}(W)
\end{eqnarray*}
for all $n\geq1$.

Finally, since $\phi:V\square_{H^\ast}(K^{\ast\cop}\otimes H^\ast)\cong V\otimes K^\ast$ is an isomorphism in $\Rep(K^\op\otimes H)$, it follows from Equation (\ref{eqn:inds(smL)=(Lsq)double}) that
$$\nu_n(K^{\ast\cop}\bowtie_\sigma V)=\sum_{W\in\mathsf{Irr}(K)}\nu_n^H(V\otimes W)\nu_n^{K^\cop}(W)
\;\;\;\;\;\;\;\;(\forall n\geq1).$$
\end{proof}

An analogous formula on indicators of $D(H)$ could be obtained as a particular case:

\begin{corollary}\label{cor:indicatorDrinfelddouble}
Let $H$ be a semisimple Hopf algebra over $\mathbb{C}$. Then for any $V\in\Rep(H)$,
$$\nu_n(H^{\ast\cop}\bowtie V)=\sum_{W\in\mathsf{Irr}(H)}\nu_n^H(V\otimes W)\nu_n^{H^\cop}(W)
\;\;\;\;\;\;\;\;(\forall n\geq1)$$
holds, where $H^{\ast\cop}\bowtie V$ is a $D(H)$-module with structure
$$(f\bowtie h)\otimes(g\bowtie v)
\mapsto\sum \langle g_{(3)}, h_{(1)}\rangle (f g_{(2)}\bowtie h_{(2)}v) \langle S^{-1}(g_{(1)}), h_{(3)}\rangle.$$
\end{corollary}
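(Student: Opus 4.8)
The plan is to obtain this corollary as the specialization of Proposition \ref{prop:indicatorsquantumdouble} to the case $K=H$ with $\sigma:H^\ast\otimes H\to\mathbb{C}$ the evaluation pairing $\sigma(f,h)=\langle f,h\rangle$. As recalled just before Lemma \ref{lem:quantumdoublePD}, the quantum double $K^{\ast\cop}\bowtie_\sigma H$ then coincides with the Drinfeld double $D(H)=H^{\ast\cop}\bowtie H$. The first step is to substitute this $\sigma$ into the $K^{\ast\cop}\bowtie_\sigma H$-module structure on $K^{\ast\cop}\bowtie_\sigma V$ displayed in Proposition \ref{prop:indicatorsquantumdouble}: writing $f,g\in H^\ast$ in place of $k^\ast,l^\ast$ and replacing $\sigma(l^\ast_{(3)},h_{(1)})$ and $\sigma(S^{-1}(l^\ast_{(1)}),h_{(3)})$ by $\langle g_{(3)},h_{(1)}\rangle$ and $\langle S^{-1}(g_{(1)}),h_{(3)}\rangle$ reproduces exactly the $D(H)$-action on $H^{\ast\cop}\bowtie V$ asserted in the corollary.

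The second step is to track the auxiliary maps. With $\sigma$ the evaluation pairing, the induced morphism $\sigma_r:H\to K=H$, $h\mapsto\sigma(-,h)$, is the identity of $H$ under the canonical isomorphism $H\cong H^{\ast\ast}$ (and likewise $\sigma_l=\id_{H^\ast}$). Consequently the diagonal $H$-action (\ref{eqn:VotimesWmodstru}) on $V\otimes W$ degenerates to $h\cdot(v\otimes w)=\sum h_{(1)}v\otimes h_{(2)}w$, the ordinary diagonal action, so that $\nu_n^H(V\otimes W)$ in each summand is the usual $n$-th indicator of $V\otimes W\in\Rep(H)$. Since moreover $\mathsf{Irr}(K)=\mathsf{Irr}(H)$ and $\nu_n^{K^\cop}(W)=\nu_n^{H^\cop}(W)$, the right-hand side of Proposition \ref{prop:indicatorsquantumdouble} becomes precisely $\sum_{W\in\mathsf{Irr}(H)}\nu_n^H(V\otimes W)\,\nu_n^{H^\cop}(W)$, and the asserted equality follows at once.

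The computation is entirely a matter of specialization, so no genuinely new argument is required; the only point deserving care---hence the main (if still routine) obstacle---is the bookkeeping of conventions. One must confirm that the evaluation pairing recovers the Drinfeld double in the precise form $H^{\ast\cop}\bowtie H$ employed here, that $\sigma_r=\id_H$ holds under $H\cong H^{\ast\ast}$ with the antipode and coproduct conventions fixed in Lemma \ref{lem:quantumdoublePD}, and that each coproduct leg together with the factor $S^{-1}$ lands in the correct tensor slot, so that the specialized general action agrees symbol-for-symbol with the $D(H)$-action displayed in the statement.
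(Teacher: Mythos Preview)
Your proposal is correct and matches the paper's approach exactly: the paper presents this corollary immediately after Proposition \ref{prop:indicatorsquantumdouble} with no proof, merely noting that it is ``a particular case'' obtained by taking $K=H$ and $\sigma$ the evaluation pairing, which is precisely the specialization you carry out.
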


Besides, according to the realization in Lemma \ref{lem:leftPDsplitext}, one might deduce similar formulas on indicators for certain representations of semisimple Hopf algebras fitting into split abelian extensions, but we would not do it in this paper.

\subsection{Invariance for the exponents of semisimple (quasi-)Hopf algebras under partial dualities}

As consequences of the results in Subsection \ref{subsection:mainthmPD},
we collect some properties on the exponent and Frobenius-Schur exponent of semisimple left partially dualized quasi-Hopf algebras in this subsection.
The first one is directly obtained by Corollary \ref{cor:inds-leftpartialdual2}(1) as follows:

\begin{corollary}\label{cor:partialdualFSexp0}
Let $H$ be a semisimple Hopf algebra over $\mathbb{C}$. Suppose that $C^\ast\#B$ is a left partially dualized quasi-Hopf algebra of $H$.
Then
\begin{equation*}
\FSexp(C^\ast\# B)=\FSexp(H).
\end{equation*}
\end{corollary}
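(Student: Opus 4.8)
The plan is to unwind the definition of the Frobenius-Schur exponent of a semisimple quasi-Hopf algebra and reduce the claimed equality to the coincidence of indicator sequences already recorded in Corollary \ref{cor:inds-leftpartialdual2}(1). First I would note that $C^\ast\#B$ is semisimple by Corollary \ref{cor:leftpdsemisimple}, so that $\Rep(C^\ast\#B)$ is an integral fusion category carrying its canonical pivotal structure; hence $\FSexp(C^\ast\#B)$ is well-defined in the sense of Definition \ref{def:inds-quasiHopfalg}(2). Explicitly, applying (\ref{eqn:objFSexp}) to the regular module gives
$$\FSexp(C^\ast\#B)=\min\{n\geq1\mid\nu_n(C^\ast\#B)=\tr(j_{C^\ast\#B})\},$$
and likewise $\FSexp(H)=\min\{n\geq1\mid\nu_n(H)=\tr(j_H)\}$, both computed with respect to the canonical pivotal structures.

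The key observation is that the two threshold constants on the right-hand sides agree. For a canonical pivotal structure one has $\tr(j_X)=\ptr(\id_X)=\FPdim(X)$ by (\ref{eqn:canonicalptr}), so evaluating at the regular objects yields $\tr(j_{C^\ast\#B})=\dim_\mathbb{C}(C^\ast\#B)$ and $\tr(j_H)=\dim_\mathbb{C}(H)$. These two dimensions coincide by Remark \ref{rmk:partialdualdim}, which records $\dim(C^\ast\#B)=\dim(H)$. Thus both exponents are measured against the single common constant $\dim(H)$.

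Finally I would invoke Corollary \ref{cor:inds-leftpartialdual2}(1), using the Remark after Theorem \ref{thm:inds-leftpartialdual} to cover a \pams of general form (here one identifies $C\cong H/B^+H$ as in the remark following Definition \ref{def:PAMS}). This gives $\nu_n(C^\ast\#B)=\nu_n(H)$ for every $n\geq1$. Consequently the two index sets $\{n\geq1\mid\nu_n(C^\ast\#B)=\dim(H)\}$ and $\{n\geq1\mid\nu_n(H)=\dim(H)\}$ are literally the same subset of $\mathbb{N}$, so their minima coincide and $\FSexp(C^\ast\#B)=\FSexp(H)$.

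I do not anticipate a genuine obstacle, since the statement is engineered to follow immediately once the two indicator sequences are known to agree; the only point requiring care is checking that the comparison constants $\tr(j_K)$ match, which rests on the canonical-pivotal-structure identity (\ref{eqn:canonicalptr}) and on the dimension equality of Remark \ref{rmk:partialdualdim}, rather than on any fresh computation.
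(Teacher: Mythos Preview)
Your proposal is correct and follows essentially the same approach as the paper: both arguments reduce to matching the threshold constants via $\ptr(\id_X)=\FPdim(X)=\dim_\mathbb{C}(X)$ from (\ref{eqn:canonicalptr}) and Remark \ref{rmk:partialdualdim}, and then invoke the equality of indicator sequences from Corollary \ref{cor:inds-leftpartialdual2}(1). Your added remarks on semisimplicity of $C^\ast\#B$ and on handling a general {\pams} are minor clarifications, not a different route.
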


\begin{proof}
As $\Rep(C^\ast\# B)$ and $\Rep(H)$ are regarded as integral fusion categories with canonical pivotal structures, the pivotal traces (\ref{eqn:canonicalptr}) of the identity morphisms on their regular modules would be
$$\ptr(\id_{C^\ast\# B})=\FPdim(C^\ast\# B)=\dim(C^\ast\# B)$$
and
$$\ptr(\id_H)=\FPdim(H)=\dim(H),$$
respectively. Moreover, it is mentioned in Remark \ref{rmk:partialdualdim} that $\dim(C^\ast\# B)=\dim(H)$, and hence
\begin{equation}\label{eqn:leftpartialdualptr}
\ptr(\id_{C^\ast\# B})=\ptr(\id_H)
\end{equation}
holds.

On the other hand, we know in Corollary \ref{cor:inds-leftpartialdual2}(1) that the regular modules $C^\ast\# B$ and $H$ have the same $n$-th indicators for each $n\geq1$. Thus one could combine the definition of Frobenius-Schur exponent (\ref{eqn:objFSexp}) and Equation (\ref{eqn:leftpartialdualptr}) to find that
$\FSexp(C^\ast\# B)=\FSexp(H)$ hold.
\end{proof}

\begin{remark}
According to \cite[Proposition 5.3]{NS07(b)}, if $\C$ is a pseudo-unitary fusion category, its Frobenius-Schur exponent $\FSexp(\C)$ equals to the Frobenius-Schur exponent of an object containing every simple object of $\C$.
The indicators of the regular object in $\C$ were studied in \cite{Shi12},
and it should be worth comparing indicators between the regular objects of $\C$ and $\C_\M^\ast$. However, the indicators of the regular object in $\C_\M^\ast$ might not be defined analogously, since we do not know whether $\C_\M^\ast$ becomes pivotal in general.
\end{remark}

%\subsection{Invariance for the exponent of semisimple (quasi-)Hopf algebras under partial dualities}

Now
let $\C$ be a fusion category over $\mathbb{C}$. Recall that the \textit{exponent} of $\C$ is defined in \cite[Remark 6]{Eti02} as the order of the square of the braiding $\sigma$ for its left center $\Z(\C)$. Namely,
\begin{equation}
\exp(\C):=\min\{n\geq1\mid \forall V,W\in\Z(\C),\;\;
(\sigma^W_V\circ\sigma^V_W)^n=\id_V\otimes\id_W\}.
\end{equation}
It is a finite positive integer, which is also related with the Frobenius-Schur exponent according to \cite[Section 6]{NS07(b)}.

However in this subsection, we make some discussions on $\exp(\Rep(H))$ where $H$ is a semisimple Hopf algebra over $\mathbb{C}$. Note at first that $\exp(\Rep(H))$ equals to
$$\exp(H):=\min\big\{n\geq1\mid \forall h\in H,\;\;
  \sum h_{(1)}h_{(2)}\cdots h_{(n)}=\e(h)1_H\big\},$$
which is referred in \cite{EG99} to be the \textit{exponent} of the semisimple Hopf algebra $H$ (with involutory antipode). It is also a finite positive integer, and it divides $\dim(H)^3$ according to \cite[Theorem 4.3]{EG99}.
Moreover in character $0$, the number $\exp(H)$ is known to be the period of the sequence
\begin{equation}\label{eqn:nu_n(H)}
\nu_n(H),\;\;\;\;n=1,2,\cdots
\end{equation}
of indicators (\ref{eqn:inds-Hopfalg}) for the regular representation $H$.
This is a consequence of the results in \cite{KSZ06}, and some
specific descriptions could be found in \cite[Section 1]{NS07(b)}.

Besides, we have mentioned in Remark \ref{rmk:inds-Hopfalgcoincide} the fact stated in \cite[Remark 3.4]{NS08}: If the semisimple Hopf algebra $H$ is regarded as a quasi-Hopf algebra, then the regular indicators $\nu_n(H)$ in the senses of Definition \ref{def:inds-quasiHopfalg}(1) and Equation (\ref{eqn:inds-Hopfalg}) coincide.
Furthermore, it follows from \cite[Proposition 5.3]{NS07(b)} that $\FSexp(H)$ is indeed the period of the sequence (\ref{eqn:nu_n(H)}), since the regular module $H$ is clearly a projective generator of $\Rep(H)$. Thus we could conclude that:

\begin{lemma}(\cite[Section 5]{NS07(b)})\label{lem:HopfalgFSexp=exp}
Let $H$ be a (finite-dimensional) semisimple Hopf algebra over $\mathbb{C}$. Then its exponent and Frobenius-Schur exponent are equal. Namely,
$\FSexp(H)=\exp(H).$
\end{lemma}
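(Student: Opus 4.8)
The plan is to exhibit both $\FSexp(H)$ and $\exp(H)$ as the period of one and the same integer sequence, namely the sequence $\nu_1(H),\nu_2(H),\dots$ of Frobenius-Schur indicators of the regular representation, and then simply read off the equality. Before doing so I would fix the meaning of $\nu_n(H)$: regarding $H$ as a semisimple quasi-Hopf algebra, the indicator of Definition~\ref{def:inds-quasiHopfalg}(1) is computed categorically in $\Rep(H)$ with its canonical pivotal structure, and by Remark~\ref{rmk:inds-Hopfalgcoincide} (that is, \cite[Remark 3.4]{NS08}) this value coincides with the algebraic one $\langle\chi_H,\Lambda^{[n]}\rangle$ of Equation~\eqref{eqn:inds-Hopfalg}, where $\Lambda$ is the normalized integral of $H$ and $\Lambda^{[n]}=\sum\Lambda_{(1)}\cdots\Lambda_{(n)}$. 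All subsequent reasoning is carried out with this common value.

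First I would establish the identification $\exp(H)=\text{period of }(\nu_n(H))_{n\ge1}$. Over $\mathbb{C}$ this is exactly the content of the analysis in \cite{KSZ06}, as summarized in \cite[Section 1]{NS07(b)}: the sequence $(\nu_n(H))_{n\ge1}$ is periodic, and its period equals the smallest $n$ for which every $n$-th Sweedler power satisfies $\sum h_{(1)}\cdots h_{(n)}=\e(h)1_H$, i.e.\ equals $\exp(H)$. Second, I would establish the identification $\FSexp(H)=\text{period of }(\nu_n(H))_{n\ge1}$. Since the regular module $H$ is a projective generator of the fusion category $\Rep(H)$, \cite[Proposition 5.3]{NS07(b)} applies and yields precisely this. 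Concretely, under the canonical pivotal structure one has $\ptr(\id_H)=\FPdim(H)=\dim(H)$, so $\FSexp(H)$ in the sense of~\eqref{eqn:objFSexp} is the least $n$ with $\nu_n(H)=\dim(H)$, and by the recalled periodicity this least $n$ is the period. With both invariants equal to the period of the same sequence, the equality $\FSexp(H)=\exp(H)$ is immediate.

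The step I expect to require the most care is the first identification: one must verify that the period of $(\nu_n(H))$ is genuinely $\exp(H)$, and not merely a divisor or multiple of it. This is exactly where the integral-theoretic characterization of \cite{KSZ06} is needed, namely that $\nu_n(H)$ attains its maximal value $\dim(H)$ if and only if the $n$-th Sweedler power fixes $\Lambda$, hence if and only if $\exp(H)\mid n$. For a genuine Hopf algebra this pins the period down exactly, eliminating the factor-$2$ ambiguity that can occur for general semisimple quasi-Hopf algebras (and which is discussed in the introduction); it is precisely this rigidity that forces the two exponents to coincide here. Once both characterizations are in place, the remaining combination is purely formal.
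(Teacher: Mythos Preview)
Your proposal is correct and follows essentially the same approach as the paper: both identify $\exp(H)$ and $\FSexp(H)$ with the period of the sequence $(\nu_n(H))_{n\ge1}$, invoking \cite{KSZ06} (via \cite[Section 1]{NS07(b)}) for the former, \cite[Proposition 5.3]{NS07(b)} together with the fact that the regular module is a projective generator for the latter, and Remark~\ref{rmk:inds-Hopfalgcoincide} to reconcile the two definitions of $\nu_n(H)$.
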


As for a semisimple quasi-Hopf algebra, its exponent and Frobenius-Schur exponent could be different with amounts up to a factor $2$ according to \cite[Example 5.4 and Corollary 6.2]{NS07(b)}.
However,
the following proposition implies that these two exponents coincide for semisimple left partial duals, which might generalize a (quasi-)Hopf algebraic version of \cite[Proposition 6.3(2)]{Eti02}:

\begin{proposition}%(cf. \cite[Proposition 6.3(2)]{Eti02})
\label{cor:partialdualexp}
Let $H$ be a semisimple Hopf algebra over $\mathbb{C}$. Suppose that $C^\ast\# B$ is a left partial dualized quasi-Hopf algebra of $H$.
Then the numbers
\begin{equation*}
\FSexp(C^\ast\# B),\;\;\;\;\exp(C^\ast\# B),
\;\;\;\;\FSexp(H)\;\;\;\;\text{and}\;\;\;\;\exp(H)
\end{equation*}
are all equal.
\end{proposition}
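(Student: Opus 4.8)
The plan is to split the four numbers into two groups that are separately easy to handle and to show that both groups collapse onto the single value $\exp(H)$. Three of the four equalities are already in hand: Corollary \ref{cor:partialdualFSexp0} gives $\FSexp(C^\ast\# B)=\FSexp(H)$, while Lemma \ref{lem:HopfalgFSexp=exp} gives $\FSexp(H)=\exp(H)$, so that
\[
\FSexp(C^\ast\# B)=\FSexp(H)=\exp(H).
\]
Hence it suffices to prove the single remaining identity $\exp(C^\ast\# B)=\exp(H)$, which involves the one quantity not yet related to the others.

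For this I would argue that the exponent is a categorical Morita invariant. By definition $\exp(\C)$ is the order of the double braiding $\sigma^W_V\circ\sigma^V_W$ on the center $\Z(\C)$, and any braided tensor equivalence carries the braiding (hence its square, hence the order of its square) to the corresponding one; thus $\exp(\C)$ depends only on $\Z(\C)$ up to braided equivalence. Now the reconstruction theorem recalled in Subsection \ref{subsection:5.2} (\cite[Theorem 4.22]{Li23}) provides a tensor equivalence $\Rep(C^\ast\# B)\approx\Rep(H)_{\Rep(B)}^\ast$, so $\Rep(C^\ast\# B)$ is categorically Morita equivalent to $\Rep(H)$; Schauenburg's equivalence $\Omega\colon\Z(\Rep(H))\approx\Z(\Rep(H)_{\Rep(B)}^\ast)$ from Subsection \ref{subsectionOmega} is braided, so I would conclude
\[
\exp(\Rep(C^\ast\# B))=\exp(\Rep(H)_{\Rep(B)}^\ast)=\exp(\Rep(H)).
\]

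Finally I would pass between the categorical and the algebraic exponents: reading the exponent of the semisimple quasi-Hopf algebra $C^\ast\# B$ as the exponent of its integral fusion representation category, and using the identity $\exp(\Rep(H))=\exp(H)$ for the Hopf algebra $H$ noted in this subsection, the displayed chain yields $\exp(C^\ast\# B)=\exp(H)$; together with the first display this makes all four numbers equal to $\exp(H)$. The point to emphasize — and the reason the statement is not automatic — is that one may \emph{not} deduce $\exp(C^\ast\# B)=\FSexp(C^\ast\# B)$ directly at the level of the quasi-Hopf algebra, since for a general semisimple quasi-Hopf algebra these two invariants can differ by a factor of $2$ (\cite[Example 5.4 and Corollary 6.2]{NS07(b)}); the equality is forced here only because both invariants are transported across the categorical Morita equivalence to $H$, where they coincide by Lemma \ref{lem:HopfalgFSexp=exp}. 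The main obstacle is thus precisely the verification that the exponent, and not merely the Frobenius-Schur exponent already handled in Corollary \ref{cor:partialdualFSexp0}, is invariant under this partial dualization, which the braided equivalence $\Omega$ supplies.
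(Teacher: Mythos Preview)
Your proof is correct and follows essentially the same route as the paper: both first combine Corollary \ref{cor:partialdualFSexp0} with Lemma \ref{lem:HopfalgFSexp=exp} to get $\FSexp(C^\ast\# B)=\FSexp(H)=\exp(H)$, and then establish $\exp(C^\ast\# B)=\exp(H)$ via the categorical Morita equivalence. The only cosmetic difference is that the paper cites \cite[Proposition 6.3(2)]{Eti02} for the Morita invariance of the exponent, whereas you spell out that invariance directly from Schauenburg's braided equivalence $\Omega$; this is precisely the content of the cited result, so the arguments are the same in substance.
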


\begin{proof}
Combining Corollary \ref{cor:partialdualFSexp0} and Lemma \ref{lem:HopfalgFSexp=exp}, we know at first that
$$\FSexp(C^\ast\# B)=\FSexp(H)=\exp(H).$$

Moreover, due to the reconstruction $\Rep(H)_{\Rep(B)}^\ast\approx\Rep(C^\ast\# B)$ according to \cite[Corollary 4.23]{Li23}, one could also find that
$$\exp(H)=\exp(C^\ast\# B)$$
 holds as an application of \cite[Proposition 6.3(2)]{Eti02} to the $\Rep(H)$-module category $\Rep(B)$.
\end{proof}

%The following is evident.
%
%\begin{corollary}(\cite[Proposition 2.2(4)]{EG99})
%Let $H$ be a semisimple Hopf algebra over $\mathbb{C}$. Then
%$$\exp(H^\ast)=\exp(H).$$
%\end{corollary}

Note in \cite[Proposition 6.8]{Li23} or Subsection \ref{subsection:5.4} that for a matched pair $(F,G)$ of finite groups, the bismash product $\mathbb{C}^G\#\mathbb{C} F$ is a left partial dualized Hopf algebra of the group algebra $\mathbb{C}(F\bowtie G)$. One might apply Corollary \ref{cor:partialdualexp} to obtain the result \cite[Proposition 3.1]{LMS06} over $\mathbb{C}$:

\begin{corollary}(\cite[Proposition 3.1]{LMS06})
For a matched pair $(F,G,\triangleright,\triangleleft)$ of finite groups, we have
\begin{equation}\label{eqn:bismashprodexp}
\exp(\mathbb{C}^G\#\mathbb{C} F)=\exp(\mathbb{C}(F\bowtie G))=\exp(F\bowtie G),
\end{equation}
where $\exp(F\bowtie G)$ denotes the exponent of the group $F\bowtie G$.
\end{corollary}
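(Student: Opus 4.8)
The plan is to deduce the first equality in (\ref{eqn:bismashprodexp}) directly from Proposition~\ref{cor:partialdualexp}, and then to verify the second equality by a short direct computation with Sweedler powers of group-like elements.

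First I would recall, as stated in \cite[Proposition 6.8]{Li23} and Lemma~\ref{lem:bismashprodPD}, that the bismash product $\mathbb{C}^G\#\mathbb{C} F$ is a left partially dualized Hopf algebra of the group algebra $H=\mathbb{C}(F\bowtie G)$. Since $F\bowtie G$ is a finite group, $H$ is a semisimple Hopf algebra over $\mathbb{C}$, so Proposition~\ref{cor:partialdualexp} applies with $C^\ast\#B=\mathbb{C}^G\#\mathbb{C} F$. This immediately yields
$$
\exp(\mathbb{C}^G\#\mathbb{C} F)=\exp(\mathbb{C}(F\bowtie G)),
$$
together with the equality of the corresponding Frobenius-Schur exponents.

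It then remains to identify $\exp(\mathbb{C}(F\bowtie G))$ with the group exponent $\exp(F\bowtie G)$. Here I would use the definition of the Hopf algebra exponent recalled in this subsection, namely the least $n\geq1$ such that $\sum h_{(1)}h_{(2)}\cdots h_{(n)}=\e(h)1_H$ holds for all $h\in H$. In a group algebra the comultiplication satisfies $\Delta(g)=g\otimes g$ on each group element $g$, so the $n$-th Sweedler power of $g$ is $g_{(1)}g_{(2)}\cdots g_{(n)}=g^n$, while $\e(g)=1$. As the group elements form a basis of $\mathbb{C}(F\bowtie G)$ and the defining condition is linear in $h$, it suffices to test it on this basis, where it reads $g^n=1_{F\bowtie G}$. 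Hence $\exp(\mathbb{C}(F\bowtie G))$ is precisely the least $n$ annihilating every element of $F\bowtie G$, that is $\exp(F\bowtie G)$; combining the two equalities gives (\ref{eqn:bismashprodexp}).

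The main obstacle does not lie in this final corollary but in Proposition~\ref{cor:partialdualexp} on which it rests, whose proof in turn depends on the reconstruction $\Rep(H)_{\Rep(B)}^\ast\approx\Rep(C^\ast\#B)$ and on \cite[Proposition 6.3(2)]{Eti02}. Once that result is available, the only remaining subtlety is the harmless reduction from checking the Sweedler-power identity on all of $\mathbb{C}(F\bowtie G)$ to checking it on the spanning set of group elements, which is immediate by linearity.
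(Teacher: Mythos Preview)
Your proposal is correct and follows essentially the same approach as the paper: the paper simply notes (in the sentence preceding the corollary) that $\mathbb{C}^G\#\mathbb{C}F$ is a left partial dual of $\mathbb{C}(F\bowtie G)$ by \cite[Proposition 6.8]{Li23} and that Proposition~\ref{cor:partialdualexp} then yields the result, while you spell out the second equality via Sweedler powers of group-likes, which the paper leaves implicit. One minor citation slip: Lemma~\ref{lem:bismashprodPD} concerns the \emph{other} bismash product $\mathbb{C}G\#\mathbb{C}^F$ as a left partial dual of $\mathbb{C}^{F\bowtie G}$, so you should cite only \cite[Proposition 6.8]{Li23} (as the paper does) for the realization of $\mathbb{C}^G\#\mathbb{C}F$ as a left partial dual of $\mathbb{C}(F\bowtie G)$.
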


In fact, the bismash product Hopf algebra $\mathbb{C}^G\#\mathbb{C} F$ arises from the split abelian extension
$$\mathbb{C}^G\rightarrow\mathbb{C}^G\#\mathbb{C} F
\rightarrow\mathbb{C} F.$$
Conversely, every semisimple split abelian extension
$B\rightarrow H\rightarrow C$
of Hopf algebras over $\mathbb{C}$ determines a matched pair $(F,G,\triangleright,\triangleleft)$ of finite groups, such that
%$$B\cong \mathbb{C}^G,\;\;\;\;C\cong\mathbb{C} F
%\;\;\;\;\text{and}\;\;\;\;H\cong\mathbb{C}^G\#\mathbb{C} F.$$
$B\cong \mathbb{C}^G$, $C\cong\mathbb{C} F$ and $H\cong\mathbb{C}^G\#\mathbb{C} F$.
The relevant results are concluded in \cite{Mas02}.
%
%Let us recall the definition of abelian extensions with our notations for convenience:
%
%\begin{definition}(cf. \cite[Definition 6.5.2]{Sch02})\label{def:splitHopfalgext}
%Let $H$, $B$ and $C$ be Hopf algebras.
%Then an extension
%\begin{equation}\label{eqn:abelext}
%B\xrightarrow{\iota} H\xrightarrow{\pi} C
%\end{equation}
%of Hopf algebras is said to be split, if there exist a left $B$-module coalgebra map $\zeta:H\rightarrow B$ and a right $C$-comodule algebra map $\gamma:C\rightarrow H$, such that $(\iota\circ\zeta)\ast(\gamma\circ\pi)=\id_H$.
%
%Furthermore, we say that the extension (\ref{eqn:abelext}) is abelian, if $B$ is commutative and $C$ is cocommutative.
%\end{definition}

%\begin{remark}
%With the notations in Definition \ref{def:splitHopfalgext},
%it is clear that $\zeta$ preserves the counits, and $\gamma$ preserves the units. Moreover
%according to the arguments in the proof of \cite[Theorem 9]{DT86} and
%\cite[Lemma 2.15]{Mas92}, we could assume that $\zeta$ and $\gamma$ both preserve the units and the counits since $\zeta(1)$ must be a group-like element. Details are also found in the paragraphs before \cite[Lemma 2.5]{Li23}.
%
%Consequently, a split extension (\ref{eqn:abelext}) of finite-dimensional Hopf algebras implies a {\pams} $(\zeta,\gamma^\ast)$ of $\iota$, such that $\zeta$ is a left $B$-module coalgebra map and $\gamma$ is a right $C$-comodule algebra map.
%\end{remark}

On the other hand, there is an inspiring conjecture by Kashina on the exponent of Hopf algebras, which states that:
\begin{conjecture}(\cite{Kas99,Kas00})
Let $H$ be a semisimple and cosemisimple Hopf algebra over a field. Then
$\exp(H)\mid\dim(H).$
\end{conjecture}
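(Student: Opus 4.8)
The plan is to recast the conjecture in categorical terms and then attack it through the modular structure of the Drinfeld center, exploiting that \emph{both} quantities in the divisibility depend only on the categorical Morita class of $\Rep(H)$. First I would record that, since $H$ is semisimple, $\Rep(H)$ is an integral fusion category with $\dim(H)=\FPdim(\Rep(H))$, while Lemma~\ref{lem:HopfalgFSexp=exp} gives $\exp(H)=\FSexp(H)=\FSexp(\Rep(H))$. Here $\FPdim$ is a Morita invariant, and $\FSexp(\Rep(H))=\FSexp(\Z(\Rep(H)))$ by \cite[Corollary~7.8]{NS07(b)} (the very identity used in Corollary~\ref{cor:FSexpscoincide} and Proposition~\ref{cor:partialdualexp}), so it too is a Morita invariant because the center is. Thus the conjecture is equivalent to the Morita-invariant statement $\FSexp(\Rep(H))\mid\FPdim(\Rep(H))$, and it suffices to verify it for any one representative of the Morita class; in particular, by Subsection~\ref{subsection:5.2}, one may replace $H$ by a partial dual $C^\ast\#B$ without changing either side, since $\exp(C^\ast\#B)=\exp(H)$ by Proposition~\ref{cor:partialdualexp}.

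Next I would pass to the modular category $\Z:=\Z(\Rep(H))$, where $\FSexp(\Z)$ equals the order $N=\ord(\theta)$ of the ribbon twist (equivalently the order of the $T$-matrix), and $\FPdim(\Z)=\FPdim(\Rep(H))^2=\dim(H)^2$. The target $N\mid\dim(H)$ would then be extracted from the arithmetic of the modular data: the Galois symmetry of $\Z$ and the congruence property (the projective $SL_2(\mathbb{Z})$-representation attached to $\Z$ factors through $SL_2(\mathbb{Z}/N)$) force the twist eigenvalues, which are $N$-th roots of unity, to be rigidly tied to the entries of the $S$-matrix and hence to $\dim(H)$. As a first coarse input, the Cauchy theorem for integral fusion categories guarantees that $N$ and $\FPdim(\Z)=\dim(H)^2$ have exactly the same prime divisors, reducing the problem to controlling, for each prime $p$, the valuation $v_p(N)$ by $v_p(\dim H)$.

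This valuation estimate is the hard part. Etingof and Gelaki proved $\exp(H)\mid\dim(H)^3$ in \cite[Theorem~4.3]{EG99}, so $v_p(N)\le 3\,v_p(\dim H)$ holds automatically; the conjecture demands the sharp $v_p(N)\le v_p(\dim H)$, and removing the surplus factor is the genuine obstruction. I would seek the missing information in the Galois action $\mathrm{Gal}(\mathbb{Q}(\zeta_N)/\mathbb{Q})$ on the simple objects of $\Z$: each $\sigma$ permutes the twists $\theta_X$ by raising them to a power coprime to $N$, and the compatibility of this permutation with the fusion rules and with the distinguished rows of the $S$-matrix should pin $v_p(N)$ to the $p$-part of $\dim(H)$. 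For the group-theoretical classes already visible in this paper—the abelian and split extensions of Subsection~\ref{subsection:5.2}, and the bismash-product and quantum-double cases of Subsection~\ref{subsection:5.4}—this program is tractable, because there $\Z$ is a (twisted) group double whose twist order is computed directly from the group data and visibly divides the order of the underlying group, which equals $\dim(H)$. Propagating this valuation control from these concrete cases to an arbitrary semisimple $H$ is exactly where the argument stalls, and is the step I expect to resist any direct approach—consistent with the problem remaining open \cite{Kas99,Kas00}.
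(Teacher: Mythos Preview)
This statement is labeled a \emph{Conjecture} in the paper, not a theorem: the paper does not prove it, and in fact offers no proof attempt beyond the brief remark immediately following the conjecture, namely that the special case of a semisimple complex $H$ fitting into a \emph{split abelian} extension follows from Corollary~\ref{cor:splitabelext} (since then $\Rep(H)$ is Morita equivalent to $\Rep(K)$ for a cocommutative $K$, and the exponent of a finite group divides its order). So there is no ``paper's own proof'' to compare against.

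Your proposal is honest about this: you correctly flag that the valuation step $v_p(N)\le v_p(\dim H)$ is the genuine obstruction and that the argument stalls outside the group-theoretical classes. In that sense your write-up is not a proof but an outline of where one might look, and your conclusion agrees with the paper's treatment of the statement as open. The one point worth noting is that your reduction to a Morita-invariant statement, while correct, does not by itself buy anything new here: the known bound $\exp(H)\mid\dim(H)^3$ of \cite{EG99} and the Cauchy theorem are already Morita-invariant, so passing to partial duals or to the center does not obviously produce a representative where the sharp bound becomes easier. The paper's remark is strictly narrower in scope---it only addresses the split abelian case and makes no claim about the general problem---so your discussion goes further than the paper but, as you acknowledge, not far enough to settle the conjecture.
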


We remark that
one could also deduce by Corollary \ref{cor:splitabelext} that: $\exp(H)\mid\dim(H)$
holds when $H$ is a semisimple complex Hopf algebra fitting into a split abelian extension, since the exponent of a finite group must divide its order.

\begin{remark}
More generally, suppose $B\rightarrow H\rightarrow C$ is a (non-abelian) split extension of semisimple Hopf algebras over $\mathbb{C}$. Since it is known in \cite[Proposition 2.2(4) and Corollary 2.6]{EG99} that
$\exp(H^\ast)=\exp(H^\op)=\exp(H^\cop)=\exp(H),$
one could formulate the bismash product $B^\biop\#C^\biop$ in the sense of Lemma \ref{lem:leftPDsplitext}, and obtain
$$\exp(H)=\exp(B^\biop\#C^\biop)$$
due to Proposition \ref{cor:partialdualexp}.
\end{remark}

As for semisimple Hopf algebras admitting non-split abelian extensions (or group-theoretical Hopf algebras), the indicators are studied in \cite{Shi12,Sch15,Sch16}, which are related to formulas established in \cite[Section 7]{NS08}.
Besides, there are results in \cite[Section 5.2]{Nat07} on the upper bounds of the exponent.

\section*{Acknowledgement}

The author is extremely grateful to Professor Kenichi Shimizu, who provided the motivation and question, as well as taught author the methods and comments for this paper with so much patience. The author would also like to thank Professors Gongxiang Liu and Zhimin Liu for useful discussions and suggestions. Besides, the final version of the manuscript was completed during the visit at University of Tsukuba, the author would like to thank Professor Akira Masuoka for his considerate hospitality and discussions.

\end{document}